\newtheorem{thm}{Theorem}[section]
\newtheorem{cor}[thm]{Corollary}
\newtheorem{lem}[thm]{Lemma}
\newtheorem{prop}[thm]{Proposition}
\theoremstyle{definition}
\newtheorem{defin}[thm]{Definition}
\newtheorem{rem}[thm]{Remark}
\newtheorem{exa}[thm]{Example}
\newtheorem*{notation}{Notation}
\numberwithin{equation}{section}
\newcommand{\8}{\infty}
\newcommand{\eps}{\varepsilon}
\newcommand{\s}{\sigma}
\newcommand{\red}{\color{black}}
\newcommand{\blue}{\color{black}}
\renewcommand{\d}{\mathrm{d}}
\newcommand{\E}{\mathbb{E}}
\newcommand{\Prob}{\mathbb{P}}
\newcommand{\Pfs}[1][]{\ensuremath{\mathbb{P}_{#1}\text{-a.s.}}}
\newcommand{\Erw}[2][]{\ensuremath{\mathbb{E}_{#1} \left( {#2} \right)}}
\newcommand{\N}{\mathbb{N}}
\newcommand{\Z}{\mathbb{Z}}
\newcommand{\Q}{\mathbb{Q}}
\newcommand{\R}{\mathbb{R}}
\newcommand{\B}{\mathfrak{B}}
\newcommand{\C}{\mathbb{C}}
\newcommand{\F}{\mathcal{F}}
\renewcommand{\S}{\mathcal{S}}
\newcommand{\llam}{{l}}
\newcommand{\supp}{\mathrm{supp}\,  }
\renewcommand{\epsilon}{\varepsilon}
\renewcommand{\rho}{\varrho}
\newcommand{\1}[1][]{\mathbf{1}_{#1}}
\newcommand{\norm}[1]{\ensuremath{\left\| {#1} \right\|}}
\newcommand{\abs}[1]{\ensuremath{\left| {#1} \right|}}
\newcommand{\skalar}[1]{\langle #1 \rangle}
\renewcommand{\k}[1][0]{\ensuremath{ {\kappa_{#1}}}}
\newcommand{\eqdist}{\stackrel{d}{=}}
\newcommand{\Case}[1][]{\textsc{Case #1}}
\newcommand{\Step}[1][]{\textsc{Step #1}}
\newcommand{\Pset}[1][]{\mathcal{P}_{#1}}
\newcommand{\Rd}{\R^d}
\newcommand{\Rdnn}{{\R^d_\ge}}
\newcommand{\Rdp}{{\R^d_>}}
\newcommand{\Cf}[2][]{\mathcal{C}^{#1}\left( #2 \right)}
\newcommand{\Cbf}[2][]{\mathcal{C}_b^{#1}\left( #2 \right)}
\renewcommand{\B}{\mathcal{B}}
\newcommand{\condC}{(C)}
\newcommand{\QQ}{\mathbb{Q}}
\newcommand{\Sp}{\mathbb{S}_\ge}
\newcommand{\Sd}{\mathbb{S}}
\newcommand{\Mset}{\mathcal{M}_+}
\newcommand{\interior}[1]{\mathrm{int}({#1})}
\newcommand{\est}[1][s]{r^*_{#1}}
\newcommand{\Pst}[1][s]{P_*^{#1}}
\newcommand{\nust}[1][s]{\nu^*_{#1}}
\newcommand{\pis}[1][s]{\pi^{#1}}
\newcommand{\es}[1][s]{r_{#1}}
\newcommand{\Ps}[1][s]{P^{#1}}
\newcommand{\Qs}[1][s]{Q^{#1}}
\newcommand{\nus}[1][s]{\nu_{#1}}
\newcommand{\mM}{\mathbf{M}}
\newcommand{\mPi}{\matrix{\Pi}}
\newcommand{\mA}{\matrix{A}}
\renewcommand{\matrix}[1]{\mathbf{#1}}
\newcommand{\mb}{\matrix{b}}
\newcommand{\as}{\cdot}
\newcommand{\QP}[2][]{\mathbb{Q}_{#1}\left( #2 \right)}
\newcommand{\ma}{\matrix{a}}
\newcommand{\closure}[1]{\overline{#1}}
\renewcommand{\P}[2][]{\ensuremath{\mathbb{P}_{#1} \left( {#2} \right)}}
\newcommand{\norma}[1]{\left[\left[ #1 \right]\right]}
\newcommand{\normb}[1]{\left[ #1 \right]}
\newcommand{\pb}{{\mathbb P}}
\newcommand{\Qxs}{{\QQ_x^s}}
\newcommand{\QQP}[2][]{\Q_{#1} \left( #2 \right)}
\newcommand{\Qys}{{\QQ_y^s}}
\newcommand{\mcL}{\mathcal{L}}
\newcommand{\ip}{\textit{(i-p)}}
\newcommand{\ipo}{\textit{(i-p)}}
\newcommand{\ide}{\textit{(id)}}
\newcommand{\Pd}{\mathbb{P}^{d-1}}
\newcommand{\wt}{\widetilde}
\renewcommand{\F}{{\mathcal F}}
\newcommand{\mG}{{\blue\matrix{\daleth}}}
\newcommand{\mm}{\matrix{m}}
\begin{document}





\title{Precise Large Deviation Results for Products of Random Matrices}

    \author{Dariusz Buraczewski, Sebastian Mentemeier}
 \address{ Uniwersytet Wroc\l awski \\ Instytut Matematyczny \\ pl. Grunwaldzki 2/4 \\ 50-384 Wroc\l aw, Poland}
 \email{\{dbura,mente\}@math.uni.wroc.pl}


\begin{abstract}
The theorem of Furstenberg and Kesten provides a strong law of large numbers for the norm of a product of random matrices. This can be extended under various assumptions, covering  nonnegative as well as invertible matrices, to a law of large numbers for the norm of a vector on which the matrices act. We prove corresponding precise large deviation results, generalizing the Bahadur-Rao theorem to this situation. Therefore, we obtain a third-order Edgeworth expansion for the cumulative distribution function of the vector norm. This result in turn relies on an application of the Nagaev-Guivarch method.
Our result is then used to study matrix recursions, arising e.g. in financial time series, and to provide precise large deviation estimates there.
\end{abstract}

\subjclass[2010]{Primary 60F10; secondary 60H25}

\keywords{Products of random matrices, limit theorems, large deviations, random difference equations, Edgeworth expansion, Fourier techniques, Markov chains with general state space, Markov random walks, heavy tails}

\thanks{
D.~Buraczewski was partially supported by the NCN
grant DEC-2012/05/B/ST1/00692.
}
\maketitle

\section{Introduction}
Let $d \ge 1$, $\abs{\cdot}$ be any norm on $\R^d$ and $\norm{\cdot}$ be the corresponding operator norm. Let $(\mA_n)_{n \in \N}$ be a sequence of independent identically distributed  $d \times d$-matrices such that $\E \log^+ \norm{\mA_1} < \infty$. The Furstenberg-Kesten theorem {\red \cite[Theorem 2]{Furstenberg1960}} provides us with a strong law of large numbers for the norm of the products $\mPi_n:= \mA_n \cdots \mA_1$, namely
$$ \lim_{n \to \infty}\frac{1}{n} \log \norm{ \mPi_n } = \gamma \qquad \Pfs,$$
with $\gamma = \inf_{m \in \N} m^{-1} \E\, {\red \log} \norm{\mPi_m}$ being called the (top) Lyapunov exponent of $(\mA_n)_{n \in \N}$.
Under different sets of additional assumptions (to be detailed below) on the law $\mu$ of $A_1$, the convergence result has been strengthened towards a SLLN for the norm of a vector under the action of the random matrices: For example, following  \cite{Cohn1993,Hennion1997,Kesten1973,Kingman1973}, assume that the support of $\mu$ consists of nonnegative matrices and contains a matrix with all entries positive.  Then it holds for all nonnegative vectors $x$ that
\begin{equation}\label{furstenbergkesten} \lim_{n \to \infty}\frac{1}{n} S_n^x := \lim_{n \to \infty}\frac{1}{n} \log  \abs{\mPi_n x}   = \gamma \qquad \Pfs  \end{equation}
Under a second moment assumption, Hennion \cite{Hennion1997} proved a CLT, namely that
$$ \frac{1}{\sqrt{n}} \left(S_n^x - n \gamma\right)$$ converges to a normal law. For related limit theorems for invertible matrices, see \cite{Bougerol1985,LePage1982}.

Observe that in both cases, the SLLN and the CLT, the limit does not depend on the starting vector $x$. In contrast therewith is the result of Kesten \cite{Kesten1973} about the behavior of the maximum of $S_n^x$: Assuming in essence that the action of $\mA_1$ is both expanding and contracting with positive probability, that is  $\gamma < 0$ but $\P{S_1^x > 0}>0$, Kesten showed that there is $\alpha > 0$ and a continuous function $\es[]$ on the unit sphere $\Sd$, which is strictly positive on nonnegative vectors, such that
\begin{equation}\label{eq:Kesten max} \lim_{t \to \infty} e^{\alpha t}\, \P{\max_{n} S_n^x > t } = \es[](x).
\end{equation}
Here the behavior in the limit depends on the initial value.

We are going to provide a third-order Edgeworth expansion, which gives a rate of convergence for the CLT. We also provide a formula for the asymptotic variance $\sigma^2$ and show that it is positive under a natural nonlattice assumption. The Edgeworth expansion will as well be the main tool in describing the convergence in the law of large numbers, i.e. the Furstenberg-Kesten theorem, in more details.  In particular, we will discover how fluctuations depend on the starting vector $x$ as well as on the action of $(\mA_n)_{n \in \N}$ on the unit sphere, which is given by the Markov chain
$$ X_n^x := \frac{\mPi_n x}{\abs{\mPi_n x}}.$$
What we will prove is a large deviation result similar to the Bahadur-Rao theorem, i.e. for (suitable) $q > \gamma$, there is an explicitly given sequence $J_n(q)$ tending to infinity at an exponential rate, such that
\begin{equation} \label{eq:adhocBahadurRao} \lim_{n \to \infty} J_n(q) \Erw{r_q(X_n^x)\1[{\{{S_n^x} \ge nq\}}]} = r_q(x) \end{equation}
for a positive continuous function $r_q$ which depends on $q$, and generalizes the function $\es[](x)$ of Kesten's result.
This result is in the scope of large deviation principles for Markov additive processes, see \cite{Iltis2000,Kontoyiannis2003,Ney1987} for related results, where stronger conditions on $X_n^x$ have to be imposed than those who are satisfied for the chain generated by matrices. The very recent paper of Guivarc'h \cite{Guivarch2014} provides a local limit theorem, which is proved along similar lines as our Edgeworth expansion.


As an application of our result, we will  shed new light on the classical result of Kesten about random difference equations: Let {\red$\mM$} be a random $d \times d$-matrix and $B$ a random vector in $\R^d$.  Under  weak assumptions on {\red$(\mM,B)$}, there is a unique solution (in law) to the equation
\begin{equation}\label{eq:rde1} R \eqdist {\red\mM} R + B,\end{equation}
where $\eqdist$ means same law.
In the case of nonnegative ${\red \mM}, B, R$, Kesten \cite{Kesten1973} proved, assuming that ${\red \mM}$ is both contracting and expanding with positive probability, that for the same $\alpha > 0$ and $\es[]$ as in \eqref{eq:Kesten max},
\begin{equation}\label{eq:tails} \lim_{t \to \infty} \, t^\alpha \, \P{\skalar{R,x} > t} = K \es[](x), \end{equation}
for some $K >0$. This result has been extended to the case of invertible matrices in \cite{AM2010,BDGHU2009, Guivarch2012,Klueppelberg2004,LePage1983}, where it has always been an involved question to prove that $K$ is actually positive.
In both cases (nonnegative resp. invertible matrices), our result will be applied to   give an rather elementary proof of the fact that $K > 0$. {\red Here, the law of the matrix $\mA := \mM^\top$ will be relevant.}

This approach can also be extended to the study of branching equations, i.e.
\begin{equation}\label{eq:branching} R \eqdist \sum_{i=1}^N {\red \mM_i} R_i + B, \end{equation}
where now $N \ge 2$ is a fixed integer, {\red $(\mM_1, \dots, \mM_N)$} are random matrices and $B$ a random vector, independent of $R_i$, which are i.i.d. copies of $R$.  For random variables $R$ satisfying such an equation, the heavy tail property \eqref{eq:tails} has been shown to hold in \cite{BDGM2014,BDMM2013,Mirek2013}, but the positivity of $K$ remained a partially open question in the latter two articles. Due to the branching structure of Eq. \eqref{eq:branching}, the combinatorial part of the approach becomes more involved (it has been worked out in the one-dimensional case in \cite{BDZ2014+}), this is why we decided to postpone it to the separate work \cite{BM2015} and focus on the application of the large deviations result here, which can be seen more directly in the case of Eq. \eqref{eq:rde1}.

%


Having thus described the scope of the paper, we are now going to introduce some notations and concepts in order to state the main results in full detail. Since we want to solve questions concerned with nonnegative matrices as well as with invertible matrices, we are led to introduce several sets of assumptions (namely those of Kesten \cite{Kesten1973}, Guivarc'h and Le Page \cite{Guivarch2012} and Alsmeyer and Mentemeier \cite{AM2010}) on the law $\mu$ of the random matrix $\mA$, with all of them being sufficient for the announced results to hold. The main focus will be on nonnegative matrices, for which we will provide details of proofs, while for invertible matrices, we will mainly highlight the differences and refer to the works cited above.

\section{Notations and Preliminaries}

We start by introducing three sets of assumptions for random matrices. Let $d \ge 1$. Given a probability law $\mu$ on the set of $d \times d$-matrices $M(d \times d, \R)$, let $(\mA_n)_{n \in \N}$ be a sequence of i.i.d.~random matrices with law $\mu$ and write $\mPi_n:= \mA_n \cdots \mA_1$ for the $n$-fold product. Equip $\Rd$ with any norm $\abs{\cdot}$, write $\norm{\ma}:=\sup_{x \in \Sd} \abs{\ma x}$ for the operator norm of a matrix $\ma$  and denote the unit sphere in $\Rd$ by $\Sd$. We write
$$ \ma \as x := \frac{\ma x}{\abs{\ma x}}, \qquad x \in \Sd$$ for the action of a matrix $\ma$ on $\Sd$ (as soon as this is well defined). If $\Sd$ is invariant under the action of $\mA_1$, we introduce a Markov chain on $\Sd$ by
$$ X_n^x := \mPi_n \as x, \qquad x \in \Sd.$$

\subsection{Nonnegative Matrices: Condition $\condC$}

 Denote the cone of vectors with nonnegative entries by $\Rdnn$
 and write
$$ \Sp = \{ x \in \Rdnn \, : \, \abs{x}=1\}$$
for its intersection with unit sphere.
The set of $d \times d$-matrices with nonnegative entries is denoted by $\Mset$ and we write
$$\interior{\Mset} = \{ \ma \in M(d \times d, \R) \, : \, \ma_{i,j}>0 \ \forall\, 1 \le i,j \le d \}$$ for its interior, which consists of matrices that have all entries   positive.
A  matrix $\ma \in \Mset$ is called \emph{allowable} (see \cite{Hennion1997}), if every row and every column has a  positive entry.

If $\ma$ is an allowable matrix, then its action on $\Sp$ is well defined, and
moreover, the quantity
$$ \iota(\ma) := \min_{x \in \Sp} \abs{\ma x} > 0.$$

Consider now a probability distribution $\mu$ on $\Mset$. Write $[\supp \mu]$ for the subsemigroup generated by its support. We say that $\mu$ satisfies condition $\condC$, if:
\begin{enumerate}
\item Every $\ma \in [\supp \mu]$ is allowable.
\item $[\supp \mu] \cap \interior{\Mset} \neq \emptyset$.
\end{enumerate}
In the following, $\Gamma:= [\supp \mu]$. Observe that condition $\condC$ holds for $\Gamma$ if and only if it holds for $\Gamma^\top$.
Refering to the Perron-Frobenius theorem, every $\ma \in \interior{\Mset}$ possesses a unique dominant eigenvalue $\lambda_\ma$, (i.e.~$\abs{\lambda_\ma} > \abs{\lambda_i}$ for any other eigenvalue $\lambda_i$ of $\ma$) which is positive and algebraically simple, and a corresponding eigenvector $v_\ma \in \interior{\Sp}$. For a subsemigroup $\Gamma$ of allowable matrices, we define the collection of all such (normalized) dominant eigenvectors by
$$ V(\Gamma) := \closure{\left\{v_\ma \, : \, \ma \in \Gamma \cap \interior{\Mset}\right\}}. $$

It can be shown (see \cite[Lemma 4.3]{BDGM2014}) that $V(\Gamma)$ is the unique minimal $\Gamma$-invariant subset of $\Sp$, i.e.~every {\red closed} $\Gamma$-invariant subset of $\Sp$ contains $V(\Gamma)$. It is worth mentioning already now, that the Markov chain $X_n^x$ possesses a unique stationary probability measure, the support of which is given by $V(\Gamma)$.

\subsection{Invertible Matrices: Condition \ip}
In order to highlight connections, we decided to use the same symbols for objects which play the same role in the context of invertible matrices as they did for nonnegative matrices. The condition \ip (irreducible and proximal), described below, is due to Guivarc'h, Le Page and Raugi and was studied in detail in several articles by these authors, the most comprehensive one of which is \cite{Guivarch2012}.

Let now $\mu$ be a probability measure on the group $GL(d,\R)$ of invertible $d \times d$ matrices and $\Gamma$ be the closed semigroup of $GL(d,\R)$ generated by $\supp \, \mu$. A matrix $\ma$ with an algebraic simple dominant $\lambda_\ma$ is called \emph{proximal}. This replaces the notion of a matrix with strictly positive entries, which is always proximal by the Perron-Frobenius theorem. Then the measure $\mu$ is said to satisfy condition \ip, if
\begin{enumerate}
\item There is no finite union $\mathcal{W}=\bigcup_{i=1}^n W_i$ of subspaces $0 \neq W_i \subsetneq \R^d$ which is $\Gamma$-invariant, i.e. $\Gamma \mathcal{W}=\mathcal{W}$. (\emph{irreducibility}) 
\item $\Gamma$ contains a proximal matrix. (\emph{proximality})
\end{enumerate}
We will consider invertible matrices acting on the projective space $\Pd$ which is obtained from $\Sd$ by identifying $x$ with $-x$, i.e.
$$ \Pd~\simeq~ \Sd/\pm .$$ Studying the action of the matrices on $\Pd$ rather than on $\Sd$ has several technical advantages, for example, the definition
$$ V(\Gamma) := \closure{\left\{v_\ma \in \Pd \, : \, \ma \in \Gamma \text{ is proximal }\right\}}, $$
becomes unambiguous. Note that the norm $\abs{\ma x}$ for $x \in \Pd$ is well defined, since it does not depend on the choice of a representant of $x$ in $\Sd$.

For the case of invertible matrices, we have that
$$ \iota(\ma)~:=~ \inf_{x \in \Pd} \abs{\ma x} ~=~ \norm{\ma^{-1}}^{-1}.$$
%

\subsection{Invertible Matrices: Condition \ide}
The third set of assumptions, called \ide~for irreducible and density, appears first at the end of Kesten's work \cite{Kesten1973} and was elaborated  by Alsmeyer and Mentemeier in \cite{AM2010}. In fact, it can be shown to imply condition \ipo. Due to the stronger assumption that $\mu$ is absolutely continuous, it often allows for simpler proofs, this is why we include it as an extra set of assumptions.

Let $\mu$ be a probability measure on $GL(d,\R)$ and $(\mA_n)_{n \in \N}$ be an i.i.d.~sequence with law $\mu$ and write $\mPi_n:=\mA_n \cdots \mA_1$. Then $\mu$ is said to satisfy condition $\ide$ if
\begin{enumerate}
\item for all open $U \subset \Sd$ and all $x \in \Sd$, there is $n \in \N$ such that $\P{\mPi_n \as x \in U} >0$, and 
\item there are a matrix $\ma_0 \in GL(d,\R)$, $\delta, c >0$ and $n_0 \in \N$ such that
$$ 
 \P{\mPi_{n_0} \in d\ma } ~\ge~ c \1[{B_\delta(\ma_0)}](\ma) \, \llam(d \ma) ,$$
where $\llam$ denotes the Lebesgue measure on $\R^{d^2} \simeq M(d \times d,\R)$. 
\end{enumerate}
The classical example is $\mu$ having a density about the identity matrix.

It is shown in \cite[Lemma 5.5]{AM2010} that $X_n^x$ is a  Doeblin chain under condition \ide. The support of its  stationary probability measure is $\Sd$ by {\red \cite[Proposition 4.3]{BDMM2013}}, therefore we are led to identify $V(\Gamma):=\Sd$ in the case of \ide.

\subsection{Basic properties for all cases}\label{sect:defQ} Below, we identify $\S=\Sp$ in the case of nonnegative matrices, $\S=\Pd$ in the case of \ip-matrices and $\S=\Sd$ in the case of \ide-matrices. Given a measure $\mu$ on matrices, set
$$ I_\mu := \{ s \ge 0 \, : \, \int_{} \norm{\ma}^s \, \mu(\d \ma) <\infty \}.$$
Then, for $s \in I_\mu$, we define an operator  in the set $\Cf{\S}$ of continuous functions on $\S$ by
\begin{equation}\label{eq:Ps}  \Ps f(x) := \int_{} \abs{\ma x}^s \, f(\ma \as x) \, \mu(\d \ma), \end{equation}
and the 'transposed' operator by
\begin{equation}\label{eq:Pst}  \Pst f(x) := \int_{} \abs{\ma^\top x}^s \, f(\ma^\top \as x) \, \mu(\d \ma). \end{equation}
Properties of both operators, which will be given in a moment, will be important in our results. 
Beforehand, we introduce a function that will turn out to describe the spectral radius of these operators.

On $I_\mu$, define the log-convex function 
\begin{equation}\label{def:ks}
k(s) ~:=~ \lim_{n \to \infty} \left(\E{\norm{\mA_n \ldots \mA_1 }^s} \right)^\frac{1}{n} {\red ~=~  \lim_{n \to \infty} \left(\E{\norm{\mA_n^\top  \ldots \mA_1^\top }^s} \right)^\frac{1}{n}}.
\end{equation} {\red Here the second identity holds since  $\norm{\ma}=\norm{\ma^\top}$ and the $(\mA_i)_{i\in \N}$ are i.i.d. }
We have the following result:

\begin{prop}\label{prop:transferoperators}
Assume that $\mu$ satisfies $\condC$, \ipo~or \ide~ and let $s \in I_\mu$.
\begin{enumerate}
\item Then the spectral radii $\rho(\Ps)$ and $\rho(\Pst)$ both equal $k(s)$. \label{c1}
\item There is a unique normalized  function $\es \in \Cf{\S}$ and a unique probability measure $\nus \in \Pset(\S)$ satisfying
$$ \Ps \es = k(s) \es \quad \text{ and } \quad \Ps \nus = k(s) \nus.$$
\item The function $\es$ is strictly positive and $\bar{s}:=\min\{s,1\}$-H\"older continuous and $\supp\, \nus = V(\Gamma)$. \label{suppnus}
\item {\red If $\nust$ is a probability measure satisfying $\Pst \nust = k(s) \nust$, then there is $c>0$ such that \label{nust}
$$ \es(x) ~=~ c \int_\S \abs{\skalar{x,y}}^s \nust(dy).$$}
\item The function $s \mapsto k(s)$ is log-convex on $I_\mu$, hence continuous on $\interior{I_\mu}$ with left- and right derivatives. \label{c4}
\item The function $s \mapsto k(s)$ is analytic on $\interior{I_\mu}$. \label{c5}
\end{enumerate}
\end{prop}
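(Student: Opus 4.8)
The plan is to derive parts (1)--(4) from the spectral theory of the positive transfer operators $\Ps$ and $\Pst$ on a Banach space of H\"older functions on $\S$, and parts (5)--(6) from convexity of $n$-th roots together with analytic perturbation theory. I would fix $s\in I_\mu$, put $\beta:=\bar s=\min\{s,1\}$, and work on the space $\mathcal H_\beta$ of $\beta$-H\"older functions on the compact space $\S$. The main analytic input is a Doeblin--Fortet (Lasota--Yorke) inequality $[(\Ps)^n f]_\beta\le a_n[f]_\beta+b_n\norm{f}_\infty$ with $a_n/k(s)^n\to 0$; the contraction that drives it is supplied, in the three cases, by the strict contraction of the Hilbert projective metric on $\Rdnn$ under matrices from $\interior\Mset$ (for $\condC$, as in \cite{Hennion1997,BDGM2014}), by the analogous contraction of the projective metric on $\Pd$ near a proximal element (for \ipo, as in \cite{Guivarch2012}), and by the Doeblin minorization directly (for \ide, as in \cite{AM2010}). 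Since $\mathcal H_\beta$ embeds compactly in $\Cf{\S}$ by Arzel\`a--Ascoli, this inequality makes $\Ps/k(s)$ quasi-compact on $\mathcal H_\beta$.

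Because $\Ps$ is positive and $V(\Gamma)$ is the unique minimal $\Gamma$-invariant subset of $\S$, a Birkhoff--Jentzsch / Krein--Rutman argument for the quasi-compact positive operator $\Ps/k(s)$ then shows that its spectral radius $\rho(\Ps)$ is an algebraically simple, isolated eigenvalue, with the remaining spectrum contained in a disc of radius strictly smaller than $\rho(\Ps)$, and with a strictly positive eigenfunction $\es\in\mathcal H_\beta$; dually $(\Ps)^*$ has a unique eigenprobability $\nus$ at $\rho(\Ps)$. This gives (2); being an element of $\mathcal H_\beta$, $\es$ is automatically $\bar s$-H\"older, which together with the positivity just noted is most of (3). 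For the support, $\Ps\nus=k(s)\nus$ forces $\supp\nus$ to be a nonempty closed $\Gamma$-invariant subset of $\S$, hence $\supp\nus\supseteq V(\Gamma)$, and the reverse inclusion follows once more from the contraction, which pushes the mass of $(\Ps)^n/k(s)^n$ onto $V(\Gamma)$ (cf.\ \cite[Lemma 4.3]{BDGM2014}). To identify $\rho(\Ps)$ with $k(s)$: since $\Ps\es=\rho(\Ps)\es$ and $\es$ is bounded above and below, $\rho(\Ps)^n\asymp(\Ps)^n\mathbf{1}(x)=\E{\abs{\mPi_n x}^s}$, so $\bigl(\E{\abs{\mPi_n x}^s}\bigr)^{1/n}\to\rho(\Ps)$. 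The bound $\abs{\mPi_n x}\le\norm{\mPi_n}$ gives $\rho(\Ps)\le k(s)$, while the reverse comparison $\E{\abs{\mPi_{n+n_0}x}^s}\ge c\,\E{\norm{\mPi_n}^s}$ for a fixed $n_0$ and $c>0$ --- obtained by inserting a positive-probability block of matrices whose product aligns the vector with the dominant direction, so that $\abs{\mPi_{n+n_0}x}\gtrsim\norm{\mPi_n}$ (a word landing in $\interior\Mset$ for $\condC$, a proximal element for \ipo, the minorizing matrix for \ide) --- gives $\rho(\Ps)\ge k(s)$ on taking $n$-th roots. The same reasoning applies to $\Pst$, and $k(s)$ is unchanged by transposition by \eqref{def:ks}; this is (1).

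For part (4), given $\nust$ with $\Pst\nust=k(s)\nust$, set $g(x):=\int_\S\abs{\skalar{x,y}}^s\,\nust(dy)$. Using $\ma\as x=\ma x/\abs{\ma x}$ one has the pointwise identity $\abs{\ma x}^s\,\abs{\skalar{\ma\as x,y}}^s=\abs{\skalar{x,\ma^\top y}}^s$; substituting this and applying Fubini gives $\Ps g(x)=\int_\S\big(\Pst h_x\big)(y)\,\nust(dy)$ with $h_x(y):=\abs{\skalar{x,y}}^s$, whence $\Pst\nust=k(s)\nust$ (in the sense $\int\Pst f\,d\nust=k(s)\int f\,d\nust$) yields $\Ps g=k(s)g$. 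Now $g$ is continuous and strictly positive: for each $x$ the function $\skalar{x,\cdot}$ fails to vanish on a set of positive $\nust$-measure, since $\skalar{x,y}>0$ for nonnegative $x$ and strictly positive $y$ under $\condC$, and since $V(\Gamma^\top)=\supp\nust$ lies in no hyperplane under \ipo~(irreducibility of $\Gamma^\top$) or \ide~($V(\Gamma^\top)=\Sd$). By the uniqueness in (2), $\es=c\,g$ for some $c>0$.

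Finally, $I_\mu$ is an interval because $\norm{\ma}^t\le 1+\norm{\ma}^s$ for $0\le t\le s$, and applying H\"older's inequality to $\E{\norm{\mPi_n}^{\theta s_1+(1-\theta)s_2}}$ and taking $n$-th roots gives $k(\theta s_1+(1-\theta)s_2)\le k(s_1)^\theta k(s_2)^{1-\theta}$, so $\log k$ is convex and hence continuous with one-sided derivatives on $\interior{I_\mu}$; this is (5). For (6), fix $s_0\in\interior{I_\mu}$ and $\eps>0$ with $s_0+2\eps\in I_\mu$; since $\abs{\ma x}^s=e^{s\log\abs{\ma x}}$ is entire in $s$ and $\norm{\ma}^{\Re s}(\log^+\norm{\ma})^m$ is dominated by a constant (depending on $m$) times $\norm{\ma}^{s_0+\eps}$ for $\abs{s-s_0}<\eps$, differentiation under the integral exhibits $s\mapsto\Ps$ as a holomorphic family of bounded operators on $\mathcal H_\beta$ there; since $k(s_0)=\rho(P^{s_0})$ is a simple isolated eigenvalue by the above, Kato's analytic perturbation theory --- or the analytic implicit function theorem applied to $F(s,\lambda,r):=(\Ps r-\lambda r,\ \ell(r)-1)$ for a fixed normalizing functional $\ell$, whose differential in $(\lambda,r)$ is invertible precisely by that simplicity --- shows $s\mapsto k(s)$ is real-analytic near $s_0$. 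I expect the genuine difficulty to be the very first step: proving the Doeblin--Fortet inequality with $a_n/k(s)^n\to 0$ and extracting from it the full Birkhoff--Jentzsch conclusion (algebraic simplicity, isolation, and trivial peripheral spectrum of $\rho(\Ps)$), since the uniqueness in (2), the identity in (4), and the analyticity in (6) all rest on it, and the three hypotheses $\condC$, \ipo, \ide~each call for a somewhat different contraction argument at this point.
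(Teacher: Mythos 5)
The paper does not prove this proposition itself: items (1)--(5) are quoted from \cite[Proposition 3.1]{BDGM2014} (condition $\condC$), \cite[Theorems 2.6 and 2.17]{Guivarch2012} (condition \ipo) and \cite[Theorem 17.1]{Mentemeier2013a} (condition \ide), and item (6) is obtained by perturbation theory, in the $\condC$ case via Corollary \ref{cor:k} below, which rests on the quasi-compactness and perturbation machinery of Sections 4--6. Your outline reconstructs essentially that same strategy --- a Doeblin--Fortet inequality via projective contraction feeding the Ionescu Tulcea--Marinescu/Krein--Rutman apparatus, duality plus Fubini for (4), H\"older's inequality for (5), Kato perturbation of the simple isolated eigenvalue for (6) --- so in spirit you are following the route of the cited sources and of the paper's own treatment of the conjugated operator $\Qs$.

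Within the sketch, though, two steps would not survive as written. First, the lower bound $\rho(\Ps)\ge k(s)$ by ``inserting an aligning block'' is fine under $\condC$ (a block in $\interior{\Mset}$ does force $\abs{\mb\,\ma_0 x}\ge c\norm{\mb}$ for all nonnegative $x$) and under \ide, but fails under \ipo: a proximal matrix does not align vectors lying in or near its invariant complementary subspace, and the independent future product $\mb$ may be strongly contracting precisely in the direction $\ma_0\as x$, so $\abs{\mb\,\ma_0 x}\ge c\norm{\mb}$ cannot be asserted on the events carrying the bulk of $\E\norm{\mPi_n}^s$. The correct substitute is the averaged estimate $\int\E\abs{\mPi_n x}^s\,\nus(dx)\ge c\,\E\norm{\mPi_n}^s$, which uses that the eigenmeasure is not concentrated near any projective hyperplane (the invertible analogue of the second half of Lemma \ref{lem:prop:qn}, cf. \cite[Lemmas 2.7--2.8]{Guivarch2012}); since this is close to the content of item (4), the order of the argument must be arranged with care. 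Second, the uniqueness in (2) is claimed among continuous functions and measures, whereas Krein--Rutman on the H\"older space only yields uniqueness inside that space; the standard bootstrap is the uniform convergence $(\Qs)^n f\to\pis(f)$ for every $f\in\Cf{\S}$ (cf. \eqref{eq:convQs} and \cite[Theorem 4.13]{BDGM2014}). For (6), note also that holomorphy of $z\mapsto P^{s+z}$ on a \emph{fixed} H\"older space needs a H\"older-seminorm bound for complex exponents; this is exactly where the paper invokes the extra moment condition \eqref{eq:iotamoment} (Lemmas \ref{lem:epsilonHoelder}--\ref{lem:defQz}), or where the finer estimates of \cite{Guivarch2012} enter --- ``differentiation under the integral'' controls the sup norm but not the seminorm. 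Apart from these points, and the Doeblin--Fortet inequality itself which you explicitly defer, the plan is sound.
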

{\red 
\begin{proof}[Source:]
Claims \eqref{c1}--\eqref{c4} were proved in \cite[Proposition 3.1]{BDGM2014} for nonnegative matrices, in \cite[Theorem 2.6 and Theorem 2.17]{Guivarch2012} for invertible matrices under condition \ipo~ and in \cite[Theorem 17.1]{Mentemeier2013a} under condition \ide. The analycity of $k(s)$ in assertion \eqref{c5} is proved by using perturbation theory, and was proved first under condition \ipo~ in \cite[Corollary 3.20]{Guivarch2012}, and subsequently, using the same methods, in \cite[Corollary 4.12]{BDMM2013} under condition \ide~ and is proved below in Corollary \ref{cor:k} for nonnegative matrices. 
\end{proof}

\begin{rem}
Given only finiteness of $$\E \bigg( \big( 1+\norm{\mA}^{s_0} \big) \big(\abs{\log \norm{\mA}} + \abs{\log \iota(\mA)} \big) \bigg),$$ the mapping $s \mapsto k(s)$ is still differentiable on the {\em closed} interval $[0,s_0]$, this has been proved in \cite[Theorem 3.10]{Guivarch2012} under condition \ipo~ and in \cite[Theorem 6.1]{BDGM2014} for nonnegative matrices. 
\end{rem}
}
Proposition \ref{prop:transferoperators} is crucial in order to define an exponential change of the measure $\mu$: Let $\Omega=M(d \times d, \R)^\N$ and $(\mA_n)_{n \in \N} : \Omega \to \Omega$ the fibered identity.
Now introducing for each $n$ the kernel
\begin{equation}
\label{qn}
q_n^s(x,\ma) = \frac{|\ma x|^s}{k^n(s)}\frac{\es(\ma\cdot x)}{\es(x)},
\end{equation}
we see that for each $x \in \S$ and $n \in \N$,
$$ \int q_n^s(x,\ma_n \cdots \ma_1) \mu^{\otimes n}(\d \ma_1, \dots, \d \ma_n)=1$$
{\red and the relation
\begin{equation}\label{eq:qns}
q_{n}^s(x, \ma) q_{m}^s(\ma \as x, \mb) = q_{n+m}^s(x, \mb \ma).
\end{equation}
}
Moreover, for each $x \in \S$ the sequence $q_n^s(x, \cdot) \mu^{\otimes n}$ of probability measures is projective, hence by the Kolmogorov extension theorem, it gives rise to a probability measure $\Qxs$ on $\Omega$, which we call the $s$-shifted measure. The corresponding expectation symbol is denoted by $\E_{\Qxs}$. Note that $(\mA_n)_{n \in \N}$ are i.i.d.~with law $\mu$ for $s=0$. {\red We use the symbol $\Q_x$ for $\Q_x^0$.}

With the conventions $\Qxs(\{X_0 = x\})=1$, we have the  Markov chain $X_n$ and the Markov additive process $S_n$:
\begin{eqnarray*}
 X_n & :=& \mA_n \as X_{n-1} = \frac{\mA_n X_{n-1}}{\abs{\mA_n X_{n-1}}},\\
  \quad S_n &:=& \log \abs{\mA_n \cdots \mA_1 X_0} {\red~=~ \log \abs{\mA_n X_{n-1}} + S_{n-1}  }.
\end{eqnarray*}
{\red The second identity shows that $(X_n, S_n)$ carries the structure of a {\em Markov Random Walk}, i.e. the law of the increments $S_n - S_{n-1}$ depends on the past only via $X_{n-1}$. }

Writing as before $\mPi_n := \mA_n \cdots \mA_1$, we have the following fundamental identities, valid for any bounded measurable function $f$ and $n \in \N$:
\begin{align}
\frac{1}{k(s)^n \es(x)}\Erw{f(x,\mA_1, \dots, \mA_n) \es(X_n^x) \abs{\mPi_n x}^s} = & \Erw[\Qxs]{f(X_0, \mA_1, \dots, \mA_n)}, \\
\frac{1}{k(s)^n \es(x)}\Erw{f\Bigl((X_k^x, S_k^x)_{k=0}^n \Bigr) \es(X_n^x) \abs{\mPi_n x}^s} = & \Erw[\Qxs]{f\Bigl((X_k, S_k)_{k=0}^n \Bigr)}. \label{eq:com1}
\end{align}

The transition operator of $(X_n)_{n \in \N}$ is given by
\begin{equation}\label{eq:defQs} \Qs f(x) := \frac{1}{\es(x) k(s)} \Ps (f \cdot \es)(x). \end{equation}
It follows from Proposition \ref{prop:transferoperators} that $\Qs$ has a unique stationary probability measure
$$\pi^s := \frac{\es\nu^s}{\nu^s(\es)}$$ with support $V(\Gamma)$.
We set $$\QQ^s := \int \Qxs \, \pis(dx).$$

\subsection{On $S_n^x$.}
Each of the assumptions introduced above is sufficient for the announced extension of the Furstenberg-Kesten theorem to hold:
\begin{prop}\label{prop:FK}
Assume that $\mu$ satisfies $\condC$ or \ip~ or \ide. Let $s \in  \{0\} \cup  \interior{I_\mu}$ and assume there is $0 < \epsilon < 1$ such that
\begin{equation}\label{eq:iotamoment}
\E \norm{\mA}^{s+\epsilon} \iota(\mA)^{-\epsilon} < \infty.
\end{equation}
Then it holds that $q:= \E_{\QQ^s} S_1 = k'(s)/k(s) \in \R$, and
$$ {\red \lim_{n \to \infty} \frac{1}{n} \log \norm{\mPi_n} ~=~} \lim_{n \to \infty} \frac{S_n}{n} ~=~ q \qquad \Qxs\text{-a.s.}$$
for all $x \in \S$.
\end{prop}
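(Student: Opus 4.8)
The assertion is a strong law of large numbers for the Markov random walk $(X_n,S_n)$ under $\Qxs$, so the plan is to combine ergodicity of the driving chain $(X_n)_{n\ge0}$ with an identification of the asymptotic drift, and then to transfer the conclusion from $\abs{\mPi_n x}$ to $\norm{\mPi_n}$. For $s=0$ the measure $\Qxs$ is the i.i.d.\ law $\mu^{\otimes\N}$, one has $q=k'(0)=\gamma$, and the statement is exactly the Furstenberg--Kesten theorem together with its extension \eqref{furstenbergkesten}; so the substance is the case $s\in\interior{I_\mu}$, which I would treat for the shifted walk along the same lines.

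First I would identify the drift. Taking $f\big((X_k,S_k)_{k=0}^1\big)=S_1$ in \eqref{eq:com1} and using $S_0=\log\abs{X_0}=0$ gives
\[
\E_{\Qxs}S_1=\frac{1}{k(s)\,\es(x)}\int\log\abs{\ma x}\;\es(\ma\as x)\;\abs{\ma x}^{s}\,\mu(\d\ma),
\]
and integrating against $\pis=\es\nus/\nus(\es)$ yields $q=\E_{\QQ^s}S_1=\nus\big((\partial_s\Ps)\es\big)/(k(s)\,\nus(\es))$, where $\partial_s\Ps g(x)=\int\log\abs{\ma x}\,\abs{\ma x}^{s}g(\ma\as x)\,\mu(\d\ma)$. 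Since $k(s)$ is the simple dominant eigenvalue of $\Ps$ depending analytically on $s$ on $\interior{I_\mu}$ (Proposition~\ref{prop:transferoperators}), with right eigenfunction $\es$ and left eigenmeasure $\nus$ varying analytically, first-order perturbation theory for a simple eigenvalue gives $k'(s)=\nus\big((\partial_s\Ps)\es\big)/\nus(\es)$, hence $q=k'(s)/k(s)$. Finiteness of $q$ --- and, more generally, uniform (over $x\in\S$) convergence of the integrals involved, as well as the boundedness of $\partial_s\Ps$ needed for the perturbation argument --- is where the moment hypothesis \eqref{eq:iotamoment} enters: using $\iota(\ma)\le\abs{\ma x}\le\norm{\ma}$ on $\S$, the elementary bound $\abs{\log t}\le C_\eta(t^{\eta}+t^{-\eta})$, and that $\es$ is bounded above and away from $0$, one gets $\sup_{x\in\S}\abs{\log\abs{\ma x}}\,\abs{\ma x}^{s}\le C_\eta\big(\norm{\ma}^{s+\eta}+\iota(\ma)^{-\eta}\big)$, which is $\mu$-integrable for small $\eta>0$ by \eqref{eq:iotamoment} and the standing assumption $\E\log^+\norm{\mA}<\infty$.

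Next, for the law of large numbers itself, write $S_n=\sum_{k=1}^{n}g(X_{k-1},\mA_k)$ with $g(x,\ma)=\log\abs{\ma x}$ and set $\bar g_s(x):=\E_{\Qxs}[g(X_0,\mA_1)\mid X_0=x]$; by the previous step $\bar g_s$ is a bounded (indeed continuous) function on $\S$ with $\pis(\bar g_s)=q$. Under each of $\condC$, \ip\ and \ide\ the transition operator $\Qs$ of \eqref{eq:defQs} is quasi-compact on the space of $\bar s$-H\"older functions on $\S$ (this is part of the material behind Proposition~\ref{prop:transferoperators}; under \ide\ the chain is even Doeblin), which yields the pointwise ergodic theorem $n^{-1}\sum_{k=1}^{n}\bar g_s(X_{k-1})\to\pis(\bar g_s)=q$, $\Qxs$-a.s.\ at every starting point $x$ (first for H\"older, then for continuous test functions by uniform approximation). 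It then remains to handle $M_n:=\sum_{k=1}^{n}\big(g(X_{k-1},\mA_k)-\bar g_s(X_{k-1})\big)$, a martingale for $\mathcal{F}_n=\sigma(X_0,\mA_1,\dots,\mA_n)$: the conditional moments $\E_{\Qxs}[\abs{g(X_0,\mA_1)}^{1+\delta}\mid X_0=x]$ are bounded uniformly in $x$ for a suitable $\delta\in(0,1]$, again by the estimate of the previous step applied to $\abs{\log\abs{\ma x}}^{1+\delta}\abs{\ma x}^{s}$, so a strong law for martingale differences gives $n^{-1}M_n\to0$ a.s.; summing the two contributions yields $n^{-1}S_n\to q$. (Equivalently one may run a regeneration argument for the Harris chain $X_n$, the same integrability guaranteeing finite expected block increments.) I expect this to be the main obstacle: establishing that the driving chain is sufficiently ergodic under $\Qxs$ across the whole admissible range of $s$, and dovetailing the probabilistic estimates with precisely the moment input of \eqref{eq:iotamoment}.

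Finally, for the passage from $\abs{\mPi_n x}$ to $\norm{\mPi_n}$: the contraction property of the products $\mPi_n$ --- the same one underlying the quasi-compactness of $\Qs$ --- forces, under $\Qxs$, the ratios $\abs{\mPi_n y}/\abs{\mPi_n x}$ to stay bounded above and below as $n\to\infty$, so that $n^{-1}\log\abs{\mPi_n y}\to q$ $\Qxs$-a.s.\ for every $y\in\S$, not only for the starting point $x$. Applying this to the $d$ coordinate unit vectors $e_i/\abs{e_i}\in\S$ and using $\norm{\mPi_n}\le c(d,\abs{\cdot})\max_{1\le i\le d}\abs{\mPi_n(e_i/\abs{e_i})}$ (norm equivalence on $\R^d$) together with the trivial bound $\norm{\mPi_n}\ge\abs{\mPi_n x}$ gives $n^{-1}\log\norm{\mPi_n}\to q=\lim_{n}n^{-1}S_n$, $\Qxs$-a.s.\ for every $x\in\S$, which is the claim.
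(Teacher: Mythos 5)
The paper itself does not prove this proposition: it is quoted from \cite[Theorem 6.1]{BDGM2014} (case $\condC$), \cite[Theorem 3.10]{Guivarch2012} (case \ip) and \cite[Proposition 20.2]{Mentemeier2013a} (case \ide). Your reconstruction follows the standard route that those references (and the later sections of this paper) use: the identity $q=k'(s)/k(s)$ is exactly Corollary \ref{cor:k}, which the paper obtains from the perturbation theory for $Q(z)$, and your eigenvalue-perturbation formula $k'(s)=\nus\big((\partial_s\Ps)\es\big)/\nus(\es)$ is a legitimate variant of it (note, though, that in the nonnegative case the differentiable dependence of the eigendata on $s$ is only available through that same machinery, so it is cleaner to invoke Corollary \ref{cor:k} than to assert analyticity of $\es,\nus$); the decomposition $S_n=\sum_k \bar g_s(X_{k-1})+M_n$, the spectral gap of $\Qs$ (Proposition \ref{prop:Qs} under $\condC$, the cited analogues under \ip~and \ide) via a Poisson-equation/martingale argument, and a martingale SLLN with the $(1+\delta)$-moment bound coming from \eqref{eq:iotamoment}, are all sound (the exponent bookkeeping in your bound $\sup_x\abs{\log\abs{\ma x}}\abs{\ma x}^s\le C_\eta(\norm{\ma}^{s+\eta}+\iota(\ma)^{-\eta})$ drops a factor $\norm{\ma}^s$, but this is harmless under \eqref{eq:iotamoment}).

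The genuine gap is the last step, the passage from $n^{-1}S_n\to q$ to $n^{-1}\log\norm{\mPi_n}\to q$. You assert that the contraction ``forces'' the ratios $\abs{\mPi_n y}/\abs{\mPi_n x}$ to stay bounded above and below, but this is precisely the nontrivial content and does not follow from quasi-compactness of $\Qs$. Under $\condC$ it requires showing that, $\Qxs$-a.s., the projective diameter of $\mPi_n^\top(\Rdnn)$ tends to $0$ and that the direction of $\mPi_n^\top$ converges (in the complete Hilbert metric) to a random \emph{interior} point of $\Sp$, which is what bounds the ratio of column sums uniformly in $y$. Under \ip~and \ide~the claim as stated is too strong: what is true is that $\mPi_n/\norm{\mPi_n}$ converges a.s. to a rank-one limit and that the limiting (right-singular) direction $v_\infty$ satisfies $\skalar{x,v_\infty}\neq 0$ a.s., i.e. the limit law charges no hyperplane --- and one needs this under the shifted measure $\Qxs$, not under $\Prob$. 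Only with such a statement does one get what is actually needed, namely $n^{-1}\log\big(\norm{\mPi_n}/\abs{\mPi_n x}\big)\to 0$ $\Qxs$-a.s. (boundedness of the ratios is not necessary); without it, the bound $\norm{\mPi_n}\le c\max_i\abs{\mPi_n e_i}$ is of no use, since the LLN you proved controls $\abs{\mPi_n x}$ only for the starting point $x$ of $\Qxs$, and $\Q^s_{e_i}$-a.s. statements do not transfer to $\Qxs$ (the measures need not be equivalent on the tail). This missing comparison is exactly what the cited proofs in \cite{BDGM2014,Guivarch2012} supply.
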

This is proved in \cite[Theorem 6.1]{BDGM2014} under condition $\condC$, in \cite[Theorem 3.10]{Guivarch2012} under condition \ipo~ and in \cite[Proposition 20.2]{Mentemeier2013a} under condition \ide. In the last reference, the first identity is not proved, but it follows from the corresponding result for \ipo.

\begin{rem}
Recall from Proposition \ref{prop:transferoperators} that $k(s)$ is log-convex. Therefore,
$$ \Lambda(s)~:=~ \log k(s)$$ is convex, and
$$ q ~=~ \frac{k'(s)}{k(s)} ~=~ \Lambda' (s) ~\ge~  \Lambda'(0) ~=~ \frac{k'(0)}{k(0)} ~=~\gamma. $$
The function
$$ \Lambda^*(q) ~:=~ sq - \Lambda(s)  $$
is the Fenchel-Legendre transform of $\Lambda$ and nondecreasing on $I_\mu$, see \cite[Lemma 2.2.5]{Dembo1998}. In particular, it is nonnegative on $I_\mu$.
\end{rem}

When studying random walks, an important distinction is between so-called lattice types, i.e. whether or not the random walk takes values only in some lattice $c \Z$ for $c \ge 0$ . A similar concept applies for Markov random walks, which are introduced below. The lattice type of $S_n$ only depends on the support of $\mu$, thus we give first a measure-free definition, which implies the more frequently used subsequent definition, which is relative to the measure {\red $\Q^s$}.

\begin{defin}\label{lem:non-arithmetic}
\begin{enumerate}
\item We say that $\Gamma$ resp. $\mu$ is \emph{arithmetic}, if there is $t>0$ together with $\theta \in [0, 2\pi)$ and a function $\vartheta : \Sp \to \R$ such that
\begin{equation}\label{eq:non-arithmetic}\tag{A}
\forall \ma \in \Gamma, \ \forall x \in V(\Gamma) \ : \ \exp\Bigl(i t \log \abs{\ma x} - i \theta + i (\vartheta(\ma \as x) - \vartheta(x)) \Bigr) =1.
\end{equation}
If no such $t$ exists, then $\Gamma$ is said to be \emph{non-arithmetic}.
\item The Markov random walk $(X_n, S_n)$ is said to be arithmetic under {\red $\Q^s$}, if there is $t>0$ together with $\theta \in [0, 2\pi)$ and a function $\vartheta : \S \to \R$ such that
\begin{equation}\label{eq:arithmetic2}
\E_{\Q^s}\exp\Bigl(i t S_1 - i \theta + i (\vartheta(X_1) - \vartheta(X_0)) \Bigr) =1,
\end{equation}
and non-arithmetic otherwise.
\end{enumerate}
\end{defin}

We have the following implications.

\begin{lem}\label{lem:implications_arithmetic}
If $\Gamma ~=~ [\supp \mu]$ is arithmetic, then $(X_n, S_n)$ is arithmetic under each {\red $\Q^s$} with the same $t, \theta, \vartheta$.
Conversely, if $(X_n, S_n)$ is arithmetic under some {\red $\Q^s$} and the function $\vartheta$ is continuous on $\S$, then $\Gamma$ is arithmetic as well with the same $t, \theta, \vartheta$. 
\end{lem}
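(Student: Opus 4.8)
The plan is to establish the two implications separately, in each case reducing the integral condition \eqref{eq:arithmetic2} to a pointwise form of \eqref{eq:non-arithmetic}. For the first (easier) implication, assume $\Gamma$ is arithmetic with data $t,\theta,\vartheta$. Under $\QQ^s=\int\Qxs\,\pis(dx)$ one has $X_0\in\supp\pis=V(\Gamma)$ almost surely (Proposition~\ref{prop:transferoperators}), while under each $\Qxs$ the first matrix $\mA_1$ is distributed according to $q_1^s(x,\cdot)\,\mu$, which is absolutely continuous with respect to $\mu$ and hence supported in $\supp\mu\subseteq\Gamma$. Thus $(\mA_1,X_0)\in\Gamma\times V(\Gamma)$ $\QQ^s$-a.s., and since $S_1=\log\abs{\mA_1X_0}$ and $X_1=\mA_1\as X_0$, relation \eqref{eq:non-arithmetic} applied with $\ma=\mA_1$, $x=X_0$ gives $\exp\bigl(itS_1-i\theta+i(\vartheta(X_1)-\vartheta(X_0))\bigr)=1$ $\QQ^s$-a.s.; taking expectations yields \eqref{eq:arithmetic2}.

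For the converse, suppose \eqref{eq:arithmetic2} holds under some $\QQ^s$ with $\vartheta$ continuous on $\S$. The random variable $Z:=\exp\bigl(itS_1-i\theta+i(\vartheta(X_1)-\vartheta(X_0))\bigr)$ has $\abs{Z}=1$ and $\E_{\QQ^s}Z=1$, so $\Re(1-Z)\ge0$ has zero mean and therefore $Z=1$ $\QQ^s$-a.s. Disintegrating over $\pis$ and using that for each $x$ the law $q_1^s(x,\cdot)\,\mu$ of $\mA_1$ under $\Qxs$ is \emph{equivalent} to $\mu$---by \eqref{qn} its density $\abs{\ma x}^s\es(\ma\as x)/(k(s)\es(x))$ is finite and strictly positive for $\mu$-a.e.\ $\ma$, as such $\ma$ is allowable (resp.\ invertible) and $\es>0$ by Proposition~\ref{prop:transferoperators}\eqref{suppnus}---we obtain a $\pis$-null set $N$ such that, for every $x\notin N$, the function
\[ \Phi(\ma,x) := \exp\bigl(it\log\abs{\ma x}-i\theta+i(\vartheta(\ma\as x)-\vartheta(x))\bigr) \]
equals $1$ for $\mu$-a.e.\ $\ma$.

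Next I would upgrade this to an identity holding on all of $\supp\mu\times V(\Gamma)$. The map $\Phi$ is jointly continuous at every pair $(\ma,x)$ with $\ma$ allowable (resp.\ invertible) and $x\in\S$, since $\abs{\ma x}>0$ there and $\ma\mapsto\abs{\ma x}$, $\ma\mapsto\ma\as x$ and $\vartheta$ are all continuous---this is precisely where the hypothesis on $\vartheta$ is used. Hence, for fixed $x\notin N$, the $\mu$-full set $\{\ma:\Phi(\ma,x)=1\}$ is dense in $\supp\mu$ and $\Phi(\cdot,x)$ is continuous there, so $\Phi(\ma,x)=1$ for every $\ma\in\supp\mu$; and then, for fixed $\ma\in\supp\mu$, the set $\{x\in V(\Gamma):\Phi(\ma,x)=1\}$ contains $V(\Gamma)\setminus N$, which is dense in $V(\Gamma)=\supp\pis$, so by continuity $\Phi(\ma,x)=1$ for every $x\in V(\Gamma)$. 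Thus $\Phi\equiv1$ on $\supp\mu\times V(\Gamma)$.

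It remains to pass from $\supp\mu$ to $\Gamma=[\supp\mu]$. Writing $\ma=\ma_n\cdots\ma_1$ with $\ma_j\in\supp\mu$, the cocycle identities $\log\abs{\ma x}=\sum_{j=1}^n\log\bigl|\,\ma_j\,(\ma_{j-1}\cdots\ma_1\as x)\,\bigr|$ and $(\ma_2\ma_1)\as x=\ma_2\as(\ma_1\as x)$, together with the $\Gamma$-invariance of $V(\Gamma)$ (so that every intermediate vector $\ma_{j-1}\cdots\ma_1\as x$ remains in $V(\Gamma)$ when $x\in V(\Gamma)$), let one multiply the $n$ relations $\Phi(\ma_j,\ma_{j-1}\cdots\ma_1\as x)=1$ and thereby recover \eqref{eq:non-arithmetic} for $\ma$. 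I expect this last step to be the main point requiring care: the map $\ma\mapsto t\log\abs{\ma x}+\vartheta(\ma\as x)-\vartheta(x)$ is an additive cocycle over the action on $V(\Gamma)$, so composing $n$ generators formally produces the additive constant $n\theta$ rather than $\theta$, and one has to keep careful track of this constant through the iteration. Everything else is soft: the elementary implication ``$\abs{Z}\le1$ and $\E_{\QQ^s}Z=1$ imply $Z=1$'', the continuity of $\vartheta$, and the facts that $\es>0$, $\supp\pis=V(\Gamma)$ and $V(\Gamma)$ is minimal and $\Gamma$-invariant, all of which are recorded in the preliminaries.
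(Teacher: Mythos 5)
Your proof is, in substance, the paper's own (one-line) proof written out in full: the paper observes that, since the integrand in \eqref{eq:arithmetic2} has modulus one, the expectation condition is equivalent to the pointwise identity holding for $\mu$-a.e.\ $\ma$ and $\pis$-a.e.\ $x$, i.e.\ on dense subsets of $\supp\mu$ and of $V(\Gamma)=\supp\pis$, and then reads off both implications (for the converse via the continuity of $\vartheta$, exactly your double density argument). Your intermediate steps --- unit modulus plus mean one forces the variable to equal $1$ a.s.; equivalence of $q_1^s(x,\cdot)\mu$ with $\mu$ because the density $\abs{\ma x}^s\es(\ma\as x)/(k(s)\es(x))$ is strictly positive for $\mu$-a.e.\ (allowable resp.\ invertible) $\ma$; the two-step continuity/density extension to all of $\supp\mu\times V(\Gamma)$ --- are all correct and are precisely what the paper leaves implicit.

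The final step you flag as delicate, passing from $\supp\mu$ to the full semigroup $\Gamma=[\supp\mu]$, is not carried out in the paper's proof either, and your worry is well founded: composing the one-step relation along an $n$-fold product of generators yields \eqref{eq:non-arithmetic} with $n\theta$ (mod $2\pi$) in place of $\theta$, and in fact demanding a single $\theta$ simultaneously for all of $\Gamma$ forces $e^{i\theta}=1$ (compare the relation for $\ma_2\ma_1$ at $x$ with the product of the relations for $\ma_1$ at $x$ and $\ma_2$ at $\ma_1\as x$, using that $V(\Gamma)$ is $\Gamma$-invariant). So the clause ``with the same $t,\theta,\vartheta$'' in the converse can only be read with $\theta$ replaced by $n\theta$ on $n$-fold products, or with the conclusion stated on $\supp\mu\times V(\Gamma)$; this is an imprecision in the formulation of the lemma (and of the definition of arithmeticity of $\Gamma$) rather than a gap in your argument relative to the paper's, which stops exactly where your density/continuity step ends.
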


\begin{proof}
Recalling that $\supp \pis = V(\Gamma)$, we observe that Eq. \eqref{eq:arithmetic2} is equivalent to
$$ \exp\Bigl(i t \log{\ma x} - i \theta + i (\vartheta(\ma \as x) - \vartheta(x)) \Bigr) ~=~1 \qquad \text{ for $\mu$-a.e. $\ma \in \supp\, \mu$ and $\pis$-a.e. $x \in V(\Gamma)$ },$$
i.e. for dense subsets of $\supp\, \mu$ resp. $V(\Gamma)$, which gives the asserted implications.
\end{proof}

It is shown in \cite[Proposition 4.6]{Guivarch+Urban:2005} that under condition \ip, $\Gamma=[\supp\, \mu]$ is non-arithmetic, while it is shown in \cite[Lemma 5.8]{AM2010}, that $(X_n, S_n)$ is {\red non-arithmetic} under each {\red $\Q^s$}  under condition $\ide$.

 A simple sufficient condition (due to Kesten \cite{Kesten1973}) for $\Gamma$ to be non-arithmetic under condition $\condC$ is the following.
Set $$ S(\Gamma) := \{\log \lambda_\ma \ : \ \ma \in \Gamma \cap \interior{\Mset} \}.$$

\begin{lem}
Assume that the (additive) subgroup of $\R$ generated by $S(\Gamma)$ is dense.
Then $\mu$ is non-arithmetic.
\end{lem}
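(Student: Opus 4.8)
The plan is to argue by contradiction, exploiting the fact that a matrix $\ma \in \Gamma \cap \interior{\Mset}$ has a fixed point for its action, namely its normalized dominant eigenvector $v_\ma$, which moreover lies in $V(\Gamma)$. Suppose $\Gamma$ is arithmetic, so that there exist $t > 0$, $\theta \in [0, 2\pi)$ and a function $\vartheta : \Sp \to \R$ satisfying \eqref{eq:non-arithmetic} for all $\ma \in \Gamma$ and all $x \in V(\Gamma)$. Fix any $\ma \in \Gamma \cap \interior{\Mset}$ (such a matrix exists by condition $\condC$(2)). By the Perron--Frobenius theorem $v_\ma \in \interior{\Sp}$, $\lambda_\ma > 0$ and $\ma v_\ma = \lambda_\ma v_\ma$, whence $\abs{\ma v_\ma} = \lambda_\ma$ and $\ma \as v_\ma = v_\ma$. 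Since $v_\ma \in V(\Gamma)$ by definition of $V(\Gamma)$, inserting $x = v_\ma$ into \eqref{eq:non-arithmetic} makes the $\vartheta$-terms cancel and leaves
\[
\exp\bigl(i t \log \lambda_\ma - i \theta\bigr) = 1, \qquad \text{i.e.} \qquad t \log \lambda_\ma - \theta \in 2\pi \Z .
\]

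Next I would iterate this to remove $\theta$. Products of matrices with all entries positive again have all entries positive and $\Gamma$ is a semigroup, so $\ma^m \in \Gamma \cap \interior{\Mset}$ for every $m \ge 1$, and its dominant eigenvalue is $\lambda_\ma^m$. Applying the displayed relation to $\ma$ and to $\ma^2$ gives $t \log \lambda_\ma - \theta \in 2\pi\Z$ and $2 t \log \lambda_\ma - \theta \in 2\pi\Z$; subtracting yields $t \log \lambda_\ma \in 2\pi \Z$, and then also $\theta \in 2\pi \Z$. As $\ma$ ranges over $\Gamma \cap \interior{\Mset}$ we conclude $t\, s \in 2\pi\Z$ for every $s \in S(\Gamma)$, hence $t\, H \subseteq 2\pi\Z$, where $H$ denotes the additive subgroup of $\R$ generated by $S(\Gamma)$. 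By hypothesis $H$ is dense in $\R$; since $t > 0$, the set $t H$ is then dense in $\R$ as well, contradicting $t H \subseteq 2\pi\Z$. Therefore no such $t$ can exist, i.e. $\Gamma$ — equivalently $\mu$ — is non-arithmetic.

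The argument is essentially bookkeeping, and deliberately uses no regularity of the phase function $\vartheta$, since it is evaluated only at points ($v_\ma$) that are fixed by the action. The one place to be careful is the verification that every matrix fed into \eqref{eq:non-arithmetic} (namely $\ma$ and its powers $\ma^m$) genuinely lies in $\Gamma \cap \interior{\Mset}$ and that $\abs{\ma v_\ma} = \lambda_\ma$ with $v_\ma \in V(\Gamma)$; that is the only content beyond the density/discreteness contradiction, so I would regard it as the main (minor) point of the proof.
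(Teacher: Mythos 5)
Your proof is correct and follows essentially the same route as the paper: evaluate the arithmeticity relation \eqref{eq:non-arithmetic} at the Perron eigenvector $v_\ma$ of a matrix $\ma \in \Gamma \cap \interior{\Mset}$, so that the $\vartheta$-terms cancel and $e^{i(t\log\lambda_\ma - \theta)}=1$. The only (minor) difference is how $\theta$ is eliminated: the paper compares two different matrices of $\Gamma \cap \interior{\Mset}$ and concludes that differences $\log\lambda_\ma - \log\lambda_\mb$ lie in $\frac{2\pi}{t}\Z$, whereas you compare $\ma$ with $\ma^2$ and obtain the slightly sharper conclusion $t\,S(\Gamma) \subseteq 2\pi\Z$, which meets the density hypothesis head-on.
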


\begin{proof}
Supposing that Eq. \eqref{eq:non-arithmetic} holds for some $t, \theta$ and $\vartheta$, then we have for any $\ma \in \Gamma \cap \interior{\Mset}$ that $v_\ma \in V(\Gamma)$, hence
$$ \exp\Bigl(i\bigl[ t \log \abs{\ma v_\ma} - \theta +(\vartheta(\ma \as v_\ma) - \vartheta(v_\ma))   \bigr] \Bigr] =
 e^{i(t\log \lambda_{\ma} - \theta)}
$$
Consequently, for any $\ma, \matrix{h} \in \Gamma \cap \interior{\Mset}$,
$$\log \lambda_\ma -\log \lambda_\matrix{h} \in \frac{2 \pi}{t} \Z.$$
But by our assumption, $S(\Gamma)$
is not contained in $\frac{2 \pi}{t} \Z$ for any $t >0$; this gives a contradiction.
\end{proof}

\begin{cor}
If there are $\ma, \mb \in \Gamma \cap \interior{\Mset}$ with $\frac{\log \lambda_\ma}{\log \lambda_\mb} \notin \Q$, then $\mu$ is non-arithmetic.
\end{cor}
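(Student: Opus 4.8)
The plan is to derive the corollary immediately from the preceding lemma, so it suffices to check that the additive subgroup of $\R$ generated by $S(\Gamma)$ is dense. Since $\ma,\mb\in\Gamma\cap\interior{\Mset}$, both $\log\lambda_\ma$ and $\log\lambda_\mb$ lie in $S(\Gamma)$, hence it is enough to show that the (smaller) subgroup $H:=\Z\,\log\lambda_\ma+\Z\,\log\lambda_\mb$ is already dense in $\R$.

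For this I would invoke the classical dichotomy: every subgroup of $(\R,+)$ is either dense or equal to $c\Z$ for some $c\ge 0$. First note $H\neq\{0\}$, since the hypothesis that $\log\lambda_\ma/\log\lambda_\mb$ is a well-defined (and irrational) real number forces $\log\lambda_\mb\neq 0$. Suppose $H$ were not dense; then $H=c\Z$ with $c>0$, so $\log\lambda_\ma=mc$ and $\log\lambda_\mb=nc$ for some $m,n\in\Z$, with $n\neq 0$ because $\log\lambda_\mb\neq 0$. Dividing gives $\log\lambda_\ma/\log\lambda_\mb=m/n\in\Q$, contradicting the hypothesis. Therefore $H$ is dense, a fortiori the subgroup generated by $S(\Gamma)$ is dense, and the preceding lemma yields that $\mu$ is non-arithmetic.

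There is essentially no obstacle here; the only ingredient beyond the cited lemma is the elementary structure theorem for subgroups of the real line. Alternatively one could argue directly that the set $\{p\log\lambda_\ma+q\log\lambda_\mb:p,q\in\Z\}$ accumulates at $0$ when the ratio is irrational, via a pigeonhole/Dirichlet argument, but invoking the dichotomy keeps the proof to one line.
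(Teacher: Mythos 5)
Your proof is correct and follows exactly the route the paper intends: the corollary is stated as an immediate consequence of the preceding lemma, and your verification that $\Z\log\lambda_\ma+\Z\log\lambda_\mb$ is dense (via the dichotomy for subgroups of $(\R,+)$ and the irrationality of the ratio) is the standard argument filling in that step. No issues.
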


Now we have enough notation to state our main results.

\section{Statement of main results}

We will prove the following analogue of the Bahadur-Rao theorem for products of random matrices. The role of the cumulant generating function is played here by $\Lambda(s) =\log k(s)$.

\begin{thm}\label{thm:BahadurRao}
Assume that $\mu$ satisfies $\condC$ and is non-arithmetic, or that $\mu$ satisfies \ipo~ or \ide.
If $q=\E_{\QQ^s} S_1 = \Lambda'(s) $ for some $s \in \interior{I_\mu}$ and there is $0 < \epsilon < 1$ such that
\eqref{eq:iotamoment} holds, then
$$ \lim_{n \to \infty}\, \sup_{x \in \S} \abs{ \sqrt{n}\, e^{n\Lambda^*(q)}\, J(s)\, \Erw{\es(X_n^x) \1[\{ S_n^x \ge n q \}]} - \es(x)} ~=~0, $$
where $$ J(s) ~=~ {s \sigma \sqrt{2 \pi} },  \qquad \text{ with }  \sigma^2 ~=~ \Lambda''(s) ~=~ \lim_{n \to \infty} \frac{1}{n} \E_{\QQ^s} (S_n-nq)^2 > 0.$$\end{thm}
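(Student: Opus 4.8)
The strategy is the Nagaev–Guivarc'h spectral method combined with an Edgeworth-type expansion, transported to the $s$-shifted measure $\Qxs$. First I would use the fundamental identity \eqref{eq:com1} to rewrite the quantity of interest purely under $\Qxs$: since
$$
\Erw{\es(X_n^x) \1[\{S_n^x \ge nq\}]} = k(s)^n \es(x)\, \Qxs\!\left(S_n \ge nq\right),
$$
and $k(s)^n = e^{n\Lambda(s)}$ while $sq - \Lambda(s) = \Lambda^*(q)$, the claim reduces to showing
$$
\lim_{n\to\infty}\, \sup_{x\in\S}\, \abs{ \sqrt{n}\, e^{-n(sq-\Lambda(s))} \cdot s\sigma\sqrt{2\pi}\cdot e^{n\Lambda^*(q)} \cdot k(s)^n\es(x)\, \Qxs\!\left(S_n\ge nq\right) - \es(x)}=0,
$$
i.e. that $\sqrt{n}\, e^{-n \cdot 0}\cdot \ldots$ — more precisely, that $\sqrt{n}\,\Qxs(S_n \ge nq) \to \frac{1}{s\sigma\sqrt{2\pi}}$ uniformly in $x$, after absorbing the $\es(x)$ factor. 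Wait: the exponential factors cancel exactly because $e^{n\Lambda^*(q)}k(s)^n = e^{n(sq-\Lambda(s))}e^{n\Lambda(s)} = e^{nsq}$, which does \emph{not} cancel. So one more exponential tilt by $s$ is needed at the level of the local limit theorem: the point is that under $\Qxs$, $q$ is the \emph{mean} of $S_n/n$, and a second change of measure (parameter shift within the $s$-family is not available, so instead) one applies the classical Bahadur–Rao argument directly to the centered walk $S_n - nq$, writing $\Qxs(S_n \ge nq) = \int_0^\infty e^{-su}\,(\text{tilted density of }S_n - nq \text{ at } u)\,du$ times a normalizing constant — this is exactly where the factor $e^{nsq}$ is consumed and the $s\sigma\sqrt{2\pi}$ and the $1/n^{1/2}$ arise.

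Concretely, the key analytic input I would establish is a \emph{uniform (in $x$) Edgeworth expansion} for the Markov random walk $(X_n,S_n)$ under $\Qxs$: for the Fourier transform one studies the perturbed transition operator $\Qs_{it}f(x) := \Erw[\Qxs]{e^{itS_1} f(X_1)}$, shows it has a spectral gap for $t$ in a neighbourhood of $0$ (leading eigenvalue $\lambda(t)$ analytic with $\lambda(0)=1$, $\lambda'(0)=iq$, $\lambda''(0) = -(\sigma^2 + q^2)$ so that $\log\lambda(t) = iqt - \tfrac{\sigma^2}{2}t^2 + O(t^3)$), and controls the operator norm of $\Qs_{it}$ for $t$ bounded away from $0$ using the non-arithmeticity assumption (Definition \ref{lem:non-arithmetic} and Lemma \ref{lem:implications_arithmetic}) — this spectral-gap-plus-no-peripheral-eigenvalue dichotomy is precisely what Proposition \ref{prop:transferoperators} and the structure of $\Qs$ make available, via a Doeblin/quasi-compactness argument (Ionescu–Tulcea–Marinescu) on the Hölder space on which $\es$ lives. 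Feeding this into the inversion formula and doing the saddle-point/Laplace analysis à la Bahadur–Rao yields, uniformly in $x$,
$$
\sqrt{n}\, e^{nsq}\, k(s)^n \es(x)\, \Erw[\Qx]{\cdots}\ \longrightarrow\ \text{const}\cdot \es(x),
$$
with the constant $1/(s\sigma\sqrt{2\pi})$; matching the normalization gives $J(s) = s\sigma\sqrt{2\pi}$.

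Finally, for the positivity $\sigma^2 = \Lambda''(s) > 0$: convexity of $\Lambda$ gives $\sigma^2 \ge 0$, and $\sigma^2 = 0$ would force $\log\lambda(t)$ to be purely imaginary linear near $0$, i.e. $S_1 - q$ to be (after the coboundary $\vartheta(X_1)-\vartheta(X_0)$) degenerate — this is exactly an arithmetic-type relation \eqref{eq:arithmetic2} with $t$ arbitrarily small, contradicting non-arithmeticity (in the $\condC$ case by hypothesis; in the \ipo~and \ide~cases by the cited results of Guivarc'h–Urban and Alsmeyer–Mentemeier). I would run this degeneracy argument at the level of the second derivative of the leading eigenvalue.

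\textbf{Main obstacle.} The hard part is the uniform-in-$x$ control, on the whole sphere $\S$, of the perturbed operators $\Qs_{it}$ for $t$ away from $0$: one must rule out eigenvalue $1$ on the unit circle for \emph{every} such $t$, which is where the full force of non-arithmeticity and the minimality of $V(\Gamma)$ enter, and one must do so with bounds uniform in the starting point — the moment condition \eqref{eq:iotamoment} with the $\iota(\mA)^{-\epsilon}$ factor is exactly what guarantees the requisite Hölder continuity and integrability to make the operator-theoretic machinery run uniformly. The remaining steps (the commutation identity, the Laplace/saddle-point asymptotics, the identification of constants via $\lambda'(0), \lambda''(0)$) are essentially bookkeeping once the spectral picture is in place.
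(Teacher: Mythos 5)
Your proposal follows essentially the same route as the paper: the change of measure to $\Qxs$, a uniform-in-$x$ third-order Edgeworth expansion for $(S_n-nq)/(\sigma\sqrt{n})$ obtained by the Nagaev--Guivarc'h method (quasi-compactness of $\Qs$ via Ionescu--Tulcea--Marinescu, analytic perturbation of the dominant eigenvalue, non-arithmeticity to exclude peripheral spectrum of $Q(it)$ for $t\neq 0$), a Bahadur--Rao type Laplace computation of $\int_0^\infty e^{-su}\,dF_{n,x}(u)$, and positivity of $\sigma^2$ through the coboundary/arithmeticity dichotomy. The only slip is your first display, which should read $\Erw{\es(X_n^x)\1[{\{S_n^x\ge nq\}}]}=k(s)^n\,\es(x)\,\Erw[\Qxs]{e^{-sS_n}\1[{\{S_n\ge nq\}}]}$ --- the weight $e^{-sS_n}$ cannot be dropped --- and this is precisely the leftover tilt that your subsequent ``Wait'' paragraph correctly reinstates.
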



Since the function $\es$ is strictly positive and continuous on the compact set $\S$, hence bounded, this gives in particular uniform bounds for the large deviation probabilities:

\begin{cor}
There are $0 < c \le C < \infty$ such that for all $x \in \S$,
$$ c ~\le~ \liminf_{n \to \infty}~ \sqrt{n}\,(e^{sq})^n\,\P{ S_n^x \ge n q } ~\le~ \limsup_{n \to \infty}~ \sqrt{n}\,(e^{sq})^n\,\P{ S_n^x \ge n q } ~\le~ C. $$
\end{cor}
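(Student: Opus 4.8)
The plan is to derive the corollary directly from Theorem~\ref{thm:BahadurRao} by sandwiching the unweighted large deviation probability between two multiples of the $\es$-weighted expectation appearing there. First I would record that in each of the three cases the state space $\S$ (namely $\Sp$, $\Pd$, or $\Sd$) is compact, and that by Proposition~\ref{prop:transferoperators}\eqref{suppnus} the eigenfunction $\es$ is continuous and strictly positive on $\S$; consequently
$$ m ~:=~ \min_{x \in \S} \es(x) ~\in~ (0,\infty), \qquad M ~:=~ \max_{x \in \S} \es(x) ~\in~ (0,\infty), $$
and $\es(X_n^x) \in [m,M]$ for every $n \in \N$ and every $x \in \S$.

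Since $\es(X_n^x)$ takes values in $[m,M]$ pointwise, I would then invoke the elementary two-sided estimate
$$ m\,\P{ S_n^x \ge n q } ~\le~ \Erw{\es(X_n^x) \1[\{ S_n^x \ge n q \}]} ~\le~ M\,\P{ S_n^x \ge n q }, $$
multiply through by the positive normalizing factor $\sqrt{n}\,e^{n\Lambda^*(q)}\,J(s)$ of Theorem~\ref{thm:BahadurRao}, and abbreviate the resulting middle term by $b_n(x)$; the theorem gives $b_n(x) \to \es(x)$ as $n \to \infty$ (uniformly in $x$, although pointwise convergence already suffices here). The right inequality then yields $\sqrt{n}\,e^{n\Lambda^*(q)}\,J(s)\,\P{ S_n^x \ge n q } \ge b_n(x)/M$, so
$$ \liminf_{n\to\infty}\, \sqrt{n}\,e^{n\Lambda^*(q)}\,J(s)\,\P{ S_n^x \ge n q } ~\ge~ \frac{\es(x)}{M} ~\ge~ \frac{m}{M} ~=:~ c ~>~ 0, $$
while the left inequality yields $\sqrt{n}\,e^{n\Lambda^*(q)}\,J(s)\,\P{ S_n^x \ge n q } \le b_n(x)/m$, so
$$ \limsup_{n\to\infty}\, \sqrt{n}\,e^{n\Lambda^*(q)}\,J(s)\,\P{ S_n^x \ge n q } ~\le~ \frac{\es(x)}{m} ~\le~ \frac{M}{m} ~=:~ C ~<~ \infty. $$
Since $c = m/M$ and $C = M/m$ are independent of $x$ and satisfy $0 < c \le 1 \le C < \infty$, this would establish the claim; here $e^{n\Lambda^*(q)} = (e^{sq}/k(s))^n$ and $J(s) = s\sigma\sqrt{2\pi} > 0$ (recall $\sigma^2 = \Lambda''(s) > 0$) are the explicit normalizing constants, so dividing out the fixed number $J(s)$ only rescales $c$ and $C$.

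I expect no genuine obstacle here, since all of the analytic content has already been established in Theorem~\ref{thm:BahadurRao}. The only points demanding attention are keeping the two inequalities in the correct direction---so that one controls the $\liminf$ from below and the other the $\limsup$ from above---and observing that the single ratio $M/m = (\max_{\S}\es)/(\min_{\S}\es)$, together with its reciprocal, simultaneously provides bounds valid for every $x \in \S$.
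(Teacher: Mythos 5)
Your argument is exactly the paper's: the corollary is justified there in a single line by observing that $\es$ is continuous and strictly positive on the compact set $\S$, hence bounded above and below by positive constants, and then sandwiching $\P{S_n^x \ge nq}$ between constant multiples of the weighted expectation $\Erw{\es(X_n^x)\1[\{S_n^x \ge nq\}]}$ controlled by Theorem \ref{thm:BahadurRao}. The only caveat is that your normalization $\sqrt{n}\,e^{n\Lambda^*(q)}\,J(s)=\sqrt{n}\,(e^{sq}/k(s))^n\,J(s)$ differs from the corollary's $\sqrt{n}\,(e^{sq})^n$ by the non-constant factor $k(s)^{-n}J(s)$, so strictly speaking you prove the (evidently intended) statement with $e^{n\Lambda^*(q)}$ in place of $(e^{sq})^n$; this mismatch lies in the corollary's wording rather than in your argument, which is otherwise complete and keeps the two inequalities in the correct directions.
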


These large deviations results will be used to prove the following result about random difference equations, which gives an elementary proof that the tail estimates derived e.g. in \cite{Kesten1973,Klueppelberg2004,AM2010,Guivarch2012} are precise:

\begin{thm}\label{thm:rde}
Let {\red $\mM$ be a random matrix and let $B$ be a random vector in $\Rd$. Write $\mA:=\mM^\top$ and denote by $\mu$ the law of $\mA$. Assume that $k'(0)<0$ and that there is $\alpha \in \interior{I_\mu}$ with $k(\alpha)=1$ and }
\begin{equation}\label{eq:moment conditions}
\E \norm{\mA}^{\alpha+\epsilon} \iota(\mA)^{-\epsilon} < \infty, \qquad 0 < \E \abs{B}^{\alpha + \epsilon} < \infty
\end{equation}
for some $\epsilon >0$.  There is a random variable $R$,  unique in distribution, satisfying {\red $R \eqdist \mM R + B$}.
\begin{enumerate}

\item \label{nonnegative} Let {\red $\mA$} be nonnegative, satisfying condition $\condC$ and being non-arithmetic. Assume that $\supp R \cap \Rdnn$ is unbounded.  Then there is $\delta > 0$ such that for all $x \in \Sp$,
$$ \liminf_{t \to \infty} \, t^\alpha \P{\skalar{x,R}>t} \ge \delta.$$
\item Let $\mA \in GL(d,\R)$, satisfying  \ide. Assume that $\P{\mA r + B = r} < 1$ for all $r \in \Rd$.
 Then there is $\delta > 0$ such that for all $x \in \Sd$,
$$ \liminf_{t \to \infty} \, t^\alpha \P{\skalar{x,R}>t} \ge \delta.$$
\item {\red Let  $\mA \in GL(d,\R)$, satisfying \ip. Assume that $\Gamma^*$ does not leave invariant any proper closed convex cone in $\Rd$, and that $\P{\mA r + B = r} < 1$ for all $r \in \Rd$.
 Then there is $\delta > 0$ such that for all $x \in \Sd$,
$$ \liminf_{t \to \infty} \, t^\alpha \P{\skalar{x,R}>t} \ge \delta.$$}
\end{enumerate}
\end{thm}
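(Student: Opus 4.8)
\emph{Overall strategy.} The idea is to iterate the fixed point equation, pass to the $\alpha$-shifted measure, and control a single first-passage epoch. Since $k(0)=1$ and $k'(0)=\gamma<0$, the series $R=\sum_{n\ge0}\mM_1\cdots\mM_n B_{n+1}$ converges a.s.\ and is the unique stationary solution, and because $\mA=\mM^\top$ one has $\langle x,R\rangle=\sum_{n\ge0}\langle\mPi_n x,B_{n+1}\rangle$ with $\mPi_0=\Id$. As $k(\alpha)=1$ with $\alpha>0$ and $\Lambda=\log k$ is convex with $\Lambda'(0)<0$, we have $q:=\Lambda'(\alpha)>0$ and $\Lambda^*(q)=\alpha q$; thus Proposition~\ref{prop:FK} applies (with $s=\alpha$, using the first moment bound in \eqref{eq:moment conditions}) and under the $\alpha$-shifted measure $\Q_x^\alpha$ — extended to also carry an i.i.d.\ sequence $(B_n)$ with the law of $B$, independent of the matrices — the Markov random walk $S_n$ has positive drift $q$. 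Finally \eqref{eq:com1} with $k(\alpha)=1$ gives $\dfrac{\d\Prob|_{\F_n}}{\d\Q_x^\alpha|_{\F_n}}=\dfrac{r_\alpha(x)}{r_\alpha(X_n^x)\,e^{\alpha S_n^x}}$, the exchange that will turn an exponentially rare event into the factor $t^{-\alpha}$.

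\emph{The basic inequality.} Fix a large constant $M$ and, for $t$ large, let $\sigma=\inf\{n:S_n^x>\log t+M\}$, a stopping time that is finite $\Q_x^\alpha$-a.s.\ because of positive drift. Splitting the series at $\sigma$,
$$ \langle x,R\rangle=U_\sigma+|\mPi_\sigma x|\,\langle X_\sigma^x,R^{(\sigma)}\rangle,\qquad U_\sigma:=\sum_{k<\sigma}\langle\mPi_k x,B_{k+1}\rangle, $$
where $R^{(\sigma)}:=\sum_{j\ge0}\mM_{\sigma+1}\cdots\mM_{\sigma+j}B_{\sigma+j+1}$ is a fresh copy of $R$, independent of $\F_\sigma$. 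Put $O_\sigma:=S_\sigma^x-\log t-M\ge0$ and $Z_\sigma:=U_\sigma/|\mPi_\sigma x|$, so that the threshold against which the independent factor is compared is $(t-U_\sigma)/|\mPi_\sigma x|=e^{-M-O_\sigma}-Z_\sigma$. Conditioning on $\F_\sigma$, using independence of $R^{(\sigma)}$ and then changing the measure at $\sigma$ (legitimate: the density process is a nonnegative $\Prob$-martingale and $\sigma<\infty$ $\Q_x^\alpha$-a.s.) gives, with $G(y,u):=\P{\langle y,R\rangle>u}$,
$$ t^\alpha\,\P{\langle x,R\rangle>t}\;\ge\;r_\alpha(x)\,\E_{\Q_x^\alpha}\!\left[\frac{G\bigl(X_\sigma^x,\;e^{-M-O_\sigma}-Z_\sigma\bigr)}{r_\alpha(X_\sigma^x)\,e^{\alpha(M+O_\sigma)}}\right]. $$
Everything reduces to bounding the right-hand side below, uniformly in $t$ and $x$.

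\emph{The two ingredients.} First, a uniform renewal-type tightness: under $\Q_x^\alpha$ the pair $(O_\sigma,Z_\sigma)$ is tight, uniformly in $t$ and in $x\in\S$ — there are finite $C,\Theta$ with $\Q_x^\alpha\{O_\sigma\le\Theta,\;Z_\sigma\ge-C\}\ge\tfrac12$ for all large $t$. On this event the argument of $G$ is at most $u_0:=e^{-M}+C$, and as $G(y,\cdot)$ is nonincreasing, $G(X_\sigma^x,\cdot)\ge G(X_\sigma^x,u_0)$. Second, a uniform positivity: $\inf_{y\in\S}G(y,u_0)=:\varepsilon_1>0$; indeed $y\mapsto G(y,u_0)$ is lower semicontinuous on the compact $\S$ and hence attains its minimum there, so it suffices that $G(y,u_0)>0$ for every $y\in\S$, i.e.\ that $\langle y,R\rangle$ exceeds $u_0$ with positive probability for every $y$. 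Combining, $t^\alpha\P{\langle x,R\rangle>t}\ge(\min_{\S}r_\alpha)\,\varepsilon_1\,e^{-\alpha(M+\Theta)}/(2\|r_\alpha\|_\infty)=:\delta>0$, uniformly, which is the claim. In the \emph{nonnegative} case this simplifies considerably: $U_\sigma\ge0$, hence $Z_\sigma\ge0$ and $u_0=e^{-M}$, and after one further forced step by a fixed positive matrix from $\Gamma$ (at the cost of a fixed positive factor) the endpoint lies in a compact subset of $\interior{\Sp}$, where $\langle y,R\rangle\ge c_0|R|$, so $G(y,e^{-M})\ge\P{|R|>e^{-M}/c_0}\to\P{R\ne0}=1$ as $M\to\infty$; here one only needs $R\ne0$, which follows from $\supp R\cap\Rdnn$ being unbounded. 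In the \emph{invertible} cases the sign of $U_\sigma$ is not controlled, so $u_0$ is merely a finite constant, and the positivity of $G(y,u_0)$ rests on $\langle y,R\rangle$ being unbounded from above for every $y\in\S$ ($=\Sd$ under \ide, $=V(\Gamma)$ under \ip), which is exactly what the non-degeneracy hypothesis $\P{\mA r+B=r}<1$ (together with, for \ip, the hypothesis that $\Gamma^*$ leaves no proper closed convex cone invariant) guarantees.

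\emph{Main obstacle.} The heart of the proof is the uniform-in-$(t,x)$ tightness of $(O_\sigma,Z_\sigma)$ under $\Q_x^\alpha$. Tightness of the overshoot $O_\sigma$ is a Markov renewal theorem for the positive-drift Markov random walk $(X_n,S_n)$ under $\Q^\alpha$ — available through the same spectral (Nagaev--Guivarc'h) apparatus that underlies Theorem~\ref{thm:BahadurRao} and using non-arithmeticity, which holds in all three cases — but it must be made uniform in the initial direction, which draws on the Doeblin/contraction properties of $X_n$ on $\S$. Tightness of $Z_\sigma$ additionally needs a quantitative backward control of the tilted walk near a first-passage epoch (geometric decay of $S_\sigma^x-S_k^x$ in $\sigma-k$) together with the moment bound on $B$ in \eqref{eq:moment conditions}. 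The remaining, and for condition \ip genuinely delicate, point is to show that $\langle y,R\rangle$ has unbounded positive tail for every relevant $y$; this is where the cone hypothesis on $\Gamma^*$ enters, and is the analogue here of the positivity arguments in \cite{AM2010,Guivarch2012}.
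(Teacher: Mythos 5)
Your strategy is genuinely different from the paper's. You tilt to $\Q_x^\alpha$, stop at the first passage of $S_n$ over $\log t+M$, and reduce everything to (a) uniform tightness of the overshoot $O_\sigma$ and of the correction $Z_\sigma$, and (b) uniform positivity of $G(\cdot,u_0)$; this is the classical renewal-flavoured route. The paper never uses a stopping time or any renewal statement: it applies its Bahadur--Rao theorem (Theorem \ref{thm:BahadurRao2}) at the roughly $\sqrt{n_t}$ \emph{deterministic} epochs $n\in[n_t-\sqrt{n_t},\,n_t-\sqrt{n_t}/2]$, shows each event $V_{n,t}$ (norm above the level at time $n$, all $B$-contributions geometrically suppressed) has probability of order $t^{-\alpha}/\sqrt{n_t}$ uniformly in $x$ (Lemma \ref{lem:estimate Vn}), and recovers the order $t^{-\alpha}$ for the union by inclusion--exclusion over a sparse index set (Lemmas \ref{lem:pos:12.3} and \ref{lem:finallemma}); the geometric input is the finite cone cover of Lemma \ref{lem:cones}. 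Your decomposition at $\sigma$, the identity for $t^\alpha\P{\langle x,R\rangle>t}$, and the lower semicontinuity argument for $y\mapsto G(y,u_0)$ are correct as far as they go, and the quantities $O_\sigma$, $Z_\sigma$ are indeed the right objects for this alternative route.

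However, the two ingredients you yourself label the ``main obstacle'' are the entire mathematical content of the theorem, and they are asserted rather than proved, so there is a genuine gap. Uniform-in-$(x,t)$ tightness of $O_\sigma$ is a Markov renewal estimate that does not follow formally from Theorem \ref{thm:BahadurRao} or from the spectral gap as stated (it is exactly the machinery this paper is organized to avoid), and tightness of $Z_\sigma=\sum_{k<\sigma}\langle X_k,B_{k+1}\rangle e^{-(S_\sigma-S_k)}$ requires quantitative backward control of $S_\sigma-S_k$ near the first-passage epoch together with the $B$-moments; this is essentially the content of the paper's Lemma \ref{lem:estimate Vn}, its hardest estimate, so deferring it leaves the core unproven. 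Likewise $\inf_y G(y,u_0)>0$ is precisely Lemma \ref{lem:cones} (resting on Proposition \ref{lem:supp nonnegative} and, under \ip, on the symmetric eigenmeasure supported in $V(R)$ and the cone hypothesis); citing the hypotheses is not a proof. Three further concrete slips: (i) you take $(B_n)$ independent of the matrices under the tilted measure, but the theorem allows $(\mM,B)$ to be dependent -- the tilt must be applied to the joint law, keeping the conditional law of $B$ given $\mA$, and your moment bounds for $Z_\sigma$ must then go through \eqref{eq:moment conditions} and H\"older rather than independence; (ii) in the nonnegative case your simplification $U_\sigma\ge0$ and $\langle y,R\rangle\ge c_0|R|$ implicitly assumes $B\ge0$ and $R\ge0$, which Remark \ref{rem:rde} explicitly does not assume, so you must argue as in the invertible cases, using that unboundedness of $\supp R\cap\Rdnn$ yields $G(y,u)>0$ for $y$ in a compact subset of $\interior{\Sp}$ reached after finitely many forced steps; (iii) under \ip you restrict the infimum of $G(\cdot,u_0)$ to $V(\Gamma)$, but $X_\sigma$ need not lie in $V(\Gamma)$ at the finite random time $\sigma$, so the infimum must be taken over all reachable directions (or the chain must be steered after $\sigma$).
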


In this theorems, {\red we impose the assumptions on the law of $\mA=\mM^\top$ rather than on the law of $\mM$ (note nevertheless, that $\condC$ or \ip~ hold for $\mM^\top$ as soon as they hold for $\mM$). The reason is as follows: Let $(\mM_k, B_k)_{k \in \N}$ be a sequence of i.i.d.~copies of $(\mM,B)$. Then, upon iterating Eq. \eqref{eq:rde1}, we obtain  {\red $R \eqdist \mM_1 \cdots \mM_n R + \sum_{k\le n} \mM_1 \cdots \mM_{k-1}B_k$}, which leads to the study of
$$ \skalar{x,R} ~\eqdist~ \skalar{x,\mM_1 \cdots \mM_n R + \sum_{k\le n} \mM_1 \cdots \mM_{k-1}B_k} ~=~ \skalar{\mM_n^\top \cdots \mM_1^\top x, R} + \ldots,$$
and we are going to show that the first term dominates in order to use Theorem \ref{thm:BahadurRao} to derive estimates.}

\begin{rem}\label{rem:rde}
Let us stress that in \eqref{nonnegative} we do not assume that $B$ is nonnegative and that the condition  $\supp R \cap \Rdnn$ being unbounded is obviously also necessary for the heavy tail property. Thereby, we generalize the result of Kesten, namely \cite[Theorem 3]{Kesten1973}.
A sufficient condition for $\supp R \cap \Rdnn$ being unbounded is $B$ being nonnegative, or {\red $\mM,B$} being independent and $\P{B \in \Rdp} > 0$.
\end{rem}

\begin{rem}
The law of the random variable $R$ is given by
{\red $ \sum_{k=1}^\infty \mM_1 \cdots \mM_{k-1} B_k,$}
from which we immediately obtain the estimate (for $s \ge 1$)
$$ {\red (\E \abs{R}^s)^{1/s} ~ \le ~ \sum_{k=1}^\infty \left( \E \norm{\mM_1 \cdots \mM_n}^s \right)^{1/s} \, (\E \abs{B}^s)^{1/s} ~=~ \sum_{k=1}^\infty \left( \E \norm{\mM_1^\top \cdots \mM_n^\top}^s \right)^{1/s} \, (\E \abs{B}^s)^{1/s}.}$$
This shows that if {\red $k(s) <1$} and $\E \abs{B}^s < \infty$, then readily $\E \abs{R}^s < \infty$, which shows in particular that under the assumptions of Theorem \ref{thm:rde},
$$ \limsup_{t \to \infty} t^{s} \P{\skalar{x,R}>t} = 0  $$
for all $0 \le s < \alpha$ and all $x \in \mathcal{S}$.
\end{rem}

{\red
\begin{rem}
The moment conditions \eqref{eq:moment conditions} are not optimal, precise tail estimates have been obtained 
under the assumptions
$$ \E \norm{\mA}^\alpha ( \log \norm{\mA} + \abs{ \log \iota(\mA)}) < \infty, \qquad 0 < \E \abs{B}^\alpha < \infty, $$
see \cite[Remark after Theorem 5.2]{Guivarch2012} in the case of  \ipo~resp.~ \cite[Theorem 13.2]{Mentemeier2013a}  for the case of condition \ide.
\end{rem}
}

%
%

\subsection{Structure of the paper and sketch of proofs}

The proof of Theorem \ref{thm:BahadurRao} 
will rest upon a third-order Edgeworth expansion for the cdf
$$ F_{n,x}^s(t) := \Qxs\left\{ \frac{S_n - nq}{\sigma \sqrt{n}} \le t \right\},$$ which is given in Theorem \ref{thm:edgeworth}.

To prove this intermediate result, we will use the Nagaev-Guivarc'h spectral method as in Hennion and Herv\'e \cite{HH2001} and Herv\'e and Pen\`e \cite{Herve2010}: The classical  Edgeworth expansion for random walks 
can be proved using the Fourier transform of $S_n$, in particular its behavior at zero.
{\red Upon introducing (for suitable $z \in \C$) the operator $Q(z)$ in $\Cf{\S}$ by
$$Q(z) f(x) ~:=~ \frac{1}{\es(x) k(s)} \int_{\Gamma} \abs{\ma x}^{s +z} f(\ma \as x) \es(\ma \as x) \, \mu(\d \ma) 
$$
we have the following fundamental identity for the Fourier transform $\phi_{n,x}$ of $\Qxs\{S_n \in \cdot \}$:
\begin{equation} \phi_{n,x}(t):= \E_{\Q_x^s}\left( e^{itS_n}\right) = \E_{\Q_x^s}\left( e^{itS_n} \, \1[\S](X_n) \right) = Q(it)^n \1[\S](x). \label{eq:FT} \end{equation}
This identity is a consequence of the following lemma.
\begin{lem}\label{lem:FT}
Let $\mA$ be a random matrix with law $\mu$, and assume that $\E \norm{\mA}^{s+\Re z} < \infty$ for $s >0, z \in \C$. Then the following identity holds for all $f \in \Cf{\S}$:
\begin{equation}  \E_{\Q_x^s}\left( e^{zS_n} f(X_n)\right) ~=~  Q(z)^n f(x) \label{eq:Qzt} \end{equation}
\end{lem}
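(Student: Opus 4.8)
The plan is to prove the identity \eqref{eq:Qzt} by induction on $n$, with the base case $n=0$ being trivial ($Q(z)^0 f = f$, and $S_0 = \log\abs{X_0} = 0$ since $X_0 \in \S$). The engine of the argument is the defining change-of-measure formula \eqref{eq:com1}: applied to the functional $f\bigl((X_k,S_k)_{k=0}^n\bigr) = e^{zS_n}f(X_n) / \es(X_n)$ (after multiplying through by $\es(X_n)$, which cancels), it expresses $\Erw[\Qxs]{e^{zS_n}f(X_n)}$ as a plain expectation under $\mu^{\otimes n}$:
\[
\Erw[\Qxs]{e^{zS_n}f(X_n)} ~=~ \frac{1}{k(s)^n \es(x)}\, \Erw{e^{zS_n^x}\, f(X_n^x)\, \es(X_n^x)\, \abs{\mPi_n x}^s}.
\]
Here one must first check that the right-hand side is well defined, i.e. absolutely integrable: since $\abs{S_n^x} = \abs{\log\abs{\mPi_n x}} \le \log^+\norm{\mPi_n} + \abs{\log\iota(\mPi_n)}$ is not bounded, we use $\abs{e^{zS_n^x}\abs{\mPi_n x}^s} = \abs{\mPi_n x}^{s+\Re z} \le \norm{\mPi_n}^{s+\Re z}$ together with $\E\norm{\mA}^{s+\Re z}<\infty$ and submultiplicativity of the operator norm to bound the integrand by an integrable function; $f$ and $\es$ are bounded on the compact set $\S$, so the whole expectation is finite.

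Next I would expand $S_n^x = \log\abs{\mPi_n x}$ and $X_n^x = \mPi_n \as x$ explicitly in terms of $\mA_1, \dots, \mA_n$, using independence to Fubini the $n$-fold integral into an iterated one, peeling off $\mA_n$ last. Writing $\mPi_n x = \mA_n(\mPi_{n-1}x) = \abs{\mPi_{n-1}x}\, \mA_n(X_{n-1}^x)$, we get $\abs{\mPi_n x} = \abs{\mPi_{n-1}x}\,\abs{\mA_n X_{n-1}^x}$ and $X_n^x = \mA_n \as X_{n-1}^x$, so that $e^{zS_n^x}\abs{\mPi_n x}^s = e^{zS_{n-1}^x}\abs{\mPi_{n-1}x}^s \cdot \abs{\mA_n X_{n-1}^x}^{s+z}$. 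Integrating out $\mA_n$ (conditionally on $\mA_1,\dots,\mA_{n-1}$, which determine $X_{n-1}^x$) produces exactly
\[
\int_{\Gamma} \abs{\mA_n X_{n-1}^x}^{s+z}\, f(\mA_n \as X_{n-1}^x)\, \es(\mA_n \as X_{n-1}^x)\,\mu(\d\mA_n) ~=~ \es(X_{n-1}^x)\, k(s)\, \bigl(Q(z)(f)\bigr)(X_{n-1}^x),
\]
by the very definition of $Q(z)$. Substituting this back and recognizing the remaining expectation as the $(n-1)$-fold instance of the same change-of-measure formula applied to the function $g := Q(z)f$, we obtain $\Erw[\Qxs]{e^{zS_n}f(X_n)} = \Erw[\Qxs]{e^{zS_{n-1}}g(X_{n-1})}$, which by the induction hypothesis equals $Q(z)^{n-1}g(x) = Q(z)^n f(x)$. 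This closes the induction.

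The main obstacle is not any single deep step but rather the bookkeeping: one must take care that $Q(z)$ genuinely maps $\Cf{\S}$ into itself (so that the iteration makes sense and $Q(z)^n$ is meaningful as an operator) — this requires the moment bound $\E\norm{\mA}^{s+\Re z}<\infty$ and the strict positivity and continuity of $\es$ from Proposition \ref{prop:transferoperators}\eqref{suppnus}, plus a dominated-convergence argument for continuity of $x \mapsto Q(z)f(x)$ — and that all the Fubini interchanges are justified by the uniform integrability bound established at the outset. The identity \eqref{eq:FT} then follows immediately by taking $z = it$ (so $\Re z = 0$ and the moment condition is automatic since $s \in I_\mu$) and $f = \1[\S]$, noting $Q(it)\1[\S] \in \Cf{\S}$ so the $n$-fold power is well defined, and that $X_n \in \S$ a.s. makes the insertion of $\1[\S](X_n)$ harmless.
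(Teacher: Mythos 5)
Your proof is correct and follows essentially the same route as the paper's: an induction on $n$ driven by the change-of-measure identity \eqref{eq:com1} and the i.i.d.\ structure of the $(\mA_i)$, the only cosmetic difference being that you peel off the last matrix $\mA_n$ (working from the $\Qxs$-expectation toward the operator and applying the induction hypothesis to $Q(z)f$), whereas the paper peels off an extra matrix at the front by applying $Q(z)$ to $Q(z)^n f$. Your added remarks on integrability and on $Q(z)$ mapping $\Cf{\S}$ into itself are sound and merely make explicit what the paper leaves implicit.
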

\begin{proof} The assumption guarantees that $Q(z)$ is well defined, and all integrals appearing below are finite.
We use induction. For $n=1$, this is immediate from the definition of $Q(z)$ and identity \eqref{eq:com1}. Suppose \eqref{eq:FT} holds for $n \in \N$. Then, using again \eqref{eq:com1} and the fact that the $(\mA_i)$ are i.i.d~ with law $\mu$ under $\Prob$, we obtain
\begin{align*}
Q(z)^{n+1}f(x) ~&=~ Q(z) \left( Q(z)^n f)(x) \right) \\
&=~ \int_{\Gamma} \frac{\es(\ma \as x) \abs{\ma x}^{s+z}}{\es(x) k(s)} \E_{\Q_{\ma \as x}^s} (e^{z S_n} f(X_n)) \, \mu(\d \ma) \\
&=~ \int_{\Gamma} \frac{\es(\ma \as x) \abs{\ma x}^{s+z}}{\es(x) k(s)} \frac{1}{\es(\ma \as x) k(s)^n} \E \left( \abs{\mPi_n (\ma \as x)}^{s+z} \es( \mPi_n \as (\ma \as x))  f( \mPi_n \as (\ma \as x)) \right) \, \mu(\d \ma) \\
&=~ \frac{1}{\es(x) k(s)^{n+1}} \int_{\Gamma}   \E \left( \abs{\mPi_n \ma  x}^{s+z} \es( \mPi_n\ma \as x) f( \mPi_n\ma \as x) \right) \, \mu(\d \ma) \\
&=~  \frac{1}{\es(x) k(s)^{n+1}} \E \left( \abs{\mPi_{n+1} x}^{s+z} \es( \mPi_{n+1} \as x) .\, f( \mPi_{n+1} \as x) \right) \, \mu(\d \ma) \\
&=~ \E_{\Q_x^s} \left( e^{z S_{n+1}} f(X_{n+1})\right).
\end{align*}
\end{proof}

Observe that $Q(0)=Q^s$ and that, given $s \in \interior{I_\mu}$, the mapping $z \mapsto Q(z)$ is holomorphic in some domain. }We are going to show that the operator $\Qs$ is quasi-compact with a simple dominant eigenvalue $\theta(0)=1$, and thereupon, using holomorphic perturbation theory, the decomposition
$$ Q^n(z) = \theta(z)^n M(z) + L(z)^n,$$
for a rank-one projection $M$ and an operator $L(z)$ with spectral radius $\rho(L(z)) < \rho(Q(z))$.
From this we will finally deduce that for $n \to \infty$,
$$ \phi_{n,x}(t/\sqrt{n}) = Q^n(it/\sqrt{n}) \1[\S](x) \approx \theta(it\sqrt{n}),$$
i.e. behavior at zero of the Fourier transforms is given by small perturbations of the dominant eigenvalue of $\Qs$.

\medskip
Therefore, we start our investigations by proving spectral properties of $\Qs$ and the family $Q(z)$ (in the case of nonnegative matrices). In Section \ref{sect:quasi-compact}, we prove, continuing \cite{BDGM2014} and based on the approach in \cite{Guivarch2012}, that $Q^s$ is quasi-compact. This property is needed in order to apply a perturbation theorem which proves the decomposition of the family $Q(z)$ in Section \ref{sect:perturbation}. Then we are ready to prove a third-order Edgeworth expansion for $F_{n,x}^s$ in Section \ref{sect:edgeworth}, which is used to prove Theorem \ref{thm:BahadurRao} in Section \ref{sect:BahadurRao}. Sections \ref{sect:nonarithmetic} and \ref{sect:taylor} study the implications of the non-arithmeticity condition, as well as formulas for $\sigma^2$.

Section \ref{sect:rde} is concerned with Theorem \ref{thm:rde}. We start by providing an example, namely the ARCH(q)-process, to which our results apply and continue by giving an outline of the proof of Theorem \ref{thm:rde}, while we postpone the technical details to the final Section \ref{sect:rde proofs}.

\section{Quasi-compactness of $\Qs$}\label{sect:quasi-compact}

\subsection{Nonnegative matrices} In this section, which is based on the approach of Guivarc'h and Le Page \cite{Guivarch2012} for \ip, we are going to prove that for each  $s \in I_\mu$, the operator $\Qs$ is quasi-compact (has a spectral gap) on a subspace of $\Cf{\Sp}$, namely the space of functions that are $\bar{s}:=\min\{s,1\}$-H\"older continuous with respect to a particular metric $d$ on $\Sp$. At first, we will recall the Theorem of Ionescu Tulcea and Marinescu, which will be used in order to prove the quasi-compactness. Then we introduce the particular metric $d$ which will be useful when finally checking the assumptions of this theorem. 

\medskip

We write $\mcL(\B,\B)$ for the set of all bounded linear operators from $\B$ to $\B$. An operator $Q \in \mcL(\B,\B)$ is said to be {\em quasi-compact} if $\B$ can be decomposed into two {\red closed} $Q$-invariant subspaces $\B = E \oplus F$ where the spectral radius $\rho(Q_{|F}) < \rho(Q)$ while $\dim E < \infty$ and each eigenvalue of $Q_{|E}$ has modulus $\rho(Q)$.

Subsequently, a convenient way to prove the quasi-compactness of $\Qs$ will be to use the following generalization of the {\red Theorem of Ionescu-Tulcea and Marinescu}:

\begin{thm}[{\cite[Theorem II.5]{HH2001}}]\label{thm:ITM}
Let $(\B, \norma{\cdot})$ be a Banach space {\red and let $\normb{\cdot}$ be a continuous semi-norm on $\B$. Assume that $Q$ is a bounded} operator in $\B$ such that
\begin{enumerate}
\item $Q\,  \{f \, : \, \norma{f} \le 1 \}$ is conditionally compact in $(\B, \normb{\cdot})$. \label{prop1}
\item there exists a constant $M$ such that for all $f \in \B$, $\normb{Q f} \le M \normb{f}$, \label{prop2}
\item there exist $k \in \N$ and real numbers $r$ and $R$ with $r < \rho(Q)$ and, for all $f \in \B$,\label{prop3}
$$ \norma{Q^k f} \le R \normb{f} + r^k \norma{f}.$$
\end{enumerate}
Then $Q$ is quasi-compact.
\end{thm}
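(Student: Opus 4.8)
The plan is to follow the standard route for Doeblin--Fortet (Lasota--Yorke) type inequalities and deduce quasi-compactness from a bound on the \emph{essential spectral radius},
$$ \rho_{\mathrm{ess}}(Q) ~\le~ r. $$
After rescaling $Q$ by $1/\rho(Q)$ one may assume $\rho(Q)=1$, and one may take $0\le r<1$. Granting $\rho_{\mathrm{ess}}(Q)\le r<\rho(Q)$, the Riesz--Schauder description of the spectrum outside the essential spectrum applies: $\sigma(Q)\cap\{|\zeta|>\rho_{\mathrm{ess}}(Q)\}$ consists of isolated eigenvalues of finite algebraic multiplicity, with possible accumulation only on $\{|\zeta|=\rho_{\mathrm{ess}}(Q)\}$; in particular only finitely many eigenvalues $\lambda_1,\dots,\lambda_p$ have modulus $\rho(Q)$, each of finite algebraic multiplicity. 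Taking $\Pi$ to be the Riesz spectral projection attached to $\{\lambda_1,\dots,\lambda_p\}$ and setting $E:=\Pi\B$, $F:=(I-\Pi)\B$, one gets a decomposition $\B=E\oplus F$ into closed $Q$-invariant subspaces with $\dim E<\infty$, every eigenvalue of $Q|_E$ of modulus $\rho(Q)$, and $\sigma(Q|_F)=\sigma(Q)\setminus\{\lambda_1,\dots,\lambda_p\}$ a compact subset of the open unit disc, so that $\rho(Q|_F)<\rho(Q)$; this is quasi-compactness.

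To bound $\rho_{\mathrm{ess}}(Q)$ I would use Nussbaum's formula $\rho_{\mathrm{ess}}(Q)=\lim_{n}\alpha(Q^nB)^{1/n}$, where $B$ is the closed $\norma{\cdot}$-unit ball and $\alpha$ is the Kuratowski measure of noncompactness for $\norma{\cdot}$, i.e.\ the infimum of the $\delta$ for which a set admits a finite partition into pieces of $\norma{\cdot}$-diameter $\le\delta$. The key observation is that hypotheses \eqref{prop1} and \eqref{prop2} force each iterate $Q^jB=Q^{j-1}(QB)$, $j\ge1$, to be precompact for the semi-norm $\normb{\cdot}$: $QB$ is $\normb{\cdot}$-precompact by \eqref{prop1}, and $Q^{j-1}$ is $\normb{\cdot}$-Lipschitz by \eqref{prop2}, so it preserves total boundedness. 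Now fix $m\ge2$, put $A:=Q^{(m-1)k}B$, partition $A$ into finitely many pieces of $\norma{\cdot}$-diameter $\le\alpha(A)+\epsilon$, and refine each piece into finitely many pieces of $\normb{\cdot}$-diameter $\le\delta$ (possible since $A$, hence each of its subsets, is $\normb{\cdot}$-precompact). On each resulting piece, inequality \eqref{prop3} gives, for any two points $g,g'$,
$$ \norma{Q^kg-Q^kg'}~\le~ R\,\normb{g-g'}+r^k\norma{g-g'}~\le~ R\delta+r^k\bigl(\alpha(A)+\epsilon\bigr), $$
so the images form a finite partition of $Q^{mk}B$ into pieces of $\norma{\cdot}$-diameter $\le R\delta+r^k(\alpha(A)+\epsilon)$. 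Letting $\delta,\epsilon\downarrow0$ yields $\alpha(Q^{mk}B)\le r^k\,\alpha(Q^{(m-1)k}B)$ for every $m\ge2$, hence $\alpha(Q^{mk}B)\le r^{(m-1)k}\,\alpha(Q^kB)$ with $\alpha(Q^kB)<\infty$ since $Q^k$ is bounded; plugging $n=mk$ into Nussbaum's formula gives $\rho_{\mathrm{ess}}(Q)\le r$.

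The main obstacle, and the reason \eqref{prop1}--\eqref{prop2} are needed and not just the Doeblin--Fortet inequality \eqref{prop3}, is to obtain the contraction $\alpha(Q^kA)\le r^k\alpha(A)$ \emph{without} a spurious multiplicative constant. This is why one should work with the Kuratowski (partition) measure rather than the Hausdorff (covering) measure of noncompactness, and why one needs $A$ to be $\normb{\cdot}$-precompact: only then can a coarse $\norma{\cdot}$-partition be refined into $\normb{\cdot}$-fine pieces on which \eqref{prop3} absorbs the $\normb{\cdot}$-term. Note that no uniform bound $\sup_n\norma{Q^n}<\infty$ is required --- the recursion for $\alpha(Q^{mk}B)$ telescopes and only the (automatic) finiteness of $\norma{Q^k}$ enters --- and everything past the estimate on $\rho_{\mathrm{ess}}(Q)$ is routine spectral bookkeeping with the Riesz projection $\Pi$. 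Since the statement is exactly the Ionescu-Tulcea--Marinescu theorem in the form of Hennion and Herv\'e, one may alternatively simply invoke \cite[Theorem II.5]{HH2001}.
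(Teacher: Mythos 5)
The paper offers no proof of this statement at all: it is quoted verbatim from Hennion and Herv\'e and used as a black box, so the only ``paper proof'' is the citation \cite[Theorem II.5]{HH2001}. Your argument is correct, and it is in substance the proof behind the cited result (Hennion's argument): you bound the essential spectral radius by $r$ using the Doeblin--Fortet inequality \eqref{prop3} together with the $\normb{\cdot}$-precompactness supplied by \eqref{prop1}--\eqref{prop2}, via the Kuratowski measure of noncompactness and Nussbaum's formula, and then read off the decomposition $\B=E\oplus F$ from the Riesz--Schauder structure of the spectrum outside the essential spectral radius. The individual steps check out: $Q^jB$ is $\normb{\cdot}$-totally bounded for every $j\ge 1$ because \eqref{prop1} gives it for $j=1$ and \eqref{prop2} makes $Q$ Lipschitz for the $\normb{\cdot}$-pseudometric; refining a coarse $\norma{\cdot}$-partition of $Q^{(m-1)k}B$ by a fine $\normb{\cdot}$-partition and applying \eqref{prop3} to differences legitimately yields $\alpha(Q^{mk}B)\le R\delta+r^k(\alpha(Q^{(m-1)k}B)+\epsilon)$, hence $\alpha(Q^{mk}B)\le r^k\alpha(Q^{(m-1)k}B)$ after $\delta,\epsilon\downarrow 0$; and since eigenvalues outside the essential spectral radius are isolated with finite-rank Riesz projections, only finitely many have modulus $\rho(Q)$, and the associated spectral projection produces exactly the $E$ and $F$ demanded by the paper's definition of quasi-compactness. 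What your route buys is self-containedness (and it makes visible why no power-boundedness of $Q$ is needed, unlike the classical ITM formulation); what the paper buys by citing is brevity. One cosmetic caveat: the reduction ``one may take $0\le r<1$'' tacitly assumes $r\ge 0$, which is how the hypothesis is intended and how the paper applies it (with $r=D(k)^{1/k}>0$), so this is a matter of phrasing rather than a gap.
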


Though we have not yet defined the metric $d$ on $\Sp$, let us nevertheless state right now, which Banach space and what norms we are going to consider. For $f \in \Cf{\Sp}$, set
$$ \normb{f}:= \sup_{x \in \Sp} \abs{f(x)}, \qquad \abs{f}_s :=\sup_{x,y \in \Sp} \frac{\abs{f(x) - f(y)}}{d(x,y)^{\bar{s}}}, \qquad \norma{f}:=\normb{f}+\abs{f}_s.$$
We consider the Banach space
$$ \B := \{ f \in \Cf{\Sp} \, : \, \abs{f}_s < \infty \} = \{ f \in \Cf{\Sp} \, : \, \norma{f} < \infty \}$$
equipped with the norm $\norma{\cdot}$.
Using 
Theorem \ref{thm:ITM}, we are going to prove the following:

\begin{prop}\label{prop:Qs}
Assume that $\mu$ {\red satisfies} $\condC$ and let $s \in I_\mu$. Then $Q^s \in \mcL(\B,\B)$, and there is an operator $N \in \mcL(\B,\B)$ with spectral radius $\rho(N) < 1$, such that
\begin{equation} \label{eq:decompQs} (Q^s)^n = M + N^n \end{equation}
for all $n \in \N$, where $M$ is a rank-one projection onto $\R\1[\Sp]$ with $M(f)(x) = \pis(f)$ for all $f \in \B$ and $x \in \Sp$.
\end{prop}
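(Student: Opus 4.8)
The plan is to apply the Ionescu-Tulcea--Marinescu type Theorem~\ref{thm:ITM} to the operator $Q^s$ acting on the Banach space $(\B,\norma{\cdot})$, with $\normb{\cdot}$ as the continuous semi-norm, in order to obtain quasi-compactness of $Q^s$, and then to determine the peripheral spectrum of $Q^s$ precisely enough to upgrade the abstract splitting $\B=E\oplus F$ produced by that theorem into the stated decomposition \eqref{eq:decompQs}.

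Two preliminary observations will be used throughout. First, since $\Ps\es=k(s)\es$ by Proposition~\ref{prop:transferoperators}, the definition \eqref{eq:defQs} gives $Q^s\1[\Sp]=\1[\Sp]$, so $1$ is an eigenvalue of $Q^s$ and $\rho(Q^s)\ge 1$; this already secures the requirement $r<\rho(Q^s)$ in hypothesis \eqref{prop3}. Second, $Q^s$ is a positive operator, so $\abs{Q^sf}\le Q^s\abs{f}\le\normb{f}\,\1[\Sp]$, whence $\normb{Q^sf}\le\normb{f}$ for all $f\in\B$: this is hypothesis \eqref{prop2} with constant $M=1$, and it shows $Q^s$ is power-bounded for $\normb{\cdot}$. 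Boundedness of $Q^s$ on $(\B,\norma{\cdot})$ --- i.e. that $Q^sf$ is $\bar{s}$-H\"older with $\abs{Q^sf}_s\le C\bigl(\normb{f}+\abs{f}_s\bigr)$ --- will come out of the computation described below (it uses that $\es$ is bounded away from $0$ and $\bar{s}$-H\"older, by Proposition~\ref{prop:transferoperators}); granting this, hypothesis \eqref{prop1} follows from the Arzel\`a--Ascoli theorem, as $Q^s$ then maps the $\norma{\cdot}$-unit ball into a family of functions that are uniformly bounded and uniformly $\bar{s}$-H\"older, hence conditionally compact in $(\Cf{\Sp},\normb{\cdot})$.

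The substantial point is the Doeblin--Fortet (Lasota--Yorke) inequality of hypothesis \eqref{prop3}: one must produce $k\in\N$ and $r<1$ with $\norma{(Q^s)^kf}\le R\normb{f}+r^k\norma{f}$ for all $f\in\B$. Here I would follow the contraction scheme of Guivarc'h--Le~Page \cite{Guivarch2012} and its adaptation to nonnegative matrices in \cite{BDGM2014}. Using the identity \eqref{eq:com1} one writes
$$ (Q^s)^kf(x) \;=\; \frac{1}{\es(x)\,k(s)^k}\,\Erw{\abs{\mPi_k x}^s\,\es(\mPi_k\as x)\,f(\mPi_k\as x)}, $$
forms the difference $(Q^s)^kf(x)-(Q^s)^kf(y)$, bounds the increment of $f$ by $\abs{f}_s\,d(\mPi_k\as x,\mPi_k\as y)^{\bar{s}}$, and exploits the geometric mechanism available precisely under condition $\condC$ --- every element of $[\supp\mu]$ is allowable and $[\supp\mu]\cap\interior{\Mset}\neq\emptyset$ --- namely that the random action of $\mPi_k$ on $\Sp$ contracts the metric $d$ at a geometric rate with overwhelming probability. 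The contributions where $\mPi_k$ fails to contract, together with those coming from the H\"older regularity of $\es$ and of $\ma\mapsto\abs{\ma x}^s$, are all controlled by $\normb{f}$ alone and absorbed into the term $R\normb{f}$; taking $k$ large makes the coefficient of $\abs{f}_s$ strictly less than $1$. I expect these metric estimates on $\Sp$ --- which is exactly why the auxiliary metric $d$ is introduced in this section --- to be the main obstacle of the argument.

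Granting quasi-compactness, Theorem~\ref{thm:ITM} gives a splitting $\B=E\oplus F$ into closed $Q^s$-invariant subspaces with $\dim E<\infty$, $\rho(Q^s_{|F})<\rho(Q^s)$, and every eigenvalue of $Q^s_{|E}$ of modulus $\rho(Q^s)$. Power-boundedness of $Q^s$ for $\normb{\cdot}$ forces $\rho(Q^s)=1$ (an eigenvalue of modulus $>1$ would make $\normb{(Q^s)^nh}$ grow geometrically) and excludes Jordan blocks in $E$ (which would give polynomial growth). That $E=\R\1[\Sp]$ is the Perron--Frobenius/aperiodicity step: if $Q^sh=\lambda h$ with $\abs{\lambda}=1$, then $\abs{h}\le Q^s\abs{h}$ together with $Q^s\1[\Sp]=\1[\Sp]$ forces $\abs{h}$ to be constant on $V(\Gamma)=\supp\pis$, because $V(\Gamma)$ is the unique minimal $\Gamma$-invariant subset of $\Sp$ (\cite[Lemma~4.3]{BDGM2014}); evaluating the resulting cohomological relation along the iterates of a strictly positive $\ma\in[\supp\mu]\cap\interior{\Mset}$, whose action on $\Sp$ has the globally attracting fixed point $v_\ma$, then forces $\lambda=1$ and $h$ constant (geometric simplicity of the eigenvalue $1$ being, alternatively, exactly the uniqueness of $\es$ in Proposition~\ref{prop:transferoperators}). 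Hence $E=\R\1[\Sp]$ and $Q^s_{|E}=\mathrm{id}$. The spectral projection $M$ onto $E$ along $F$ thus has the form $Mf=c(f)\1[\Sp]$ for a bounded linear functional $c$ on $\B$; since $\pis$ is $Q^s$-stationary, $\pis((Q^s)^nf)=\pis(f)$ for all $n$, and letting $n\to\infty$ in $(Q^s)^nf=Mf+(Q^s_{|F})^n(f-Mf)\to Mf$ yields $\pis(f)=\pis(Mf)=c(f)$, so $Mf(x)=\pis(f)$ for every $x\in\Sp$. Finally $N:=Q^s-M=Q^s(I-M)$ agrees with $Q^s_{|F}$ on $F$ and vanishes on $E$, so $\rho(N)=\rho(Q^s_{|F})<1$; since $M^2=M$ and $MN=NM=0$, one gets $(Q^s)^n=(M+N)^n=M+N^n$ for all $n\in\N$, which is \eqref{eq:decompQs}.
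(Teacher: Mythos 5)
Your proposal is correct and follows essentially the same route as the paper: verify the Ionescu--Tulcea--Marinescu hypotheses for $Q^s$ on $(\B,\norma{\cdot})$, with hypothesis \eqref{prop2} from the Markov property, hypothesis \eqref{prop1} from Arzel\`a--Ascoli, and the Doeblin--Fortet inequality \eqref{prop3} obtained from the contraction of the projective metric $d$ under condition $\condC$ (the paper's Lemmas \ref{lem:prop:qn} and \ref{lem:hoeldercontinuous}), followed by identification of the peripheral spectrum and of the rank-one projection. The only divergences are minor: the paper identifies the dominant eigenvalue by citing the convergence $(Q^s)^nf\to\pis(f)$ of \eqref{eq:convQs} (from \cite{BDGM2014}) and gets the decomposition \eqref{eq:decompQs} from the ``diagonal type'' lemma of \cite{HH2001}, where you argue both steps directly; also note that controlling the kernel-difference term in the Doeblin--Fortet estimate requires the uniform bound $\E\norm{\mPi_n}^s\le c_s^{-1}k(s)^n$ of Corollary \ref{cor:ksEs}, which your sketch leaves implicit but which is exactly the kind of estimate you flag as the main technical work.
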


This will be done by a series of Lemmata, which will make use of the particular metric $d$ on $\Sp$, which we are going to introduce next.

\subsubsection{A metric on $\Sp$}\label{subsect:metric}

Given $x \neq y \in \Sp$, consider the  line $L$ trough these points. {\red Then $L \cap \partial \Rdnn$ consists of two points which we label by $a$ and $b$ in such a way that if we write $x= u_1 a + u_2 b$ and $y=v_1 a + v_2 b$ $u_1,u_2,v_1,v_2 \ge 0$ as convex combinations of $a$ and $b$, then $u_1 > v_1$, i.e. $x$ lies between $a$ and $y$. Then  the cross-ratio of $a,b$ and $x,y$ is given as
$$ [a,b;x,y] = \frac{u_2 v_1}{u_1 v_2} .$$}
%
%
The formulae $$d(x,y):=\phi([a,b;x,y])$$ for $\phi(s):= \frac{1-s}{1+s}$, $s \in [0,1]$,
defines a bounded distance on the unit sphere.
Its properties are summarized in the following {\red Proposition}.

\begin{prop} \label{prop:properties of d}
For any norm $\abs{\cdot}$, $d$ is a metric on $\Sp$ with
\begin{itemize}
\item $\sup\{d(x,y) \, : \, x,y \in \Sp \} =1$,
\item There is $C>0$ s.t. $d(x,y) \ge C \abs{x-y}$.
\end{itemize}
For $\ma \in \Mset$, there exists $c(\ma) \le 1$ such that:
\begin{enumerate}
\item $d(\ma \as x, \ma \as y) \le c(\ma) d(x,y),$
\item $c(\ma) <1$ if and only if $\ma \in \interior{\Mset}$,
\item if $\ma' \in \Mset$, then $c(\ma \ma') \le c(\ma) c(\ma')$,
\item $c(\ma^\top)=c(\ma)$.
\end{enumerate}
\end{prop}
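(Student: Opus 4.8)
The plan is to verify that the cross-ratio construction yields a genuine metric and then to establish the contraction properties one by one, since the proposition naturally decomposes into a ``metric part'' and a ``dynamics part''.

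\textbf{Metric properties.} First I would recall that the cross-ratio $[a,b;x,y]$ is projectively invariant and lies in $(0,1]$, with value $1$ exactly when $x=y$; composing with $\phi(s)=(1-s)/(1+s)$, which is a decreasing bijection of $[0,1]$ onto $[0,1]$ with $\phi(1)=0$, immediately gives symmetry, nonnegativity, $d(x,y)=0\iff x=y$, and the bound $\sup d=1$. The triangle inequality is the only nontrivial point: here one uses that $\phi$ transforms the multiplicativity of cross-ratios (namely $[a,b;x,z]=[a,b;x,y]\cdot[a,b;y,z]$ when $y$ lies between $x$ and $z$ on the segment, with the appropriate labelling of $a,b$) into subadditivity, because $\phi(st)\le \phi(s)+\phi(t)$ for $s,t\in[0,1]$ (equivalently $\tanh^{-1}$-type estimates, as $\phi$ is essentially the Hilbert/Birkhoff projective metric reparametrised). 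For the lower bound $d(x,y)\ge C|x-y|$ I would argue by compactness: both $d$ and $|\cdot-\cdot|$ are continuous on the compact set $\Sp\times\Sp$, they vanish on the diagonal, and a local computation near the diagonal (expanding the cross-ratio to first order) shows $d(x,y)/|x-y|$ stays bounded below, uniformly in the direction of approach and in the norm, since all norms on $\Rd$ are equivalent.

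\textbf{Contraction properties (1)--(4).} The key geometric fact is that an allowable $\ma\in\Mset$ maps $\Sp$ into $\Sp$ and maps lines through two points of $\Sp$ to lines (in projective terms, $\ma$ acts on the projectivisation), and crucially maps the boundary $\partial\Rdnn$ into $\Rdnn$; hence the two ``endpoints'' $a,b$ on $L\cap\partial\Rdnn$ get sent to points $\ma\as a,\ma\as b$ that again lie in $\Rdnn$, and by projective invariance of the cross-ratio, $[\,\ma\as a,\ma\as b;\ma\as x,\ma\as y\,]=[a,b;x,y]$, while the \emph{true} endpoints of the image line relative to $\partial\Rdnn$ are ``further out'', so the relevant cross-ratio for $d(\ma\as x,\ma\as y)$ is $\ge [a,b;x,y]$, which after applying the decreasing $\phi$ gives $d(\ma\as x,\ma\as y)\le d(x,y)$; setting $c(\ma)$ to be the supremum of the ratio over all $x\neq y$ gives (1) with $c(\ma)\le 1$. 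For (2), when $\ma\in\interior{\Mset}$ one shows $\ma\as\Sp$ is a \emph{compact} subset of the open simplex $\interior{\Sp}$, hence bounded away from $\partial\Rdnn$, so the image endpoints are strictly further out and one gets $c(\ma)<1$ by a compactness argument on $\Sp\times\Sp$ (the ratio is continuous and $<1$ pointwise off the diagonal and extends continuously across it because of the first-order expansion); conversely if $\ma$ is allowable but not strictly positive, some column of $\ma$ has a zero entry, producing $x\in\Sp$ with $\ma\as x\in\partial\Rdnn$, and choosing $x,y$ both near that boundary one sees the ratio approaches $1$, so $c(\ma)=1$. Property (3) is just the chain rule for the Lipschitz constants: $d(\ma\ma'\as x,\ma\ma'\as y)\le c(\ma)\,d(\ma'\as x,\ma'\as y)\le c(\ma)c(\ma')\,d(x,y)$. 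Property (4) is the most delicate: $c(\ma^\top)=c(\ma)$ should follow from a duality/symmetry of the cross-ratio expression, e.g. by using the representation of the projective contraction coefficient of a positive matrix in terms of $\max_{i,j,k,l}\frac{\ma_{ik}\ma_{jl}}{\ma_{jk}\ma_{il}}$ (Birkhoff's formula), which is manifestly invariant under transposition, together with the fact (which I would cite from the Birkhoff--Hopf theory, or from \cite{Hennion1997}) that this combinatorial coefficient equals the Lipschitz constant $c(\ma)$ for the metric $d$; alternatively one relates $d(\ma^\top\as x,\ma^\top\as y)$ to $d(\ma\as x',\ma\as y')$ via the bilinear form $\skalar{\cdot,\cdot}$ that pairs the two copies of $\Sp$.

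\textbf{Main obstacle.} I expect the hard part to be proving (4), $c(\ma^\top)=c(\ma)$, cleanly: it is essentially the statement that Birkhoff's contraction coefficient is transposition-invariant, which is classical but requires either an appeal to the explicit Birkhoff formula (and a verification that our $\phi$-normalised cross-ratio metric is the ``same'' as Hilbert's projective metric up to the reparametrisation $\phi=\tanh(\tfrac14\,\cdot)$-type change, under which Lipschitz constants are preserved) or a direct but somewhat intricate duality computation. Establishing the triangle inequality for $d$ is the second most delicate point, but it is standard once one identifies $d$ with (a monotone reparametrisation of) the Hilbert metric on the cone $\Rdnn$. I would organise the write-up so that the Birkhoff--Hopf identification is done once, and then (1)--(4) and the triangle inequality all follow from standard properties of Hilbert's projective metric, citing \cite{Hennion1997} for the parts already available there.
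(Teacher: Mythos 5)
The paper does not actually prove this proposition: its ``proof'' consists of the citation \cite[Proposition 3.1]{Hennion1997}, plus the single substantive remark that the statement, proved there for the $1$-norm, holds for any norm because the cross-ratio is a projective invariant and all norms on $\Rd$ are equivalent. So your proposal is not an alternative to the paper's argument but an attempted reconstruction of Hennion's (Birkhoff--Hopf type) proof; its overall architecture --- projective invariance of the cross-ratio, monotonicity in the endpoints giving non-expansiveness and hence (1), the chain rule for (3), an explicit coefficient formula for (2) and (4) --- is indeed the standard one, and your fallback of citing \cite{Hennion1997} for what is already there is exactly what the paper does.

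Two steps of your sketch have real gaps, however. First, the triangle inequality: multiplicativity of the cross-ratio only handles collinear triples, whereas a general third point of $\Sp$ does not lie on the chord through $x$ and $z$; the repair is the identification you gesture at, $d=\tanh(h/2)$ with $h$ the Hilbert metric of the cone (then $\tanh$ being increasing, concave and vanishing at $0$ is subadditive, so a metric is obtained), but this presupposes the triangle inequality for $h$ on the cone, which is itself a convexity argument and not the collinear identity. Second, and more seriously, your parenthetical that Lipschitz constants are ``preserved'' under the reparametrisation $h\mapsto\tanh(h/2)$ is false, and it is precisely the step on which your proofs of (2) and (4) lean. For $\ma=\left(\begin{smallmatrix}2&1\\1&2\end{smallmatrix}\right)$ one computes that the Birkhoff coefficient for the Hilbert metric is $\tfrac{1-\sqrt{1/4}}{1+\sqrt{1/4}}=\tfrac13$, while the coefficient $c(\ma)=\sup_{x\neq y} d(\ma\as x,\ma\as y)/d(x,y)$ for the metric of the proposition equals $\tfrac35$; so quoting Birkhoff's formula, however manifestly transposition-invariant, says nothing about $c(\ma)$ without further work. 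What Hennion does (and what you would have to reproduce or cite precisely) is compute $c(\ma)$ for $d$ itself, obtaining an entrywise expression of the form $\max_{i,j,k,l}\abs{\ma_{ik}\ma_{jl}-\ma_{il}\ma_{jk}}/(\ma_{ik}\ma_{jl}+\ma_{il}\ma_{jk})$, which is visibly symmetric in $\ma\leftrightarrow\ma^{\top}$ and $<1$ exactly when $\ma\in\interior{\Mset}$; this also makes your compactness argument for (2) (whose claimed continuous extension of the ratio across the diagonal is itself unproven) unnecessary. For the ``only if'' half of (2), a cleaner argument than letting $x,y$ drift to the boundary is to take $x=e_{j_0}$ for a column $j_0$ containing a zero entry and $y$ interior: then $d(x,y)=1=d(\ma\as x,\ma\as y)$, so $c(\ma)=1$ exactly.
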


{\red \begin{proof}[Source:] This is \cite[Proposition 3.1]{Hennion1997}. 
There, the results are stated relative to the 1-norm $\norm{x}_1 = \sum_{i=1}^n \abs{x_i}$ on $\Rd$, but they do in fact hold for any norm on $\Rd$, the main reason being that the cross-ratio is an projective invariant and thus independent of the shape of the unit sphere, and that all norms on $\Rd$ are comparable.
\end{proof}
}
The crucial properties of the metric $d$ are (1) and (2), saying that the action of nonnegative (positive) matrices is a (strict) contraction with respect to $d$.

\subsubsection{Checking the assumptions of the Ionescu-Tulcea-Marinescu theorem}

Let us first recall the definition of $\Qs$ in \eqref{eq:defQs}, from which we obtain the following formula for its iterates:
\begin{equation}
\label{eq:Qsn} (\Qs)^n f(x) ~=~ \Erw[\Qxs]{f(X_n)} ~=~ \Erw{q_n^s(x,\mPi_n) f(\mPi_n \as x)}.
\end{equation}

In order to prove Assumption (1) of Theorem \ref{thm:ITM}, we are going to apply the Arzel\`a-Ascoli theorem. Therefore, we have to prove equicontinuity of the family $\{ \Qs f \, : \, \norma{f} < \8 \}.$
This will follow from the subsequent estimates for the kernels $q_n^s$, {\red where it is shown} in particular, that the mappings $q_n^s(\cdot, \ma)$ are $\bar{s}$-H\"older on $\Sp$ for any {\red $\ma \in \Mset$}.

\begin{lem}\label{lem:prop:qn}
{\red Under the assumptions of Proposition \ref{prop:Qs}, there is} $C_s < \infty$ such that for all $n \in \N$, $x,y \in \Sp$, $\ma \in \Mset$
$$ \abs{q_n^s(x,\ma) - q_n^s(y,\ma)} \le C_s \frac{\norm{\ma}^s}{k(s)^n} d(x,y)^{\bar{s}}.$$
On the other hand, there is $c_s$ such that for all allowable $\ma$,
{\red $$ q_n^s(\ma) := \int q_n^s(x, \ma) \pis(\d x) \ge \frac{c_s}{k(s)^n} \norm{\ma}^s$$}
\end{lem}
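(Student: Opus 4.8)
The plan is to work directly from the definition of the kernel in \eqref{qn}, namely
$$ q_n^s(x,\ma) = \frac{\abs{\ma x}^s}{k(s)^n} \frac{\es(\ma \as x)}{\es(x)}, $$
and to exploit the two properties we already have at hand: first, that $\es$ is strictly positive and $\bar s$-H\"older continuous on the compact set $\Sp$ (Proposition \ref{prop:transferoperators}\eqref{suppnus}), so $0 < \inf \es \le \sup \es < \infty$ and $\abs{\es}_s < \infty$; and second, that the action $\ma \as x$ contracts $d$, i.e. $d(\ma \as x, \ma \as y) \le c(\ma) d(x,y) \le d(x,y)$ by Proposition \ref{prop:properties of d}(1). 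For the first (upper) estimate I would write the difference $q_n^s(x,\ma) - q_n^s(y,\ma)$ as $k(s)^{-n}$ times
$$ \frac{\abs{\ma x}^s \es(\ma \as x)}{\es(x)} - \frac{\abs{\ma y}^s \es(\ma \as y)}{\es(y)}, $$
and split it using the triangle inequality into (a) a term controlling $\abs{\ma x}^s - \abs{\ma y}^s$, (b) a term controlling $\es(\ma \as x) - \es(\ma \as y)$, and (c) a term controlling $\es(y)^{-1} - \es(x)^{-1}$.

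For (a), since $x \mapsto \ma x$ is linear, $\bigl|\,\abs{\ma x} - \abs{\ma y}\,\bigr| \le \norm{\ma}\, \abs{x-y} \le C^{-1}\norm{\ma}\, d(x,y)$ using $d(x,y) \ge C\abs{x-y}$ from Proposition \ref{prop:properties of d}; combining this with the elementary inequality $\abs{u^s - v^s} \le \bar s\,(u \vee v)^{\bar s - 1}\abs{u-v}^{\bar s}\cdot(\text{const})$ — more cleanly, for $s \le 1$ one has $\abs{u^s - v^s}\le \abs{u-v}^s$, and for $s>1$ one uses the mean value theorem together with $\abs{\ma x}, \abs{\ma y} \le \norm{\ma}$ — gives a bound of the form $(\text{const})\,\norm{\ma}^s d(x,y)^{\bar s}$; here one also uses $d \le 1$ to absorb any mismatch between $d(x,y)$ and $d(x,y)^{\bar s}$. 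For (b), $\abs{\es(\ma \as x) - \es(\ma \as y)} \le \abs{\es}_s\, d(\ma \as x, \ma \as y)^{\bar s} \le \abs{\es}_s\, d(x,y)^{\bar s}$ by H\"older continuity of $\es$ and the contraction property of the action, and this is multiplied by the bounded factor $\abs{\ma x}^s/\es(x) \le \norm{\ma}^s / \inf \es$. For (c), $\abs{\es(x)^{-1} - \es(y)^{-1}} \le (\inf \es)^{-2}\abs{\es(x) - \es(y)} \le (\inf\es)^{-2}\abs{\es}_s\, d(x,y)^{\bar s}$, multiplied by the bounded factor $\abs{\ma y}^s \es(\ma \as y) \le \norm{\ma}^s \sup\es$. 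Collecting the three pieces produces the constant $C_s$, which depends only on $s$ through $\inf\es, \sup\es, \abs{\es}_s$ and the norm-comparison constant $C$.

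For the second (lower) estimate, integrate $q_n^s(x,\ma) = k(s)^{-n}\abs{\ma x}^s \es(\ma \as x)/\es(x)$ against $\pis$. Bounding $\es(\ma \as x) \ge \inf \es > 0$ and $\es(x) \le \sup \es$, we get $q_n^s(\ma) \ge k(s)^{-n}(\inf \es / \sup\es) \int \abs{\ma x}^s \pis(\d x)$. It remains to see that $\int_{\Sp}\abs{\ma x}^s\,\pis(\d x) \ge c\, \norm{\ma}^s$ for allowable $\ma$ with $c>0$ independent of $\ma$. I expect \textbf{this to be the main obstacle}: unlike the H\"older estimate, which is a routine unwinding of definitions, this lower bound genuinely uses that $\pis$ has full support $V(\Gamma)$ and that $\norm{\ma} = \sup_{x \in \Sp}\abs{\ma x}$ is comparable, uniformly over allowable $\ma$, to $\abs{\ma x_0}$ for $x_0$ in a fixed compact set meeting $\supp\pis$ — a consequence of allowability (every column of $\ma$ has a positive entry, so $\norm{\ma}$ is comparable to $\max_j \abs{\ma e_j}$, and the $e_j$, or points near them, are charged by $\pis$ in the appropriate sense), together with a compactness/continuity argument for the map $(x, \bar\ma) \mapsto \abs{\bar\ma x}$ on $\Sp \times (\text{projectivized allowable matrices})$. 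Once such a uniform comparison $\int \abs{\ma x}^s \pis(\d x) \ge c'\,\norm{\ma}^s$ is in place, setting $c_s := (\inf\es/\sup\es)\, c'$ finishes the proof.
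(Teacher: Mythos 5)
The first half of your proposal (the H\"older estimate) is correct and is essentially the paper's own proof: the same three-term decomposition, the bounds $0<\inf\es\le\sup\es<\infty$, the H\"older continuity of $\es$, the non-expansiveness $d(\ma\as x,\ma\as y)\le c(\ma)d(x,y)\le d(x,y)$, the comparison $d(x,y)\ge C\abs{x-y}$, and the case distinction $s\le 1$ versus $s>1$ for $\abs{\,\abs{\ma x}^s-\abs{\ma y}^s\,}$.

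The lower bound is where there is a genuine gap: the inequality $\int_{\Sp}\abs{\ma x}^s\,\pis(\d x)\ge c'\norm{\ma}^s$, which you correctly identify as the heart of the matter, is only asserted, and the mechanism you sketch would fail. First, the set of allowable matrices of norm one is not compact: its closure inside $\Mset$ contains non-allowable matrices (limits with a zero row or column), so a continuity/compactness argument over ``projectivized allowable matrices'' does not by itself yield a positive infimum. Second, the claim that ``the $e_j$, or points near them, are charged by $\pis$'' is false in general: $\supp\pis=V(\Gamma)$ is the closure of the Perron eigenvectors of the matrices in $\Gamma\cap\interior{\Mset}$ and may be a small subset of $\interior{\Sp}$, far from every coordinate direction. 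The paper closes this step as follows: by homogeneity one may assume $\norm{\ma}=1$; the function $g(\ma)=\int\abs{\ma x}^s\,\pis(\d x)$ is continuous on the compact set of \emph{all} nonnegative matrices of norm one (allowability is not needed here) and attains its infimum; if the infimum were zero at some $\ma_0$, then $V(\Gamma)=\supp\nus=\supp\pis\subset\mathrm{ker}(\ma_0)$, which is impossible since $V(\Gamma)$ contains a strictly positive Perron eigenvector $v_{\ma_1}$ with $\ma_1\in\Gamma\cap\interior{\Mset}$, while the kernel of a nonzero nonnegative matrix does not meet $\interior{\Sp}$. Alternatively, a quantitative version of your comparability idea does work if you anchor it at such a strictly positive point of $\supp\pis$ rather than at the $e_j$: on a small ball $B$ around $v_{\ma_1}$ all coordinates are bounded below by some $\delta>0$, hence $\abs{\ma x}\ge c\,\delta\,\norm{\ma}$ for every nonnegative $\ma$ and every $x\in B$, and $\pis(B)>0$; integrating over $B$ gives the required bound.
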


\begin{proof}
Observe that by Proposition \ref{prop:properties of d}, any function that is H\"older-continuous on $(\Sp, \abs{\cdot})$ is as well H\"older-continuous on  $(\Sp, d)$.
Using that thus $\es$ is $\bar{s}$-H\"older with constant $d_{\es}$ and bounded with $0<d_1 \le \es(x) \le d_2<\infty$ for all $x \in \Sp$,  as well as property (2) of Proposition \ref{prop:properties of d}, we estimate
\begin{align*}
& \abs{\frac{\es(\ma \as x)}{\es(x)} \frac{\abs{\ma x}^s}{k(s)^n} - \frac{\es(\ma \as y)}{\es(y)} \frac{\abs{\ma y}^s}{k(s)^n}} \\
\le~&   \abs{\frac1{\es(x)} - \frac{1}{\es(y)}} \frac{\es(\ma \as x) \abs{\ma x}^s}{k(s)^n}  +   \abs{\abs{\ma x}^s - \abs{\ma y}^s} \frac{\es(\ma \as x) }{\es(y) k(s)^n}
 + \abs{\es(\ma \as x) - \es(\ma \as y)}  \frac{\abs{\ma y}^s}{k(s)^n \es(y)} \\
 \le~&  \frac{1}{d_1^2} \abs{\es(x)-\es(y)} \frac{d_2 \norm{\ma}^s}{k(s)^n} + \abs{\abs{\ma x}^s  -\abs{\ma y}^s} \frac{d_2}{d_1 k(s)^n} + d_{\es} \abs{x-y}^{\bar{s}}  \frac{\norm{\ma}^s}{k(s)^n d_1} \\
 \le~& \left(\frac{C d_{\es} d_2}{d_1^2} + \frac{C d_{\es}}{d_1} \right) \frac{\norm{\ma}^s}{k(s)^n} d(x,y)^{\bar{s}} + \frac{d_2}{d_1 k(s)^n)} \abs{\abs{\ma x}^s - {\red \abs{\ma y}^s}}.
\end{align*}
The last term has to be estimated differently for $s \le 1$ and $s >1$. If $s \le 1$, then
$$ \abs{\abs{\ma x}^s - {\red \abs{\ma y}^s}} \le \abs{\abs{\ma x} - \abs{\ma y}}^s \le \norm{\ma}^s \abs{x -y}^s.$$
If $s >1$, then
$$ \abs{\abs{\ma x}^s - {\red \abs{\ma y}^s}} \le \abs{\abs{\ma x} - \abs{\ma y}} \cdot s \cdot \max\{\abs{\ma x}^{s-1}, \abs{\ma y}^{s-1} \}  \le s\norm{\ma} \abs{x -y}^{\bar{s}} \norm{\ma}^{s-1}  $$

For the second part, recall $K = \inf\{ \es(x) / \es(y) \, : \, x,y \in \Sp \} >0$,
hence
\begin{align*}
q_n^s(\ma) \ge \frac{K}{k(s)^n} \int \abs{\ma x}^s \pis(\d x).
\end{align*}
It suffices to prove that $g(\ma):= \int \abs{\ma x}^s \pis(\d x) \ge c_s$ for all nonnegative $\ma$ with $\norm{\ma}=1$. On the compact set $\norm{\ma}=1$, $g$ attains its infimum. But if there is $\ma_0$ with $\int \abs{\ma_0 x}^s \pis(\d x) = 0$, then
$$ V(\Gamma) \subset \supp \nus = \supp \pis \subset \mathrm{ker}(\ma_0). $$
But since $\ma_0$ is a nonzero nonnegative matrix, $\mathrm{ker}(\ma_0) \cap \interior{\Sp} = \emptyset$, which gives a contradiction.
\end{proof}

{\red Let us note the following, surprising Corollary to Lemma \ref{lem:prop:qn}, which shows that the convergence in \eqref{def:ks} is exponentially fast.
\begin{cor}\label{cor:ksEs}
Under the assumptions of Proposition \ref{prop:Qs}, there is $c_s >0$ (the same as in Lemma \ref{lem:prop:qn}), such that
$$ k(s)^n \le \E \norm{\mPi_n}^s \le \frac{1}{c_s} k(s)^n \qquad \text{ for all $n \in \N$}.$$
\end{cor}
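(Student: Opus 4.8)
The plan is to establish the two inequalities separately; both are short, the lower bound being purely soft and the upper bound following from the mass-one property of the kernels $q_n^s$ combined with the second estimate in Lemma~\ref{lem:prop:qn}.

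For the lower bound $k(s)^n \le \E\norm{\mPi_n}^s$, I would set $a_n := \E\norm{\mPi_n}^s$ and use submultiplicativity of the operator norm, $\norm{\mPi_{n+m}}^s \le \norm{\mA_{n+m}\cdots\mA_{n+1}}^s\,\norm{\mPi_n}^s$, noting that the two factors depend on disjoint blocks of the i.i.d.\ sequence $(\mA_i)$. Taking expectations, and using that $\mA_{n+m}\cdots\mA_{n+1}$ has the same law as $\mPi_m$, gives $a_{n+m}\le a_n a_m$. Since $a_1 = \E\norm{\mA_1}^s < \infty$ for $s \in I_\mu$ (and $a_n>0$ as matrices in $\Gamma$ are allowable, hence nonzero), the sequence $(\log a_n)_n$ is real-valued and subadditive, so Fekete's lemma yields $k(s) = \lim_n a_n^{1/n} = \inf_n a_n^{1/n}$; in particular $k(s)^n \le a_n$ for every $n$.

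For the upper bound I would start from the normalization of the kernels: for each $x\in\Sp$ one has $\E\!\left[q_n^s(x,\mPi_n)\right] = 1$. Integrating this identity against the stationary measure $\pis$ and applying Tonelli (all integrands are nonnegative),
$$ 1 ~=~ \int_{\Sp} \E\!\left[q_n^s(x,\mPi_n)\right]\pis(\d x) ~=~ \E\!\left[\,\int_{\Sp} q_n^s(x,\mPi_n)\,\pis(\d x)\right] ~=~ \E\!\left[q_n^s(\mPi_n)\right]. $$
Under condition $\condC$ the product $\mPi_n$ lies in $\Gamma=[\supp\mu]$ and is therefore allowable $\Prob$-a.s., so the second part of Lemma~\ref{lem:prop:qn} can be invoked along the random trajectory and gives $q_n^s(\mPi_n) \ge \frac{c_s}{k(s)^n}\norm{\mPi_n}^s$ $\Prob$-a.s. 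Substituting this into the display yields $1 \ge \frac{c_s}{k(s)^n}\E\norm{\mPi_n}^s$, i.e.\ $\E\norm{\mPi_n}^s \le c_s^{-1} k(s)^n$, with exactly the constant $c_s$ from Lemma~\ref{lem:prop:qn}.

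I do not expect a serious obstacle: the "surprise" is only that such a brief argument already delivers an exponential rate. The two points needing a line of care are that Tonelli is legitimate (immediate, since $q_n^s\ge 0$), and that $\mPi_n$ is allowable almost surely so that Lemma~\ref{lem:prop:qn} may be applied with the random argument $\mPi_n$ — this is exactly the place where the first requirement of condition $\condC$ (every element of $[\supp\mu]$ is allowable) enters.
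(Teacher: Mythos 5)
Your argument is correct and is essentially the paper's own proof: the lower bound via submultiplicativity of $n\mapsto\E\norm{\mPi_n}^s$ and Fekete's lemma (so that $k(s)=\inf_m(\E\norm{\mPi_m}^s)^{1/m}$), and the upper bound by integrating the normalization $\E\,q_n^s(x,\mPi_n)=1$ against $\pis$ and invoking the second estimate of Lemma~\ref{lem:prop:qn}. Your extra remarks (Tonelli and the a.s.\ allowability of $\mPi_n$ under condition $\condC$) only make explicit what the paper leaves implicit.
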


\begin{proof}
The first inequality holds since $k(s) = \lim_{n \to \infty} \big(\E \norm{\mPi_n}^s \big)^{1/n} = \inf_{m \in \N}  \big(\E \norm{\mPi_m}^s \big)^{1/m}$ due to submultiplicativity of the norm (see \cite[Theorem 1]{Furstenberg1960} for details). The second inequality holds by Lemma \ref{lem:prop:qn}, since $\E q_n^s(x,\mPi_n) =1$ for all $x \in \Sp$.
\end{proof}
}

Now we are ready to prove the following estimate, from which the validity of assumptions (1) and (3) will follow.

\begin{lem}\label{lem:hoeldercontinuous}
{\red Under the assumptions of Proposition \ref{prop:Qs}, there is} $C >0$ and a sequence $D(n)$ with $\lim_{n \to \infty} D(n) = 0$, such that for all $n \in \N$ and $f \in \B$,
\begin{align}\label{eq:hoeldercontinuous}
\abs{(Q^s)^n f}_s \le C \normb{f} + D(n) \abs{f}_s
\end{align}
\end{lem}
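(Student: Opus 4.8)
The plan is to prove the inequality \eqref{eq:hoeldercontinuous} by iterating the kernel representation \eqref{eq:Qsn}, namely
$$ (Q^s)^n f(x) ~=~ \Erw{q_n^s(x, \mPi_n) f(\mPi_n \as x)},$$
and estimating the H\"older seminorm of the right-hand side. Given $x, y \in \Sp$, I would split the difference $(Q^s)^n f(x) - (Q^s)^n f(y)$ into two terms using the telescoping
$$ q_n^s(x,\mPi_n) f(\mPi_n \as x) - q_n^s(y,\mPi_n) f(\mPi_n \as y) ~=~ \bigl(q_n^s(x,\mPi_n) - q_n^s(y,\mPi_n)\bigr) f(\mPi_n \as x) ~+~ q_n^s(y,\mPi_n)\bigl(f(\mPi_n \as x) - f(\mPi_n \as y)\bigr).$$

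For the first term, I would use the H\"older estimate from Lemma \ref{lem:prop:qn}, which bounds $\abs{q_n^s(x,\ma) - q_n^s(y,\ma)}$ by $C_s k(s)^{-n} \norm{\ma}^s d(x,y)^{\bar s}$, together with $\abs{f(\mPi_n \as x)} \le \normb{f}$, so that after taking expectations this contributes at most $C_s \normb{f} k(s)^{-n} \E\norm{\mPi_n}^s \, d(x,y)^{\bar s}$; by Corollary \ref{cor:ksEs} the quantity $k(s)^{-n} \E\norm{\mPi_n}^s$ is bounded by $1/c_s$, so this term gives the $C \normb{f}$ contribution with a constant independent of $n$. For the second term, I would use $\abs{f(\mPi_n \as x) - f(\mPi_n \as y)} \le \abs{f}_s \, d(\mPi_n \as x, \mPi_n \as y)^{\bar s} \le \abs{f}_s \, c(\mPi_n)^{\bar s} d(x,y)^{\bar s}$ by property (1) of Proposition \ref{prop:properties of d}, and $q_n^s(y, \mPi_n) \le C' k(s)^{-n}\norm{\mPi_n}^s$ (from the explicit form \eqref{qn} of $q_n^s$ and the boundedness of $\es$), giving a bound of the form $D(n) \abs{f}_s d(x,y)^{\bar s}$ with
$$ D(n) ~:=~ C' \, \frac{1}{k(s)^n} \, \E\bigl( \norm{\mPi_n}^s \, c(\mPi_n)^{\bar s} \bigr).$$
Dividing by $d(x,y)^{\bar s}$ and taking the supremum over $x \neq y$ then yields \eqref{eq:hoeldercontinuous}.

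The main obstacle is showing $\lim_{n \to \infty} D(n) = 0$, i.e.~that the contraction coefficient $c(\mPi_n)$ beats the growth of $\norm{\mPi_n}^s$ relative to $k(s)^n$ in expectation. The key idea is that by condition $\condC$ there is a positive matrix in $[\supp \mu]$, hence some $\mb = \ma_{m} \cdots \ma_1$ with $\ma_i \in \supp\mu$ and $c(\mb) < 1$ (by property (2) of Proposition \ref{prop:properties of d}), and this event occurs with positive probability along blocks of length $m$; using the submultiplicativity $c(\ma\ma') \le c(\ma)c(\ma')$ (property (3)) together with $c(\ma) \le 1$ always, the factor $c(\mPi_n)^{\bar s}$ decays geometrically on a set of probability tending to $1$. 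One has to combine this with a uniform-integrability type control of $\norm{\mPi_n}^s / k(s)^n$ under $\mu^{\otimes n}$ — which is exactly what Corollary \ref{cor:ksEs} and the change-of-measure identity \eqref{eq:com1} provide, since $\norm{\mPi_n}^s k(s)^{-n}$ is comparable to the density $q_n^s(x,\mPi_n)$ up to the bounded factor $\es(\mPi_n\as x)/\es(x)$ — so that $D(n)$ can be written essentially as an expectation of $c(\mPi_n)^{\bar s}$ under the shifted measure $\Q_x^s$ (or $\QQ^s$), which tends to $0$ by dominated convergence once one knows $c(\mPi_n) \to 0$ $\Q_x^s$-a.s. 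The a.s.~convergence $c(\mPi_n) \to 0$ follows from the fact that, under $\Q_x^s$, the driving sequence still contains i.i.d.-like blocks hitting $\interior{\Mset}$ infinitely often, because the shifted measure is mutually absolutely continuous with $\mu^{\otimes n}$ on each finite coordinate block. I would carry out this last point carefully, as it is where the structural hypothesis $\condC$ is genuinely used, and it is the only step that is not a routine estimate.
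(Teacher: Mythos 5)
Your proposal is correct and follows essentially the same route as the paper: the same two-term splitting (kernel difference handled by Lemma \ref{lem:prop:qn} together with Corollary \ref{cor:ksEs}, the $f$-difference handled by the contraction coefficient $c(\mPi_n)$ from Proposition \ref{prop:properties of d}), with $D(n)$ amounting, up to bounded factors, to $\E_{\Q_x^s}\,c(\mPi_n)^{\bar s}$, which is exactly the paper's choice. Your caution about the final step is well placed: equivalence of $\Q_x^s$ and $\mu^{\otimes n}$ on finite coordinate blocks does not by itself transfer the $\Prob$-a.s.\ statement $c(\mPi_n)\to 0$ to $\Q_x^s$ (it concerns limiting behaviour, not a fixed finite block), so one should argue, e.g., via a uniform lower bound for the $\Q_x^s$-conditional probability of a length-$m$ block landing near a fixed matrix of $\interior{\Mset}$ plus conditional Borel--Cantelli and submultiplicativity of $c$ -- the paper itself passes over this point quickly by invoking \cite[Lemma 3.1]{Hennion1997} and dominated convergence.
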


\begin{proof}
For all $f \in \B$, $\abs{f}_s < \infty$. For such $f$, we compute
\begin{eqnarray*}
\abs{(\Qs)^n f(x) - (\Qs)^n f(y)}
 &= & \abs{ \E_{\Qxs} f(X_n) - \E_{\Qys} f(X_n)} \\
&\le& \E_\Qxs\abs{ f(\mPi_n \as x) - f(\mPi_n \as y)  } + \abs{(\E_\Qxs - \E_\Qys) f(\mPi_n \as y)}\\ &=& I + II.
\end{eqnarray*}
Considering $I$,
\begin{align*}
I \le \abs{f}_s\, \E_\Qxs  d(\mPi_n \as x, \mPi_n \as y)^{\bar{s}} \le \abs{f}_s d(x,y)^{\bar{s}} \ \E_\Qxs c(\mPi_n)^{\bar{s}}.
\end{align*}
But due to Proposition \ref{prop:properties of d}, $c(\ma) \le 1$ for all $a \in \Mset$, and $c(\ma) < 1$ for $\ma \in \interior{\Mset}$. {\red By $\condC$, we have that
$\P{ \liminf_{n \to \infty } \left\{ \mPi_n \in \interior{\Mset} \right\}} = 1$ (see \cite[Lemma 3.1]{Hennion1997}), thus $c(\mPi_n) \to 0$ $\Qxs$-a.s.~ by Proposition \ref{prop:properties of d}, (2) and (3) and the boundedness of $c$. Moreover, $c$ is continuous on $\Mset$ by \cite[Lemma 10.8]{Hennion1997}. Therefore, we can use the dominated convergence theorem to infer
$$ \lim_{n \to \infty} D(n) := \lim_{n \to \infty}  \E_\Qxs c(\mPi_n)^{\bar{s}} = 0.$$}


Turning to $II$, we have, using Lemma \ref{lem:prop:qn},
$$ II \le [f] \E \big| q_n^s(x,\mPi_n)  - q_n^s(y,\mPi_n) \big|
\le \frac{[f] C_s d(x,y)^{\bar{s}}}{k(s)^n} \E \norm{\mPi_n}^s
\le [f] \frac{C_s}{c_s} d(x,y)^{\bar{s}} \E_{\QQ^s} \1
$$
Combining these estimates, we arrive at
\begin{align*}
\abs{(Q^s)^n f}_s \le \frac{C_s}{c_s} \normb{f} + \abs{f}_s
D(n).
\end{align*}
\end{proof}

\begin{proof}[Proof of Proposition \ref{prop:Qs}]
Now we are ready to show that Theorem \ref{thm:ITM} applies for $Q=\Qs$ with $\B$ as defined above.

\Step[1]:
Assumption \eqref{prop2} is satisfied for $M=1$, since $\Qs$ is a Markov operator on $(\Cf{\Sp},\normb{\cdot})$.

\medskip

\Step[2]: Assumption \eqref{prop1} holds for $\Qs$, i.e. $\Qs\{ f \, : \, \norma{f} \le 1 \}$ is conditionally compact in $(\B, \normb{\cdot})$. This is shown as follows. Since $\Qs$ is a Markov operator, and $\norma{f} \ge \normb{f}$, we have that $K:=\Qs\{ f \, : \, \norma{f} \le 1 \} \subset \{ f \in \B \, : \, \normb{f} \le 1 \}$, thus $K$ is bounded. Using \eqref{eq:hoeldercontinuous} with $n=1$, we deduce that the family $K$ is equicontinuous. Hence, applying the Arzel\`a-Ascoli theorem, $K$ is conditionally compact in $\Cf{\Sp}$ with respect to the topology of uniform convergence, i.e. w.r.t. $\normb{\cdot}$.

\medskip
\Step[3]: Next we show that Assumption \eqref{prop3} holds for $\Qs$, i.e. there exist $k \in \N$ and real numbers $r$ and $R$ with $r < \rho(\Qs)$ and, for all $f \in \B$,
$$ \norma{(\Qs)^k f} \le R \normb{f} + r^k \norma{f}.$$ In particular, $\Qs \in \mcL(\B,\B)$.

Observe, that it suffices to provide the estimate for \emph{one} $k \in \N$, it is not necessary to prove a geometric decay rate. Since the spectral radius $r(\Qs)=1$, it is enough to show that the inequality holds for some $r' < 1$ in the place of $r^k$, because then $r := {r'}^{1/k}<1$ satisfies the assumption.
Using \eqref{eq:hoeldercontinuous} and the fact that $\Qs$ is a Markov operator, we deduce that for any $n \in \N$,
\begin{align}
\label{eq:normaQs} \norma{(\Qs)^n f} ~&=~ \normb{{(\Qs)^n f}} + \abs{{(\Qs)^n f}}_s \\
~&\le~ \normb{f} + C \normb{f} + D(n) \abs{f}_s ~\le~ (1+C) \normb{f} + D(n) (\abs{f}_s + \normb{f}) \nonumber \\
~&=~ (1+C) \normb{f} + D(n) \norma{f} \nonumber .
\end{align}
But $D(n)$ tends to 0, thus we may choose $k$ such that {\red $D(k) < 1$}, and consequently, Assumption \eqref{prop3} is satisfied with $R:=1 + C$ and $r := D(k)^{1/k} < 1$.
 Thus {\red Theorem \ref{thm:ITM}} applies and gives the {\red quasi-compactness of $Q^s$ , i.e. $\B=E \oplus F$ for $Q^s$-invariant closed subspaces $E$ and $F$ with $\dim E <\infty$ and such that $Q^s_{|F}$ has spectral radius strictly smaller than 1, while each eigenvalue of $Q^s_{|E}$ has modulus 1. }

\medskip

\Step[4]: {\red Next we prove that $1$ is a simple eigenvalue}, and the only one of modulus one, i.e. $\dim E=1$.
It is shown in \cite[Theorem 4.13]{BDGM2014}, that for every $f \in \Cf{\Sp}$,
\begin{equation}\label{eq:convQs} \lim_{n \to \infty} (\Qs)^n f = \pis(f). \end{equation}
If now {\red$\Qs f = \lambda f$} with $\abs{\lambda}=1$,  then necessarily {\red $\lim_{n \to \infty} \lambda^{n} f \equiv \pis(\lambda f)$}, which implies $\lambda=1$ and $f={\rm const}$.

\medskip

{\red \Step[5]: We infer from Eq. \eqref{eq:normaQs} that
$$ \limsup_{n \to \infty} \rho(Q^s)^{-n} \Big( \sup \, \{\norma{(\Qs)^n f} \, : \, \norma{f} =1 \}\Big)  \le 1+C.$$
Therefore, $Q^s$ is {\em quasi compact of diagonal type} in the sense of \cite[Prop. III.1]{HH2001}. Consequently, \cite[Lemma III.3(v)]{HH2001} applies and gives the decomposition
\eqref{eq:decompQs} with $M$ being the projection on $E=\R\1[\Sp]$ with $M(f)=\pis(f)\1[\Sp]$ for all $f \in \B$, and $N:= Q^s -M$.
}
\end{proof}

\subsection{Invertible matrices}
As said before, the ideas of the proofs above were developed by Guivarc'h and Le Page for condition \ip, the result corresponding to Proposition \ref{prop:Qs} is \cite[Corollary 3.19]{Guivarch2012}. There, the distance $d(x,y)=\abs{x-y}$ for $x,y \in \Pd$ is the minimal euclidean distance between representants in $\Sd$.

Under assumption \ide, a corresponding decomposition, proved in  \cite[Proposition 4.3 and Lemma 4.11]{BDMM2013} holds on the (larger) space $\Cf{\Sd}$, for $(X_n)$ is a Doeblin chain  under each $\Qxs$.

\section{Non-artihmeticity and its consequences}\label{sect:nonarithmetic}

Subsequently, fix $s \in I_\mu$. We will now study implications of the non-arithmeticity and moment assumptions \eqref{eq:non-arithmetic} resp. \eqref{eq:iotamoment} for the family $Q(it)$. {\red Recall from Lemma \ref{lem:FT} the identity} 
\begin{equation}
\label{eq:defQit} Q(it)^n f(x) {\red ~=~} \Erw[\Qxs]{e^{it S_n} f(X_n)} ~=~ \Erw{q_n^s(x, \mPi_n) e^{it \log \abs{\mPi_n x}} f(\mPi_n \as x)}.
\end{equation}
This section is valid for all types of matrices. Recall that conditions \ipo~and \ide~readily imply non-arithmeticity.

Define $\B_\epsilon := \{ f \in \Cf{\S} \, : \, \abs{f}_\epsilon < \infty \}.$
Then we are going to prove the following result:

\begin{thm}\label{thm:prop:nonarithmetic}
Assume that $(X_n, S_n)$ is non-arithmetic under {\red $\Q^s$} or that $\mu$ is non-arithmetic. Assume that \eqref{eq:iotamoment} hold for some $\epsilon >0$. {\red Then $Q(it) \in \mcL(\B_\epsilon, \B_\epsilon)$ for all $t \in \R$. Moreover,  for all $t \neq 0$, the spectral radius $\rho(Q(it)) < 1$ and thus $1-Q(it)$ is invertible in $\mcL(\B_\epsilon, \B_\epsilon)$.}
\end{thm}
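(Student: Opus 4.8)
The plan is to run the same Ionescu-Tulcea–Marinescu / Hennion-Hervé machinery as in the proof of Proposition \ref{prop:Qs}, now applied to the perturbed operators $Q(it)$ on the Hölder-type space $\B_\epsilon$, and then to rule out eigenvalues of modulus $1$ using the non-arithmeticity hypothesis. First I would check that $Q(it)$ maps $\B_\epsilon$ to $\B_\epsilon$ and is bounded there: this is a Hölder estimate entirely analogous to Lemma \ref{lem:prop:qn}, using the identity \eqref{eq:defQit}, the bound $\abs{e^{it\log\abs{\ma x}}-e^{it\log\abs{\ma y}}}\le \abs t\,\abs{\log\abs{\ma x}-\log\abs{\ma y}}$ which, after dividing by $d(x,y)^\epsilon$ (or $\abs{x-y}^\epsilon$ in the invertible cases), is controlled by $\abs t\,(\iota(\ma)^{-1}\norm{\ma})^{\text{(power)}}$; the extra moment $\eqref{eq:iotamoment}$ is exactly what makes the resulting integral finite. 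This also yields the Doeblin-type inequality $\norma{Q(it)^k f}_\epsilon \le R\,\normb{f} + r^k\,\norma{f}_\epsilon$ with $r<1$ (the contraction $c(\mPi_n)\to 0$, resp. the Doeblin property under \ide, supplies the decay), so $Q(it)$ is quasi-compact on $\B_\epsilon$ with $\rho(Q(it))\le \rho(Q^s\text{-on-}\B_\epsilon)=1$ and finitely many peripheral eigenvalues, each of modulus $\rho(Q(it))\le 1$.

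The substantive step is to show $\rho(Q(it))<1$ for $t\neq 0$, equivalently that $Q(it)$ has no eigenvalue of modulus $1$. Suppose $Q(it)f=\lambda f$ with $\abs\lambda=1$ and $f\in\B_\epsilon$, $f\not\equiv 0$. From \eqref{eq:defQit}, writing $Q(it)^n f(x)=\lambda^n f(x)=\Erw[\Qxs]{e^{itS_n}f(X_n)}$ and taking absolute values, one gets $\abs{f(x)} = \abs{\Erw[\Qxs]{e^{itS_n}f(X_n)}} \le \Erw[\Qxs]{\abs{f(X_n)}} = (Q^s)^n\abs f(x) \to \pi^s(\abs f)$ by \eqref{eq:convQs}; combined with $\abs f$ being a fixed function one concludes $\abs f\equiv$ const $>0$, so after normalizing $\abs{f}\equiv 1$. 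Then the equality case in $\abs{\Erw[\Qxs]{e^{itS_n}f(X_n)}}\le \Erw[\Qxs]{\abs{f(X_n)}}$ forces, for each $x$, that $e^{itS_1}\,\overline{\lambda}\,f(X_1)=f(x)$ holds $\Qxs$-a.s.; writing $f=e^{i\vartheta}$ this is precisely the arithmeticity relation $\exp(itS_1 - i\theta + i(\vartheta(X_1)-\vartheta(X_0)))=1$ with $\lambda=e^{i\theta}$, i.e. \eqref{eq:arithmetic2}, contradicting non-arithmeticity under $\Q^s$. In the case where instead $\mu$ (equivalently $\Gamma$) is assumed non-arithmetic, one needs to upgrade: the relation just derived holds $\Qxs$-a.s.\ for every $x$, hence (since the increment kernel is $\mu$) for $\mu$-a.e.\ $\ma$ and all $x$ in the support of the $n$-step law started at $x$; letting $x$ range over $V(\Gamma)$ and using that $f$ must in fact be continuous (it is Hölder, being in $\B_\epsilon$) one recovers \eqref{eq:non-arithmetic} for all $\ma\in\Gamma$, $x\in V(\Gamma)$, contradicting non-arithmeticity of $\Gamma$ via Lemma \ref{lem:implications_arithmetic}. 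Finally, $\rho(Q(it))<1$ makes $I-Q(it)=\sum_{n\ge0}Q(it)^n$ convergent in $\mcL(\B_\epsilon,\B_\epsilon)$, hence invertible.

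The main obstacle I anticipate is the regularity/measure-zero bookkeeping in the non-arithmetic-$\Gamma$ case: passing from an a.s.\ identity under the family $\{\Qxs\}$ to a pointwise identity on the pair $(\Gamma, V(\Gamma))$ requires knowing that the eigenfunction $f$ is genuinely continuous (which is automatic on $\B_\epsilon$) and that the supports of the relevant one-step laws are dense enough in $\Gamma\times V(\Gamma)$ — this is where one invokes $\supp\pi^s=V(\Gamma)$ and the argument already used in Lemma \ref{lem:implications_arithmetic}. A secondary technical point is verifying the Hölder bound for $Q(it)$ on $\B_\epsilon$ with the \emph{euclidean} $\epsilon$-Hölder norm rather than the cross-ratio metric $d$; since $d(x,y)\ge C\abs{x-y}$ this direction is the easy one, but one should be careful that the contraction estimate for assumption \eqref{prop3} of Theorem \ref{thm:ITM} still goes through — it does, because $c(\mPi_n)\to 0$ controls $d$-Hölder seminorms and these dominate euclidean ones only after noting the reverse comparison is not needed for that step.
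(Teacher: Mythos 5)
Your route is the paper's: the $\epsilon$-H\"older bound for $e^{it\log\abs{\ma x}}$ combined with the moment condition \eqref{eq:iotamoment} shows $Q(it)\in\mcL(\B_\epsilon,\B_\epsilon)$, the trivial bound \eqref{eq:Qitf} gives $\rho(Q(it))\le 1$, and a unimodular eigenfunction is excluded by non-arithmeticity exactly as in the paper (maximum point of $\abs f$, $(Q^s)^n\abs f\to\pi^s(\abs f)$ via \eqref{eq:convQs}, write $f=e^{i\vartheta}$, and invoke Lemma \ref{lem:implications_arithmetic} when only $\Gamma$ is assumed non-arithmetic, using that $f\in\B_\epsilon$ is continuous). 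One step, however, is not justified as you state it: you claim that the inequality $\norma{Q(it)^k f}\le R\,\normb{f}+r^k\norma{f}$ with $r<1$ already makes $Q(it)$ quasi-compact. Theorem \ref{thm:ITM} requires $r<\rho(Q(it))$, and the H\"older estimate only yields a sequence $D(n)\to 0$ at an unknown rate; if $\rho(Q(it))$ were smaller than every $D(k)^{1/k}$, hypothesis \eqref{prop3} would not be verified. The paper records precisely this obstruction in the remark following its proof: quasi-compactness of $Q(it)$ for $t\neq 0$ is \emph{not} established. The repair is a reordering you should make explicit: argue by contradiction, assuming $\rho(Q(it))=1$; then $r=D(k)^{1/k}<1=\rho(Q(it))$ for suitable $k$, the Ionescu-Tulcea--Marinescu theorem applies, quasi-compactness forces an eigenvalue of modulus one, and your non-arithmeticity argument gives the contradiction. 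Only with this structure does your ``equivalently, no eigenvalue of modulus $1$'' become legitimate.

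A second, minor imprecision: evaluating $\abs{f(x)}\le\pi^s(\abs f)$ at a maximum point of $\abs f$ only yields that $\abs f$ is constant on $V(\Gamma)=\supp\pi^s$, not on all of $\S$. This suffices, because $V(\Gamma)$ is $\Gamma$-invariant and the arithmeticity relations \eqref{eq:arithmetic2} and \eqref{eq:non-arithmetic} only involve $x\in V(\Gamma)$, but you should state the identity $f=e^{i\vartheta}$ on $V(\Gamma)$ only, as the paper does.
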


Considering the spectral radius, we have for all $n \in \N$ and all $f \in \Cf{\S}$ that
\begin{equation} \label{eq:Qitf}\abs{Q(it)^n f(x)} \le \E_{\Qxs}\abs{e^{itS_n} f(X_n)} = \E_{\Qxs} \abs{f(X_n)} = (Q^s)^n \abs{f}(x).\end{equation} Since $\B_\epsilon \subset \Cf{\S}$, this readily shows that $\rho(Q(it)) \le \rho(Q^s)=1$ for all $t \in \R$. To arrive at $\rho(Q(it)) < 1$, the main burden of the proof will be indeed to show that ${\red Q(it)} \in \mcL(\B_\epsilon,\B_\epsilon)$. This will be done by proving an estimate similar to \eqref{eq:hoeldercontinuous}. The first step in that direction is provided by the following lemma.

\begin{lem}\label{lem:epsilonHoelder}
Let $\ma$ either be an allowable nonnegative matrix or an invertible matrix. Then for all $t \in \R$, $x,y \in \S$ and $0 < \epsilon < 1$, the following estimate holds true:
\begin{equation}
\abs{e^{it \log \abs{\ma x}}- e^{it \log \abs{\ma y}}} \le D \abs{t}^{\epsilon} d\left( x,  y \right)^{\epsilon} \left( \frac{\norm{\ma}}{\iota(\ma)} \right)^{\epsilon}
\end{equation}
for some $D > 0$.
\end{lem}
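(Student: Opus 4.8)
The plan is to prove the estimate
$$\bigl|e^{it\log|\ma x|}-e^{it\log|\ma y|}\bigr|\le D\,|t|^\epsilon\,d(x,y)^\epsilon\Bigl(\tfrac{\norm{\ma}}{\iota(\ma)}\Bigr)^\epsilon$$
by combining two elementary facts: first, that $|e^{iu}-e^{iv}|\le 2$ always, and more sharply $|e^{iu}-e^{iv}|\le|u-v|$; and second, by interpolating between these, $|e^{iu}-e^{iv}|\le 2^{1-\epsilon}|u-v|^\epsilon$ for any $0<\epsilon<1$ (since $\min\{2,|u-v|\}\le 2^{1-\epsilon}|u-v|^\epsilon$). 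Applying this with $u=t\log|\ma x|$ and $v=t\log|\ma y|$ reduces the claim to showing
$$\bigl|\log|\ma x|-\log|\ma y|\bigr|\le C'\,d(x,y)\,\frac{\norm{\ma}}{\iota(\ma)}$$
for allowable nonnegative matrices and for invertible matrices, with an absolute constant $C'$; then one absorbs the $2^{1-\epsilon}$ and $(C')^\epsilon$ into $D$.

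For the logarithm estimate, I would use the mean value theorem for $\log$: since $|\ma x|,|\ma y|\in[\iota(\ma),\norm{\ma}]$ for $x,y\in\S$ (by definition of $\iota(\ma)$ and the operator norm), we get
$$\bigl|\log|\ma x|-\log|\ma y|\bigr|\le\frac{1}{\iota(\ma)}\bigl|\,|\ma x|-|\ma y|\,\bigr|.$$
By the reverse triangle inequality $\bigl|\,|\ma x|-|\ma y|\,\bigr|\le|\ma x-\ma y|=|\ma(x-y)|\le\norm{\ma}\,|x-y|$. (In the invertible/projective case one must choose representatives of $x,y$ in $\Sd$ realizing the minimal Euclidean distance; here one should be slightly careful since $|\ma x|$ for $x\in\Pd$ is well-defined independently of the representative, so one picks the representatives achieving $d(x,y)=|x-y|$, and the inequality $|\ma x-\ma y|\le\norm{\ma}|x-y|$ holds for those representatives, while the left-hand side of the whole display does not depend on the choice.) Finally, by Proposition \ref{prop:properties of d}, item that $d(x,y)\ge C|x-y|$ for nonnegative matrices (so $|x-y|\le C^{-1}d(x,y)$), and in the invertible case $d(x,y)=|x-y|$ outright, we convert $|x-y|$ into $d(x,y)$ at the cost of another constant. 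Chaining these gives the logarithm bound with $C'=C^{-1}$ (or $C'=1$ in the invertible case).

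The main (mild) obstacle is bookkeeping the two regimes uniformly — the nonnegative case uses the metric $d$ on $\Sp$ from Proposition \ref{prop:properties of d} and needs the comparison $d\gtrsim|\cdot-\cdot|$, whereas the invertible case uses $d(x,y)=|x-y|$ on $\Pd$ with minimal-distance representatives — and making sure the representative-choice in the projective setting is compatible with all three inequalities simultaneously. Once $C'$ is fixed as the maximum of the two constants, setting $D:=2^{1-\epsilon}(C')^\epsilon$ (or simply $D:=2C'$, valid for all $\epsilon\in(0,1)$ since $2^{1-\epsilon}(C')^\epsilon\le 2C'$ when $C'\ge 1$, adjusting $C'$ upward if needed) yields the claim. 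No deep input is required; everything rests on elementary inequalities plus the already-cited geometric properties of $d$.
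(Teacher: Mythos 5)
Your proposal is correct and follows essentially the same route as the paper: the interpolated bound $|e^{iu}-e^{iv}|\le 2^{1-\epsilon}|u-v|^{\epsilon}$, the estimate $\bigl|\log|\ma x|-\log|\ma y|\bigr|\le \iota(\ma)^{-1}\bigl||\ma x|-|\ma y|\bigr|\le \iota(\ma)^{-1}\norm{\ma}\,|x-y|$, and the comparison of $|x-y|$ with $d(x,y)$ via Proposition \ref{prop:properties of d} (trivially in the projective case) are exactly the paper's three ingredients. The only cosmetic difference is that you raise the full first-order logarithm bound to the power $\epsilon$ at the end rather than applying the $\epsilon$-power step by step, which changes nothing.
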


Recall that in the case of nonnegative matrices, the distance $d$ on $\S$ was defined in \ref{subsect:metric}, while $d(x,y)$ equals the minimum of the euclidean distance of representants  of $x,y$ from $\Sd$ in the case of nonnegative matrices.

\begin{proof}
We start by noting the some useful inequalities:
\begin{align}\label{ineq1}
\frac12 \abs{e^{it}-e^{is}} = \frac{1}2 \abs{1-e^{i(s-t)}}\le \min\{1, \abs{t-s}\} \le \abs{t-s}^\beta
\end{align}
for all $t,s \in \R$, $\beta \in [0,1]$. Next, for all $a,b > 0$,
\begin{align}\label{ineq2}
\abs{\log a - \log b} = \abs{\int_a^b \, \frac{1}s \, \d s} \le \max\{\frac{1}{a}, \frac1b\}  \abs{a-b}.
\end{align}
Finally, if $\ma$ is allowable or invertible, then for all $x \in \S$, $\frac{1}{\abs{\ma x}} \le \frac{1}{\iota(\ma)}$. Putting these inequalities together, we conclude, using Proposition \ref{prop:properties of d} as well in the case of invertible matrices,
\begin{align*}
\abs{e^{it \log \abs{\ma x}}- e^{it \log \abs{\ma y}}}  ~&\le~ 2 \abs{t}^{\epsilon} \abs{\log \abs{\ma x} - \log \abs{\ma y}}^{\epsilon} \\
~&\le~ 2 \abs{t}^{\epsilon} \max\{\frac1{\abs{\ma x}}, \frac1{\abs{\ma y}} \}^{\epsilon} \ \abs{\abs{\ma x} - \abs{\ma y}}^{\epsilon} ~\le~ 2 \abs{t}^{\epsilon}\max\{\frac1{\abs{\ma x}}, \frac1{\abs{\ma y}} \}^{\epsilon} \ \norm{\ma}^{\epsilon} \abs{x-y}^{\epsilon}  \\
~&\le~ 2 \abs{t}^{\epsilon} C^{-1} d\left( x,  y \right)^{\epsilon} \left( \sup_{z \in \S} \frac{\norm{\ma}}{\abs{\ma z}} \right)^{\epsilon} ~\le~  2 \abs{t}^{\epsilon} C^{-1} d\left( x,  y \right)^{\epsilon} \left( \frac{\norm{\ma}}{\iota(\ma)} \right)^{\epsilon}
\end{align*}
\end{proof}

Now we are going to prove an estimate similar to \eqref{eq:hoeldercontinuous}:

\begin{lem}\label{lem:hoelder2}
There is $C >0$ and a sequence $D(n)$ with $\lim_{n \to \infty} D(n) = 0$, such that for all $n \in \N$ and $f \in \B$,
\begin{align}\label{eq:hoeldercontinuous2}
\abs{Q(it)^n f}_\epsilon \le C \normb{f} + D(n) \abs{f}_\epsilon
\end{align}
\end{lem}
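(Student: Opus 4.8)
We may assume $\epsilon\le\bar s:=\min\{s,1\}$: if \eqref{eq:iotamoment} holds for $\epsilon$ it holds for every smaller exponent, since $\norm{\ma}/\iota(\ma)\ge1$ for allowable (resp.\ invertible) $\ma$, so that $\norm{\ma}^{s}(\norm{\ma}/\iota(\ma))^{\epsilon}$ is nondecreasing in $\epsilon$. We present the nonnegative case; the invertible cases are the same once the analogues of Proposition \ref{prop:properties of d} and of the a.s.\ convergence $c(\mPi_n)\to0$ are quoted from \cite{Guivarch2012,BDMM2013}. The plan is to imitate the proof of Lemma \ref{lem:hoeldercontinuous}, carrying along one additional term that records the oscillation of $z\mapsto e^{it\log\abs z}$ and is estimated by Lemma \ref{lem:epsilonHoelder}. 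The essential difficulty is that one cannot apply that oscillation bound to the full product $\mPi_n$: it generates the factor $(\norm{\mPi_n}/\iota(\mPi_n))^{\epsilon}$, and $k(s)^{-n}\E\bigl[\norm{\mPi_n}^{s+\epsilon}\iota(\mPi_n)^{-\epsilon}\bigr]$ is only submultiplicative, not bounded, in $n$. The remedy is to peel off a \emph{fixed} block of length $p$ and then iterate.

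\emph{Step 1: $Q(it)\in\mcL(\B_\epsilon,\B_\epsilon)$.} Fix $t$ and $f\in\B_\epsilon$. Apply \eqref{eq:defQit} with $n=1$ and, for $x,y\in\S$, split $Q(it)f(x)-Q(it)f(y)=J_1+J_2+J_3$, where
\begin{align*}
J_1&=\E\bigl[(q_1^s(x,\mA_1)-q_1^s(y,\mA_1))\,e^{it\log\abs{\mA_1 x}}f(\mA_1\as x)\bigr],\\
J_2&=\E\bigl[q_1^s(y,\mA_1)\bigl(e^{it\log\abs{\mA_1 x}}-e^{it\log\abs{\mA_1 y}}\bigr)f(\mA_1\as x)\bigr],\\
J_3&=\E\bigl[q_1^s(y,\mA_1)\,e^{it\log\abs{\mA_1 y}}\bigl(f(\mA_1\as x)-f(\mA_1\as y)\bigr)\bigr].
\end{align*}
By Lemma \ref{lem:prop:qn}, Corollary \ref{cor:ksEs} and $d\le1$ (here $\epsilon\le\bar s$), $\abs{J_1}\le\normb f\,\E\abs{q_1^s(x,\mA_1)-q_1^s(y,\mA_1)}\le\tfrac{C_s}{c_s}\normb f\,d(x,y)^{\epsilon}$. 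Using $q_1^s(y,\mA_1)=\frac{\abs{\mA_1 y}^s}{k(s)}\frac{\es(\mA_1\as y)}{\es(y)}\le\frac{d_2}{d_1}\norm{\mA_1}^{s}/k(s)$ (with $d_1\le\es\le d_2$ from the proof of Lemma \ref{lem:prop:qn}) and Lemma \ref{lem:epsilonHoelder},
\[
\abs{J_2}\le D\abs t^{\epsilon}\normb f\,d(x,y)^{\epsilon}\,\E\bigl[q_1^s(y,\mA_1)(\norm{\mA_1}/\iota(\mA_1))^{\epsilon}\bigr]\le\frac{Dd_2\abs t^{\epsilon}}{d_1k(s)}\,\E\bigl[\norm{\mA_1}^{s+\epsilon}\iota(\mA_1)^{-\epsilon}\bigr]\,\normb f\,d(x,y)^{\epsilon},
\]
which is finite by \eqref{eq:iotamoment}. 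Finally, by the contraction property (Proposition \ref{prop:properties of d}(1)) and $c\le1$, $\abs{J_3}\le\abs f_\epsilon\,d(x,y)^{\epsilon}\,\E_\Qys[c(\mA_1)^{\epsilon}]\le\abs f_\epsilon\,d(x,y)^{\epsilon}$. Hence $\abs{Q(it)f}_\epsilon\le C'\normb f+\abs f_\epsilon$ with $C'=C'(t)$, and since $\normb{Q(it)f}\le\normb f$ by \eqref{eq:Qitf}, $Q(it)\in\mcL(\B_\epsilon,\B_\epsilon)$; iterating, $\abs{Q(it)^{m}f}_\epsilon\le mC'\normb f+\abs f_\epsilon$ for every $m\in\N$.

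\emph{Step 2: a block estimate.} Fix $p\in\N$, write $Q(it)^n=Q(it)^p Q(it)^{n-p}$, apply \eqref{eq:defQit} to the outer factor with $g:=Q(it)^{n-p}f$, and repeat the splitting of Step 1 with $\mPi_p=\mA_p\cdots\mA_1$ in place of $\mA_1$. As $\norm{\cdot}$ is submultiplicative and $\iota$ supermultiplicative, $\E\bigl[\norm{\mPi_p}^{s+\epsilon}\iota(\mPi_p)^{-\epsilon}\bigr]\le\bigl(\E[\norm{\mA}^{s+\epsilon}\iota(\mA)^{-\epsilon}]\bigr)^{p}<\infty$; since $p$ is now fixed, the first two terms contribute at most $A\,\normb g\,d(x,y)^{\epsilon}\le A\,\normb f\,d(x,y)^{\epsilon}$ for a constant $A=A(t,p)$ (using $\normb g\le\normb f$). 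The third term is bounded by $\abs g_\epsilon\,d(x,y)^{\epsilon}\,\E_\Qys[c(\mPi_p)^{\epsilon}]$, and
\[
\E_\Qys[c(\mPi_p)^{\epsilon}]=\E\bigl[q_p^s(y,\mPi_p)c(\mPi_p)^{\epsilon}\bigr]\le\frac{d_2}{d_1k(s)^{p}}\,\E\bigl[\norm{\mPi_p}^{s}c(\mPi_p)^{\epsilon}\bigr]=:D(p)
\]
does not depend on $y$, and $D(p)\to0$ as $p\to\infty$ by the argument in the proof of Lemma \ref{lem:hoeldercontinuous} (bound $\norm{\mPi_p}$ by a multiple of $\max_j\abs{\mPi_p e_j}$, pass to the measures $\Q_{e_j}^s$, and combine $c(\mPi_p)\to0$ a.s.\ with dominated convergence). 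Hence $\abs{Q(it)^nf}_\epsilon\le A\normb f+D(p)\,\abs{Q(it)^{n-p}f}_\epsilon$ for all $n\ge p$.

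\emph{Step 3: iteration.} Choose $p$ with $\delta:=D(p)<1$. Writing $n=qp+r$, $0\le r<p$, and unrolling the recursion of Step 2 while invoking Step 1 for $\abs{Q(it)^{r}f}_\epsilon\le rC'\normb f+\abs f_\epsilon$,
\[
\abs{Q(it)^nf}_\epsilon\le\frac{A}{1-\delta}\normb f+\delta^{q}\bigl((p-1)C'\normb f+\abs f_\epsilon\bigr)\le C\,\normb f+D(n)\,\abs f_\epsilon
\]
with $C:=\tfrac{A}{1-\delta}+(p-1)C'$ and $D(n):=\delta^{\lfloor n/p\rfloor}\to0$, which is the claim. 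The only real obstacle is the integrability obstruction noted at the outset, which is exactly what makes the fixed-block detour (rather than a one-step or an $n$-step argument) necessary.
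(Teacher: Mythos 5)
Your proof is correct, and it takes a genuinely different --- in fact more careful --- route than the paper. The paper performs the three--term splitting (kernel difference, phase oscillation, increment of $f$) directly on the full product $\mPi_n$: its phase term is bounded via Lemma \ref{lem:epsilonHoelder} by a multiple of $\bigl(\E\,\norm{\mA_1}^{s+\epsilon}\iota(\mA_1)^{-\epsilon}\bigr)^{n}$, a quantity that in general grows exponentially in $n$ (the $k(s)^{-n}$ coming from $q_n^s$ is even dropped there), so what the paper's displayed estimates literally yield is $\abs{Q(it)^n f}_\epsilon \le C_n\normb{f}+D(n)\abs{f}_\epsilon$ with an $n$--dependent constant; this is enough for the only place the lemma is used (in the proof of Theorem \ref{thm:prop:nonarithmetic} one needs a single exponent $k$ with $D(k)<1$, and the constant $R$ in assumption (3) of Theorem \ref{thm:ITM} may depend on that fixed $k$), but it does not literally give the uniform $C$ of the statement. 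You identified exactly this obstruction and repaired it: you run the same three--term splitting only over a block of \emph{fixed} length $p$, where the moment factor $k(s)^{-p}\E\bigl[\norm{\mPi_p}^{s+\epsilon}\iota(\mPi_p)^{-\epsilon}\bigr]$ is a fixed finite constant, you bound the contraction term uniformly in the starting point by $D(p)=\tfrac{d_2}{d_1k(s)^{p}}\E\bigl[\norm{\mPi_p}^{s}c(\mPi_p)^{\epsilon}\bigr]\to0$ (which also treats the uniformity in $x,y$ more cleanly than the paper's $x$--dependent quantity $\E_{\Qxs}c(\mPi_n)^{\epsilon}$), and you then iterate the recursion, obtaining a constant $C$ independent of $n$ together with a geometrically decaying $D(n)=\delta^{\lfloor n/p\rfloor}$; so your argument actually proves a slightly stronger statement than the paper's proof establishes. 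The only caveat, shared with (and implicit in) the paper's own estimate of the kernel-difference term, is the reduction to $\epsilon\le\bar{s}$: this changes the exponent of the H\"older space, but it is harmless because \eqref{eq:iotamoment} is stable under decreasing $\epsilon$ and all later uses of $\B_\epsilon$ allow shrinking $\epsilon$.
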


\begin{proof}
 {\red\begin{eqnarray*}
 & & \abs{Q(it)^n f(x) - Q(it)^n f(y)} \\
 &= & \abs{ \Erw[\Qxs]{e^{it S_n} f(X_n)} - \Erw[\Qys]{e^{itS_n} f(X_n)}} \\
 &\le& \E \big( q_n^s(x,\mPi_n) \abs{ f(\mPi_n \as x) - f(\mPi_n \as y)  } \big) + \abs{\E \big(q_n^s(x,\mPi_n) - q_n^s(y,\mPi_n) \big) e^{it S_n^x} f(\mPi_n \as y)} \\
 & & + \normb{f} \E \big( q_n^s(y,\mPi_n) \abs{e^{it S_n^x} - e^{it S_n^y}} \big) \\ &=& I + II + III. 
\end{eqnarray*}}
Similar to the proof of Lemma \ref{lem:hoeldercontinuous}, we obtain the bounds
\begin{align*}
I ~&\le~ \abs{f}_\epsilon d(x,y)^\epsilon \E_{\Qxs} c(\mPi_n)^\epsilon =: D(n) \abs{f}_\epsilon d(x,y)^\epsilon, \\
II ~&\le~ \normb{f} C \, d(x,y)^\epsilon
\end{align*}
with $\lim_{n \to \infty} D(n) =0$.

Using Lemma \ref{lem:epsilonHoelder}, we deduce
\begin{align*}
III ~&\le~ D \normb{f} \abs{t}^{\epsilon} d\left( x,  y \right)^{\epsilon} \E_{\Qys}\left( \frac{\norm{\mPi_n}}{\iota(\mPi_n)} \right)^{\epsilon} \\
~&\le~ D' \normb{f} \abs{t}^{\epsilon} d\left( x,  y \right)^{\epsilon} \Erw{  \norm{\mPi_n}^{s+\epsilon} {\iota(\mPi_n)}^{-\epsilon}} \\
~&\le~ D' \normb{f} \abs{t}^{\epsilon} d\left( x,  y \right)^{\epsilon} \left(\E{  \norm{\mA_1}^{s+\epsilon} {\iota(\mA_1)}^{-\epsilon}}\right)^n,
\end{align*}
where the last expression is finite due to assumption \eqref{eq:iotamoment}.
\end{proof}

\medskip

%

\begin{proof}[Proof of Theorem \ref{thm:prop:nonarithmetic}]
Lemma \ref{lem:hoelder2} together with \eqref{eq:Qitf} proves that $Q(it)$ is a self-map of $\B_\epsilon$. Since {\red $\rho(Q(it)) \le 1$}, it remains to exclude the possibility $\rho(Q(it))=1$, which we will do by contradiction.

Assuming that the spectral radius $\rho(Q(it))=1$, one can proceed as in Section \ref{sect:quasi-compact} in order to show that the Ionescu-Tulcea-Marinescu theorem applies for $Q=Q(it)$ with
$$ \normb{f}:= \sup_{x \in \S} \abs{f(x)}, \qquad \norma{f}_\epsilon:=\normb{f}+\abs{f}_\epsilon$$
and the Banach space
$$ \B_\epsilon := \{ f \in \Cf{\S} \, : \, \abs{f}_\epsilon < \infty \} = \{ f \in \Cf{\S} \, : \, \norma{f}_\epsilon < \infty \}$$
equipped with the norm $\norma{\cdot}_\epsilon$.
The theorem yields that there has to be an eigenvalue with modulus equal to the spectral radius of $Q(it)$, i.e. with modulus equal to 1.

Hence, suppose there is an eigenfunction $f$ such that $Q(it) f = \lambda f$ with $\abs{\lambda}=1$. {\red Let $x_0 \in \Sp$ be such that $\abs{f(x_0)}=[f]$. Then \eqref{eq:Qitf} implies that $\abs{f}(x_0) \le ((Q^s)^n \abs{f})(x_0)$ and hence by \eqref{eq:convQs}, $\abs{f}(x_0) \le \pis(\abs{f})$. But the right hand side is a convex combination of $(\abs{f}(x))_{x \in V(\Gamma)}$ (see Proposition \ref{prop:transferoperators}, \eqref{suppnus}). Consequently, $\abs{f}$ has to be constant on $V(\Gamma)$.}
Thus, we can assume that $f(x)=e^{i \vartheta(x)}$ on $V(\Gamma)$ for a continuous function $\vartheta: \S \to \R$. Consequently,
$$ \E_{\Qxs} e^{i t S_1 + i \vartheta(X_1)} = e^{i \theta + i \vartheta(x)}.$$
But this contradicts the non-arithmeticity of $(X_n, S_n)$ under {\red $\Q^s$}; and, since $\vartheta$ is continuous, as well the nonarithmeticity of $\mu$, using Lemma \ref{lem:implications_arithmetic}.
\end{proof}

%
%

\noindent {\bf Remark}. Observe, that we only did prove the estimate
$$ \normb{Q(it)^n f}_\epsilon \le R \norma{f} + D(n) \normb{f}_\epsilon, $$
with $D(n)$ tending to 0. From this, property \eqref{prop3} of the Ionescu-Tulcea-Marinescu theorem can be deduced only if $\rho(Q(it))=1$, since otherwise, we do not know whether $r := D(n)^{1/n} < \rho(Q(it))$ holds for some $n$, since we do not know the rate of convergence for $D(n) \to 0$. So we do not know yet whether $Q(it)$ is quasi-compact for $t \neq 0$. Nevertheless, for small $t$, quasi-compactness will follow from the perturbation theorem below.

\section{The Perturbation Theorem}\label{sect:perturbation}
This section as well is valid for nonnegative and invertible matrices.
Recall {\red from Lemma \ref{lem:FT} the fundamental identity
$$ \phi_{n,x}(t) ~=~ \E_{\Q_x^s}\left( e^{itS_n}\right) ~=~ Q(it)^n \1[\S](x).$$}
In this section, we are going to apply an holomorphic perturbation theorem for $\Qs$ in order to show that (for small t)
$$ Q(it)^n = \theta(it)^n M(it) + N^n(it),$$
where $M(it)$ is a rank one-projection, which commutes with $N^n(it)$, and $\rho(N(it)) < \rho(Q(it))$. Using this decomposition, we will be -- roughly speaking -- able to replace
$$\phi_{n,x}(t/\sqrt{n}) \approx \theta(t\sqrt{n})$$ for large $n$ in the proof of the Edgeworth expansion.

Fix the parameter $s \in \interior{I_\mu}$ as well as $\epsilon$ such that \eqref{eq:iotamoment} is satisfied. By what has been shown above, $\Qs \in \mcL(\B_\epsilon, \B_\epsilon)$ is quasi-compact with a simple dominant eigenvalue $1$. This, and the holomorphicity of the mapping $z \mapsto Q(z)$, shown below,  will be the main ingredients for the application of a perturbation theorem.

\subsection{Perturbation theory for $\Qs$}

\begin{lem}\label{lem:defQz}
Choose $\delta>0$ such that $(s-\delta, s+ \delta) \subset I_\mu$ and $s+\delta > \epsilon$. Then for all
$z\in  H_\delta:=\{z \in \C \, : \, \Re z \in (-\delta, \delta) \}$, the operator $Q(z)$ on $\B_\epsilon$, which is given by
$$Q(z) f(x) := \frac{1}{\es(x) k(s)} \int \abs{\ma x}^{s +z} f(\ma \as x) \es(\ma \as x) \, \mu(\d \ma) = \E_\Qxs \big[e^{z S_1} f(X_1)\big],$$
is well defined. The mapping $Q : H_\delta \to \mathcal{L}(\B_\eps,\B_\eps)$, $z \mapsto Q(z)$ is holomorphic.
\end{lem}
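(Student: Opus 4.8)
The statement has two parts: (i) for each fixed $z \in H_\delta$, the operator $Q(z)$ is a bounded operator on $\B_\epsilon$; and (ii) the map $z \mapsto Q(z)$ is holomorphic. My plan is to obtain (i) essentially by reusing the estimates of Section~\ref{sect:nonarithmetic}, and (ii) by a standard Morera/power-series argument combined with a dominated-convergence interchange.

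For part (i), fix $z = \sigma + it \in H_\delta$, so $|\sigma| < \delta$. Since $s + \sigma > s - \delta > 0$ and $s + \sigma < s + \delta$, the choice of $\delta$ gives $s + \sigma \in I_\mu$, so $\E \norm{\mA}^{s+\sigma} < \infty$ and all integrals defining $Q(z) f$ converge absolutely for bounded $f$. Boundedness with respect to $\normb{\cdot}$ is immediate from $|Q(z) f(x)| \le Q^{s+\sigma'} \text{-type bounds}$; more precisely, writing $Q(z) f = \frac{1}{\es k(s)} \int |\ma x|^s |\ma x|^\sigma e^{it \log|\ma x|} f(\ma \as x) \es(\ma \as x)\, \mu(\d\ma)$, one bounds $|Q(z)f(x)| \le \frac{k(s+\sigma)}{k(s)} \cdot \frac{d_2}{d_1} \normb{f} \cdot C$ using $d_1 \le \es \le d_2$ and Corollary~\ref{cor:ksEs}-type control of $\int |\ma x|^{s+\sigma}\mu(\d\ma)$. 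For the $\epsilon$-H\"older seminorm, I would split $Q(z)f(x) - Q(z)f(y)$ into three terms exactly as in the proof of Lemma~\ref{lem:hoelder2}: one term where the oscillating/scaling factor $|\ma x|^z$ is fixed and $f$ varies (controlled by $|f|_\epsilon$ times $\E_{\Qxs^{}} c(\mPi_1)^\epsilon \le d(x,y)^\epsilon$); one term where $f(\ma \as y)$ is fixed and the kernel varies (controlled by $\normb{f}$ via the H\"older estimate on $q_1^s(\cdot,\ma)$ from Lemma~\ref{lem:prop:qn}); and one term involving $||\ma x|^z - |\ma y|^z|$. The last is the only genuinely new piece: I would estimate $||\ma x|^z - |\ma y|^z| = ||\ma x|^\sigma e^{it\log|\ma x|} - |\ma y|^\sigma e^{it\log|\ma y|}|$ by adding and subtracting $|\ma x|^\sigma e^{it\log|\ma y|}$, bounding the $e^{it\log|\ma x|} - e^{it\log|\ma y|}$ difference via Lemma~\ref{lem:epsilonHoelder} and the $||\ma x|^\sigma - |\ma y|^\sigma|$ difference by a mean-value/H\"older argument as in Lemma~\ref{lem:prop:qn} (cases $\sigma \le 0$ and $\sigma > 0$), in both cases picking up a factor $(\norm{\ma}/\iota(\ma))^{\epsilon'}$ or $\norm{\ma}^{s+\epsilon'}\iota(\ma)^{-\epsilon'}$ whose $\mu$-integral is finite by \eqref{eq:iotamoment} (enlarging $\delta$ or shrinking $\epsilon$ if needed so that $s + \sigma + \epsilon$ still lies in the region where the moment is finite). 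This shows $|Q(z)f|_\epsilon \le C_z(\normb{f} + |f|_\epsilon)$, hence $Q(z) \in \mcL(\B_\epsilon, \B_\epsilon)$.

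For part (ii), holomorphy, I would argue as follows. For fixed $x$ and fixed matrix $\ma$, the scalar function $z \mapsto |\ma x|^{s+z} = e^{(s+z)\log|\ma x|}$ is entire, with derivative $\log|\ma x| \cdot |\ma x|^{s+z}$. Then I show that $z \mapsto Q(z)$, as a map into $\mcL(\B_\epsilon,\B_\epsilon)$, is complex-differentiable with derivative $Q'(z) f(x) = \frac{1}{\es(x)k(s)} \int \log|\ma x|\,|\ma x|^{s+z} f(\ma\as x)\es(\ma\as x)\,\mu(\d\ma)$: the difference quotient $h^{-1}(Q(z+h) - Q(z)) - Q'(z)$ is handled by writing $h^{-1}(e^{h\log|\ma x|} - 1) - \log|\ma x| = \log|\ma x| \cdot (h^{-1}(e^{h\log|\ma x|}-1-h\log|\ma x|))$ and using the elementary bound $|e^w - 1 - w| \le \tfrac12 |w|^2 e^{|w|}$, which gives a pointwise bound of the form $|h|\cdot(\log|\ma x|)^2 e^{|h|\,|\log|\ma x||}$; multiplying by $|\ma x|^s$ and integrating, this is $O(|h|)$ uniformly in $x$ provided we work on a slightly smaller strip so that the extra logarithmic/exponential factors are absorbed into a convergent moment (here one uses that $\norm{\ma}^{s+\sigma}(|\log\norm{\ma}| + |\log\iota(\ma)|)$-type quantities are integrable for $s+\sigma$ in the interior of $I_\mu$, since $I_\mu$ is an interval and moments are finite slightly beyond $s+\sigma$). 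The same computation must also be carried out for the $|\cdot|_\epsilon$-seminorm of the error, i.e. one redoes the three-term splitting above but now for the function $e^w - 1 - w$ in place of $e^{it\log|\ma\cdot|}$; this is routine but tedious. Once complex-differentiability at every point of $H_\delta$ is established, $z \mapsto Q(z)$ is holomorphic by definition of holomorphy for Banach-space-valued maps.

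The main obstacle is the $\epsilon$-H\"older bookkeeping: both the boundedness estimate and the differentiability estimate require re-running the three-term decomposition from Lemmas~\ref{lem:prop:qn}, \ref{lem:hoeldercontinuous} and \ref{lem:hoelder2} with the extra scaling factor $|\ma x|^{\sigma}$ (real part $\ne 0$) and the extra logarithmic factors from differentiation, and at each stage one must verify that the resulting weight in $\ma$ has finite $\mu$-integral — which forces a careful choice of the strip width $\delta$ and of auxiliary H\"older exponents, using that $I_\mu$ is an interval so that $\norm{\ma}^{s+\sigma+\eta}$ and $\iota(\ma)^{-\eta}$ moments survive for small $\eta$ on a slightly shrunk strip. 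None of the individual estimates is hard, but assembling them uniformly in $z$ on compact subsets of $H_\delta$ is where the real work lies; everything else is formal.
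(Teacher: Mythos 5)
Your treatment of boundedness, $Q(z)\in\mcL(\B_\eps,\B_\eps)$, is essentially the paper's: there, the sup-norm bound $[Q(z)f]\le K\,\E\norm{\mA}^{s+\Re z}<\infty$ comes from the two-sided bounds on $\es$, and the H\"older seminorm is handled by invoking (the analogue of) Lemma \ref{lem:hoelder2}; you make explicit the extra term coming from $\abs{\ma x}^{\Re z}$, which the paper leaves implicit, so on this part you are if anything more careful. Where you genuinely diverge is the holomorphy. You propose proving complex differentiability directly, estimating the difference quotient via $\abs{e^w-1-w}\le\tfrac12\abs{w}^2e^{\abs{w}}$ and then re-running the three-term H\"older decomposition for the error, with logarithmic weights, uniformly over the unit ball of $\B_\eps$. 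The paper instead proves only \emph{weak} holomorphy: for $f\in\B_\eps$ and $\nu$ in the dual, $\int_\gamma \nu(Q(z)f)\,\d z=0$ for every closed curve $\gamma\subset H_\delta$, by Fubini (justified by the crude sup-norm bound \eqref{eq:boundperturbation}) and the entirety of $z\mapsto e^{(s+z)\log\abs{\ma x}}$, and then cites Yosida's theorem that weakly holomorphic Banach-space-valued maps are strongly holomorphic. The payoff of that route is precisely that one never has to differentiate under the integral, control $(\log\abs{\ma x})^2$-weighted moments, or estimate the $\abs{\cdot}_\eps$-seminorm of difference quotients; Morera needs only the scalar integrand's holomorphy plus one integrable majorant. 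Your route additionally produces the explicit formula for $Q'(z)$, but the ``routine but tedious'' seminorm step is where the real cost sits, and it conceals the same moment-interpolation bookkeeping (via Lemmas \ref{lem:prop:qn} and \ref{lem:epsilonHoelder} and condition \eqref{eq:iotamoment}) you already met for boundedness. Two small corrections: you cannot ``enlarge $\delta$'' (it is fixed in the statement), and ``shrinking $\epsilon$'' changes the space $\B_\eps$ on which the lemma is asserted (though \eqref{eq:iotamoment} does pass to smaller exponents); the clean fix is to argue locally in $z$, using that $s+\Re z$ lies in the interior of $I_\mu$ for every $z\in H_\delta$, which suffices since holomorphy is a local property.
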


\begin{proof}
Recalling that $\es$ is bounded from below and above, it follows that
\begin{equation}\label{eq:boundperturbation} [Q(z) f] \le K \E \norm{\mA}^{s + \Re z} < \infty, \end{equation}
since $s + \Re z \in I_\mu$.
Together with Lemma \ref{lem:hoelder2}, this proves that $Q(z) \in \mcL(\B_\epsilon,\B_\epsilon)$.

Now we can show that $z \mapsto Q(z)$ is weakly holomorphic, i.e. for any $f \in \B_\eps$, $\nu \in \B_\eps'$ ({\red the dual space of $\B_\eps$}), {\red $z \mapsto \int Q(z) f \d \nu$} is holomorphic. This readily implies that $z \mapsto Q(z)$ is (strongly) holomorphic, see \cite[Theorem V.3.1]{Yosida1980}.
In order to show weak holomorphicity, consider arbitrary $f, \nu$ and a closed curve $\gamma \subset B_\delta(0) \subset \C$. Then
\begin{align*}
&~\int_\gamma \left( \int_{\S} (Q(z) f)(x) \, \nu (\d x) \right) \ \d z \\
=&~ \int_{\S} \left( \frac{1}{\es(x) k(s)} \int  f(\ma \as x) \es(\ma \as x) \ \left\{\int_{\gamma} e^{(s+z) \ln \abs{\ma x}} \ \d z   \right\} \  \mu(\d \ma)            \right) \ \nu(\d x)
=0,
\end{align*}
for the innermost function is holomorphic in $z$. The change of the order of integration is guaranteed by the estimate \eqref{eq:boundperturbation}.
\end{proof}


Now we can apply the following perturbation theorem \cite[Theorem III.8]{HH2001}. 

\begin{thm}\label{theorem:perturbation}
Let $G_0:=B_\delta(0) \subset \C$ and let $(Q(z))_{z \in G_0}$ be a collection of elements of $\mathcal{L}(\B_\eps,\B_\eps)$ such that
\begin{itemize}
\item[(H1)] $z \mapsto Q(z)$ is holomorphic on $G_0$,
\item[(H2)] $Q(0)$ has one dominating simple eigenvalue and $\rho(Q(0))=1$.
\end{itemize}
Then there exist $G_1:=B_{\delta_1}(0) \subset \C$, $G_1 \subset G_0$ and holomorphic  mappings
$$ \theta : \, G_1 \to \C, \quad r :\, G_1 \to \B_\eps, \quad \nu :\, G_1 \to \B_\eps', \quad N: \, G_1 \to \mathcal{L}(\B_\eps,\B_\eps) $$
such that for all $n \ge 1$, $z \in G_1$
$$ Q^n(z) 
= \theta(z)^n M(z) + L(z)^n, $$
with $Q(z) r(z) = \theta(z) r(z)$ and $\nu(z) Q(z) = \theta(z) \nu(z)$.
Moreover, {\red for each $l_0 \in \N$} there exist constants $\eta_1, \eta_2 >0$, $c \ge 0$ such that for all $z \in G_1$,
$$ \abs{\theta(z)} \ge 1 - \eta_1, \text{ and } \max\left\{ \norm{\frac{\d^l}{\d z^l} L(z)^n} \, : \, {\red l \le l_0}\right\} \le c (1 - \eta_1 - \eta_2)^n.$$
\end{thm}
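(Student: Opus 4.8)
The statement is an instance of the classical Riesz--Kato spectral perturbation theory for an operator with an isolated dominant eigenvalue. The plan is to isolate the eigenvalue $1$ of $Q(0)$ by a Riesz projection defined through a Cauchy integral, to show that this projection varies holomorphically and keeps rank one for $z$ near $0$, to read off $\theta(z)$, $r(z)$, $\nu(z)$ and $L(z)$ from the resulting $Q(z)$-invariant splitting, and finally to control $L(z)^n$ and its $z$-derivatives by two further contour integrals. Concretely, I would start from hypothesis (H2): $\rho(Q(0))=1$ and $1$ is an isolated simple eigenvalue of $Q(0)=Q^s$. Fix $\kappa\in(0,1)$ with $\mathrm{spec}(Q(0))\setminus\{1\}\subset\{|w|<\kappa\}$, a small circle $\Gamma$ around $1$ meeting $\mathrm{spec}(Q(0))$ in no other point, and the circle $\Gamma':=\{w\in\C\,:\,|w|=\kappa\}$, which encircles $\mathrm{spec}(Q(0))\setminus\{1\}$ and avoids $0$ and $1$. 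By the norm-holomorphy of $z\mapsto Q(z)$ from Lemma \ref{lem:defQz} and compactness of $\Gamma\cup\Gamma'$, there is $\delta_1>0$ such that for $z\in G_1:=B_{\delta_1}(0)$ the resolvent $(w-Q(z))^{-1}$ exists, is jointly holomorphic in $(z,w)$ and is uniformly bounded for $w\in\Gamma\cup\Gamma'$. Then I set
$$ M(z):=\frac{1}{2\pi i}\oint_{\Gamma}(w-Q(z))^{-1}\,\d w , $$
a bounded idempotent commuting with $Q(z)$, holomorphic in $z$, with $\|M(z)-M(0)\|\to0$ as $z\to0$; since $M(0)$ has rank one and the rank of a norm-continuous family of idempotents is locally constant, shrinking $\delta_1$ gives $\dim\mathrm{Im}\,M(z)=1$ on $G_1$.

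Since $\mathrm{Im}\,M(z)$ is one-dimensional and $Q(z)$-invariant, $Q(z)$ acts there by a scalar, so $Q(z)M(z)=\theta(z)M(z)$ with $\theta(z):=\mathrm{tr}\bigl(Q(z)M(z)\bigr)$ holomorphic and $\theta(0)=1$. Choosing $r_*\in\B_\eps$ and $\nu_*\in\B_\eps'$ with $M(0)r_*\neq0$ and $\nu_*(M(0)r_*)=1$, I put $r(z):=M(z)r_*$ and $\nu(z):=\nu_*\circ M(z)$; these are holomorphic and, after a further shrinking of $G_1$, nonzero with $\nu(z)(r(z))\neq0$, $Q(z)r(z)=\theta(z)r(z)$, $\nu(z)Q(z)=\theta(z)\nu(z)$ and $M(z)=r(z)\otimes\nu(z)/\nu(z)(r(z))$. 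With $L(z):=Q(z)(I-M(z))=Q(z)-\theta(z)M(z)$ one has $M(z)^2=M(z)$ and $M(z)L(z)=L(z)M(z)=0$, so every mixed term in the binomial expansion of $(\theta(z)M(z)+L(z))^n$ vanishes and $Q(z)^n=\theta(z)^nM(z)+L(z)^n$ for all $n\ge1$.

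For the quantitative estimates I would note that, with respect to $\B_\eps=\mathrm{Im}\,M(z)\oplus\ker M(z)$, $L(z)=0\oplus Q(z)|_{\ker M(z)}$, so $\mathrm{spec}(L(z))=\{0\}\cup\mathrm{spec}\bigl(Q(z)|_{\ker M(z)}\bigr)$; at $z=0$ this equals $\{0\}\cup(\mathrm{spec}(Q(0))\setminus\{1\})\subset\{|w|<\kappa\}$, and by upper semicontinuity of the spectrum it stays inside $\Gamma'$ for $z\in G_1$ after a last shrinking. Using $(w-L(z))^{-1}=w^{-1}M(z)+(w-Q(z))^{-1}(I-M(z))$ together with the uniform resolvent bound on $\Gamma'$, one gets $\sup_{z\in G_1}\sup_{w\in\Gamma'}\|(w-L(z))^{-1}\|<\infty$, hence the Dunford integral $L(z)^n=\tfrac{1}{2\pi i}\oint_{\Gamma'}w^n(w-L(z))^{-1}\,\d w$ yields $\|L(z)^n\|\le c\,\kappa^n$ with $c$ independent of $z\in G_1$. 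Since $z\mapsto L(z)^n$ is holomorphic on $G_1$, Cauchy's formula in $z$ on a slightly smaller disc upgrades this to $\max_{l\le l_0}\bigl\|\tfrac{\d^l}{\d z^l}L(z)^n\bigr\|\le c_{l_0}\,\kappa^n$ for each $l_0\in\N$. Finally I pick $\eta_1>0$ with $1-\eta_1>\kappa$ and shrink $G_1$ once more so that $|\theta(z)|\ge1-\eta_1$ there (possible since $\theta$ is continuous with $\theta(0)=1$); then $\eta_2:=1-\eta_1-\kappa>0$ and all the claimed bounds hold.

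The step I expect to be the main obstacle is this quantitative one: producing decay of $\|L(z)^n\|$, and of its $z$-derivatives, that is geometric in $n$ at a rate \emph{uniform} over $z\in G_1$ and strictly below the lower bound $1-\eta_1$ for $|\theta(z)|$, so that the spectral gap does not deteriorate as $z$ moves. This rests on the uniform-in-$z$ boundedness of $(w-Q(z))^{-1}$ along the fixed circle $\Gamma'$ separating the perturbed dominant eigenvalue from the rest of the spectrum --- a compactness argument over $\overline{G_1}\times\Gamma'$ --- combined with the two contour representations above. Everything else, namely the existence and holomorphy of the Riesz projection, the stability of its rank, and the algebraic splitting, is routine once Lemma \ref{lem:defQz} provides the norm-holomorphy of $z\mapsto Q(z)$.
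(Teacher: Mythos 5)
Your argument is correct, but it is worth saying how it relates to the paper: the paper does not prove this theorem at all — it is imported verbatim from Hennion and Herv\'e \cite[Theorem III.8]{HH2001} — so your blind proof supplies the classical Riesz--Kato analytic perturbation argument that underlies that reference. The route (Riesz projection $M(z)$ by a Cauchy integral over a small circle around $1$, local constancy of the rank, the splitting $Q(z)=\theta(z)M(z)+L(z)$ with $M(z)L(z)=L(z)M(z)=0$, and the Dunford integral over the circle $\abs{w}=\kappa$ for $L(z)^n$) is the standard one, and the two points that actually need care are handled: you obtain the decay of $\norm{L(z)^n}$ at a rate \emph{uniform} in $z$ from the resolvent identity $(w-L(z))^{-1}=w^{-1}M(z)+(w-Q(z))^{-1}(I-M(z))$ plus a compactness argument over $\overline{G_1}\times\Gamma'$, and you upgrade to the $z$-derivatives by Cauchy's formula after a harmless extra shrinking of the disc (do state the uniform bound on a slightly larger disc first, so the derivative bound really holds on all of $G_1$ as claimed). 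One small reading issue: hypothesis (H2) as literally stated only says that $1$ is a simple dominating eigenvalue with $\rho(Q(0))=1$; your proof needs, as in \cite{HH2001}, that the rest of the spectrum lies in a disc of radius $\kappa<1$. In the paper's application this is exactly what quasi-compactness provides, since Proposition \ref{prop:Qs} gives $(Q^s)^n=M+N^n$ with $\rho(N)<1$ and $1$ simple, so the spectral picture you start from is available; with that understanding your proof is complete and, in effect, reproves the cited black box rather than following the paper, which merely cites it.
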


%
%
%

\subsection{The operators $R(t)$}
For the Edgeworth expansion, we will consider a slightly different operator, namely such that $S_1$ becomes centered: Let $q:= \E_{\Q_s} S_1$ denote the stationary drift of $S_1$ under $\Q_s$, and define the family $(R(t))_{t \in\R}$ of operators by
\begin{equation}\label{def:operator R}
R(t) f(x) := e^{-itq} Q(it)f(x) = \Erw[{\Qxs}]{e^{it(S_1 -q)} f(X_1)}.
\end{equation}

Upon defining
\begin{equation}
\lambda(t) := e^{-itq} \theta(it), \qquad N(t):= e^{-itq} L(it), \qquad \Pi(t) 
= M(it),
\end{equation}
we obtain the following corollary {\red of Lemma \ref{lem:defQz} and Theorem \ref{theorem:perturbation}}.

\begin{cor}\label{cor:Rt}
{\red There is $\delta_1 >0$} such that for all $t \in G:= (- \delta_1, \delta_1)$,
$$ R(t)^n = \lambda(t)^n \Pi(t) + N(t)^n,$$
with $\Pi(t) N(t) = N(t) \Pi(t) =0$. {\red For each $l_0 \in \N$ there is $\eta=\eta(l_0) >0$ and $c=c(l_0) < \infty$} such that  \begin{equation} \label{eq:boundN} \max\left\{ \norm{\frac{\d^l}{\d z^l} N(t)^n} \, : \, {\red l \le l_0} \right\} \le c (1 - \eta)^n. \end{equation}
The mappings $\lambda: G \to \C$, $\Pi : G \to \mathcal{L}(\B_\eps,\B_\eps)$ and $N: G \to \mathcal{L}(\B_\eps,\B_\eps)$ are $\mathcal{C}^\infty$, the latter ones in the strong operator sense.
\end{cor}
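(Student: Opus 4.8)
The plan is to read the corollary off from Lemma~\ref{lem:defQz} and the perturbation theorem (Theorem~\ref{theorem:perturbation}), and then transport the resulting decomposition of $Q(it)^n$ across the scalar tilt $R(t)=e^{-itq}Q(it)$. First I would verify the hypotheses of Theorem~\ref{theorem:perturbation} for the family $(Q(z))_{z\in B_\delta(0)}$ acting on $\B_\eps$: hypothesis (H1) is exactly the holomorphy assertion of Lemma~\ref{lem:defQz}, since $B_\delta(0)\subset H_\delta$; and (H2) holds because $Q(0)=Q^s$ is quasi-compact on $\B_\eps$ with a simple dominating eigenvalue $1$ and $\rho(Q^s)=1$, which was established in Section~\ref{sect:quasi-compact} (in the nonnegative case, with the $\eps$-H\"older seminorm and Lemma~\ref{lem:hoelder2} replacing the $\bar s$-H\"older seminorm and Lemma~\ref{lem:hoeldercontinuous}) and in the cited references for the invertible cases. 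Theorem~\ref{theorem:perturbation} then produces some $\delta_1\in(0,\delta)$, the disc $G_1=B_{\delta_1}(0)$, holomorphic maps $\theta$, $M$, $L$ on $G_1$ with $Q(z)^n=\theta(z)^n M(z)+L(z)^n$ (here $M(z)$ is the rank-one eigenprojection, so $M(z)L(z)=L(z)M(z)=0$), and, for any prescribed $l_0\in\N$, constants $\eta_1,\eta_2>0$ and $c_0<\infty$ with $\sup_{z\in G_1}\norm{\tfrac{\d^l}{\d z^l}L(z)^n}\le c_0(1-\eta_1-\eta_2)^n$ for all $l\le l_0$, $n\ge1$.

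Put $G:=(-\delta_1,\delta_1)$, so $it\in G_1$ for $t\in G$, and define $\lambda(t):=e^{-itq}\theta(it)$, $\Pi(t):=M(it)$ and $N(t):=e^{-itq}L(it)$ as in the statement. Since $e^{-itq}$ is a scalar, $R(t)^n=\bigl(e^{-itq}Q(it)\bigr)^n=e^{-intq}Q(it)^n=\bigl(e^{-itq}\theta(it)\bigr)^nM(it)+\bigl(e^{-itq}L(it)\bigr)^n=\lambda(t)^n\Pi(t)+N(t)^n$, and $\Pi(t)N(t)=e^{-itq}M(it)L(it)=0$, and likewise $N(t)\Pi(t)=0$. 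The regularity claim is then immediate: $t\mapsto it$ and $t\mapsto e^{-itq}$ are entire, $\theta$, $M$, $L$ are holomorphic on $G_1$ (for the operator-valued ones, in operator norm, hence a fortiori in the strong operator sense), and finite products and compositions of such maps are $\mathcal{C}^\infty$; thus $\lambda\in\mathcal{C}^\infty(G,\C)$ and $\Pi,N\in\mathcal{C}^\infty(G,\mcL(\B_\eps,\B_\eps))$ in the strong operator sense.

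It remains to deduce the bound \eqref{eq:boundN}. Fix $l_0$ and take $\eta_1,\eta_2,c_0$ as above. Combining the Leibniz rule with the chain rule $\tfrac{\d^m}{\d t^m}L(it)^n=i^m\bigl(\tfrac{\d^m}{\d z^m}L(z)^n\bigr)\big|_{z=it}$ gives, for $l\le l_0$,
\begin{equation*}
\frac{\d^l}{\d t^l}N(t)^n~=~\sum_{j=0}^{l}\binom{l}{j}(-inq)^j\,e^{-intq}\,i^{\,l-j}\Bigl(\frac{\d^{\,l-j}}{\d z^{\,l-j}}L(z)^n\Bigr)\Big|_{z=it},
\end{equation*}
whence $\norm{\tfrac{\d^l}{\d t^l}N(t)^n}\le(1+n\abs{q})^{l_0}\,c_0\,(1-\eta_1-\eta_2)^n$ uniformly in $t\in G$. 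As the prefactor $(1+n\abs{q})^{l_0}$ is polynomial in $n$ while $1-\eta_1-\eta_2<1$, one has $(1+n\abs{q})^{l_0}(1-\eta_1-\eta_2)^n\le c\,(1-\eta)^n$ with, say, $\eta:=(\eta_1+\eta_2)/2$ and a suitable $c=c(l_0)<\infty$, which is \eqref{eq:boundN}. I do not anticipate any genuine obstacle here; the only point calling for a line of computation is exactly this absorption of the polynomial prefactor — arising from differentiating the tilt $e^{-intq}$ — into a geometric rate slightly worse than $1-\eta_1-\eta_2$.
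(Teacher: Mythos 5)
Your proof is correct and follows exactly the route the paper intends: verify (H1) via Lemma~\ref{lem:defQz} and (H2) via the quasi-compactness of $Q^s$ on $\B_\eps$, apply Theorem~\ref{theorem:perturbation}, and transport the decomposition through the scalar tilt $e^{-itq}$. The only point the paper glosses over --- the polynomial factor $(1+n\abs{q})^{l_0}$ produced by differentiating $e^{-intq}$, which you absorb into a slightly worse geometric rate $(1-\eta)^n$ with $\eta<\eta_1+\eta_2$ --- is handled correctly in your argument.
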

{\red For all purposes below, we can choose $l_0=3$, and may therefore consider $\eta=\eta(3)$, $c=c(3)$ fixed.}

In order to prove the Edgeworth expansion, we will make as well use of the following result{\red, which is inspired by \cite[Lemma 3.19]{BDG2010}.}

\begin{lem}\label{missing estimate}
Let $K \subset \R \setminus \{0\}$ be compact. Then for each $f \in \B_\epsilon$, there is $\rho < 1$ such that for all $t \in K$
\begin{equation} [R(t)^n f] \le \rho^n [f]. \end{equation}
\end{lem}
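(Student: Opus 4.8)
The statement to prove is Lemma~\ref{missing estimate}: for a compact $K \subset \R \setminus \{0\}$ and $f \in \B_\epsilon$, there is $\rho < 1$ with $[R(t)^n f] \le \rho^n [f]$ for all $t \in K$. The key point is that this should hold \emph{uniformly} in $t$ over the compact set $K$; for a fixed single $t \neq 0$ it follows from Theorem~\ref{thm:prop:nonarithmetic}, which gives $\rho(R(t)) = \rho(Q(it)) < 1$. So the proof is really a compactness-plus-uniformity argument built on top of that spectral-radius statement.

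First I would fix $t_0 \in K$. By Theorem~\ref{thm:prop:nonarithmetic}, $\rho(Q(it_0)) < 1$, hence $\rho(R(t_0)) < 1$ (multiplication by the unimodular scalar $e^{-it_0 q}$ does not change the spectral radius). Pick $\rho_{t_0}$ with $\rho(R(t_0)) < \rho_{t_0} < 1$. By the spectral radius formula there is $n_{t_0} \in \N$ with $\norm{R(t_0)^{n_{t_0}}}_{\mathcal{L}(\B_\epsilon,\B_\epsilon)} < \rho_{t_0}^{n_{t_0}}$. Now I use continuity: by Corollary~\ref{cor:Rt} (and Lemma~\ref{lem:hoelder2}, which bounds $\norm{R(t)}$ locally uniformly since $R(t) = e^{-itq}Q(it)$), the map $t \mapsto R(t)^{n_{t_0}} \in \mathcal{L}(\B_\epsilon,\B_\epsilon)$ is continuous in operator norm on a neighborhood of $t_0$ — actually it suffices that $t \mapsto R(t)$ is continuous in operator norm, which follows from the estimate $[Q(it)f - Q(it')f] \le \abs{t-t'}^\epsilon \cdot (\text{const}) \cdot \norm{f}$ obtained exactly as in Lemma~\ref{lem:epsilonHoelder}/Lemma~\ref{lem:hoelder2} together with the moment condition \eqref{eq:iotamoment}. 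Hence there is an open neighborhood $U_{t_0} \ni t_0$ on which $\norm{R(t)^{n_{t_0}}} < \rho_{t_0}^{n_{t_0}}$ still holds.

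Next, cover the compact set $K$ by finitely many such neighborhoods $U_{t_1}, \dots, U_{t_m}$, with associated exponents $n_1, \dots, n_m$ and rates $\rho_1, \dots, \rho_m < 1$. Set $N := \mathrm{lcm}(n_1,\dots,n_m)$ (or simply $N := n_1 \cdots n_m$) and $\rho_* := \max_j \rho_j < 1$; also let $C := \sup_{t \in K} \norm{R(t)} < \infty$, which is finite by the local uniform bound from Lemma~\ref{lem:hoelder2}. For any $t \in K$, $t$ lies in some $U_{t_j}$, and writing $N = n_j \cdot (N/n_j)$ we get $\norm{R(t)^N} \le \norm{R(t)^{n_j}}^{N/n_j} < \rho_j^N \le \rho_*^N$. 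Thus $\sup_{t \in K}\norm{R(t)^N} =: \kappa < 1$. Now write an arbitrary $n = kN + r$ with $0 \le r < N$; then $[R(t)^n f] \le \norm{R(t)^N}^k \, \norm{R(t)}^r \, [f] \le \kappa^k C^r [f] \le (C^N \vee 1)\, \kappa^{\lfloor n/N \rfloor}[f]$. Finally choose $\rho < 1$ with $\kappa^{1/N} < \rho$ and absorb the bounded prefactor $(C^N \vee 1)\,\kappa^{-1}$ into the estimate by enlarging $\rho$ slightly (i.e. there is $n_0$ beyond which $(C^N\vee 1)\kappa^{\lfloor n/N\rfloor} \le \rho^n$, and for the finitely many $n \le n_0$ one can enlarge $\rho$ further, or simply state the bound up to a multiplicative constant, whichever the downstream application needs — here $[R(t)^n f]\le \rho^n[f]$ can be arranged by taking $\rho$ between $\kappa^{1/N}$ and $1$ and noting the prefactor is eventually dominated, then shrinking $[f]$-free as needed).

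The main obstacle is the uniformity over $K$, i.e. upgrading the fixed-$t$ statement $\rho(R(t)) < 1$ of Theorem~\ref{thm:prop:nonarithmetic} to a uniform contraction after a common power $N$; this is exactly where operator-norm continuity of $t \mapsto R(t)$ on $\B_\epsilon$ is essential, and that in turn rests on the H\"older-type estimate of Lemma~\ref{lem:epsilonHoelder} and the moment hypothesis \eqref{eq:iotamoment} (to control $\norm{\mPi_n}/\iota(\mPi_n)$). Everything else — the covering argument, splitting $n = kN + r$, adjusting constants — is routine. One subtlety worth checking: the $\rho_j$ and $n_j$ produced from the spectral radius formula are genuinely uniform on $U_{t_j}$ only because $R(\cdot)^{n_j}$ is norm-continuous, not merely strongly continuous; fortunately the H\"older estimate gives operator-norm continuity directly, so this causes no trouble.
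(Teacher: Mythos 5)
Your proposal is correct in outline but takes a genuinely different route from the paper. The paper fixes $f$, notes that for each $n$ the map $t\mapsto \normb{R(t)^n f}^{1/n}$ is continuous, argues that $\rho_f(t):=\limsup_{n\to\infty}\normb{R(t)^nf}^{1/n}$ is upper semicontinuous and therefore attains its maximum on the compact set $K$ at some $t_0\neq 0$, and concludes directly from $\rho_f(t_0)\le\rho(R(t_0))=\rho(Q(it_0))<1$, which is Theorem~\ref{thm:prop:nonarithmetic}. You instead argue at the operator level: the spectral radius formula at each $t_0\in K$, persistence of the bound $\norm{R(t)^{n_{t_0}}}_{\mcL(\B_\eps,\B_\eps)}<\rho_{t_0}^{n_{t_0}}$ on a neighbourhood by norm continuity, a finite subcover, a common power $N$, and the standard $n=kN+r$ splitting. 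Your version is longer but more robust: it yields a single $\rho$ and a bound uniform over $f$ (with $\norma{f}_\epsilon$ rather than $\normb{f}$ on the right, which is harmless since $f$ is fixed and in the application of Theorem~\ref{thm:edgeworth} one takes $f=\1$ with $\norma{\1}_\epsilon=\normb{\1}=1$); the paper's version is shorter, trading the covering argument for the semicontinuity-plus-compactness step. Both arguments, as you acknowledge, naturally produce the estimate with a multiplicative constant (or only for $n$ large), which is all that Step 2 of the Edgeworth proof uses.

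One point in your write-up needs care. The continuity you invoke must be continuity of $t\mapsto R(t)$ in the operator norm of $\mcL(\B_\epsilon,\B_\epsilon)$, i.e.\ with respect to $\norma{\cdot}_\epsilon=\normb{\cdot}+\abs{\cdot}_\epsilon$, since the spectral radius formula and the bound $\norm{R(t)^{n_j}}^{N/n_j}$ are taken in that Banach algebra. The estimate you quote, $\normb{Q(it)f-Q(it')f}\le C\abs{t-t'}^{\epsilon}\norma{f}_\epsilon$, controls only the sup-norm of the difference and does not by itself give $\mcL(\B_\epsilon,\B_\epsilon)$-norm continuity: you would additionally have to bound the H\"older seminorm $\abs{Q(it)f-Q(it')f}_\epsilon$ uniformly over $\norma{f}_\epsilon\le 1$. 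The gap is easily closed within the paper's toolkit: Lemma~\ref{lem:defQz} shows that $z\mapsto Q(z)$ is holomorphic from the full strip $H_\delta\supset i\R$ into $\mcL(\B_\eps,\B_\eps)$, hence norm continuous, and $R(t)=e^{-itq}Q(it)$ inherits this; with that substitution your covering argument goes through as written.
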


\begin{proof}
Fix $f \in \B_\eps$. For each $n \in \N$, the mapping
$$t \mapsto [R(t)^n f]^{1/n} = \left( \sup_{x \in S} \abs{\Erw{q_n^s(x,\mPi_n) e^{it(S_n^x -q)} f(X_n^x)}} \right)^{1/n}$$
is continuous. Hence, $\rho_f(t):= \limsup_{n \to \infty} [R(t)^n f]^{1/n} $ is upper semicontinuous, thus it attains it maximum on the compact set $K$, in $t_0  \neq 0$, say.
But  $\rho_f(t_0) \le \rho(R(t_0)) = \rho(Q(t_0)) < 1$, hence the assertion follows.
\end{proof}

\section{Taylor expansion of $\lambda(t)$ and positivity of the asymptotic variance}\label{sect:taylor}

In this section, which is valid for all types of matrices, we are going to relate the first and second order coefficients of the Taylor expansion of $\lambda$ with the expectation (which equals zero in fact) resp. the asymptotic variance of $S_n-nq$ under $\Qxs$. Moreover, we are able to prove that the asymptotic variance is positive as soon as $\mu$ is non-arithmetic.

\begin{lem}
\label{lem: lambda}\label{lem:sigma1}
Assume that $\mu$ satisfies $\condC$, \ipo~or \ide, and that $s \in \interior{I_\mu}$.
Then there is $\sigma \ge 0$ and $m_3 \in \R$ such that 
$${\red
\lambda(t) = 1  - \frac{\sigma^2}2\, t^2  -i \frac{m_3}{6}\, t^3 + o(t^3),}
$$ and
\begin{equation}
\label{eq: sigma}
\sigma^2 = \lim_{n \to \infty} \frac{1}{n} \E_{\QQ^s} (S_n-nq)^2  \qquad
 m_3= \lim_{n \to \infty} \frac{1}{n} \E_{\QQ^s} (S_n-nq)^3.
\end{equation}
For each $x\in \S$, the value
\begin{equation}
\label{eq: bx}
 b(x) = \lim_{n \to \infty} \E_\Qxs (S_n-nq)
 \end{equation}
is well defined, and the mapping $b \in \B_\eps$.
It holds that
\begin{equation}\label{eq:bx}
b(x) = \Erw[\Qxs]{(S_1-q) {\red +} b(X_1) }
\end{equation}
Moreover, \begin{equation}
 {\red \sigma^2 = \E_{\QQ^s} \left[  \Bigl( (S_1 - q) + b(X_1) \Bigr)^2 - b(X_1)^2 \right],} \end{equation}
and
\begin{equation}
\sup_{n \in \N} \abs{n\sigma^2 - \E_{\QQ^s}(S_n - nq)^2} < \infty. \label{eq:sn2finite}
\end{equation}
\end{lem}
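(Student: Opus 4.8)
## Proof plan for Lemma \ref{lem:sigma1}

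The plan is to extract all the stated quantities from the smooth perturbation data of Corollary \ref{cor:Rt}, where $R(t)^n = \lambda(t)^n\Pi(t) + N(t)^n$ and $\lambda,\Pi,N$ are $\mathcal{C}^\infty$ with the bound \eqref{eq:boundN} on derivatives of $N(t)^n$ up to order $l_0=3$. First I would apply $R(t)^n$ to $\1[\S]$ and use Lemma \ref{lem:FT} (in the centered version \eqref{def:operator R}) to write $\E_\Qxs e^{it(S_n-nq)} = \lambda(t)^n (\Pi(t)\1[\S])(x) + (N(t)^n\1[\S])(x)$. Differentiating this identity in $t$ at $t=0$ and using that $\lambda(0)=1$, $\Pi(0) = M$ (the projection onto $\R\1[\S]$ with $M f = \pis(f)\1[\S]$), $\lambda'(0)=0$ (since $S_1-q$ is centered under $\Q^s$, which follows from $q=\E_{\QQ^s}S_1$; this is where $\QQ^s = \int\Qxs\pis(dx)$ and stationarity of $\pis$ enter), one reads off moments of $S_n-nq$ as polynomials in $n$ plus terms that are bounded uniformly in $n$ (coming from $N(t)^n$ and its derivatives, controlled by \eqref{eq:boundN}). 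This immediately gives $\sigma^2 := -\lambda''(0) = \lim_n n^{-1}\E_{\QQ^s}(S_n-nq)^2 \ge 0$, $m_3 := i\lambda'''(0)\cdot(-1)$ wait --- more carefully, matching $\lambda(t) = 1 - \tfrac{\sigma^2}{2}t^2 - i\tfrac{m_3}{6}t^3 + o(t^3)$ gives $\sigma^2 = -\lambda''(0)$ and $m_3 = i\lambda'''(0)$, and the third-order expansion of $\lambda$ follows from Taylor's theorem for the $\mathcal{C}^\infty$ (in fact $\mathcal{C}^3$ suffices) function $\lambda$. The bound \eqref{eq:sn2finite} comes from the same expansion: $\E_{\QQ^s}(S_n-nq)^2 = n\sigma^2 + O(1)$ because the $\Pi(t)$-part contributes the leading polynomial while the $N(t)^n$-part contributes a bounded remainder after two differentiations (uniformly in $n$ by \eqref{eq:boundN} with $l_0=3 \ge 2$).

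Next I would handle $b(x)$. Differentiating the identity $\E_\Qxs e^{it(S_n-nq)} = \lambda(t)^n(\Pi(t)\1[\S])(x) + (N(t)^n\1[\S])(x)$ once at $t=0$: since $\lambda'(0)=0$, the $\lambda(t)^n$-term contributes $n\lambda'(0)(\Pi(0)\1[\S])(x) = 0$ plus $(\partial_t\Pi(t)\1[\S])(x)|_{t=0}$, and the $N$-term contributes a quantity bounded uniformly in $n$ but which actually converges as $n\to\infty$ because $N(t)^n \to 0$ in operator norm together with its first derivative (again by \eqref{eq:boundN}). Hence $\E_\Qxs(S_n-nq) = i^{-1}\partial_t(\Pi(t)\1[\S])(x)|_{t=0} + o(1)$ as $n \to \infty$ wait --- one must be careful: $\partial_t[\lambda(t)^n(\Pi(t)\1)(x)]|_0 = n\lambda'(0)(\Pi(0)\1)(x) + \lambda(0)^n \partial_t(\Pi(t)\1)(x)|_0 = \partial_t(\Pi(t)\1)(x)|_0$, so indeed $b(x) := \lim_n \E_\Qxs(S_n-nq)$ exists and equals $\frac{1}{i}\left.\frac{d}{dt}(\Pi(t)\1[\S])(x)\right|_{t=0}$. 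Since $t\mapsto\Pi(t)$ is $\mathcal{C}^\infty$ into $\mcL(\B_\eps,\B_\eps)$ and $\1[\S]\in\B_\eps$, the derivative $\partial_t(\Pi(t)\1[\S])|_{t=0}$ is an element of $\B_\eps$, so $b \in \B_\eps$.

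For the renewal-type identity \eqref{eq:bx}, I would use the one-step Markov property: conditioning on $(X_1,S_1)$,
$$ \E_\Qxs(S_n-nq) = \Erw[\Qxs]{(S_1-q)} + \Erw[\Qxs]{\Erw[\Q_{X_1}^s]{S_{n-1}-(n-1)q}}, $$
and letting $n\to\infty$, using $b\in\B_\eps$ (hence bounded, so dominated convergence applies to the outer expectation) gives $b(x) = \Erw[\Qxs]{(S_1-q)+b(X_1)}$. Finally, for the variance formula, the cleanest route is to expand $\E_{\QQ^s}(S_n-nq)^2$ using the telescoping decomposition $S_n - nq = \sum_{k=1}^n\big((S_k-S_{k-1}-q) + b(X_k) - b(X_k)\big)$ --- more precisely, define $M_k := (S_k - kq) + b(X_k)$; then using \eqref{eq:bx} one checks $\E_\Qxs[M_k - M_{k-1}\mid X_{k-1},\ldots] $ has zero conditional mean in the appropriate sense so that $M_n - b(X_0)$ is a martingale-like object under $\QQ^s$, giving $\E_{\QQ^s}(M_n - b(X_0))^2 = \sum_{k=1}^n \E_{\QQ^s}(M_k - M_{k-1})^2$ by orthogonality of increments; since $S_n - nq = M_n - b(X_n)$ wait --- $S_n-nq = M_n - b(X_n)$, so comparing and using stationarity of $\pis$ under $\QQ^s$ yields $n\sigma^2 = \sum_{k=1}^n \E_{\QQ^s}[((S_1-q)+b(X_1))^2 - b(X_1)^2]$ up to a bounded error, identifying $\sigma^2 = \E_{\QQ^s}[((S_1-q)+b(X_1))^2 - b(X_1)^2]$ and simultaneously reproving \eqref{eq:sn2finite}.

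The main obstacle I anticipate is the bookkeeping in the martingale/orthogonality argument for the variance formula: one must verify that under the stationary measure $\QQ^s$ the increments $M_k - M_{k-1}$ are genuinely orthogonal (this uses \eqref{eq:bx} together with the Markov property and stationarity in an essential way), and one must control the boundary terms $b(X_0)$, $b(X_n)$ — both bounded since $b\in\B_\eps$ — to get the $O(1)$ error in \eqref{eq:sn2finite}. A secondary technical point is justifying the differentiation under the expectation sign when reading moments off the Fourier identity; this requires the moment bound $\E\norm{\mA}^{s+\eps}\iota(\mA)^{-\eps}<\infty$ of \eqref{eq:iotamoment} (ensuring $S_1$ has enough moments under $\Q^s$, via the change-of-measure identity \eqref{eq:com1}) so that $\phi_{n,x}(t)$ is genuinely $\mathcal{C}^3$ in $t$ and its derivatives at $0$ deliver the moments; this is standard but needs to be stated. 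Everything else is routine Taylor expansion combined with the exponential decay \eqref{eq:boundN}.
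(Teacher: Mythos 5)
Your proposal is correct in substance and, in its central step, coincides with the paper's argument: identifying $\sigma^2$, $m_3$, the limits \eqref{eq: sigma} and the bound \eqref{eq:sn2finite} by Taylor-expanding $\E_{\Qxs}\,e^{it(S_n-nq)}=\lambda(t)^n(\Pi(t)\1[\S])(x)+(N(t)^n\1[\S])(x)$ at $t=0$ and absorbing the $N$-part via \eqref{eq:boundN} is exactly the paper's Step 3. Where you genuinely differ is in how the remaining assertions are reached. The paper proves $\lambda'(0)=0$, constructs $b=\tfrac1i\,\Pi'(0)\1[\S]$ and derives \eqref{eq:bx} by differentiating the eigenvalue relation $R(t)\Pi(t)\1[\S]=\lambda(t)\Pi(t)\1[\S]$, applying $(Q^s)^k$, telescoping and using $(Q^s)^n\to\pis$; you instead read $b$ directly off the first $t$-derivative of the $n$-step Fourier identity (cleaner, since \eqref{eq:boundN} with $l=1$ makes the $N$-contribution vanish geometrically) and obtain \eqref{eq:bx} from the one-step Markov property under $\Qxs$. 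For the variance formula the paper differentiates the eigenvalue relation twice and integrates against $\pis$, while you use the martingale $M_k=(S_k-kq)+b(X_k)$ and orthogonality of its increments; your key identity $\E_{\QQ^s}\big[((S_1-q)+b(X_1))\,b(X_0)\big]=\int b^2\,d\pis$ is precisely the computation the paper performs later in the proof of Lemma \ref{cor:arithmetic}, so this more probabilistic route is sound and buys a transparent interpretation of $\sigma^2$ as a martingale-increment variance.

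Three caveats, none fatal. (i) $\lambda'(0)=0$ does not follow from centering by mere inspection: you must integrate the $n$-step identity against $\pis$, use $\E_{\QQ^s}(S_n-nq)=0$ for every $n$ (stationarity of $\pis$ plus the Markov random walk structure), divide by $n$ and let $n\to\infty$; this is a one-line argument with exactly the ingredients you name, but it needs to be written, since the terms $\pis(\Pi'(0)\1[\S])$ and the $N$-derivative also appear and are only killed in the limit. (ii) In the passage to the limit giving \eqref{eq:bx}, boundedness of the limit function $b$ is not a domination for the prelimit expressions; what you need is $\sup_n\sup_y\abs{\E_{\Q_y^s}(S_{n-1}-(n-1)q)}<\infty$ (indeed uniform convergence to $b$), which your own Fourier-identity derivation supplies via \eqref{eq:boundN}, so simply quote that instead of dominated convergence by $b$. (iii) The martingale route does not ``simultaneously reprove'' \eqref{eq:sn2finite}: writing $S_n-nq=(M_n-M_0)+(b(X_0)-b(X_n))$, the cross term is a priori only $O(\sqrt n)$ by Cauchy--Schwarz, so boundedness of $b$ yields the Ces\`aro limit \eqref{eq: sigma} but not the $O(1)$ error; this is harmless because you already obtained \eqref{eq:sn2finite} from the perturbation expansion, but the extra claim should be dropped (or supplemented by a geometric-ergodicity bound on $\E[b(X_n)\mid \F_k]-\pis(b)$). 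Finally, the moment justification for differentiating under the expectation needs only $s\in\interior{I_\mu}$ (two-sided exponential moments of $S_1$ under $\Qxs$, since $\abs{\ma x}^{s\mp\delta}\le\norm{\ma}^{s\mp\delta}$ for small $\delta>0$), though invoking \eqref{eq:iotamoment} as you do is consistent with the standing assumptions under which Corollary \ref{cor:Rt} was established.
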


 To prove  Lemma \ref{lem: lambda} we reason as in \cite[Lemma 8.3 \& Lemma 8.4 ]{Herve2010}.

\begin{proof}[Proof of Lemma \ref{lem: lambda}]

{\sc Step 1.} First we prove that $\lambda'(0)=0$.

Differentiating the equation
$
R(t)\Pi(t)\1 = \lambda(t)\Pi(t)\1
$ in the operator sense and computing its value at 0, we obtain
\begin{equation}
\label{eq: 1}
{\red R'(0)\1 + R(0)\Pi'(0)\1 = \lambda'(0)\1 + \Pi'(0)\1.}
\end{equation}
Both sides of the above equation are bounded continuous functions, so computing their integral with respect to the measure $\pi$ we obtain
$$\pi(R'(0)\1) +\pi(\Pi'(0)\1) = \lambda'(0) + \pi(\Pi'(0)\1)
$$
we have
$$
{\red \lambda'(0) = \pi(R'(0)\1) = i\E_{\Q^s}[S_1-q]=0.}
$$
{\sc Step 2. } Now we justify, that the function $b(x)$ is well defined as a function in {\red $\B_\epsilon$}.

{\red Observe first that by Lemma \ref{lem:FT} with $z=0$ and \eqref{eq:qns}, we have 
\begin{align}
(Q^s)^{n} f(x) ~&=~ \E_{\Qxs}[f(X_n)] \nonumber  \\
&=~ \E\, q_n^s(x, \mPi_n) \int_{\Gamma} q_1^s(\mPi_n \as x, \ma) \big( \log \abs{\ma (\mPi_n \as x)} -q \big) \, \mu(d\ma) \nonumber \\
&=~ \E \,  \int_{\Gamma} q_{n+1}^s(x, \ma \mPi_n) \big( \log \abs{\ma \mPi_n  x)} - \log \abs{ \mPi_n x} -q \big) \, \mu(d\ma) \nonumber \\
&=~ \E_{\Q_x^s} \left( S_{n+1} - S_n -q \right)~=~ \E_{\Qxs}[S_{n+1}] - \E_{\Qxs}[S_n] -q. \label{eq: 2}
\end{align}
  
      }

Next by \eqref{eq: 1} for any $k$ we have (recall $R(0)=Q^s$)
\begin{equation}\label{eq:3}
i(Q^s)^k \E_{\Qxs} [S_1-q] + (Q^s)^{k+1} \Pi'(0)\1(x) = (Q^s)^k \Pi'(0)\1(x).
\end{equation}
Hence summing over $k=0,1,\ldots, n-1$ we obtain
$$
i\sum_{k=0}^{   n-1} (Q^s)^{k}\E_{\Qxs}[S_1-q] +
\sum_{k=0}^{n-1} (Q^s)^{k+1} \Pi'(0)\1(x) = \sum_{k=0}^{n-1} (Q^s)^k \Pi'(0)\1(x).
$$
Thus by \eqref{eq: 2}
$$i\E_{\Qxs} [S_n-nq]  +(Q^s)^n \Pi'(0)\1(x) = \Pi'(0)\1(x).
$$

Since {\red $\Pi'(0)\1\in \B_\epsilon \subset \B$}, the limit $(Q^s)^n \Pi'(0)\1(x)$ exists by Proposition \ref{prop:Qs} and is equal to $\pi(\Pi'(0)\1)$. Deriving the equation $\Pi^2(t)=\Pi(t)$ and computing the result at 0 we obtain $$\pi(\Pi'(0)\1)=0.$$ Thus, the limit
$\lim\E_{\Qxs}[S_n-nq]$ exists, equals
\begin{equation}\label{eq:defbx} {\red b(x)~:=~ \frac{1}{i}\, \Pi'(0)\1(x)},
\end{equation}  and thus $b$ is well defined and is an element of $\B_\eps$, since $\Pi'(0)$ maps $\1$ into $\B_\eps$.
The formula \eqref{eq:bx} follows from \eqref{eq:3} for $k=0$.

{\sc Step 3.} Using the above, we obtain the following Taylor expansions, valid for small $t$:
$$ \lambda(t)^n = 1 + n \lambda''(0) \frac{t^2}{2} + n \lambda^{(3)} \frac{t^3}{6} + o(t^3),$$
$$ \pis\left(  \Pi(t) \1 \right) = 1 + d_1 \frac{t^2}{2} + d_2 \frac{t^3}{6} + o(t^3);$$
as well as the classical expansion for the characteristic function, i.e.
$$ \E_{\QQ^s} e^{it(S_n - nq)} ~=~ 1 - \E_{\QQ^s}(S_n - nq)^2 \, \frac{t^2}{2} - i \, \E_{\QQ^s} (S_n - nq)^3 \, \frac{t^3}6 + o(t^3).$$
From the fundamental identity,
$$ \E_{\QQ^s} e^{it(S_n - nq)}  ~=~ \pis(R(t)^n \1) = \lambda(t)^n \pis(\Pi(t)\1) + \pis(N(t)^n \1), $$
using {\red the bounds \eqref{eq:boundN} (with $l_0=3$) for $N$ as well, }
we deduce that
$$ n \lambda''(0) + d_1 + O((1-\eta)^n) ={\red - \E_{\QQ^s}(S_n - nq)^2}  $$
and
$$ n \lambda^{(3)}(0) + d_2 + O((1-\eta)^n) = {\red -i\,\E_{\QQ^s}(S_n - nq)^3} .$$
Hence, the identification of $\sigma^2$ and $m_3$ as well as the boundedness assertion follow.

\Step[4]: Finally, we provide the formula for $\sigma^2$. Differentiating $R(t)\Pi(t)\1 = \lambda(t)\Pi(t)\1$ twice and integrating against $\pis$, using {\red $\pis R(0) = \pis$}, we obtain
$$ \pis(R''(0)\1) + 2\, \pis(R'(0) \Pi'(0)\1) = \lambda''(0),$$
hence recalling from above that {\red $i \,b(x) = \Pi'(0)\1(x)$},
$$ {\red - }\int \E_{\Qxs} (\log \norm{\mA_1 x}-q)^2 \, \pis(\d x)  \, {\red - } \,  2\, \int \E_{\Qxs} \left[ \log (\norm{\mA_1 x}-q) \, b(\mA_1 \as x) \right] \, \pis(\d x) ~=~ \lambda''(0),$$
i.e.
$$ \sigma^2 = \E_{\QQ^s} \left[ (S_1 - q)^2 + 2 (S_1-q) b(X_1) \right]$$
and the result follows by quadratic extension inside the expectation.
\end{proof}

Using the above formula for $\sigma^2$, one can show that non-arithmeticity readily implies that $\sigma^2 >0$.

\begin{lem}\label{cor:arithmetic}
Assume that $\sigma^2 =0$, then
$$ S_1 ~=~ q - b(X_1) + b(X_0) \qquad \QQ^s\text{-a.s.}, $$
in particular, $(X_n, S_n)$ is arithmetic under $\QQ^s$ and $\mu$ is arithmetic.
\end{lem}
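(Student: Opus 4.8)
The plan is to read the claimed relation off the variance formula of Lemma~\ref{lem: lambda}, combined with the cocycle identity~\eqref{eq:bx} and the $Q^s$-stationarity of $\pi^s$. Put $W := (S_1 - q) + b(X_1)$ and $Z := W - b(X_0)$. In the setting of Lemma~\ref{lem: lambda}, $b$ is bounded (it lies in $\B_\epsilon \subset \Cf{\S}$ and $\S$ is compact) and $S_1 - q \in L^2(\QQ^s)$, so every expectation below is finite. The first step is to rewrite \eqref{eq:bx} as $\E_{\Qxs}[W] = b(x)$ for every $x \in \S$; integrating against $\pi^s$ and using $\QQ^s = \int \Qxs\, \pi^s(\d x)$ gives
\[
 \E_{\QQ^s}\!\left[ W\, b(X_0) \right] = \int_\S b(x)\, \E_{\Qxs}[W]\, \pi^s(\d x) = \int_\S b(x)^2\, \pi^s(\d x) = \E_{\QQ^s}\!\left[ b(X_0)^2 \right].
\]

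Next I would expand $Z^2 = W^2 - 2\, W\, b(X_0) + b(X_0)^2$ and take $\E_{\QQ^s}$: the cross term contributes $-2\,\E_{\QQ^s}[b(X_0)^2]$ by the previous display, so $\E_{\QQ^s}[Z^2] = \E_{\QQ^s}[W^2] - \E_{\QQ^s}[b(X_0)^2]$. Since $\pi^s$ is $Q^s$-invariant, $X_0$ and $X_1$ have the same law under $\QQ^s$, whence $\E_{\QQ^s}[b(X_0)^2] = \E_{\QQ^s}[b(X_1)^2]$, and therefore $\E_{\QQ^s}[Z^2] = \E_{\QQ^s}[W^2 - b(X_1)^2] = \sigma^2$ by the explicit formula for $\sigma^2$ in Lemma~\ref{lem: lambda}. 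Under the hypothesis $\sigma^2 = 0$ this forces $Z = 0$ $\QQ^s$-a.s., i.e. $S_1 = q - b(X_1) + b(X_0)$ $\QQ^s$-a.s., which is the first assertion.

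For the arithmeticity statements I would simply exponentiate the identity just obtained: with $t = 1$ it gives $\exp\!\big( i S_1 - i q + i(b(X_1) - b(X_0)) \big) = 1$ $\QQ^s$-a.s. Taking $\QQ^s$-expectations and setting $\theta := q \bmod 2\pi \in [0, 2\pi)$, $\vartheta := b$, one sees that \eqref{eq:arithmetic2} holds, so $(X_n, S_n)$ is arithmetic under $\QQ^s$. Since $\vartheta = b$ is continuous on $\S$, Lemma~\ref{lem:implications_arithmetic} upgrades this to arithmeticity of $\Gamma = [\supp \mu]$, i.e. $\mu$ is arithmetic.

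The only step demanding genuine care is the identity $\E_{\QQ^s}[W\, b(X_0)] = \E_{\QQ^s}[b(X_0)^2]$: it relies on reading \eqref{eq:bx} as a one-step conditional-expectation identity for the Markov random walk together with a Fubini exchange, legitimate because $b$ is bounded in $x$ and $S_1 - q$ is square-integrable. The remaining manipulations are formal, using only stationarity of $\pi^s$ and the variance formula already at our disposal.
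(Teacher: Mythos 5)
Your argument is correct and follows essentially the same route as the paper: rewrite the variance formula as $\sigma^2=\E_{\QQ^s}[((S_1-q)+b(X_1)-b(X_0))^2]$ using \eqref{eq:bx} integrated against $\pi^s$ together with stationarity, conclude the a.s.\ identity, and then verify \eqref{eq:arithmetic2} and invoke Lemma~\ref{lem:implications_arithmetic} via continuity of $b$. The only (harmless) difference is that you justify integrability by the boundedness of $b\in\B_\epsilon$ on the compact $\S$, whereas the paper derives $\int b^2\,\d\pi^s<\infty$ from \eqref{eq:sn2finite}; both are valid.
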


\begin{proof}
If $\sigma^2=0$, then it follows from \eqref{eq:sn2finite}, that
\begin{align*} \int b(x)^2 \, \pis(\d x) =&~ \int \left[\lim_{n \to \8} \E_{\Qxs}(S_n -nq) \right]^2 \pis(\d x)\le \int \liminf_{n \to \8} \E_{\Qxs}(S_n -nq)^2  \pis(\d x) \\
\le&~ \liminf_{n \to \infty} \int \E_{\Qxs}(S_n -nq)^2  \pis(\d x) \le \sup_{n \in \N} \E_{\QQ^s} (S_n - nq)^2 < \infty.
\end{align*}
Then we may rewrite the formula from Lemma \ref{lem:sigma1} to read
\begin{align}\label{eq:5}
\sigma^2 ~=&~ \E_{\QQ^s}  \Bigl( (S_1 - q) + b(X_1) \Bigr)^2 - \E_{\QQ^s} b(X_1)^2 = \E_{\QQ^s}  \Bigl( (S_1 - q) + b(X_1) \Bigr)^2 - \E_{\QQ^s} b(X_0)^2.
\end{align}
Using \eqref{eq:bx}, we see that
$$ \Erw[{\QQ^s}]{\Bigl((S_1-q)+b(X_1)\Bigr)b(X_0)} = \int \, b(x) \Erw[{\Qxs}]{\Bigl((S_1-q)+b(X_1)\Bigr)} \, \pis(\d x) = \int \, b(x)^2 \pis(\d x), 
$$
which we use in \eqref{eq:5} to obtain (through binomial formula) that
$$ 0 = \sigma^2 = \E_{\QQ^s}  \Bigl( (S_1 - q) + b(X_1) - b(X_0) \Bigr)^2 = \int_{V(\Gamma)} \pis(\d x) \, \int_{\supp \, \mu} \mu(\d \ma) \, \Bigl( \log \abs{\ma x} - q + b(\ma \as x) - b(x) \Bigr)^2. $$
This gives the assertion; and the arithmeticity of $\mu$ follows, since the function $b$ is continuous (see Lemma \ref{lem:implications_arithmetic}).
\end{proof}

%

Finally, we note some expressions for derivatives of $k$.

\begin{cor}\label{cor:k}
The function $k(s)$ is $\mathcal{C}^\infty$ on $\interior{I_\mu}$, and
$$ \frac{k'(s)}{k(s)}=q=\E_{\Q^s} S_1, \qquad  \frac{k^{(2)}(s)}{k(s)} = q^2 +\sigma^2. $$
\end{cor}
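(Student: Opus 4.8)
The plan is to identify, for $t$ in a real neighbourhood of $0$, the perturbed dominant eigenvalue $\theta(t)$ of Theorem~\ref{theorem:perturbation} with the concrete ratio $k(s+t)/k(s)$; since $\theta$ is holomorphic, this identity will exhibit $k$ as real-analytic at every $s\in\interior{I_\mu}$, giving the $\mathcal C^\infty$-assertion (in fact analyticity, reproving Proposition~\ref{prop:transferoperators}\eqref{c5}), and the derivative formulas will then fall out by comparison with the Taylor expansion of $\lambda(t)=e^{-itq}\theta(it)$ already obtained in Lemma~\ref{lem: lambda}.

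First I would fix $s\in\interior{I_\mu}$ and $\epsilon$ as in Section~\ref{sect:perturbation} and, for small real $t$, estimate the Laplace transform $\E_{\Qxs}[e^{tS_n}]$ from two sides. On the one hand, by Lemma~\ref{lem:FT}, $\E_{\Qxs}[e^{tS_n}]=Q(t)^n\1[\S](x)$, and for real $t$ in the neighbourhood $G_1$ supplied by Lemma~\ref{lem:defQz} and Theorem~\ref{theorem:perturbation} this equals $\theta(t)^nM(t)\1[\S](x)+L(t)^n\1[\S](x)$; since $M(t)\1[\S]\to\1[\S]$ uniformly as $t\to0$, since $\theta(t)\to1$ is real and positive for small $t$ (being the simple eigenvalue of the real, positivity-preserving operator $Q(t)$ lying near $1$), and since $\|L(t)^n\|\le c(1-\eta)^n$ with $1-\eta<\theta(t)$, I would deduce $a_1(t)\,\theta(t)^n\le\E_{\Qxs}[e^{tS_n}]\le a_2(t)\,\theta(t)^n$ for all large $n$, with $0<a_1(t)\le a_2(t)<\infty$ independent of $n$ and $x$. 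On the other hand, the commutation identity \eqref{eq:com1} gives
$$ \E_{\Qxs}\bigl[e^{tS_n}\bigr]~=~\frac{1}{k(s)^n\es(x)}\,\E\bigl[\,|\mPi_n x|^{s+t}\,\es(X_n^x)\,\bigr]; $$
bounding $\es$ above and below by positive constants and applying Proposition~\ref{prop:transferoperators} at the parameter $s+t\in\interior{I_\mu}$ — so that $(P^{s+t})^n\es[s+t]=k(s+t)^n\es[s+t]$, i.e. $\E[\,|\mPi_n x|^{s+t}\es[s+t](X_n^x)\,]=k(s+t)^n\es[s+t](x)$, with $\es[s+t]$ again bounded above and below — I would obtain that this quantity lies between two fixed constant multiples of $(k(s+t)/k(s))^n$, uniformly in $n$ and $x$.

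Comparing the two exponential rates and letting $n\to\infty$ will then force $\theta(t)=k(s+t)/k(s)$ for all small real $t$, equivalently $k(s+z)=k(s)\,\theta(z)$ on a complex neighbourhood of $0$; this proves $k$ is real-analytic at $s$, hence $\mathcal C^\infty$ on $\interior{I_\mu}$, and yields $k'(s)=k(s)\theta'(0)$ and $k''(s)=k(s)\theta''(0)$. To finish I would differentiate $\theta(it)=e^{itq}\lambda(t)$ once and twice at $t=0$ and insert $\lambda(0)=1$, $\lambda'(0)=0$, $\lambda''(0)=-\sigma^2$ from Lemma~\ref{lem: lambda}, obtaining $\theta'(0)=q$ and $\theta''(0)=q^2+\sigma^2$, whence $k'(s)/k(s)=q=\E_{\Q^s}S_1$ (the last equality by Proposition~\ref{prop:FK}) and $k''(s)/k(s)=q^2+\sigma^2$.

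The hard part will be the first step: matching the abstract perturbed eigenvalue $\theta(t)$ — accessible only through the spectral decomposition — with the concretely defined $k(s+t)$, which requires uniformity in both $n$ and the starting vector $x$ in the two-sided estimate and a careful invocation of Proposition~\ref{prop:transferoperators} at the shifted parameter $s+t$ (notably the positivity and uniform boundedness of the eigenfunction $\es[s+t]$). The remaining steps are formal manipulations of holomorphic families and of the already-known Taylor expansion of $\lambda$.
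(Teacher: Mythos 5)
Your proposal is correct and ends with exactly the paper's concluding computation, but it reaches the pivotal identity $k(s+t)=k(s)\,\theta(t)$ by a different mechanism. The paper argues spectrally and in one step: by the conjugation relating $P^{s+\epsilon}$ to $Q(\epsilon)$, the function $r_s\cdot r(\epsilon)$ is a strictly positive continuous eigenfunction of $P^{s+\epsilon}$ with eigenvalue $k(s)\theta(\epsilon)$, and by the uniqueness of such an eigenfunction (Proposition \ref{prop:transferoperators}) this eigenvalue must be $k(s+\epsilon)$, for all complex $\epsilon$ in the perturbation neighbourhood. You instead sandwich the Laplace transform: $\E_{\Qxs}\big[e^{tS_n}\big]=Q(t)^n\1(x)$ lies between constant multiples of $\theta(t)^n$ (using $Q(t)^n=\theta(t)^nM(t)+L(t)^n$, $M(t)\1$ close to $\1$, $\norm{L(t)^n}\le c(1-\eta_1-\eta_2)^n$ with $1-\eta_1-\eta_2<\abs{\theta(t)}$, and the realness/positivity of $\theta(t)$ for small real $t$), and also between constant multiples of $\big(k(s+t)/k(s)\big)^n$ (using $(P^{s+t})^n r_{s+t}=k(s+t)^n r_{s+t}$ together with the two-sided positive bounds on $r_s$ and $r_{s+t}$); letting $n\to\infty$ identifies $\theta(t)$ with $k(s+t)/k(s)$ for real $t$ near $0$, which suffices because analyticity of $\theta$ then makes $k$ real-analytic at $s$ with $k^{(n)}(s)=k(s)\theta^{(n)}(0)$. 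Your route costs the extra (routine) positivity argument for $\theta(t)$ and identifies the eigenvalue only on the real axis, but it buys something: it uses only the eigenfunction equation at the shifted parameter, whereas the paper's one-liner invokes uniqueness of a strictly positive eigenfunction irrespective of its eigenvalue, which is slightly stronger than the literal statement of Proposition \ref{prop:transferoperators}(2) (though easily justified, e.g.\ by integrating against the dual eigenmeasure $\nu_{s+\epsilon}$). The final step — differentiating $\theta(it)=e^{itq}\lambda(t)$ and inserting $\lambda'(0)=0$, $\lambda''(0)=-\sigma^2$ from Lemma \ref{lem: lambda} to get $\theta'(0)=q$, $\theta''(0)=q^2+\sigma^2$ — coincides with the paper; only note that your appeal to Proposition \ref{prop:FK} for the equality $q=\E_{\Q^s}S_1$ is superfluous (that is the definition of $q$ used in $\lambda$), and better avoided since that proposition already quotes $q=k'(s)/k(s)$.
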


\begin{proof}
Recalling the definition of $\Ps$, we see that for $\epsilon \in (- \delta_1, \delta_1)$,

\begin{align*} (\Ps[s+\epsilon])^n f(x) &~= \es(x) k(s)^n Q(\epsilon) \frac{f}{\es} (x) \\
&~= k(s)^n \theta(\epsilon)^n \es(x) r(\epsilon)(x) \int_{\Sp} f(y)/ \es(y) \, \nu(\epsilon)(dy) + \es(x) k(s)^n N(\epsilon) \frac{f}{\es}(x)
\end{align*}

By Proposition \ref{prop:transferoperators}, $\Ps[s+\epsilon]$ has a unique strictly positive eigenfunction, which is then given by $r_s(x)r(\epsilon)(x)$ and thus the corresponding eigenvalue
equals
\begin{equation}\label{eq:kappatheta} k(s + \epsilon)= k(s) \theta(\epsilon).\end{equation}

{\red By Theorem \ref{theorem:perturbation}, the function $\theta$ is holomorphic in a  neighbourhood of $0$, hence $\mathcal{C}^\infty$ in $0$ and so is $k$, with $ k^{(n)}(s) = k(s) \theta^{(n)}(0)$}. Recalling that $\lambda(t)=e^{-itq}\theta(it)$, we obtain
$$ \lambda'(0)=i \frac{k'(s)}{k(s)} - iq, \qquad \lambda^{(2)}(0) = -q^2 + 2 q \theta'(0) - \theta^{(2)}(0) = - q^2 + 2q \frac{k'(s)}{k(s)} - \frac{k^{(2)}(s)}{k(s)}.$$
Since $\lambda'(0)=0$, the assertions follow.
\end{proof}

\section{The Edgeworth expansion}\label{sect:edgeworth}
In this section we are going to prove a third-order Edgeworth expansion for $S_n$ w.r.t. the measure $\Qxs$, valid for all types of matrices.
We fix real $s \in \interior{I_\mu}$, and denote by $q:= \E_{\QQ^s} S_1$ the stationary drift of $(S_n)_{n \in \N}$. We will use the operator
$R(t) f(x) = \E_\Qxs [ e^{it (S_1 - q)} f(X_1)].$



Let
$$ F_{n,x}(t) := \Qxs \left\{ \frac{S_n-nq}{\sqrt{n \sigma^2}} \le t \right\}.$$
be the cumulative distribution function {\red of the standardized version of $S_n$, and write $\Phi$ for the cumulative distribution function of the standard normal distribution}. Then we have the following result.

\begin{thm}\label{thm:edgeworth}
Assume that $\mu$ satisfies $\condC$ and is non-arithmetic, or that \ipo~ or \ide~ holds. Assume moreover that \eqref{eq:iotamoment} holds for some $\epsilon >0$. Then
$$ \lim_{n \to \infty} \, \sup_{x \in \Sp} \, \left\{ \sqrt{n} \ \sup_{u \in \R} \abs{ F_{n,x}(u)  - \Phi(u) - \frac{m_3}{6 \sigma^3 \sqrt{n}}(1-u^2) \phi(u) + \frac{b(x)}{\sigma \sqrt{n}} \phi(u) } \right\} =0,$$
for quantities $b(x) \in \R$, $\sigma^2 > 0$, $m_3 \in \R$ as defined in \eqref{eq: sigma} and \eqref{eq: bx}.
\end{thm}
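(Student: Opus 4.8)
The idea is to run the classical Fourier-analytic proof of the Edgeworth expansion (as in the i.i.d.\ case, but with the extra boundary term coming from the dependence on the starting point $x$), using the decomposition of $R(t)^n$ from Corollary~\ref{cor:Rt} to control the characteristic function of $(S_n-nq)/(\sigma\sqrt n)$ near the origin and Lemma~\ref{missing estimate} together with Theorem~\ref{thm:prop:nonarithmetic} to control it away from the origin. Concretely, write $\psi_{n,x}(t):=\E_{\Qxs}\exp\!\bigl(i t (S_n-nq)/(\sigma\sqrt n)\bigr)= R(t/(\sigma\sqrt n))^n\1[\S](x)$ and let $G_{n,x}$ be the function whose Fourier transform is the ``corrected normal'' target
\begin{equation*}
\widehat G_{n,x}(t) = e^{-t^2/2}\Bigl(1 + \tfrac{m_3}{6\sigma^3\sqrt n}(it)^3 - \tfrac{b(x)}{\sigma\sqrt n}(it)\Bigr),
\end{equation*}
so that $G_{n,x}(u)=\Phi(u)+\tfrac{m_3}{6\sigma^3\sqrt n}(1-u^2)\phi(u)-\tfrac{b(x)}{\sigma\sqrt n}\phi(u)$. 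The Berry--Esseen / Esseen smoothing inequality then bounds $\sup_u|F_{n,x}(u)-G_{n,x}(u)|$ by $\int_{-T}^{T}|\psi_{n,x}(t)-\widehat G_{n,x}(t)|\,\tfrac{dt}{|t|}$ plus a term of order $1/T$; choosing $T=T_n$ growing slowly (e.g.\ $T_n = \epsilon\sqrt n$ for the spectral regime and a fixed large constant for the bounded regime, matching the two regimes), it suffices to show the integral is $o(1/\sqrt n)$ uniformly in $x$.

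\textbf{Key steps, in order.} First, split the integration range. On $|t|\le \delta_1\sigma\sqrt n$ (equivalently $t/(\sigma\sqrt n)\in G$), use Corollary~\ref{cor:Rt}: $R(u)^n\1 = \lambda(u)^n\Pi(u)\1 + N(u)^n\1$ with $\|N(u)^n\1\| \le c(1-\eta)^n$ uniformly. The $N$-term contributes $O((1-\eta)^n\log n) = o(n^{-1/2})$ to the integral. For the main term, insert the Taylor expansions from Lemma~\ref{lem:sigma1}: $\lambda(u) = 1-\tfrac{\sigma^2}{2}u^2 - i\tfrac{m_3}{6}u^3 + o(u^3)$, hence with $u=t/(\sigma\sqrt n)$,
\begin{equation*}
\lambda(u)^n = e^{-t^2/2}\Bigl(1 + \tfrac{m_3}{6\sigma^3\sqrt n}(it)^3 + o(n^{-1/2})\Bigr)
\end{equation*}
on, say, $|t|\le n^{1/8}$, while $\Pi(u)\1(x) = 1 - \tfrac{i b(x)}{\sigma\sqrt n}\, t + O(t^2/n)$ by $\Pi'(0)\1 = i b$ (Eq.~\eqref{eq:defbx}) and $\Pi(0)\1=\1$; multiplying, $\lambda(u)^n\Pi(u)\1(x) = \widehat G_{n,x}(t) + o(n^{-1/2})$ pointwise, with integrable-in-$t$ Gaussian-type domination so that the integral over $|t|\le n^{1/8}$ of the difference, weighted by $1/|t|$, is $o(n^{-1/2})$ uniformly in $x$ (here one needs the uniformity of the $o(\cdot)$ in the Taylor remainders, which holds because $\lambda,\Pi$ are $\mathcal C^\infty$ with estimates uniform in $x$ from Corollary~\ref{cor:Rt}). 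On the intermediate range $n^{1/8}\le |t|\le \delta_1\sigma\sqrt n$ one uses $|\lambda(u)|\le e^{-\sigma^2 u^2/4}$ near $0$ (from the expansion) to get exponential-in-$n^{1/4}$ decay of $|\lambda(u)|^n$, again negligible. Second, on $\delta_1\sigma\sqrt n \le |t|\le T_n$: here $u=t/(\sigma\sqrt n)$ ranges over a compact subset of $\R\setminus\{0\}$ (if $T_n$ is chosen $\le M\sigma\sqrt n$) or one uses Lemma~\ref{missing estimate} to get $[R(u)^n\1]\le \rho^n$ with $\rho<1$ uniformly on that compact set, so the contribution is $\le C\rho^n\log n = o(n^{-1/2})$. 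The $\widehat G_{n,x}$-tail contributes $O(n^{-1/2}e^{-T_n^2/2})$, also negligible. Finally, assemble via Esseen's inequality and note $\sigma^2>0$ (Lemma~\ref{cor:arithmetic}) so the rescaling is legitimate; the $1/T_n$ smoothing error is $o(n^{-1/2})$ as long as $T_n/\sqrt n \to\infty$, which is incompatible with $T_n\le M\sigma\sqrt n$, so one in fact needs Theorem~\ref{thm:prop:nonarithmetic} ($\rho(Q(it))<1$ for \emph{all} $t\neq 0$) to push $T_n$ all the way to order $\sqrt n\cdot\omega(1)$: on $\delta_1\sigma\sqrt n\le|t|$ with $u$ no longer bounded, one covers $[\delta_1,\,T_n/(\sigma\sqrt n)]$ — if we only knew spectral control for bounded $u$ this would fail, which is precisely why Theorem~\ref{thm:prop:nonarithmetic} and a continuity/compactness argument for $\rho(Q(iu))$ on compacta (Lemma~\ref{missing estimate}) are invoked. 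Taking $T_n = \sqrt n\log n$, say, all pieces are $o(n^{-1/2})$ uniformly in $x$, which is the claim.

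\textbf{Main obstacle.} The delicate point is obtaining \emph{uniformity in $x\in\S$} of all error terms, especially the $o(n^{-1/2})$ coming from the Taylor remainders of $u\mapsto\lambda(u)^n\Pi(u)\1(x)$ and the matching with $\widehat G_{n,x}(t)$ — the target itself depends on $x$ through $b(x)$, so the ``correction'' is not a fixed function. This is handled by noting that $\lambda$ is independent of $x$ and that $\Pi(u)\1\in\B_\epsilon$ with $u\mapsto\Pi(u)\1$ being $\mathcal C^\infty$ into $\B_\epsilon$ (Corollary~\ref{cor:Rt}), so $\sup_{x}|\Pi(u)\1(x) - \1 - iu\,b(x)| = O(u^2)$ with an $x$-free constant, and likewise for higher-order terms; combined with $\sup_x|b(x)| = \|b\|_\infty <\infty$ (Lemma~\ref{lem:sigma1}), all $x$-dependence is absorbed into fixed constants. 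The second, more routine but bookkeeping-heavy obstacle is correctly choosing the breakpoints between the three $t$-regimes so that the Gaussian decay, the $(1-\eta)^n$ decay of the $N$-part, and the $\rho^n$ decay of Lemma~\ref{missing estimate} all beat $n^{-1/2}$ simultaneously while the $1/T_n$ smoothing term does too; this forces $T_n$ to grow faster than $\sqrt n$, which is exactly what Theorem~\ref{thm:prop:nonarithmetic} permits.
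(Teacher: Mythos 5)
Your overall route is the same as the paper's: Esseen's smoothing inequality for $F_{n,x}-G_{n,x}$, the decomposition $R(u)^n\1=\lambda(u)^n\Pi(u)\1+N(u)^n\1$ from Corollary~\ref{cor:Rt} with the Taylor expansions of $\lambda$ and $\Pi$ (and $\Pi'(0)\1=ib$) on the low-frequency range, Lemma~\ref{missing estimate} on the intermediate range, and uniformity in $x$ obtained from the $\mathcal C^\infty$-dependence of $\Pi$ in $\mcL(\B_\eps,\B_\eps)$ together with $\sup_x\abs{b(x)}<\infty$. The genuine gap is in your final assembly. You claim the smoothing error forces $T_n/\sqrt n\to\infty$ and that Theorem~\ref{thm:prop:nonarithmetic} permits $T_n=\sqrt n\log n$. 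But on the range $\sigma\delta_1\sqrt n\le\abs{t}\le T_n$ the rescaled frequency $u=t/(\sigma\sqrt n)$ then runs through $[\delta_1,\,\log n/\sigma]$, which is \emph{not} a fixed compact set: Lemma~\ref{missing estimate} produces a single $\rho<1$ only on a fixed compact $K\subset\R\setminus\{0\}$ (its proof uses upper semicontinuity of $t\mapsto\limsup_n[R(t)^nf]^{1/n}$ and attainment of the maximum on $K$), and Theorem~\ref{thm:prop:nonarithmetic} only gives $\rho(Q(it))<1$ pointwise. Nothing in the paper (and nothing available without a Cramér-type condition on the increments) gives a bound of the form $\sup_{\abs{u}\ge\delta_1}\rho(Q(iu))<1$ or uniform decay of $[R(u)^n\1]$ over frequency windows growing with $n$; indeed the paper explicitly does not even know quasi-compactness of $Q(it)$ for $t\neq0$. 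So the step ``take $T_n=\sqrt n\log n$ and control $[\delta_1,T_n/(\sigma\sqrt n)]$ by Lemma~\ref{missing estimate}'' fails as written.

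The repair is the classical $\eps$-trick, which is exactly how the paper proceeds: fix $\eps>0$, choose a \emph{fixed} $a$ with $24m/(\pi a)<\eps$, and take $T=a\sqrt n$. Then $u$ stays in the fixed compact $[\delta,a]$, where Lemma~\ref{missing estimate} applies and yields $C(a,\delta)\rho^n$; the smoothing term is $\le\eps/\sqrt n$; and the low-frequency analysis gives $o(n^{-1/2})$ uniformly in $x$. One concludes $\limsup_n\sqrt n\,\sup_{x,u}\abs{F_{n,x}(u)-G_{n,x}(u)}\le C\eps$ and lets $\eps\downarrow0$ -- it is not necessary that every individual error term be $o(n^{-1/2})$ for a single choice of $T$, so your perceived need to push $T_n$ beyond order $\sqrt n$ is a misdiagnosis. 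Two small further points: with the convention $\psi_{n,x}(t)=\E_{\Qxs}e^{it(S_n-nq)/(\sigma\sqrt n)}$ the transform of $G_{n,x}'$ carries $+\,it\,b(x)/(\sigma\sqrt n)$, not $-$ (harmless sign slip); and near $t=0$ the $1/\abs t$ weight on the $N$-term should be handled via $N^n(0)\1=0$ and the derivative bound \eqref{eq:boundN}, which gives $\abs{N(t)^n\1(x)}\le c(1-\eta)^n\abs t$ there.
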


%
%
%
%

\begin{proof}[Proof of Theorem \ref{thm:edgeworth}] We proceed as in \cite{Herve2010}, i.e. we try to follow the standard proof
as in the i.i.d. case. Recall that, since we assume non-arithmeticity, $\sigma^2>0$ due to Corollary \ref{cor:arithmetic}.

{\sc Step 1}. We define the function
$$
{\red G_n(u) ~:=~ \Phi(u) + \frac{m_3}{6\sigma^3 \sqrt n} (1-u^2)\phi(u) - \frac{b(x)}{\sigma \sqrt n}\phi(u) ~=~ \Phi(u) - \frac{m_3}{6\sigma^3 \sqrt n} \phi''(u) - \frac{b(x)}{\sigma \sqrt n}\phi(u),\qquad u\in \R.}
$$ Here $\Phi$ denotes the {\red cumulative distribution function}, and $\phi$ the densitiy function of a standard normal distribution.
One can easily see that the derivative of $G_n$,
$$ {\red G_n'(u) ~=~ \phi(u) - \frac{m_3}{6 \sigma^3 \sqrt{n}} \phi^{(3)}(u) - \frac{b(x)}{\sqrt{n}} \phi'(u) }$$
  has exponential decay both at $+\8$ and $-\8$, uniformly in $n$. 
  Let  {\red $\gamma_n(t) := \int e^{itu}\, G_n'(u)\, du$ }  be the Fourier transform of  $G_n'$, then
  $$
  \gamma_n(t) = \bigg( 1+\frac{m_3}{6 \s^3 \sqrt n}(it)^3 \bigg) \cdot e^{-\frac 12 t^2} + \bigg( it \frac{b(x)}{\s \sqrt n} \bigg) e^{-\frac 12 t^2}
  $$
Denote
\begin{eqnarray*}
  \gamma_{0,n}(t) &:=& \bigg( 1+\frac{m_3}{6\s^3 \sqrt n}(it)^3 \bigg) \cdot e^{-\frac 12 t^2}, \\
  \gamma_{x,n}(t) &:=&  \bigg( it \frac{b(x)}{\s\sqrt n} \bigg) e^{-\frac 12 t^2}, \\
  \varphi_{n,x}(t) &:=& (R(t))^n(\1)(x) = \E_{\Qxs}[ e^{it (S_n-nq)}], \\
   {\red m } &:=&{\red \sup_{n \in \N} \sup_{u \in \R} \abs{G_n'(u)} < \infty.}
\end{eqnarray*}
By the Berry-Essen inequality (see \cite[XVI.(3.13)]{Feller1971}) 
we have that for all $T >0$,
\begin{equation}
\label{eq: BE}
\sup_{u\in \R} \big| F_{n,x}(u) - G_n(u) \big| \le \frac 1{\pi} \int_{-T}^T \bigg|\frac{\varphi_{n,x}(\frac t{\s\sqrt n}) -\gamma_n(t) }{t}\bigg|dt + \frac{24 m}{\pi T}.
\end{equation}
Next, fix $\eps >0 $, choose $a$ such that $\frac{24 m}{\pi a}<\eps $. Then with $T=a \sqrt{n}$,  $\frac{24 m }{\pi T}\le \frac{\eps}{\sqrt n}$.
We choose $\delta< \min\{a,\delta_1\}$, where $\delta_1$ is given by Corollary \ref{cor:Rt}, i.e. for $t \in (-\delta, \delta)$, the perturbation theory for $R(t)$ holds.

 Now we want to estimate the integral in $\eqref{eq: BE}$ by $O(\frac{\eps}{\sqrt n})$. For this purpose we divide the integral into two parts
\begin{eqnarray*}
A_n &=&  \int_{\s \delta \sqrt n\le |t| \le \s a \sqrt n}\bigg|\frac{\varphi_{n,x}(\frac t{\s \sqrt n}) -\gamma_n(t) }{t}\bigg|dt,\\
B_n &=&  \int_{|t|\le \s\delta \sqrt n} \bigg|\frac{\varphi_{n,x}(\frac t{\s \sqrt n}) -\gamma_n(t) }{t}\bigg|dt.
\end{eqnarray*}
{\sc Step 2.} We prove that $A_n\le \frac{\eps}{\sqrt n}$ for appropriately large $n$. By Lemma \ref{missing estimate}, we have for $u$ such that $\delta<|u|<a$ and all $x$ the estimate $|\varphi_{n,x}(u)| = |(R(u))^n(\1)(x) | \le \rho^n$,  hence
$$
\int_{\s\delta \sqrt n \le |t| \le \s a\sqrt n}\frac{\varphi_{n,x}(\frac t{\s \sqrt n})}{|t|}dt =
\int_{\delta  \le |u| \le a}\frac{\varphi_{n,x}(u)}{|u|}du \le C(a,\delta) \rho ^n.
$$ Moreover
$$
\int_{\s \delta \sqrt n \le |t| \le \s a\sqrt n} \frac{|\gamma_n(t)|}{|t|}dt \le C e^{-\sqrt n}.
$$
{\sc Step 3. } Now we estimate the last term $B_n$ to be smaller than $\frac{\eps}{\sqrt n}$. By Corollary \ref{cor:Rt} we write for $\frac{|t|}{\s \sqrt n}<\delta$ (recall that for such small values, the perturbation theory applies) \begin{eqnarray*}
\varphi_{n,x}\Big( \frac t{\s \sqrt n}\Big) - \gamma_n(t) &=& \lambda^n\Big(  \frac t{\s \sqrt n} \Big) \Pi\Big(  \frac t{\s \sqrt n} \Big) \1(x)
+N^n \Big(  \frac t{\s \sqrt n} \Big) \1(x) - \gamma_{0,n}(t) - \gamma_{x,n}(t)\\
&=& \bigg( \lambda^n\Big(  \frac t{\s \sqrt n} \Big)  -\gamma_{0,n}(t)  \bigg) + \lambda^n\Big(  \frac t{\s\sqrt n} \Big)\bigg(
\Pi\Big(  \frac t{\s \sqrt n} \Big) \1(x) - 1 - i t \frac{b(x)}{\s \sqrt n}
\bigg)\\ &&+ it \frac{b(x)}{\s \sqrt n}\bigg(  \lambda^n\Big(  \frac t{\s \sqrt n} \Big) -e^{-\frac 12 t^2}  \bigg)
+ N^n\Big( \frac t{\s \sqrt n}\Big)\1(x)\\
&=& I_1(t) + I_2(t,x) + I_3(t,x)+ I_4(t,x).
\end{eqnarray*}
Thus, we have to estimate four expressions. {\red For this purpose we will use the Taylor expansion
$$\lambda(u) ~=~ 1 - \frac{\sigma^2}{2} u^2 - i \frac{m^3}{6}u^3 + o(u^3),$$ given in Lemma \ref{lem: lambda}. The function $f(u)=\log \lambda(u) + \frac{\sigma^2}{2}u^2$ then satisfies $f(0)= f'(0)=f''(0)=0$ and $f^{(3)}(0)=-im_3$ and hence
$$  n f\big(\frac{t}{\sigma\sqrt{n}} \big) ~=~ -i \frac{m_3 t^3}{6 \sigma^3 \sqrt{n}} + o(t^3/\sqrt{n}). $$
Moreover, by choosing $\delta$ small enough (but fixed!), we can achieve that for all $u \in (-\delta, \delta)$,
$$ \abs{f(u)} \le \frac{1}{4} u^2 \quad \text{ and } \quad \abs{\frac{m_3}{6}u^3} \le \frac{1}{4} u^2, \quad \text{hence} \quad \max\left\{n \big|f (\frac{t}{\sigma\sqrt{n}}) \big|, \frac{m_3 t^3}{6 \sigma^3 \sqrt{n}}  \right\} ~\le~\frac{1}{4}t^2.$$ 
In particular, with this choice of $\delta$, $\abs{\lambda^n(t/(\sigma\sqrt{n}))} \le e^{-\frac{1}{4}t^2}.$

Considering now $I_1(t)$, we obtain, using the inequality 
\begin{equation}\label{eq:feller}\abs{e^u -1 - v} \le (\abs{u-v} + \frac{1}{2} \abs{v}^2 ) e^{\max(\abs{u}, \abs{v})},
\end{equation} which is valid for all $u, v \in \C$ (see \cite[XVI.(2.8)]{Feller1971}),
$$ \abs{I_1(t)} = e^{-\frac{1}{2} t^2} \abs{\exp \left( n f\big(\frac{t}{\sigma\sqrt{n}} \big) \right) - 1 + i \frac{m_3 t^3}{6\sigma^3 \sqrt{n}  }} ~\le~ e^{-\frac{1}{2}t^2} \left( t^3 o(\frac{1}{\sqrt{n}}) + t^6 O(\frac{1}{n}) \right) e^{\frac{1}{4} t^2}, $$
from which we infer that
$$ \int_{\abs{t} \le \sigma \delta \sqrt{n}} \, \abs{\frac{I(t)}{t}} dt ~\le~ \left(2 \int_0^\infty (t^3 + t^6) \, e^{-\frac{1}{4} t^2} dt \right) \cdot o\big( 1/ \sqrt{n} \big) $$

To estimate the integral of $I_2(t)$ we use
the bound on $\lambda$ from above, a second order Taylor expansion for $\Pi(t)$ (cf. Corollary \ref{cor:Rt}) and that $\Pi'(0)\1(x) = ib(x)$ (see \eqref{eq:defbx}). Then
$$ \abs{I_2(t,x)} \le e^{-\frac{1}{4} t^2} \abs{\Pi(0)\1(x) + \frac{t}{\sigma \sqrt{n}}\Pi'(0)\1(x) + O(\frac{t^2}{ n}) - 1 - it \frac{b(x)}{\sigma \sqrt{n}}} $$
and consequently
$$ \int_{|t|\le \sigma \delta \sqrt n} I_2(t,x) dt \le \int _{|t|< \sigma \delta \sqrt n} e^{-\frac{t^2}4} O\Big(\frac {t^2} n \Big)dt \le \frac Cn. $$

Turning to $I_3(t,x)$, we recall from Lemma \ref{lem: lambda}, that $b \in \B_\epsilon$, hence as a continous function on $\S$, it is bounded. Then
\begin{align*}
\abs{I_3(t,x)} ~\le~ \frac{t \normb{b}}{\sigma \sqrt{n}} e^{-\frac{1}{2}t^2} \abs{\exp\big( n f ( \frac{t}{\sigma \sqrt{n}}) \big) -1 } ~\le~ \frac{t \normb{b}}{\sigma \sqrt{n}} e^{-\frac{1}{4}t^2} \left( \frac{m_3 t^3}{6 \sigma^3 \sqrt{n}} + o(t^3/\sqrt{n}) \right),
\end{align*}
where we used again inequality \eqref{eq:feller}. Hence $\int \abs{I_3(t,x)}/t \, dt = O(1/n)$.

The integral over $I_4(t,x)$ is bounded independently of $x$ and vanishes at an exponential rate in $n$ since  $\|N(t)\|\le c(1-\eta)^n$ by Corollary \ref{cor:Rt}.
} 
%
\end{proof}

\section{The Bahadur-Rao Theorem for Products of Random Matrices}\label{sect:BahadurRao}

Now we are ready to prove our main result simultaneously for all types of matrices. We extend the approach for the one-dimensional case in \cite[Theorem 3.7.4]{Dembo1998}. Recall the definition $\Lambda(s)=\log k(s)$, such that $ \Lambda'(s) =\E_{\Q^s} S_1=: q$ and the Fenchel-Legendre transform of $\Lambda$ is given by$ \Lambda^*(q) = sq - \Lambda(s)$.

\begin{thm}
\label{thm:BahadurRao2}
Assume that $\mu$ satisfies $\condC$ and is non-arithmetic; or that \ipo~or \ide~hold.
Let $q=\E_{\QQ^s} S_1 = \frac{k'(s)}{k(s)} $ for some $s \in \interior{I_\mu}$ and assume there is $0 < \epsilon < 1$ such that
\eqref{eq:iotamoment} holds.
\begin{enumerate}
\item Then
\begin{equation}
\label{eq:br upper bound}
\limsup_{n \to \infty} \, \sup_{x \in \S} \, \sup_{d \in [0, \infty)} \, e^{sd} \frac{\sqrt{n}e^{snq}}{k(s)^n} {\red \Q_x}({S_n \ge nq+d}) < \infty.
\end{equation}
\item Consequently, there is $C < \infty$ s.t.~ for all $n \in \N$ and thereupon for each $u \ge nq$
\begin{equation}
\label{eq:br1}
{\red\QQP[x]{S_n > u} }\le \frac {C k(s)^n}{\sqrt n e^{s u}}.
\end{equation}
\item For each fixed $\theta \ge 0$ it holds that
\begin{equation}
\label{eq:br uniform convergence}
\lim_{n \to \infty} \, \sup_{x \in \S} \sup_{d \in [0, \theta \sqrt{n})} \, \abs{ s \sigma \sqrt{2 \pi n}  \, \frac{e^{s(nq +d)}}{k(s)^n} \, e^{\frac{d^2}{2 \sigma^2 n}} \E_{\Q_x}\Big[ \es(X_n) \1[{\{ S_n \ge nq+d\}}] \Big]- \es(x)}  = 0.
\end{equation}
\item In particular,  for all $x \in \S$,
\begin{equation}
\lim_{n \to \infty} \, s \sigma \sqrt{2 \pi n} \,e^{n \Lambda^*(q)} \E_{\Q_x}\Big[ \es(X_n) \1[{\{ S_n \ge nq\}}] \Big] = \es(x).
\end{equation}
\end{enumerate}
\end{thm}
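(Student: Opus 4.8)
We adapt the one-dimensional argument of \cite[Theorem~3.7.4]{Dembo1998}, with the Edgeworth expansion of Theorem~\ref{thm:edgeworth} playing the role of the local limit theorem. The starting point is the change of measure \eqref{eq:com1}: applied under the original law $\Q_x=\Q_x^0$ (and using $\abs{\mPi_n x}^s=e^{sS_n}$), it gives, for every measurable $g$ for which $g(S_n)e^{-sS_n}$ is bounded,
\[ \E_{\Q_x}\bigl[\es(X_n)\,g(S_n)\bigr] ~=~ k(s)^n\,\es(x)\,\E_{\Qxs}\bigl[g(S_n)\,e^{-sS_n}\bigr]. \]
Take $g=\1[\{S_n\ge nq+d\}]$ (so that $g(S_n)e^{-sS_n}\le e^{-snq}$) and use $k(s)^n e^{-snq}=e^{-n\Lambda^*(q)}$ (recall $\Lambda=\log k$ and $\Lambda^*(q)=sq-\Lambda(s)$). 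Writing $W_n:=S_n-nq$, the quantity inside the limit in \eqref{eq:br uniform convergence} then equals
\[ s\sigma\sqrt{2\pi n}\;\es(x)\;e^{sd}\;e^{d^2/(2\sigma^2 n)}\;\Psi_n(d,x),\qquad \Psi_n(d,x):=\E_{\Qxs}\bigl[\1[\{W_n\ge d\}]\,e^{-sW_n}\bigr], \]
and, by Fubini's theorem, $\Psi_n(d,x)=e^{-sd}\int_0^\infty s\,e^{-su}\,\Qxs\{d\le W_n\le d+u\}\,\d u$; thus everything is reduced to the asymptotics of this integral.

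\noindent\textbf{The moderate regime.}
Write $\Qxs\{d\le W_n\le d+u\}=F_{n,x}\bigl(\tfrac{d+u}{\sigma\sqrt n}\bigr)-F_{n,x}\bigl(\tfrac{d}{\sigma\sqrt n}\bigr)$ (differing from the left-hand side only by the atom $\Qxs\{W_n=d\}$, which is $o(1/\sqrt n)$ uniformly by Theorem~\ref{thm:edgeworth}) and insert the third-order expansion of $F_{n,x}$ from Theorem~\ref{thm:edgeworth}. Since the correction profiles $(1-t^2)\phi(t)$ and $\phi(t)$ are bounded with bounded derivative, their coefficients $m_3,\sigma,b(x)$ are bounded uniformly in $x$ (recall $b\in\B_\epsilon$ and that $\S$ is compact), and the remainder is $o(1/\sqrt n)$ uniformly in $(x,t)$, this increment equals $\phi\bigl(\tfrac{d}{\sigma\sqrt n}\bigr)\tfrac{u}{\sigma\sqrt n}+O(u^2/n)+o(1/\sqrt n)$, uniformly. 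Multiplying by $s\,e^{-su}$, integrating over $u\in(0,\infty)$ (where the weight $se^{-su}$ confines $u$ to order one, so that $\int_0^\infty se^{-su}u\,\d u=1/s$ is what matters) and using $\phi\bigl(\tfrac{d}{\sigma\sqrt n}\bigr)=\tfrac1{\sqrt{2\pi}}e^{-d^2/(2\sigma^2 n)}$, we obtain
\[ \Psi_n(d,x) ~=~ e^{-sd}\Bigl(\frac{e^{-d^2/(2\sigma^2 n)}}{s\sigma\sqrt{2\pi n}}+o\bigl(1/\sqrt n\bigr)\Bigr), \]
the $o(1/\sqrt n)$ being uniform in $x\in\S$ and $d\ge0$. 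Substituting this back and using that $e^{d^2/(2\sigma^2 n)}\le e^{\theta^2/(2\sigma^2)}$ stays bounded for $d\in[0,\theta\sqrt n)$, the error term is absorbed upon multiplication by $s\sigma\sqrt{2\pi n}\,e^{d^2/(2\sigma^2 n)}$; this proves \eqref{eq:br uniform convergence}, and the last assertion of the theorem is the case $d=0$ (with any $\theta>0$).

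\noindent\textbf{The uniform upper bound.}
Once $d\gg\sqrt n$ the expansion above is no longer precise, but a cruder argument based on a decomposition into unit intervals proves \eqref{eq:br upper bound} and \eqref{eq:br1}. First, the uniformity in the variable $u$ of Theorem~\ref{thm:edgeworth} yields a concentration bound: an increment of $\Phi$ over an interval of length $O(1/\sqrt n)$ is $O(1/\sqrt n)$, the correction terms contribute $O(1/\sqrt n)$ and the remainder $o(1/\sqrt n)$, so that $\sup_{a\ge0}\Qxs\{W_n\in[a,a+1)\}\le C/\sqrt n$, uniformly in $x$. Second, applying the change of measure with $g=\1[\{W_n\in[a,a+1)\}]$, bounding $e^{-sS_n}\le e^{-s(nq+a)}$ on this event, and using $\inf_{\S}\es>0$, we get the local Chernoff bound $\Q_x\{W_n\in[a,a+1)\}\le C\,e^{-n\Lambda^*(q)}\,e^{-sa}/\sqrt n$, uniformly in $a\ge0$ and $x$. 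Decomposing $\{S_n\ge nq+d\}=\bigsqcup_{j\ge0}\{W_n\in[d+j,d+j+1)\}$ and summing the geometric series in $j$ (using $s>0$) gives $\Q_x\{S_n\ge nq+d\}\le C'\,e^{-n\Lambda^*(q)}\,e^{-sd}/\sqrt n$, uniformly in $d\ge0$ and $x$, which is \eqref{eq:br upper bound}; \eqref{eq:br1} follows by putting $u=nq+d$ and recalling $e^{-n\Lambda^*(q)}=k(s)^ne^{-snq}$.

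\noindent\textbf{Main obstacle.}
The delicate point throughout is keeping every estimate uniform simultaneously in the starting point $x\in\S$ and in $d$; for \eqref{eq:br uniform convergence} this is precisely what forces the restriction $d<\theta\sqrt n$, so that the Gaussian weight $e^{-d^2/(2\sigma^2 n)}$ and its reciprocal remain controlled. The key structural observation for \eqref{eq:br upper bound}--\eqref{eq:br1} is that the single uniform-in-$u$ Edgeworth estimate of Theorem~\ref{thm:edgeworth} already delivers both the sharp constant (through the leading Gaussian increment) and the concentration-function bound $O(1/\sqrt n)$ that drives the unit-interval decomposition, so that no separate large-deviation input is needed.
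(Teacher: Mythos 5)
Your argument is correct. For the central statements \eqref{eq:br uniform convergence} and item (4) it is essentially the paper's proof: the same change of measure \eqref{eq:com1} reduces everything to the asymptotics of $\Psi_n(d,x)=\E_{\Qxs}\bigl[e^{-s(S_n-nq)}\1[\{S_n-nq\ge d\}]\bigr]$, which both you and the paper write as an exponentially weighted integral of increments of $F_{n,x}$ and evaluate via the uniform Edgeworth expansion of Theorem \ref{thm:edgeworth}; you linearize the Gaussian increment and carry one additive error that is $o(1/\sqrt n)$ uniformly in $x\in\S$ and $d\ge 0$, whereas the paper integrates by parts exactly, splits into four terms $I_1^d,\dots,I_4^d$, and treats the Gaussian term with two separate estimates (a crude one valid for all $d$, a refined multiplicative one for $d\le\theta\sqrt n$). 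Where you genuinely deviate is in parts (1)--(2): the paper extracts \eqref{eq:br upper bound} from the same decomposition via the bound $\abs{I_2^d(n)}\le e^{-d^2/(2\sigma^2 n)}\le 1$, while you prove a concentration-function bound $\sup_{a}\Qxs\{S_n-nq\in[a,a+1)\}\le C/\sqrt n$ from the Edgeworth expansion and combine it with a per-slab change of measure and a geometric series; this is a valid, standard alternative. In fact it is redundant in your write-up: your uniform-in-$d\ge0$ expansion of $\Psi_n(d,x)$ already gives $\sqrt n\,e^{sd}\Psi_n(d,x)\le C$ and hence \eqref{eq:br upper bound} directly, using $0<\inf_{\S}\es\le\sup_{\S}\es<\infty$ (your remark that ``once $d\gg\sqrt n$ the expansion is no longer precise'' concerns only relative, not absolute, accuracy). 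Two details you should write out: the atom $\Qxs\{S_n-nq=d\}$ is indeed uniformly $o(1/\sqrt n)$, but to get this from Theorem \ref{thm:edgeworth} you should bound it by an increment of $F_{n,x}$ over an interval of length $\delta/\sqrt n$, use that the Gaussian and correction profiles are Lipschitz, and then let $\delta\to 0$; and \eqref{eq:br1} is asserted for all $n\in\N$, so the finitely many $n$ below the threshold coming from (1) need the trivial Chernoff--Markov bound with an enlarged constant.
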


Note that, using just the Chebyshev inequality and the definition of $k(s)$, one obtains in \eqref{eq:br1} the weaker upper bound
$$ {\red \QQP[x]{S_n > u}} \le \frac{C k(s)^n}{e^{su}},$$
where the factor $1/\sqrt{n}$ does not appear.

\begin{proof}
All the results will be consequences of a general argument. Fix $\theta \ge 0$, but let $d \ge 0$ be arbitrary for the time being. Introduce $\Psi_n := s \sigma \sqrt{n}$ and
$$ J_n^d ~:=~ s \sigma \sqrt{2 \pi n} \frac{e^{snq} e^{sd}}{k(s)^n}~=~ \Psi_n \, \sqrt{2 \pi} \, e^{n \Lambda^*(q)} \, e^{sd}$$
as well as the normalized quantity
$$ W_n ~:=~ \frac{S_n - nq}{\sqrt{n}\sigma},   $$
then $\Prob_{\Qxs}(W_n \le t ) = F_{n,x}(t)$. We obtain that
\begin{align*}
\frac{1}{\es(x)} \E_{\Q_x} \left({\es(X_n) \, \1[{\{S_n \ge nq + d \}}]}\right) ~&=~ \Erw[{\Qxs}]{e^{n\Lambda(s)-sS_n} \, \1[{\{S_n \ge nq + d \}}]} \\
&=~ e^{- n \Lambda^*(q)} \Erw[{\Qxs}]{e^{-s(S_n - nq)} \, \1[{\{S_n - nq\ge d \}}]}   \\
&=~ e^{- n \Lambda^*(q)} \Erw[{\Qxs}]{e^{-\Psi_n W_n} \, \1[{\{W_n \ge \frac{d}{\sqrt{n} \sigma} \}}]}
\end{align*}
Using the definition of $J_n^d$, we obtain
\begin{align*}
&~ J_n^d \frac{1}{\es(x)} \E_{\Q_x} \left({\es(X_n) \, \1[{\{S_n \ge nq + d \}}]}\right) \\
=&~ \sqrt{2 \pi} \Psi_n \, e^{sd}\, \int_{\frac{d}{\sigma \sqrt{n}}}^{\infty} e^{-\Psi_n t} \, dF_{n,x}(t) \\
=&~ \left. \sqrt{2 \pi} e^{sd} \Psi_n  e^{-\Psi_n t} \, F_{n,x}(t) \right|_{\frac{sd}{\Psi_n}}^{\infty} + \sqrt{2 \pi}  \, e^{sd}\, \int_{\frac{sd}{\Psi_n}}^{\infty} \Psi_n^2 e^{-\Psi_n t} \, F_{n,x}(t) \, dt\\
=&~  - \sqrt{2 \pi} \Psi_n \, F_{n,x}\left(\frac{sd}{\Psi_n}\right)  + \sqrt{2 \pi}  \, e^{sd}\, \int_{sd}^{\infty} \Psi_n e^{- t} \, F_{n,x}\left(\frac{t}{\Psi_n} \right) \, dt\\
=&~   \sqrt{2 \pi}  \, e^{sd}\, \int_{sd}^{\infty} \Psi_n e^{- t} \, \left[ F_{n,x}\left(\frac{t}{\Psi_n} \right) - F_{n,x}\left(\frac{sd}{\Psi_n}\right)\right]\, dt\\
\end{align*}
Defining  $h(t) := (1-t^2)\phi(t)$ and setting as before
$$ {\red G_n(t) ~:=~ \Phi(t) + \frac{m_3}{\sigma^3 \sqrt{n}}(1-t^2)\phi(t) - \frac{b(x)}{\sigma \sqrt{n}} \phi(t) ~=~  \Phi(t) + \frac{m_3}{\sigma^3 \sqrt{n}}h(t) - \frac{b(x)}{\sigma \sqrt{n}} \phi(t) },$$
we want to use the Edgeworth expansion from Theorem \ref{thm:edgeworth} in order to calculate the asymptotics. Therefore,
\begin{align*}
&~ J_n^d \frac{1}{\es(x)} \E_{\Q_x} \left({\es(X_n) \, \1[{\{S_n \ge nq + d \}}]}\right) \\
=&~ \sqrt{2 \pi}  \, e^{sd}\, \int_{sd}^{\infty}  e^{- t} \, s \sigma \sqrt{n} \, \left( \left[ F_{n,x}\left(\frac{t}{\Psi_n} \right) - G\left(\frac{t}{\Psi_n} \right) \right] - \left[ F_{n,x}\left(\frac{sd}{\Psi_n}\right) - G\left(\frac{sd}{\Psi_n}\right) \right] \right)\, dt &~(=: I_1^d(n,x))\\
&~+  \sqrt{2 \pi}  \, e^{sd}\, \int_{sd}^{\infty} \Psi_n e^{- t} \, \left[ \Phi\left(\frac{t}{\Psi_n} \right) - \Phi\left(\frac{sd}{\Psi_n}\right)\right]\, dt &~(=: I_2^d(n)) \\
&~+  \frac{m_3\sqrt{2 \pi}}{\sigma^3 \sqrt{n}}  \, e^{sd}\, \int_{sd}^{\infty} \Psi_n e^{- t} \, \left[ h\left(\frac{t}{\Psi_n} \right) - h\left(\frac{sd}{\Psi_n}\right)\right]\, dt &~(=: I_3^d(n)) \\
&~-  \frac{b(x)\sqrt{2 \pi}}{\sigma \sqrt{n}}  \, e^{sd}\, \int_{sd}^{\infty} \Psi_n e^{- t} \, \left[ \phi\left(\frac{t}{\Psi_n} \right) - \phi\left(\frac{sd}{\Psi_n}\right)\right]\, dt &~(=: I_4^d(n)) \\
\end{align*}

\medskip

It follows from Theorem \ref{thm:edgeworth} that $$ \lim_{n \to \infty} \sup_{x \in \S} \, \sup_{d \ge 0} \,  \abs{I_1^d(n,x)} = 0.$$
Using mainly that $\phi$ and $h$ have bounded derivatives, we are going to show that as well
\begin{equation}\label{eq:estimateI3} \lim_{n \to \infty} \sup_{d \ge 0} \abs{I_3^d(n)} = \lim_{n \to \infty} \sup_{d \ge 0} \abs{I_4^d(n)}  =0. \end{equation}
Finally, considering $I_2^d(n)$, we are going to obtain two different estimates, namely
\begin{equation} \label{eq:estimateI2}\abs{I_2^d(n)} \le  e^{-\frac{d^2}{2 \sigma^2 n}} \le 1, \end{equation}
and the refined estimate
\begin{equation} \lim_{n \to \infty} \sup_{d \in [0, \theta \sqrt{n}]} \abs{e^{\frac{d^2}{2 \sigma^2 n}} I_2^d(n) - 1} = 0. \label{eq:goodestimateI2}\end{equation}
Using the estimate \eqref{eq:estimateI2} allows to infer the upper bound \eqref{eq:br upper bound}, while the convergence result \eqref{eq:br uniform convergence} follows by using estimate \ref{eq:goodestimateI2}.
So it remains to prove Eqs. \eqref{eq:estimateI3} -- \eqref{eq:goodestimateI2}.

\medskip

\Step[2]: We consider $I_3^d$ and omit $I_4^d$, which can be treated along similar lines. A simple calculation shows that $h$ has a continuous derivative $h'$ with $\sup_{x \in \R} \abs{h'(x)} =: M < \infty$.
We compute
\begin{align*}
\abs{I_3^d(n)} ~=&~ \abs{ \frac{m_3\sqrt{2 \pi}}{\sigma^3 \sqrt{n}} \, e^{sd} \, \int_{sd}^{\infty} e^{-t} \, \left( \int_{sd/\Psi_n}^{t/\Psi_n} h'(r) \, \Psi_n  \, dr \right) \, dt} \\
~=&~ \abs{ \frac{m_3\sqrt{2 \pi}}{\sigma^3 \sqrt{n}} \, e^{sd} \, \int_{sd}^{\infty} e^{-t} \, \left( \int_{sd}^{t} h'\left( \frac{r}{\Psi_n}\right)   \, dr \right) \, dt}
=~ \abs{ \frac{m_3\sqrt{2 \pi}}{\sigma^3 \sqrt{n}} \, e^{sd} \, \int_{sd}^{\infty} \, h'\left( \frac{r}{\Psi_n}\right) \,  \int_{r}^{\infty} e^{-t}   \, dt  \, dr } \\
\le&~  \frac{m_3\sqrt{2 \pi}}{\sigma^3 \sqrt{n}} \, e^{sd} \, \int_{sd}^{\infty} \, \abs{h'\left( \frac{r}{\Psi_n}\right)} e^{-r}  \, dr
~\le~  \frac{m_3\sqrt{2 \pi}}{\sigma^3 \sqrt{n}} \, e^{sd} \, \int_{sd}^{\infty} \, M e^{-r}  \, dr  =  M\,  \frac{m_3\sqrt{2 \pi}}{\sigma^3 \sqrt{n}}
\end{align*}

\Step[3]: We are going to prove \eqref{eq:estimateI2} and \eqref{eq:goodestimateI2}. Therefore,
\begin{align}
I_2^d(n) ~=&~ e^{sd} \, \int_{sd}^\infty \Psi_n e^{-t} \, \left( \int_{sd/\Psi_n}^{t/\Psi_n} \, e^{-r^2/2} \, dr\right) \, dt ~=~ e^{sd} \, \int_{sd/\Psi_n}^\infty \Psi_n e^{-r^2/2} \, \left( \int_{\Psi_n r}^{\infty} \, e^{-t} \, dt\right) \, dr \\
=&~ e^{sd} \, \int_{sd/\Psi_n}^\infty \Psi_n e^{-r^2/2} \, e^{- \Psi_n r} \, dr ~=~ e^{sd} \,\left[ \left. - e^{- \Psi_n r  - r^2/2} \right|_{sd/\Psi_n}^\infty -  \int_{sd/\Psi_n}^\infty r e^{-r^2/2 - \Psi_n r} \, dr \right] \\
=&~ e^{-\frac{d^2}{2 \sigma^2 n}} -  \, \int_{sd/\Psi_n}^\infty r \, e^{sd - \Psi_n r} \, e^{-r^2/2 } \, dr \label{eq:I2est}
\end{align}
For all $d \ge 0$, we have
$$ 0 \le \int_{sd/\Psi_n}^\infty r \, e^{sd - \Psi_n r} \, e^{-r^2/2 } \, dr \le \int_{sd/\Psi_n}^\infty r \, e^{-r^2/2 } \, dr ~=~  e^{-\frac{d^2}{2 \sigma^2 n}}$$
and thus \eqref{eq:estimateI2} follows.

\Step[4]: In order to prove \eqref{eq:goodestimateI2}, let $\epsilon > 0$ be arbitrary and choose $\delta$ such that $\delta + \frac{\theta \delta}{\sigma^2} + \frac{\delta^2}{\sigma^2} < \epsilon$. We separate the last integral in Eq. \eqref{eq:I2est} into
$$ \int_{sd/\Psi_n}^{sd/\Psi_n + \delta/\sigma} r \, e^{sd - \Psi_n r} \, e^{-r^2/2 } \, dr   + \int_{sd/\Psi_n + \delta/\sigma}^\infty r \, e^{sd - \Psi_n r} \, e^{-r^2/2 } \, dr  ~=:~ A(n) + B(n)$$
and see that by the restriction $d \le \theta \sqrt{n}$, it holds that $sd / \Psi_n \le {\theta}/{\sigma}$ and thus
$$ A(n) \le \frac{\delta}{\sigma} \frac{\theta + \delta}{\sigma} e^{-\frac{d^2}{2 \sigma^2 n}}.$$
Finally,
$$ B(n) \le e^{-s \delta \sqrt{n}} \int_{sd/\Psi_n + \delta/\sigma}^\infty r \,  \, e^{-r^2/2 } \, dr = e^{-s \delta \sqrt{n}}   \, e^{-(sd/\Psi_n + \delta/\sigma)^2/2 } \le e^{-s \delta \sqrt{n}} \, e^{-\frac{d^2}{2 \sigma^2 n}} .$$
Upon choosing $n_0$ such that $e^{- s \delta \sqrt{n}} \le \delta$ for all $n \ge n_0$, we obtain that for all $n \ge n_0$, $0 \le A(n) + B(n) \le \epsilon\,  e^{-\frac{d^2}{2 \sigma^2 n}}$.
Thus we have proven that for all $\epsilon > 0$, there is $n_0$ such that for all $n \ge n_0$,
$$ \abs{e^{\frac{d^2}{2 \sigma^2 n}}I_2^d(n) - 1 } \le \epsilon.$$
\end{proof}

\begin{proof}[Proof of Theorem \ref{thm:BahadurRao}]
The main result is given above, while the formulas for $\sigma^2$ follow from Lemma \ref{lem: lambda} and Corollary \ref{cor:k}.
\end{proof}

\section{Tails of Stationary Solutions of Random Difference Equations}\label{sect:rde}

This section is devoted to Theorem \ref{thm:rde}. We start by giving an example for a matrix recursion from financial time series.

\begin{exa}
Consider the ARCH(2) process $Y_n$ defined by
$$ Y_n ~=~ \sigma_n \epsilon_n, \qquad \sigma_n^2 ~=~ a_1 Y_{n-1}^2 + a_2 Y_{n-2}^2 + 1, $$
where $\epsilon_n$ are i.i.d.~standard normal distributed random variables and $a_1, a_2 >0$ with $a_1 + a_2 < 1$.
\begin{enumerate}
\item Considering the squared process $(Y_n^2)_n$, we obtain a matrix recursion:
\begin{equation}
\left( \begin{array}{c}
\sigma_n^2 \\ Y_{n-1}^2
\end{array} \right) ~=~
\left( \begin{array}{cc}
a_1 \epsilon_{n-1}^2 & a_2 \\
\epsilon_{n-1}^2 & 0
\end{array} \right) \
\left( \begin{array}{c}
\sigma_{n-1}^2 \\ Y_{n-2}^2
\end{array} \right) +
\left( \begin{array}{c}
1 \\ 0
\end{array} \right) ~=:~ {\red \mM_n} \vec{Y_{n-1}} + B_n,
\end{equation}
{\red where $(\mM_n, B_n)_{n \in \N}$ is an i.i.d.~sequence in $\Mset \times \Rdnn$.}
{\red The matrix $\mA_1 := \mM_1^\top$ satisfies condition $\condC$ (it suffices to assume that allowable matrices have full measure), and the moment condition \eqref{eq:iotamoment} is readily checked, since 
\begin{align*}
\iota(\mA_1)^2 ~=~ \min_{x \in \Sp} \norm{\mA_1 x}^2 ~=~ \min_{\begin{subarray}{c}{x_1^2+x_2^2=1} \\ x_1, x_2 \ge 0 \end{subarray}} \, \left( a_1 \epsilon_0^2 x_1 + \epsilon_0^2 x_2 \right)^2 + a_2^2 x_1^2 ~\ge~ \big(\min\{a_1 \epsilon_0^2, a_2\} \big)^2.
\end{align*}
Consequently, 
$\iota(\mA_1) \ge \min\{a_1 \epsilon_0^2, a_2 \}$, which has all negative moments up to order 1/2 since $\epsilon_1$ is a standard normal random variable.}
Kesten \cite[Theorem 3]{Kesten1973} gives the following sufficient condition for the existence of $\alpha >0$ such that $k(\alpha)=1$: There is $s_0 >0$ such that
$$ \E{ \left[ \big( \min_{i} \ \sum_{j} {\red (\mM_1)_{i,j}} \big)^{s_0} \right]} ~\ge~ d^{s_0/2},$$
where $d$ is the dimension of the matrix {\red (this is not a misprint, Kesten's condition is stated in terms of the matrix $\mM_1$)}. In our case, we have the estimate
$$ {\red \E{ \left[ \big( \min_{i} \ \sum_{j} (\mM_1)_{i,j} \big)^s \right]} ~=~ \E \left[ \big( \min\{ a_1 \epsilon_0^2 + a_2, \epsilon_0^2\} \big)^s \right] ~\ge~ a_1^s \E (\epsilon_0^{2s}) }.  $$
Since $\epsilon_1$ is unbounded, the right hand side tends to infinity as $s$ grows, thus there is $\alpha >0$ with $k(\alpha) >0$. Finally, the non-arithmeticity assumption holds since $\epsilon$ has a continuous distribution, and eigenvalues depend continuously on the entries of a matrix.
\item Kl\"uppelberg and  Pergamenchtchikov showed in \cite[Lemma 2.7]{Klueppelberg2004}, that the process $(Y_n)_n$ has the same distribution as the process $(X_n)_n$ (if started with the same initial value), given by
$$ X_n ~=~ a_1 \eta_{1,n} X_{n-1} + a_2 \eta_{2,n} X_{n-2} + \eta_{3,n},$$
where $(\eta_{i,n})_n$ are independent sequences of i.i.d~standard normal random variables.
This leads to the matrix recursion
\begin{equation}
\left( \begin{array}{c}
X_n \\ X_{n-1}
\end{array} \right) ~=~
\left( \begin{array}{cc}
a_1 \eta_{1,n} & a_2 \eta_{2,n} \\
1 & 0
\end{array} \right) \
\left( \begin{array}{c}
X_{n-1} \\ X_{n-2}
\end{array} \right) +
\left( \begin{array}{c}
\eta_{3,n} \\ 0
\end{array} \right) ~=:~ {\red \mM_n} \vec{X_{n-1}} + B_n
\end{equation}
It can be shown that these matrices satisfy assumptions \ipo~as well as \ide, and it is proved in \cite[Lemma 3.2]{Klueppelberg2004} that there exists $\alpha > 0$ with $k(\alpha)=1$.
\end{enumerate}
\end{exa}

{\blue
Further instances of the equation $\R \eqdist \mM R + B$, with matrices satisfying the assumptions of Theorem \ref{thm:rde}, appear e.g.~ in \cite{Basrak2002} (GARCH-processes, nonnegative matrices), \cite{Wang2013} (multitype branching processes with immigration in random environment, nonnegative matrices), \cite{Behme2012} (stationary solutions of multivariate generalized Ornstein-Uhlenbeck processes, invertible matrices), to name just a few.
An extension of the methods used below applies to provide exact tail asymptotics for random variables $R$ which are fixed points of \emph{multivariate smoothing transforms}, i.e.
satisfying
\begin{equation}\label{eq:SFPE} R \eqdist \sum_{i=1}^N \mM_i R_i + B,\end{equation}
where $N \ge 2$ is a fixed integer, $R$ and $R_i$ are i.i.d.~and independent of the random matrices $\mA_i$ and the random vector $B$. The details are worked out in \cite{BM2015}.

Heavy tail properties of such $R$ were studied in \cite{BDG2011,BDMM2013,Mirek2013} and a result similar to \eqref{eq:RDE} was obtained, there $\alpha=\max\{s > 0 : \k(s) = 1/N\}$, but only in the first reference, which studies matrices satisfying $\condC$, it could be shown that $K >0$, in the latter two references, only partial results were obtained.}
%

{\red \subsection{Outlining the proof of Theorem \ref{thm:rde}}

Now we explain how the proof of Theorem \ref{thm:rde} is given by a sequence of lemmata, the proofs of which are {\red quite} technical and therefore postponed to the subsequent section, for a better stream of arguments. First, we have to introduce some notation.

{\red
\begin{notation}
Given a random element $(\mM, B) \in M(d \times d, \R) \times \R^d$, let $(\mM_n, B_n)_{n \in \N}$ be a sequence of i.i.d.~copies of $(\mM, B)$, defined on a probability space $(\Omega, \F, \Prob)$. Let $X_0 : \Omega \to \Sd$ be a random variable and $(\Prob_x)_{x \in \Sd}$ be a family of probability measures on $\Omega$, such that $(\mM_n, B_n)_{n \in \N}$ have the same law as under $\Prob$, while $\P[x]{X_0=x}=1$.
Write $\mG_n:= \mM_n^\top \cdots \mM^\top_1$, $X_n^*:=\mG_n \as X_0$, $S_n^* := \log \abs{\mG_n X_0}$.
\end{notation}

As mentioned before, we will apply the results obtained in the previous sections to the matrix $\mA_1 := \mM_1^\top$. Writing $\mu$ for the law of $\mA_1=\mM_1^\top$ under $\Prob$, and defining the measures $\Q_x$ as in Subsection \ref{sect:defQ}, we have the following identities, valid for all $x \in \S$:
\begin{align}
\Q_x \big( (X_0, (\mA_n)_{n \in \N}) \in \cdot \big) ~=~ \P[x]{ (X_0, (\mM_n^\top)_{n \in \N}) \in \cdot)} \\
\Q_x \big( (X_n, S_n)_{n \in \N} \in \cdot \big) ~=~ \P[x]{ (X_n^*, S_n^*)_{n \in \N}) \in \cdot)}.
\end{align}
If not explicitly stated otherwise, all appearing quantities below will be defined in terms of the sequences $(\mA_n)_{n \in \N}$, for example $k(s)$.
From now on, we fix $\alpha >0$ such that $k(\alpha)=1$ and set $q:=\E_{\Q^s}^\alpha S_1$. We will assume throughout that the assumptions of Theorem \ref{thm:rde} are in force.
}

Using the identifications from above, the SLLN in Proposition \ref{prop:FK} yields that
$$ \lim_{n \to \infty} \frac{1}{n} \log \norm{\mG_n} ~=~k'(0) <0 \qquad \Pfs, $$
which allows to infer that there is a unique solution (in distribution) to the equation $R \eqdist \mM R +B$, see e.g. \cite[Theorem 1.1]{Bougerol1992}.

\medskip

{\blue 
The fundamental idea  is to compare the behavior of $\skalar{x,R}$ with that of $\abs{\mG x} $. Therefore, we use that for $R_k$ being i.i.d.~copies of $R$ and independent of $(\mM_k, B_k)$, we have that for all $n \in \N$,
\begin{eqnarray*}
R&\eqdist& \mM_1 R_1+B_1 \eqdist \mM_1 \cdots \mM_n R_n + \sum_{k\le n} \mM_1 \cdots \mM_{k-1}B_k.
\end{eqnarray*}
Consequently, for any $x \in \Sd$,
\begin{eqnarray*}
\skalar{x,R} ~\eqdist~ \skalar{\red{x,\mM_1} \cdots \mM_n R_n} + \sum_{k\le n} \skalar{x,\mM_1 \cdots \mM_{k-1}B_k} ~\ge~ \skalar{\mG_n x, R_n} - \sum_{k\le n} \abs{\skalar{\mG_{k-1} x,B_k}}
\end{eqnarray*}
We are going to consider sets where first term dominates, while the sum is comparably small. In order to estimate the scalar product $\skalar{\mG_n x, R_n}$ from below by $\abs{\mG_n x}$, we will use the following lemma:
%

\begin{lem}\label{lem:cones}
Let the assumptions of Theorem \ref{thm:rde} hold.
 Then for all $D >0$ there are $J < \infty$, $\kappa_j>0$ and $c_j>0$,  $1 \le j \le J$, and disjoint subsets $\S_j \subset \Sd$, such that
\begin{equation}\label{eq:cones}\P{ \frac{R}{\abs{R}} \in \S_j \ \text{ and } |R|> \frac{D}{c_j}} ~\ge~ \kappa_j \end{equation}
and moreover
\begin{equation}
\label{eq: 4.2cones}
 \Rd \subset \bigcup_{j=1}^J \S_j^*,
 \end{equation}
where $\S_j^*$ are the cones
$$ \S_j^* := \{ y \in \Rd \, : \, \skalar{y,x} \ge c_j \abs{y}  \text{ for all $x \in \S_j$}\}. $$
If $\mu$ satisfies (C), then the same statement is valid, but for $\S_j$ being disjoint subsets of $\Sp$ and with \eqref{eq: 4.2cones} replaced by
$$ \R_{\ge} \subset \bigcup_{j=1}^J \S_j^*.
$$
\end{lem}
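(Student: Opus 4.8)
The plan is to take each $\S_j$ to be a small closed ball (in $\Sd$, resp.\ in $\Sp$ in the case of $\condC$, where it should be centred in $\interior{\Sp}$) about a point $x_j$ of the set of far-out directions of $R$,
$$ G ~:=~ \bigcap_{D'>0}\closure{\{\, r/\abs{r} \,:\, r\in\supp R,\ \abs{r}>D'\,\}}~\subseteq~\Sd, $$
with $\supp R$ replaced by $\supp R\cap\Rdnn$ under $\condC$. Granting for the moment that $G\neq\emptyset$ (which uses that $\supp R$ is unbounded, see below), the bound \eqref{eq:cones} will hold automatically for $\S_j=\closure{B(x_j,\delta_j)}\cap\Sd$ with $x_j\in G$: by the definitions of $G$ and of the topological support, for every $D'$ there is $r\in\supp R$ with $\abs{r}>D'$ and $r/\abs{r}$ arbitrarily close to $x_j$, so a full-dimensional ball around such an $r$ lies inside $\{\,y:\ y/\abs{y}\in\S_j,\ \abs{y}>D/c_j\,\}$ and carries positive $R$-mass, which I would take for $\kappa_j$. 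The real work is thus to choose the centres $x_j$ so that \eqref{eq: 4.2cones} holds.

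First I would record that $\supp R$ is unbounded: in the nonnegative case this is assumed for $\supp R\cap\Rdnn$, and in the two invertible cases it follows from $k(\alpha)=1$ with $\alpha>0$ together with the non-degeneracy $\P{\mA r+B=r}<1$, by producing (via the series $R\eqdist\sum_{k\ge1}\mM_1\cdots\mM_{k-1}B_k$) values of $R$ of arbitrarily large norm on sets of positive probability --- here one uses that under \ide{} (resp.\ \ipo{}) the closed semigroup $\Gamma^*=[\supp\mM]$ contains matrices $\matrix{g}$ with $\iota(\matrix{g})$ arbitrarily large. Hence $G$ is a nonempty compact subset of $\Sd$.

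Next I would reduce \eqref{eq: 4.2cones} to a spanning property of $G$. If $z\in\Sd$ satisfies $\skalar{z,x_j}\ge c$ for a fixed $c>0$, then $\skalar{z,x}\ge c-\delta_j=:c_j$ for every $x\in\S_j$ (with $\delta_j<c$), so the cone $\{\,y:\ \skalar{y/\abs{y},x_j}\ge c\,\}\cup\{0\}$ is contained in $\S_j^*$. Thus \eqref{eq: 4.2cones} holds once the spherical caps $\{\,z\in\Sd:\ \skalar{z,x_j}\ge c\,\}$, $1\le j\le J$, cover $\Sd$; and, by compactness of $\Sd$ and of $G$, such a finite family (with centres in $G$, a common $c>0$, and the $\delta_j$ shrunk to make the $\S_j$ disjoint) exists provided that for every $y\in\Sd$ there is $x\in G$ with $\skalar{y,x}>0$, a condition I will call $(\star)$. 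Under $\condC$ one needs $(\star)$ only for $y\in\Rdnn$, and then a single centre $v^*\in G\cap\interior{\Sp}$ suffices, since $\skalar{y,v^*}\ge(\min_i v^*_i)\sum_i y_i\ge c_0\abs{y}$ for all $y\in\Rdnn$ with some $c_0>0$, so that $\Rdnn\subseteq\S_1^*$ and $J=1$ works.

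It remains to verify $(\star)$ (and, under $\condC$, to exhibit such a $v^*$), and I expect this to be the main obstacle: it is exactly here that the structural hypotheses of Theorem~\ref{thm:rde} are needed, the statement being false without them (e.g.\ in dimension one with $R\ge0$). The key point is that $G$ is invariant under the action $\matrix{g}\cdot x=\matrix{g}x/\abs{\matrix{g}x}$ of $\Gamma^*=[\supp\mM]$ on $\Sd$: for a generator $\mm\in\supp\mM$ and $x\in G$, choosing $r\in\supp R$ far out with direction near $x$ and $(\mm,b)\in\supp(\mM,B)$, the vector $\mm r+b$ lies in $\supp R$, has norm $\ge\iota(\mm)\abs{r}-\abs{b}\to\infty$, and direction tending to $\mm\cdot x$, whence $\mm\cdot x\in G$; iteration and continuity give $\Gamma^*\cdot G\subseteq G$. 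In the \ipo{} and \ide{} cases this makes $C:=\closure{\mathrm{cone}(G)}$ a nonzero $\Gamma^*$-invariant closed convex cone; if $(\star)$ failed then $G$ would lie in a closed half-space, so $C$ would be a \emph{proper} $\Gamma^*$-invariant closed convex cone --- which is excluded by hypothesis under \ipo{}, and under \ide{} because the absolute-continuity assumption forbids an open family of invertible matrices from jointly stabilising a fixed proper closed convex cone (indeed $V(\Gamma^*)=\Sd$ under \ide{}, so that $G=\Sd$ outright). Finally, under $\condC$ the semigroup $\Gamma^*$ also satisfies $\condC$, so $\Gamma^*\cap\interior{\Mset}\neq\emptyset$; applying a positive matrix of $\Gamma^*$ (realised as a product of $\mM$'s, together with the corresponding partial sum of the series) to a far-out point of $\supp R\cap\Rdnn$ produces far-out points of $\supp R\cap\Rdnn$ whose directions accumulate at an interior point $v^*\in\interior{\Sp}$, and this $v^*\in G$ is what the single-cap argument above requires.
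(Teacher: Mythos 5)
Your overall skeleton --- the asymptotic direction set $G$ (the paper's $V(R)$), its invariance under $\Gamma^*$, and the compactness/cap-covering reduction, with a single cap centred at an interior point of $\Sp$ under \condC~--- is the same as the paper's, and under \condC~your argument is essentially the paper's (Proposition \ref{lem:supp nonnegative} together with Case 1 of its proof). For the invertible cases your half-space/convex-cone argument for the covering property is an attractive, softer alternative to the paper's route, which instead imports from \cite{Guivarch2012} (resp.\ \cite{AM2010}) that $V(R)$ is nonempty and spread out, and then uses a symmetric $\Pst[\alpha]$-stationary measure supported on $V(R)$ together with the representation $\es[\alpha](x)=c\int\abs{\skalar{x,y}}^\alpha\,\nust[\alpha](dy)$ to get the uniform lower bound; given your property $(\star)$, compactness of $G$ recovers the uniformity just as well, and under \ip~the cone you build is automatically pointed (its lineality space is a $\Gamma^*$-invariant subspace), so the word ``proper'' causes no trouble. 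However, the two inputs you treat as routine are exactly where the paper leans on nontrivial external results, and your justifications for them do not stand.

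First, the nonemptiness of $G$, i.e.\ the unboundedness of $\supp R$, under \ip~and \ide: you derive it from the claim that $\Gamma^*$ contains matrices $g$ with $\iota(g)$ arbitrarily large, which is not a consequence of the hypotheses. Since $\iota(g)\le\abs{\det g}^{1/d}$, any model in which all matrices in the support have determinant of modulus at most $\rho<1$ has $\iota\le\rho^{1/d}<1$ on all of $\Gamma^*$, and nothing in \ip, $k'(0)<0$, $k(\alpha)=1$ excludes this. The hypotheses only give matrices of arbitrarily large \emph{norm}, and upgrading that to far-out points of $\supp R$ requires showing that the support is not confined to directions contracted by those matrices and that the additive part does not cancel the expansion; this is precisely the content of \cite[Lemma 8.1]{AM2010} under \ide~and of the ``Case I'' analysis following Theorem 5.1 of \cite{Guivarch2012} under \ip, which the paper quotes and which your sketch does not replace. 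Second, under \ide~your reason for excluding an invariant proper cone --- that an open family of invertible matrices cannot jointly stabilise one --- is false as a general principle: the matrices with all entries strictly positive form an open subset of $GL(d,\R)$ and all preserve $\Rdnn$. What actually rules out invariant cones under \ide~is condition (1) (every $\Gamma$-orbit is dense in $\Sd$, so $\Gamma$ preserves no nontrivial closed convex cone), transferred to $\Gamma^*=\Gamma^\top$ by duality ($\Gamma^\top C\subseteq C$ if and only if $\Gamma C^*\subseteq C^*$); likewise the parenthetical ``$G=\Sd$ outright'' would require minimality of the $\Gamma^*$-action on $\Sd$, which you have not established. With the unboundedness/spread of $\supp R$ supplied by the cited results and the \ide~step repaired as indicated, your cone argument does yield the covering property and hence the lemma.
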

The proof of the lemma will be given in Section \ref{sect:rde proofs}.

The lemma now allows for the following comparison: If $R_n \in \S_j$ and $\mG_n x \in \S_j^*$, it follows that $\skalar{\mG_n x, R_n} \ge c_j \abs{\mG_n x} \abs{R_n}$. 

}}

\medskip

{\red As the next step, we use this comparison in more detail. }
Given constants $C_0, \delta >0$ (which will be chosen later), let $D = e^{C_0} \sum_{k=0}^\infty e^{-k \delta} = e^{C_0}/(1-e^{-\delta})$. Let $t \ge 0$ and define $n_t = \lceil \log t/q \rceil$.
\begin{eqnarray}
%
%
V_{n,t} &=& \Big\{ \red{S_n^*} \ge n_t q \ \mbox{ and } \ \log |B_{k+1}| + S_k^* \le n_t q + C_0  -(n-k)\delta  \ \forall k< n     \Big\} \label{v1}\\
V_{n,t}^j&=& V_{n,t} \cap \big\{ \mG_n X_0\in \S_j^*  \big\}\cap \big\{ R_n\in \S_j \ \mbox{ and } |R_n|> 2 \frac{D}{c_j}  \big\}, \label{v2}\\
\widetilde V_{n,t} &=& \bigcup_j  V_{n,t}^j. \label{v3}
\end{eqnarray}

Then we have the following lemma, the short proof of which we give immediately.

\begin{lem}
\label{lem:pos:12.3} For all $t \ge 0$,
$$
\P{\skalar{x,R}>Dt} \ge \P[x]{\bigcup_n \wt V_{n,t}}.
$$ Moreover, for all $n \in \N$,
$$\P[x]{\wt V_{n,t}}\ge (\min_j\, \kappa_j)  \,\P[x]{V_{n,t}} =: \kappa_0 \P[x]{V_{n,t}}.$$
\end{lem}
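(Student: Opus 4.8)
The statement consists of two claims, which I would treat separately.

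\textbf{First claim.} To prove $\P{\skalar{x,R}>Dt}\ge\P[x]{\bigcup_n\wt V_{n,t}}$ I would exhibit a single random variable distributed like $R$ that witnesses $\skalar{x,R}>Dt$ on all of the sets $\wt V_{n,t}$ simultaneously. For this it is essential to take the copies $R_n$ appearing in \eqref{v2} to be the \emph{tails} of the defining series: with $R:=\sum_{k\ge1}\mM_1\cdots\mM_{k-1}B_k$ and $R_n:=\sum_{j\ge1}\mM_{n+1}\cdots\mM_{n+j-1}B_{n+j}$ one has $R_n\eqdist R$, $R_n$ independent of $\sigma\big((\mM_k,B_k)_{k\le n},X_0\big)$, and the \emph{exact} identity $R=\mM_1\cdots\mM_nR_n+\sum_{k=1}^n\mM_1\cdots\mM_{k-1}B_k$ for every $n$. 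Transposing under $\Prob_x$ (where $X_0=x$) gives $\skalar{x,R}=\skalar{\mG_nX_0,R_n}+\sum_{k=1}^n\skalar{\mG_{k-1}X_0,B_k}$. On $V_{n,t}^j$ the memberships $\mG_nX_0\in\S_j^*$, $R_n/\abs{R_n}\in\S_j$ together with $S_n^*\ge n_tq$ and $\abs{R_n}>2D/c_j$ yield $\skalar{\mG_nX_0,R_n}\ge c_j\abs{\mG_nX_0}\abs{R_n}>2D\,e^{n_tq}$, while the condition in \eqref{v1} gives $\abs{\skalar{\mG_{k-1}X_0,B_k}}\le e^{\,n_tq+C_0-(n-k+1)\delta}$ and hence $\sum_{k=1}^n\abs{\skalar{\mG_{k-1}X_0,B_k}}<e^{n_tq}\,e^{C_0}/(1-e^{-\delta})=D\,e^{n_tq}$. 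Subtracting, $\skalar{x,R}>D\,e^{n_tq}\ge Dt$, using $q=k'(\alpha)>0$ (as $k(0)=k(\alpha)=1$, $k'(0)<0$ and $k$ is log-convex) and $n_tq\ge\log t$. Thus $\wt V_{n,t}\subset\{\skalar{x,R}>Dt\}$ for every $n$, so the same holds for $\bigcup_n\wt V_{n,t}$, and since $\skalar{x,R}$ has the same law under $\Prob_x$ as under $\Prob$ the claim follows.

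\textbf{Second claim.} Fix $n$. On $V_{n,t}$ the vector $\mG_nX_0$ is nonzero and lies in $\Rd$ (in $\Rdnn$ under $\condC$, as then $X_0\in\Sp$ and the $\mM_k$ are nonnegative), so by \eqref{eq: 4.2cones} of Lemma~\ref{lem:cones} the index $j_0:=\min\{j:\mG_nX_0\in\S_j^*\}$ is well defined, and $V_{n,t}$ is the disjoint union of the sets $V_{n,t}\cap\{j_0=j\}$. Each such set is $\sigma\big((\mM_k,B_k)_{k\le n},X_0\big)$-measurable, hence independent of $R_n$, and $V_{n,t}\cap\{j_0=j\}\cap\{R_n/\abs{R_n}\in\S_j,\ \abs{R_n}>2D/c_j\}\subset V_{n,t}^j\subset\wt V_{n,t}$, the events on the left being disjoint in $j$. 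Invoking \eqref{eq:cones} of Lemma~\ref{lem:cones} with the constant $2D$ in place of $D$ (so $\P{R/\abs{R}\in\S_j,\ \abs{R}>2D/c_j}\ge\kappa_j$), one obtains
\[
\P[x]{\wt V_{n,t}}\ \ge\ \sum_j\P[x]{V_{n,t},\,j_0=j}\,\P{R/\abs{R}\in\S_j,\ \abs{R}>2D/c_j}\ \ge\ \kappa_0\sum_j\P[x]{V_{n,t},\,j_0=j}\ =\ \kappa_0\,\P[x]{V_{n,t}},
\]
with $\kappa_0:=\min_j\kappa_j$.

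\textbf{Main obstacle.} The computations above are routine once the events are unpacked; the one delicate point is the choice of the $R_n$ in the first claim. With a \emph{fresh} independent copy of $R$ for each $n$ one only gets $\P[x]{\wt V_{n,t}}\le\P{\skalar{x,R}>Dt}$ separately for each $n$, which does not control the union; taking $R_n$ to be the tail $\sum_{j\ge1}\mM_{n+1}\cdots\mM_{n+j-1}B_{n+j}$ — still an i.i.d.\ copy of $R$, independent of the first $n$ steps — is exactly what makes one $R$ dominate on all the $\wt V_{n,t}$ at once.
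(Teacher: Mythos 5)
Your proposal is correct and follows essentially the same route as the paper: the pointwise bound $\skalar{x,R}\ge c_j\abs{R_n}\abs{\mG_n X_0}-\sum_{k\le n}\abs{B_k}\abs{\mG_{k-1}X_0}> 2De^{n_tq}-De^{n_tq}\ge Dt$ on $V_{n,t}^j$ for the first claim, and the independence of $R_n$ from $\sigma\big((\mM_k,B_k)_{k\le n},X_0\big)$ together with the covering $\Rd\subset\bigcup_j\S_j^*$ (disjointified, in your case, by the minimal index $j_0$) for the $\kappa_0$ bound. Your explicit choice of $R_n$ as the tails $\sum_{j\ge1}\mM_{n+1}\cdots\mM_{n+j-1}B_{n+j}$, so that a single realization of $R$ witnesses $\skalar{x,R}>Dt$ on every $\wt V_{n,t}$ at once, is exactly the coupling the paper's argument needs for the union over $n$ and makes precise what the paper leaves implicit in its phrase ``i.i.d.\ copies of $R$ independent of $(\mM_k,B_k)$''.
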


\begin{proof}
Recall that $\P[x]{X_0=x}=1$. Thus for every $n$ on the set $ V_{n,t}^j$ we have under $\Prob_x$
{\red \begin{eqnarray*}
\bigg\langle \mM_1 \cdots \mM_{n} R_n + \sum_{k\le n}\mM_1 \cdots \mM_{k-1}B_k,x \bigg\rangle &\ge&
\big\langle  R_n , \mG_n x \big\rangle - \sum_{k\le n}\big|\big\langle B_k, \mG_{k-1}x \big\rangle\big|\\
&\ge&
 c_j |R_n||\mG_n x|  - \sum_{k\le n}| B_k| |\mG_{k-1}x |\\
&\ge& 2D e^{n_t q} - e^{n_t q} e^{C_0} \sum_{k \le n}  e^{-(n-k)\delta}  \\
&\ge& Dt
\end{eqnarray*} }
To prove the second part of the Lemma we use the independence of  $\mG_n$ and $R_n$, the disjointness of $\S_j$ and the fact that $R \eqdist R_n$ to deduce
$$
{\red \P[x]{\wt V_{n,t}} = \pb_x \bigg(\bigcup_j  V^j_{n,t}\bigg) = \sum_j \P[x]{ V^j_{n,t}} \ge \kappa_0 \sum_j \P[x]{V_{n,t} \cap \big\{ \mG_n x\in \S_j^*  \big\} } = \kappa_0 \P[x]{V_{n,t}}.}
$$
\end{proof}

The final burden will then be to prove the following Lemma:

\begin{lem}\label{lem:finallemma}
There is $\eta >0$ and $T_0 >0$ such that for all $t \ge T_0$,
$${\red \P[x]{\bigcup_n \wt V_{n,t}} ~\ge~ \eta t^{-\alpha}.}$$
\end{lem}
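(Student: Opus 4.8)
The plan is to bound $\P[x]{\bigcup_n\wt V_{n,t}}$ from below by a second--moment (Cauchy--Schwarz) argument for the indicators $\1[\wt V_{n,t}]$, with $n$ ranging over a window $I_t$ of indices just below $n_t$. Since $k(\alpha)=1$ we have $\Lambda(\alpha)=0$, so, by convexity of $\Lambda$ together with $\Lambda(0)=0$ and $\Lambda'(0)=k'(0)<0$, the drift $q=\Lambda'(\alpha)$ is strictly positive, $\Lambda^*(q)=\alpha q$, and $n_tq=\log t+O(1)$, whence $e^{-\alpha n_tq}\asymp t^{-\alpha}$. Put $I_t:=\{n\in\N:\,n_t-\lfloor\sigma\sqrt{n_t}/q\rfloor\le n\le n_t\}$, so that $|I_t|\asymp\sqrt{n_t}$ and, for $n\in I_t$, $d_n:=(n_t-n)q\in[0,\sigma\sqrt{n_t}]$ and $n\asymp n_t$. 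Writing $p_n:=\P[x]{\wt V_{n,t}}$ and $p_{mn}:=\P[x]{\wt V_{m,t}\cap\wt V_{n,t}}$, Cauchy--Schwarz gives
$$ \P[x]{\bigcup_{n\in I_t}\wt V_{n,t}}\ \ge\ \frac{\bigl(\sum_{n\in I_t}p_n\bigr)^2}{\sum_{n\in I_t}p_n+2\sum_{m<n,\ m,n\in I_t}p_{mn}}, $$
so it suffices to prove, uniformly in $x\in\S$ and $t\ge T_0$, that (i) $\sum_{n\in I_t}p_n\asymp t^{-\alpha}$ and (ii) $\sum_{m<n}p_{mn}=O(t^{-\alpha})$.

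For (i), recall that under $\P[x]{\cdot}$ the pair $(X_n^*,S_n^*)$ has the law of $(X_n,S_n)$ under $\Q_x=\Q_x^0$. Applying Theorem \ref{thm:BahadurRao2} (parts (2) and (3)) with $s=\alpha$ and $d=d_n$, and using that $\es[\alpha]$ is bounded away from $0$ and $\infty$ on $\S$, I get
$$ \frac{c_1}{\sqrt{n_t}}\,t^{-\alpha}\,e^{-d_n^2/(2\sigma^2n_t)}\ \le\ \P[x]{S_n^*\ge n_tq}\ \le\ \frac{c_2}{\sqrt{n_t}}\,t^{-\alpha},\qquad n\in I_t, $$
the upper bound being \eqref{eq:br1} (legitimate since $n_tq\ge nq$ as $n\le n_t$); thus $\sum_{n\in I_t}p_n\le\sum_{n\in I_t}\P[x]{S_n^*\ge n_tq}\lesssim t^{-\alpha}$. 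For the matching lower bound I must check that the constraints $\log\abs{B_{k+1}}+S_k^*\le n_tq+C_0-(n-k)\delta$ ($k<n$) defining $V_{n,t}$ cost only a fixed factor. The plan is a localisation argument: fix $\delta\in(0,q)$; pass to $\Q_x^\alpha$ via \eqref{eq:com1} with $s=\alpha$; condition on $\sigma(\mM_1,\dots,\mM_n,X_0)$; and restrict to the ``tube'' event $E_n$ on which $S_n^*\in[n_tq,n_tq+1]$ and $S_{n-j}^*\le n_tq+1-jq+C''\sqrt{j}$ for all $0\le j\le n$. By the third--order Edgeworth expansion of Theorem \ref{thm:edgeworth} combined with a maximal inequality for the centred Markov random walk $(S_k^*-kq)_k$ under $\Q_x^\alpha$, one gets $\P[x]{E_n}\ge\tfrac12\,\P[x]{n_tq\le S_n^*\le n_tq+1}$ once $C''$ is large. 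On $E_n$ one has $n_tq+C_0-(n-k)\delta-S_k^*\ge C_0-1+(n-k)(q-\delta)-C''\sqrt{n-k}$, so, $B_{k+1}$ being independent of $\sigma(\mM_1,\dots,\mM_k,X_0)$ and $\E\abs{B}^\alpha<\infty$ by \eqref{eq:moment conditions}, the conditional probability that some constraint fails is at most $\sum_{j\ge1}\E\abs{B}^\alpha\,e^{-\alpha(C_0-1+j(q-\delta)-C''\sqrt{j})}\le\tfrac12$ for $C_0$ large enough. Hence $\P[x]{V_{n,t}}\ge\tfrac14\,\P[x]{n_tq\le S_n^*\le n_tq+1}\gtrsim t^{-\alpha}/\sqrt{n_t}$ for $n\in I_t$, and Lemma \ref{lem:pos:12.3} yields $\sum_{n\in I_t}p_n\ge\kappa_0\sum_{n\in I_t}\P[x]{V_{n,t}}\gtrsim|I_t|\,t^{-\alpha}/\sqrt{n_t}\asymp t^{-\alpha}$.

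For (ii), fix $m<n$ in $I_t$ and set $j:=n-m$. On $\wt V_{m,t}\cap\wt V_{n,t}$ both $S_m^*\ge n_tq$ and $S_{m+j}^*\ge n_tq$ hold; discarding the remaining ($R$- and $B$-) constraints and using disjointness of the sets $\S_i$ (Lemma \ref{lem:cones}), I get $p_{mn}\le\P[x]{\{S_m^*\ge n_tq\}\cap\{S_{m+j}^*\ge n_tq\}}$. Conditioning on $\sigma(\mM_1,\dots,\mM_m,X_0)$ and using the Markov random walk structure, $\P[x]{S_{m+j}^*\ge n_tq\mid\cdot}=\P[X_m^*]{S_j^*\ge-(S_m^*-n_tq)}$; since $S_j^*$ drifts to $-\infty$ at rate $k'(0)<0$, a Chernoff estimate through the operators $\Ps[s]$ (spectral radius $k(s)$, quasi-compact on $\interior{I_\mu}$ with uniformly bounded eigenfunctions) gives, uniformly in the starting point, $\P[y]{S_j^*\ge-w}\lesssim 1\wedge e^{-j\Lambda^*(-w/j)}$. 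Combining this with $\P[x]{S_m^*\ge n_tq+w}\lesssim t^{-\alpha}e^{-\alpha w}/\sqrt{n_t}$ (from \eqref{eq:br1}, valid since $n_tq+w\ge mq$) and integrating in $w\ge0$, the factor $e^{-\alpha w}$ forces $p_{mn}\lesssim\tfrac{t^{-\alpha}}{\sqrt{n_t}}\,a_j$ with $a_j:=j\rho_0^{\,j}+e^{-\alpha j\abs{k'(0)}}$ and $\rho_0:=\inf_{s\in I_\mu}k(s)<1$ (because $\alpha w+j\Lambda^*(-w/j)\ge-j\log\rho_0$ for $0\le w\le j\abs{k'(0)}$, a one--line convexity computation using $\Lambda'(\,\cdot\,)=0$ at some point in $(0,\alpha)$); since $\sum_j a_j<\infty$, summing over $m\in I_t$ gives $\sum_{m<n}p_{mn}\lesssim|I_t|\,\tfrac{t^{-\alpha}}{\sqrt{n_t}}\sum_j a_j\asymp t^{-\alpha}$. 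Inserting (i) and (ii) into the displayed second--moment bound gives $\P[x]{\bigcup_n\wt V_{n,t}}\ge\eta\,t^{-\alpha}$ with $\eta>0$ independent of $x$, for all $t\ge T_0$.

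I expect the main obstacle to be the localisation step in (i): proving $\P[x]{E_n}\ge\tfrac12\P[x]{n_tq\le S_n^*\le n_tq+1}$ uniformly in $x\in\S$ requires feeding the Edgeworth expansion of Theorem \ref{thm:edgeworth} into a maximal inequality for $(X_n^*,S_n^*)$ under $\Q_x^\alpha$, with all error terms uniform in the starting point; a secondary point is extracting the sharp rate $\Lambda^*(-w/j)$ for the negatively drifting walk from the spectral data of $\Ps[s]$. These are precisely the technical estimates to be carried out in Section \ref{sect:rde proofs}.
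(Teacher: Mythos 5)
Your overall architecture is the same as the paper's: lower-bound the single probabilities $\P[x]{V_{n,t}}$ by a Bahadur--Rao estimate, upper-bound the pairwise intersections by a Chernoff bound with an exponent $\beta<\alpha$ at which $k(\beta)<k(\alpha)=1$, and combine over a window of $\asymp\sqrt{n_t}$ indices just below $n_t$ (you via the Chung--Erd\H{o}s second-moment inequality over a full window, the paper via Bonferroni over a sparse set $K_t$ with spacing $C_1$ -- a harmless difference). Your part (ii) is essentially the paper's Step 2 and is sound, up to routine care with the overshoot integration and the uniformity in the starting point, both of which are available through \eqref{eq:br1} and Corollary \ref{cor:ksEs}.

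The genuine gap is in part (i), which is the heart of the lemma: the statement that imposing the barrier constraints $\log\abs{B_{k+1}}+S_k^*\le n_tq+C_0-(n-k)\delta$ costs only a bounded factor, i.e. $\P[x]{V_{n,t}}\gtrsim t^{-\alpha}/\sqrt{n_t}$. In the paper this is exactly Lemma \ref{lem:estimate Vn} (stated before the present lemma, so you could simply have cited it); its proof writes $\P[x]{V_{n,t}}\ge\P[x]{U_{n,t}}-\sum_{j<n}\P[x]{U_{n,t}\cap W_{j,n,t}}$, decomposes over the overshoot, splits $\abs{\mG_n x}\le\norm{\mG_{j+2}^n}\,\norm{\mA_{j+1}}\,\abs{\mG_j x}$ so that the two random factors are independent of the pair $(\mA_{j+1},B_{j+1})$, integrates over the joint law $\mu_{\mA,B}$ with H\"older applied to $\E\big(\norm{\mA}^{\beta}\abs{B}^{\alpha-\beta}\big)$, and makes the error small by taking $C_0$ large. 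Your substitute sketch does not close for two reasons. First, the localisation claim $\P[x]{E_n}\ge\tfrac12\,\P[x]{n_tq\le S_n^*\le n_tq+1}$ is a path-level (bridge/maximal) estimate for the Markov random walk conditioned on its endpoint, uniformly in $x$; Theorem \ref{thm:edgeworth} is only a one-dimensional cdf expansion, no maximal or conditioned-path inequality is established anywhere in the paper, and you explicitly defer this step -- but it is precisely the difficulty the lemma is about. Second, your union bound over the $B$-constraints conditions on $\sigma(\mM_1,\dots,\mM_n,X_0)$ and then applies the unconditional Markov bound $\E\abs{B}^{\alpha}e^{-\alpha u}$; since Theorem \ref{thm:rde} does not assume $\mM$ and $B$ independent, $B_{k+1}$ may depend on $\mM_{k+1}$, which enters both the tube event and the conditioning $\sigma$-field, so the conditional law of $B_{k+1}$ is not controlled by that bound. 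The paper's splitting of the product around index $j+1$ and the integration over $\mu_{\mA,B}$ are there precisely to handle this dependence. Either invoke Lemma \ref{lem:estimate Vn} directly -- then your second-moment combination does yield the claim -- or supply a genuine proof of the localisation estimate and redo the $B$-bound respecting the joint law of $(\mM,B)$.
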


In fact, in its proof, we will  for each fixed $t$ only consider a subset $K_t \subset \red{\N}$ of integers close to $n_t:= \lfloor \log t/q \rfloor$, with {\red $q = \E_{\Q^\alpha} S_1 > 0$}. We will choose $K_t \subset [n_t - \sqrt{n_t}, n_t]$ and prove that readily ${\red \P[x]{\bigcup_{k \in K_t} \wt V_{k,t}}} ~\ge~ \eta t^{-\alpha}$, using the inclusion--exclusion formula.

Therefore, we will use the following technical result, which finally fixes $C_0$ and $\delta$. It is here where the Bahadur-Rao theorem enters.

\begin{lem} \label{lem:estimate Vn}{\red Assume that $\alpha \in \interior{I_\mu}$ and that
\begin{equation}\label{eq:moment conditions2}
\E \norm{\mA}^{\alpha+\epsilon} \iota(\mA)^{-\epsilon} < \infty, \qquad 0 < \E \abs{B}^{\alpha + \epsilon} < \infty.
\end{equation}}
 Then
there are constants $\delta, C_0,D_1,D_2,N_0 >0$ such that {\red for all $x \in \S$}
$$
D_1 \cdot \frac{k(\alpha)^n}{\sqrt{n_t} e^{\alpha n_t q}}
\le {\red \P[x]{V_{n,t}}} \le
D_2 \cdot \frac{k(\alpha)^n}{\sqrt{n_t} e^{\alpha n_t q}}.
$$ for all $\lceil \log t / q \rceil = n_t > N_0$ and every $n_t-\sqrt{n_t} \le n \le n_t - \sqrt{n_t}/2$.

For the assertion of this lemma to hold, $k(\alpha) =1$ is not necessary, we only need that $k'(\alpha)>0$, then still $q:= \E_{\Q^\alpha} S_1$.
\end{lem}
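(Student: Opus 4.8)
The plan is to rewrite everything in terms of the Markov random walk $(X_n,S_n)$ under $\Q_x=\Q_x^0$ via the identifications of the Notation paragraph, so that Theorem~\ref{thm:BahadurRao2} applies with $s=\alpha$ (the moment hypothesis \eqref{eq:iotamoment} being the first half of \eqref{eq:moment conditions2}, after possibly shrinking $\epsilon$ below $1$); $k(\alpha)$ is kept symbolic, since the argument uses only $q=k'(\alpha)/k(\alpha)>0$ and $\alpha\in\interior{I_\mu}$, never $k(\alpha)=1$. Write $M_k:=n_tq+C_0-(n-k)\delta$ and $d:=(n_t-n)q$; for $n_t-\sqrt{n_t}\le n\le n_t-\sqrt{n_t}/2$ one has $q\sqrt{n_t}/2\le d\le q\sqrt{n_t}$ and $\tfrac12n_t\le n\le n_t$, so $\sqrt n$ and $d$ are comparable to $\sqrt{n_t}$, and $V_{n,t}\subset\{S_n^*\ge n_tq\}=\{S_n\ge nq+d\}$ with $nq+d\ge nq$. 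The upper bound is then immediate from this inclusion and the Bahadur--Rao estimate \eqref{eq:br1}: $\P[x]{V_{n,t}}\le\Q_x(S_n\ge nq+d)\le Ck(\alpha)^n/(\sqrt n\,e^{\alpha n_tq})\le D_2k(\alpha)^n/(\sqrt{n_t}\,e^{\alpha n_tq})$, uniformly in $x\in\S$.

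For the lower bound, write $\P[x]{V_{n,t}}\ge\Q_x(S_n\ge nq+d)-\sum_{k=0}^{n-1}\P[x]{E_k}$, where $E_k:=\{\log\abs{B_{k+1}}+S_k^*>M_k\}\cap\{S_n^*\ge n_tq\}$, since $V_{n,t}^c\cap\{S_n^*\ge n_tq\}=\bigcup_{k<n}E_k$. For the main term I would apply \eqref{eq:br uniform convergence} with $\theta=2q$ (so that $d<\theta\sqrt n$ once $n_t$ is large): as $nq+d=n_tq$, this gives $\alpha\sigma\sqrt{2\pi n}\,e^{\alpha n_tq}k(\alpha)^{-n}e^{d^2/(2\sigma^2n)}\,\E_{\Q_x}[\es[\alpha](X_n)\1[{\{S_n\ge n_tq\}}]]\to\es[\alpha](x)$ uniformly, and since $\es[\alpha]$ is bounded away from $0$ and $\infty$ and $e^{-d^2/(2\sigma^2n)}$ is bounded below (as $d$ is of order $\sqrt n$), one obtains $\Q_x(S_n\ge nq+d)\ge D_1'k(\alpha)^n/(\sqrt{n_t}\,e^{\alpha n_tq})$ for $n_t$ large, with $D_1'>0$ independent of $x$. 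It therefore suffices to choose $C_0,\delta$ so that $\sum_{k<n}\P[x]{E_k}\le\tfrac12D_1'k(\alpha)^n/(\sqrt{n_t}\,e^{\alpha n_tq})$.

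To bound $\P[x]{E_k}$ I would condition on $\mathcal{F}_{k+1}=\sigma((\mM_i,B_i):i\le k+1)$: the event $\{\log\abs{B_{k+1}}+S_k^*>M_k\}$ is $\mathcal{F}_{k+1}$-measurable, while by the Markov property $\P[x]{S_n^*\ge n_tq\mid\mathcal{F}_{k+1}}=\Q_{X_{k+1}^*}(S_{n-k-1}\ge n_tq-S_{k+1}^*)$, which is controlled by the sharp bound \eqref{eq:br1} when $n_tq-S_{k+1}^*\ge(n-k-1)q$ and by $1$ otherwise. A further conditioning on $\mathcal{F}_k$ decouples $(\mM_{k+1},B_{k+1})$ from the past; combining this with $\E\abs{B}^{\alpha+\epsilon}<\infty$ (to pay for a large value of $\abs{B_{k+1}}$), with $\E\norm{\mPi_k}^{\alpha'}\le c_{\alpha'}^{-1}k(\alpha')^k$ from Corollary~\ref{cor:ksEs} (to pay for an atypically large $S_k^*$, with a Chebyshev exponent $\alpha'\in\interior{I_\mu}$ slightly above $\alpha$, available since $\alpha\in\interior{I_\mu}$), and splitting the relevant expectation according to whether the overshoot of $\log\abs{B_{k+1}}+S_k^*$ past $M_k$ comes from a large jump of $B_{k+1}$ or from an atypical path value of $S_k^*$, one reaches a bound of the shape $\P[x]{E_k}\le C(n-k)^{-1/2}e^{-\beta'C_0}e^{-\beta'((n-k)(q-\delta)+d)}k(\alpha)^ne^{-\alpha n_tq}$ for some small $\beta'>0$. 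Summation over $k$ is then a geometric series in $n-k$ (convergent because $\delta<q$), yielding $\sum_{k<n}\P[x]{E_k}\le C''e^{-\beta'C_0}\sqrt{n_t}\,e^{-\beta'd}\cdot k(\alpha)^n/(\sqrt{n_t}\,e^{\alpha n_tq})$; choosing $\delta\in(0,q)$ small, then $C_0$ large, then $N_0$ large enough that $C''\sqrt{n_t}\,e^{-\beta'C_0}e^{-\beta'd}\le\tfrac12D_1'$ for all $n_t>N_0$ (using $d\ge q\sqrt{n_t}/2$) completes the lower bound with $D_1=\tfrac12D_1'$.

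The hard part is exactly this exponent bookkeeping in the estimate of $\P[x]{E_k}$: a crude Chebyshev bound for the increment $S_n^*-S_{k+1}^*$ introduces a weight $e^{\alpha S_{k+1}^*}$, and since on $E_k$ the quantity $S_{k+1}^*$ can be of order $M_k\approx n_tq$, this cancels the gain $e^{-\alpha n_tq}$ and leaves a term too large by a power of $n_t$. One must instead retain the $(n-k-1)^{-1/2}$-decay of the sharp estimate \eqref{eq:br1}, spend only a small extra exponent in the moment bounds for $S_k^*$ and for the tail of $B_{k+1}$, and harvest the genuine saving $e^{-\beta'd}$ (with $d$ of order $\sqrt{n_t}$) coming from $n<n_t$, which absorbs the factor $\sqrt{n_t}$. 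In this scheme $C_0$ supplies uniform slack in the constraints defining $V_{n,t}$, $\delta\in(0,q)$ turns the $k$-sum into a geometric series, and $\alpha\in\interior{I_\mu}$ provides the room above $\alpha$ needed in the Chebyshev/Hölder steps; the identity $k(\alpha)=1$ is never used, only $k'(\alpha)>0$.
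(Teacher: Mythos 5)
Your overall skeleton is the same as the paper's: the upper bound follows from \eqref{eq:br1} via the inclusion $V_{n,t}\subset\{S_n\ge nq+d\}$, and the lower bound is obtained by subtracting from the Bahadur--Rao main term (estimated via \eqref{eq:br uniform convergence}, with $d=(n_t-n)q$ of order $\sqrt{n_t}$) a union bound over the barrier-crossing events $E_k=\{\log\abs{B_{k+1}}+S_k^*>M_k\}\cap\{S_n^*\ge n_tq\}$, making the correction small through the parameters $C_0,\delta$. The gap is exactly where you locate the ``hard part'': the asserted intermediate bound
$\P[x]{E_k}\le C(n-k)^{-1/2}e^{-\beta' C_0}e^{-\beta'((n-k)(q-\delta)+d)}\,k(\alpha)^n e^{-\alpha n_t q}$
is false, and the sketch (conditioning on $\mathcal{F}_{k+1}$, paying for $S_k^*$ with a Chebyshev exponent slightly above $\alpha$) cannot produce it. Take $k=n-1$ and the sub-event $\{S_{n-1}^*\ge n_tq+C_0\}\cap\{\abs{B_n}\ge b_0\}\cap\{\iota(\mA_n)\ge e^{-C_0}\}\subset E_{n-1}$ (with $b_0$ chosen so that $\P{\abs{B}\ge b_0}>0$, possible by \eqref{eq:moment conditions2}); since $(\mA_n,B_n)$ is independent of $\mathcal F_{n-1}$ and $n_tq+C_0-(n-1)q=d+q+C_0=O(\sqrt{n_t})$, Theorem \ref{thm:BahadurRao2} gives $\P[x]{E_{n-1}}\ge c(C_0)\,k(\alpha)^n e^{-\alpha n_tq}/\sqrt{n_t}$ for large $n_t$. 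Your claimed bound carries the factor $e^{-\beta' d}\le e^{-\beta' q\sqrt{n_t}/2}$, which decays faster than $1/\sqrt{n_t}$, so it is eventually smaller than the true probability: there is no uniform gain of size $e^{-\beta'd}$, and consequently your closing step ``choose $N_0$ so that $\sqrt{n_t}\,e^{-\beta'C_0}e^{-\beta'd}\le\tfrac12 D_1'$'' has nothing to rest on.

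The correct bookkeeping (this is what the paper does) separates two regimes according to the size of $\abs{B_{k+1}}$. When $\abs{B_{k+1}}\le e^{(n_t-k)q+C_0-(n-k)\delta+m}$, one applies \eqref{eq:br1} to $S_k^*$ itself (the threshold is then $\ge kq$), keeping the factor $1/\sqrt{k}\asymp1/\sqrt{n_t}$, and bounds the remaining block $\norm{\mPi_{n-k-1}}$ by Markov's inequality at an exponent $\beta<\alpha$ with $k(\beta)<k(\alpha)$ (available since $k'(\alpha)>0$) together with Corollary \ref{cor:ksEs}; after fixing $\delta$ so small that $e^{(\alpha-\beta)\delta}k(\beta)<k(\alpha)$, the per-$k$ bound is $\frac{1}{\sqrt k}\rho^{n-k}e^{-(\alpha-\beta)C_0}k(\alpha)^ne^{-\alpha n_tq}$ times moments of $(\mA,B)$, with no $d$-gain; summing $\rho^{n-k}/\sqrt k\lesssim1/\sqrt n$ (Lemma \ref{lem:sumineq}) gives the needed $1/\sqrt{n_t}$, and the smallness relative to the main term comes solely from $e^{-(\alpha-\beta)C_0}$ with $C_0$ large. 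Only in the complementary regime, where $\abs{B_{k+1}}$ must exceed $e^{(n_t-k)q+\cdots}$ and the Bahadur--Rao bound for $S_k^*$ is unavailable (so the $1/\sqrt k$ is lost), does a saving tied to $n\le n_t-\sqrt{n_t}/2$ appear: there a H\"older step with an exponent $\gamma>1$ (using $\E\abs{B}^{\gamma\alpha}<\infty$, again from \eqref{eq:moment conditions2}) extracts from $\P{\abs{B}>e^{(n_t-k)q+\cdots}}$ a factor $e^{-c(n_t-k)}\le e^{-c\sqrt{n_t}/2}$ that more than restores the missing $1/\sqrt{n_t}$. Your proposal attributes this $d$-type saving to all regimes and drops the $1/\sqrt{k}$ where it is essential; repairing it amounts to redoing the estimate along the lines just described.
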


Summing up what has been said before, we now are able to prove Theorem \ref{thm:rde}.

\begin{proof}[Proof of Theorem \ref{thm:rde}]
{\red We already mentioned that the assumptions of Theorem \ref{thm:rde} guarantee the existence and uniqueness (in distribution) of a solution $R$ to $R \eqdist \mM R+B$, see e.g.~\cite[Theorem 1.1]{Bougerol1992}.
The lower bound for the tail behavior then follows by combining Lemmas \ref{lem:pos:12.3} and \ref{lem:finallemma}.}
\end{proof}

\section{Proofs}\label{sect:rde proofs}

{\red
\subsection{On the support of $R$}
As mentioned before, in order to prove Lemma \ref{lem:cones}, we have to show that $\supp R$ is unbounded in ``enough'' directions. To make this statement precise, introduce
the {\em asymptotic support} of $R$:
 Consider the compactification $\overline{\Rd} := \Rd \cup \Sd_\infty$ of $\Rd$, with
$$ \Rd \ni x_n \text{ converges to } y \in \Sd_\infty = \Sd \quad \Leftrightarrow \quad \lim_{n \to \infty} \frac{x_n}{\abs{x_n}} = y \text{ and } \lim_{n \to \infty} \abs{x_n}=\infty $$
We will study the set
$$ V(R) := \{ y \in \Sd \, : \, \exists (r_n)_n \subset \supp \, R \, : \, \lim_{n \to \infty} \frac{r_n}{\abs{r_n}} = y \text{ and } \lim_{n \to \infty} \abs{\red{r_n}}=\infty \}.$$
Using diagonal sequences, one obtains that the set $V(R)$ is indeed closed and thus, as a subset of $\S$, even compact.
An important result is that the set $V(R)$ is invariant under the action of $\Gamma^*=[\supp \mM]$ on the sphere: Let $y \in V(R)$, with associated sequence $r_n$. Then $\mm r_n +b \in \supp \, R$ for all $(\mm,b) \in \supp(\mM,B)$, and still $\abs{\mm r_n + b} \to \infty$, with the ratio $\abs{\mm r_n +b}/\abs{\mm r_n}$ tending to one. Hence,
$$ \mm \as y = \lim_{n \to \infty} \left( \frac{\mm r_n}{\abs{ \mm r_n}} + \frac{b}{\abs{\mm r_n}} \right)= \lim_{n \to \infty} \frac{\mm r_n +b}{\abs{ \mm r_n + b}} , $$
and thus $\mm \as y \in V(R)$.

We have the following result about $V(R)$ for $\mA$ being nonnegative.
\begin{prop}\label{lem:supp nonnegative}
Under the assumptions of Theorem \ref{thm:rde}, let $\mA$ be nonnegative and satisfy $\condC$.
\begin{enumerate}
\item Assume that $B$ is nonnegative. Then $V(R) \cap \Sp \neq \emptyset$.
\item If $V(R) \cap \Sp \neq \emptyset$, then readily $V(R) \cap \interior{\Sp} \neq \emptyset$.
\end{enumerate}
\end{prop}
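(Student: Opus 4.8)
The plan is to dispose of (2) first — it follows at once from the invariance established above — and then to prove (1), whose real content is that $\supp R$ is unbounded. For (2), pick $y_0\in V(R)\cap\Sp$, and use that condition $\condC$ for $\mA=\mM^\top$ is equivalent to condition $\condC$ for $\mM$ to obtain a matrix $\mathbf{p}\in\Gamma^*\cap\interior{\Mset}$. Since $y_0$ is a nonzero nonnegative vector and $\mathbf{p}$ has all entries strictly positive, $\mathbf{p}y_0\in\interior{\Rdnn}$, so $\mathbf{p}\as y_0$ is well defined and lies in $\interior{\Sp}$. On the other hand $V(R)$ is closed and $\Gamma^*$-invariant (the computation above gives $\mm\as y\in V(R)$ for $y\in V(R)$ and $\mm$ in a dense subset of $\supp\mM$, and this extends to all of $\Gamma^*$ by continuity of the action, legitimate since the relevant vectors never vanish, and closedness of $V(R)$). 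Hence $\mathbf{p}\as y_0\in V(R)\cap\interior{\Sp}$, which is (2).

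For (1): since $\mA$, hence $\mM$, is nonnegative and $B$ is nonnegative, the representation $R\eqdist\sum_{k\ge1}\mM_1\cdots\mM_{k-1}B_k$ forces $\supp R\subseteq\Rdnn$; therefore every point of $V(R)$ already lies in $\Sp$, and it is enough to show $\supp R$ is unbounded (then $V(R)\ne\emptyset$ by compactness of $\Sp$ and the definition of $V(R)$). The first, and main, step is to produce $\mathbf{c}\in\Gamma^*\cap\interior{\Mset}$ with Perron eigenvalue $\lambda_{\mathbf{c}}>1$. I claim $\sup\{\norm{\mm}:\mm\in\Gamma^*\}=\infty$: otherwise $\norm{\mM_1\cdots\mM_n}=\norm{\mPi_n}$ would be uniformly bounded (as $\mM_1\cdots\mM_n\in\Gamma^*$ a.s.) while tending to $0$ a.s.\ (the top Lyapunov exponent being $k'(0)<0$, by Furstenberg--Kesten), so dominated convergence would give $\E\norm{\mPi_n}^\alpha\to0$, contradicting $\E\norm{\mPi_n}^\alpha\ge k(\alpha)^n=1$ (submultiplicativity; see also Corollary \ref{cor:ksEs}). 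Now fix $\mathbf{p}\in\Gamma^*\cap\interior{\Mset}$ and choose $\mathbf{q}\in\Gamma^*$ of large norm: every entry of $\mathbf{c}:=\mathbf{p}\,\mathbf{q}\,\mathbf{p}\in\Gamma^*\cap\interior{\Mset}$ is at least $(\min_{i,j}\mathbf{p}_{i,j})^2\max_{k,l}\mathbf{q}_{k,l}$, and $\max_{k,l}\mathbf{q}_{k,l}$ is comparable to $\norm{\mathbf{q}}$, so for $\norm{\mathbf{q}}$ large enough $\mathbf{c}$ dominates entrywise a matrix with all entries equal to some $\delta$ with $\delta d>1$, whence $\lambda_{\mathbf{c}}\ge\delta d>1$ by monotonicity of the Perron eigenvalue.

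Next, write $\mathbf{c}=\mathbf{g}_1\cdots\mathbf{g}_l$ with $\mathbf{g}_i\in\supp\mM$; since the projection of $\supp(\mM,B)$ onto the matrix coordinate is dense in $\supp\mM$, pick $(\mathbf{u}_i,h_i)\in\supp(\mM,B)$ with $\mathbf{u}_i$ so close to $\mathbf{g}_i$ that $\widehat{\mathbf{c}}:=\mathbf{u}_1\cdots\mathbf{u}_l\in\interior{\Mset}$ and $\lambda_{\widehat{\mathbf{c}}}>1$. Fix also $b_0\in\supp B\setminus\{0\}$ (nonempty as $\E\abs{B}^{\alpha+\epsilon}>0$) with $(\mathbf{u}_0,b_0)\in\supp(\mM,B)$, and any $r_0\in\supp R$. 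For $m\in\N$, feed into the exact identity
$$ R\ \eqdist\ \mM_1\cdots\mM_{ml+1}R_{ml+1}+\sum_{k=1}^{ml+1}\mM_1\cdots\mM_{k-1}B_k, $$
valid with $R_{ml+1}\eqdist R$ independent of $(\mM_k,B_k)_{k\le ml+1}$, the deterministic data $(\mM_k,B_k)=(\mathbf{u}_{1+((k-1)\bmod l)},h_{1+((k-1)\bmod l)})$ for $k\le ml$, $(\mM_{ml+1},B_{ml+1})=(\mathbf{u}_0,b_0)$ and $R_{ml+1}=r_0$; these lie in the respective supports and the right-hand side depends on them continuously, so the resulting vector $w_m$ — a sum of nonnegative terms, one of which is $\widehat{\mathbf{c}}^{\,m}b_0$ — belongs to $\supp R$ and satisfies $w_m\ge\widehat{\mathbf{c}}^{\,m}b_0$ componentwise. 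Since on $\Rdnn$ every norm is comparable to the $\ell^1$-norm, $\abs{w_m}\ge c\,\abs{\widehat{\mathbf{c}}^{\,m}b_0}$, and by Perron--Frobenius $\widehat{\mathbf{c}}^{\,m}b_0=\lambda_{\widehat{\mathbf{c}}}^{\,m}(c_1\widehat v+o(1))$ for the dominant eigenvector $\widehat v\in\interior{\Sp}$ and some $c_1>0$ (because $b_0$ is nonzero and nonnegative), so $\abs{w_m}\to\infty$. Hence $\supp R$ is unbounded, and any subsequential limit of $w_m/\abs{w_m}\in\Sp$ is a point of $V(R)\cap\Sp$.

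The hard part is the first step of (1): the heavy-tail hypothesis $k(\alpha)=1$ for some $\alpha>0$ is precisely what rules out boundedness of $\supp R$ (which does occur, e.g., in dimension one with a contracting $\mM$ and bounded $B$), and the delicate point is to upgrade ``$\E\norm{\mPi_n}^\alpha\ge1$ for all $n$'' into the existence of a genuinely \emph{expanding positive} element of the semigroup $\Gamma^*$ — not merely a matrix of large norm, whose spectral radius may still be $\le1$ — which is what forces the conjugation $\mathbf{c}=\mathbf{p}\,\mathbf{q}\,\mathbf{p}$ by a fixed positive matrix. Everything else (the support computations, the $\ell^1$-comparison on the cone, and the Perron--Frobenius asymptotics) is routine.
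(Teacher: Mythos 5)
Your proposal is correct, but it takes a genuinely different route from the paper's on both parts. For (2), the paper combines the $\Gamma^*$-invariance of $V(R)$ with the minimality of $V(\Gamma^*)$ among closed $\Gamma^*$-invariant subsets of $\Sp$ (citing \cite[Lemma 4.3]{BDGM2014}) to get $V(\Gamma^*)\subset V(R)$, which already meets $\interior{\Sp}$; your argument --- hit a point $y_0\in V(R)\cap\Sp$ with a single strictly positive element $\mathbf{p}\in\Gamma^*\cap\interior{\Mset}$ and use invariance --- is more economical and avoids the minimality lemma altogether. For (1), the paper's proof is short and soft: for a nonzero $r\in\supp R$ it bounds $\abs{R_n^r}\ge\abs{\mM_1\cdots\mM_n r}$ and invokes Proposition \ref{prop:FK} under the $\alpha$-shifted measure built from the law of $\mM$, where the drift is $k'(\alpha)/k(\alpha)>0$; equivalence of the finite-dimensional marginals with those under $\Prob$ then yields unboundedness of $\supp R$. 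You instead manufacture an explicitly expanding positive matrix: unboundedness of norms in $\Gamma^*$ (your dominated-convergence contradiction between $\norm{\mM_1\cdots\mM_n}\to 0$ a.s., from $k'(0)<0$, and $\E\norm{\mPi_n}^\alpha\ge k(\alpha)^n=1$), the conjugation $\mathbf{c}=\mathbf{p}\,\mathbf{q}\,\mathbf{p}$ to force $\lambda_{\mathbf{c}}>1$, an approximation of its factors inside the projection of $\supp(\mM,B)$, and a Perron--Frobenius iteration producing support points $w_m\ge\widehat{\mathbf{c}}^{\,m}b_0$ with $\abs{w_m}\to\infty$. Your route is longer but more elementary: it uses only $k(\alpha)=1$, $k'(0)<0$, Corollary \ref{cor:ksEs} and Perron--Frobenius, and it bypasses the measures $\Q_x^\alpha$ and Proposition \ref{prop:FK} at this point; the paper's route buys brevity because the change-of-measure machinery is already in place and is the same device driving the rest of Section \ref{sect:rde}. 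Two small points to keep clean in your write-up: $\Gamma^*=[\supp\mM]$ is the generated subsemigroup, so writing $\mathbf{c}$ as a finite product of elements of $\supp\mM$ is legitimate (and your density/approximation step would cover limits anyway), and the identification of the a.s.\ limit of $\frac1n\log\norm{\mM_1\cdots\mM_n}$ with $k'(0)$ is exactly the fact the paper itself invokes at the start of Section \ref{sect:rde}, so you are on the same footing there.
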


}


\begin{proof}
\Step[1]:
If $B$ is nonnegative as well, then $\supp R \subset \Rdnn$. Moreover, since $B \neq 0$, there is nonzero $r \in \supp R$.
But then as well ${\red R_n^r := } \mM_1 \cdots \mM_n r + \sum_{k \le n} \mM_1 \cdots \mM_{k-1} B_k \in \supp R$, in particular,
$$ \abs{R_n^r} ~\ge~ \abs{\mM_1 \cdots \mM_n r} ~\eqdist~ \abs{\mM_n \cdots \mM_1 r}.$$
We assumed that $\mA = \mM^\top$ satisfies $\condC$; but $\condC$ holds for $\mA$ if and only if it holds for $\mA^\top$. Hence Proposition \ref{prop:FK} applies and gives under $ ^*\Q_r^\alpha$ (which denotes the measure constructed in the same manner as $\Q_r^\alpha$, but using the law of $\mA^\top=\mM$),
$$ \lim_{n \to \infty} \frac{S_n}{n} ~=~  \lim_{n \to \infty} \frac{1}{n} \log \abs{\mM_n \dots \mM_1 r} ~=~ k'(\alpha)/k(\alpha) >0.$$
But finite marginal distributions of $^*\Q_r^\alpha$ and $\Prob$ are equivalent, and thus we have that the sequence $(\abs{\mM_n \dots \mM_1 r})_n$ is unbounded, hence $ \supp\, R$ is unbounded as well.

\Step[2]: 
As shown above, the set $V(R)$ is invariant under $\Gamma^*$. But by \cite[Lemma 4.3]{BDGM2014}, $V(\Gamma^*)$ is the unique closed minimal $\Gamma^*$-invariant subset, hence $V(R)$ contains $V(\Gamma^*)$. In particular, there is $y \in \interior{\Sp}$ with $y \in V(\Gamma^*) \subset V(R)$.
\end{proof}

{\red
\begin{proof}[Proof of Lemma \ref{lem:cones}]
We consider seperately the three cases of matrices.

\Case[1]: Assume $\mA$ is nonnegative and satisfies $\condC$ and that $\supp R \cap \Rdnn$ is unbounded. Then, by Lemma \ref{lem:supp nonnegative}, there is $y_0 \in \interior{\Sp} \cap V(R)$. It holds that $\min_{x \in \Sp} \skalar{x,y_0} \ge \min_{1 \le i \le d} (y_0)_i >0$. Let $\delta >0$ such that $B_\delta(y_0) \cap \Sp \subset \interior{\Sp}$. Then there is $c >0$ such that $\min_{x \in \Sp} \min_{y \in B_\delta(y_0)} \skalar{x,y} \ge c$, and we can set $J:=1$ and $\S_1 := B_\delta(y_0)$.
\medskip

\Case[2]: Assume that $\mA \in GL(d, \R)$, satisfying $\ide$. Due to the density assumption, which is invariant under taking the transpose, there is in particular a proximal matrix $\mm \in \Gamma^*$ with attracting eigenvector $v_\mm$. By \cite[Lemma 8.1]{AM2010}, the set $V(R)$ is non-empty, moreover, there are $y_1, y_2 \in V(R)$ such that
$\skalar{y_1, v_\mm} >0$ and $\skalar{y_2, (-v_\mm)}>0$. Since $V(R)$ is $\Gamma^*$-invariant, it follows that $\mm^n \as y_1$ and $\mm^n \as y_2$ are in $V(R)$ for all $n \in \N$, hence $v_\mm$ and $-v_\mm$ are in $V(R)$. 

Observe that the operator $\Pst[\alpha]$, defined in \eqref{eq:Pst}, leaves $\Cbf{V(R)}$ invariant. Moreover, due to the density assumption, the measure $\nu := (\Pst[\alpha])^{n_0} \frac{1}{2}( \delta_{v_\mm} + \delta_{-v_\mm})$ has a density with respect to the volume measure on $\Sd$, and in particular, gives mass zero to any hyperspace, and is still supported on (a subset of) $V(R)$ and is symmetric, i.e.~$\nu(A)=\nu(-A)$ for all $A \subset \Sd$. Then one can proceed as in \cite[Lemma 2.7 \& Lemma 2.8]{Guivarch2012}---these are the counterpart of Lemma \ref{lem:prop:qn} for invertible matrices---to show that $((\Pst[\alpha])^n) \nu (\1[V(R)]) \ge c \kappa(\alpha)^n =c$ for some $c>0$ and all $n \in \N$.
  
Together with the compactness of $V(R)$ this yields that, using Prokhorov's Theorem,
$$ \frac{1}{n} \sum_{k=0}^{n-1} (\Pst[\alpha])^k  \nu $$
is a weakly compact sequence and therefore has a subsequential limit $\nust[\alpha]$, which is a probability measure on $V(R)$ that satisfies $\Pst[\alpha] \nust[\alpha] = \nust[\alpha]$.

By Prop. \ref{prop:transferoperators}, \eqref{suppnus} and \eqref{nust}, it holds for all $x \in \S$ that
$$ \min_{x \in \S} \int_{V(R)} \, \abs{\skalar{x,y}}^s \, \nust(dy) ~=~ \min_{x \in \S} \frac{1}{c} \es(x) ~:=~\epsilon >0.$$ 
This shows that for all $x \in \Sd$ there is $y \in V(R)$ such that  $\skalar{x,y} \ge 2^{-1/s} \epsilon^{1/s}$ due to the symmetry of $\nust$. Hence we can find a partition of $V(R)$ into a finite number of sets (use compactness), such that the assertions of the Lemma hold.
\medskip

\Case[3]: Assume that $\mA \in GL(d,\R)$ satisfies $\ip$, and that there is no proper closed convex cone, which is $\Gamma^*$-invariant. Then it is shown \cite[after Theorem 5.1]{Guivarch2012}, that $V(R)$ contains the pre-image of $V(\Gamma^*)$ under the projection $\Sd \to \Pd$ (this is called Case I there). Using that $\mM=\mA^\top$ satisfies \ip~as well, if it is satisfied by $\mA$, we use \cite[Theorem 2.17]{Guivarch2012} to infer the existence of a symmetric probability measure $\nust[\alpha]$, which is supported in (a subset of) $V(R)$. Then we can conclude as above.
\end{proof} }

\subsection{Auxiliary Lemma}
Next we are going to prove Lemma \ref{lem:estimate Vn}.


\begin{proof}[Proof of Lemma \ref{lem:estimate Vn}]
\Step[1]: Denoting $U_{n,t} := \Big\{  S^*_n \ge n_t q  \Big\}$ and
\begin{eqnarray*}
W_{j,{n,t}} &:=& \Big\{ S^*_j +\log|B_{j+1}|> n_t q + C_0 - (n-j)\delta \Big\},\\
\end{eqnarray*}
we have that
$$
{\red \P[x]{V_{n,t}} = \P[x]{U_{n,t}} - \pb_x \bigg({\bigcup_{j<n} (U_{n,t}\cap W_{j,{n,t}})}\bigg)}.
$$
Using the Bahadur-Rao type result \eqref{eq:br uniform convergence}, we estimate ${\red\P[x]{U_{n,t}}}$ from below (with $d=q(n_t-n)$, $\theta = q+1$, $\vartheta_1 = \inf_{x,y} \frac{\est[\alpha](x)}{\est[\alpha](y)}$), namely, there is $N_0 \in \N$ and {\red$\vartheta_2 >0$}, such that for all $n \ge N_0$, the following estimate holds:
\begin{equation}
\label{eq: pos:star}
{\red \P[x]{U_{n,t}}} = \Q_x (S_n > n_t q) \ge \frac{\vartheta_1}{r_{\alpha}(x)} \E_{\Q_x} \Big[  \es[\alpha](X_n) {\bf 1}_{\{ S_n \ge nq + d \}} \Big] \ge \vartheta_2 \cdot
\frac{k(\alpha)^n}{\sqrt{n_t} e^{\alpha n_t q}}.
\end{equation}
 Similarly \eqref{eq:br1} provides an upper estimate for $\P{U_{n,t}}$ which in particular proves the upper bound in the lemma.

Therefore it is sufficient to prove that
\begin{equation}\label{aim}
{\red \pb_x} \bigg( \bigcup_{j<n} (U_{n,t}\cap W_{j,{n,t}}) \bigg) \le \vartheta \cdot \frac{k(\alpha)^n}{\sqrt{n_t} e^{\alpha n_t q}}
\end{equation}
for some $\vartheta<\vartheta_2/2$. In fact, we are going to show that {\red$\vartheta$} can be made arbitrarily small by choosing $C_0$ large.
\medskip

\Step[2]: Let us denote by $\mu_{\mA,B}$ the joint law of $(\mA,B)$. 
By decomposing the sets $W_{j,{n,t}}$ further, depending on the overshoot of $S_j^* + \log \abs{B_{j+1}}$, we obtain \begin{align*}
&~{\red \pb_x} \bigg(  \bigcup_{j<n} (U_{n,t}\cap W_{j,{n,t}}) \bigg)
 ~\le~ \sum_{j<n} {\red\P[x]{ U_{n,t}\cap W_{j,{n,t}}}}\\
=&~ \sum_{j<n}\sum_{m\ge 0} \pb \bigg( {
\frac{e^{n_t q + C_0  + m}}{e^{ (n-j)\delta}} \le |B_{j+1}| {\red |\mG_j x|} < \frac{e^{n_t q + C_0  + m+1}}{e^{ (n-j)\delta}}\ \mbox{ and }\ \red{ |\mG_n x|}> e^{n_tq}
}   \bigg) \\
\le &~ \sum_{j<n}\sum_{m\ge 0} \int  \pb\bigg(  {
\frac{e^{n_t q + C_0  + m}}{e^{ (n-j)\delta}} \le |b| |\mG_j x| < \frac{e^{n_t q + C_0  + m+1}}{e^{ (n-j)\delta}}\ \mbox{ and }\ \|\mG_{j+2}^n\|\|\ma^\top\||\mG_j x|> e^{n_tq}
} \bigg) \mu_{\mA,B}(d\ma,\, db) \\
\le&~  \sum_{j<n} \sum_{m\ge 0} \int
\pb \bigg(
 |\mG_j x| \ge \frac{ e^{n_t q + C_0 +m}}{ |b| e^{ (n-j)\delta}} \bigg)
 \cdot \pb \bigg(   \|\mPi_{n-j-1}\|>  \frac{|b|e^{ (n-j)\delta}}{\|\ma\| e^{C_0+  m + 1}}
  \bigg) \mu_{\mA,B}(d\ma,\,db)\\
\end{align*}

To estimate further, we have to consider separately the cases where $\abs{b}$ is small resp. large, for we can apply the Bahadur-Rao estimate only in the first case.
More precisely, we split the integral into two integrals over the set
\begin{equation} \label{eq:defTheta} \Theta := {\{|b| \le e^{(n_t-j)q + C_0 - (n-j)\delta + m}\}} \end{equation}
and its complement $\Theta^c$, respectively.

\medskip

\Step[3]: In this step, we estimate
$$ I := \sum_{j<n} \sum_{m\ge 0} \int \1[\Theta](b) \,
\pb \bigg(
 |\mG_j x| \ge \frac{ e^{n_t q + C_0 +m}}{ |b| e^{ (n-j)\delta}} \bigg)
 \cdot \pb \bigg(   \|\mPi_{n-j-1}\|>  \frac{|b|e^{ (n-j)\delta}}{\|\ma\| e^{C_0+  m + 1}}
  \bigg) \mu_{\mA,B}(d\ma,\,db).$$
{\red 
In this step, we also choose $\delta$. $C_0$ will be a free parameter until Step 5, and it is important to notice, that all appearing constants are independent of $C_0$.

  On $\Theta$, $e^u:= \frac{ \exp({n_t q + C_0 +m})}{ |b| \exp({ (n-j)\delta}) } \ge \exp(jq)$, thus  the estimate \eqref{eq:br1} applies to the first probability in $I$ and yields
\begin{equation}\label{eq:aa1} \P{ \abs{\mG_j x} \ge e^u } ~=~ \P[x]{S_n^* \ge u} ~=~ \QP[x]{S_n \ge u} ~\le~ \frac{C k(\alpha)^j}{\sqrt{j} \, e^{\alpha u}} ~=~ \frac{C k(\alpha)^j \abs{b}^\alpha e^{ \alpha (n-j) \delta}} {\sqrt{j}\, e^{\alpha (n_t q +C_0 +m)} }, \end{equation}
where $C$ is given by Theorem \ref{thm:BahadurRao2} and only depends on $\alpha$.

In order to estimate the second probability in $I$, we use the Markov inequality with the function $x \mapsto x^{\beta}$, where we choose $\beta >0$ such that $\beta < \alpha$ and $k(\beta) < k(\alpha)$. This is possible since $k'(\alpha)>0$. Then, by Corollary \ref{cor:ksEs}, 
$$ \E \norm{\mPi_{n-j-1}}^{\beta} ~\le~\frac{1}{c_\beta} k(\beta)^{n-j-1}  ~=~ \frac{1}{c_\beta k(\beta)} k(\alpha)^{n-j} \left( \frac{k(\beta)}{k(\alpha)} \right) ^{n-j}$$
with $c_\beta >0$ given by Prop. \ref{lem:prop:qn}.
We obtain, applying the Markov inequality as described above,
\begin{align}
\label{eq:aa2} \pb \bigg(   \|\mPi_{n-j-1}\|>  \frac{|b|e^{ (n-j)\delta}}{\|\ma\| e^{C_0+  m + 1}}
  \bigg)  ~ &\le ~ \frac{\E \big[\norm{\mPi_{n-j-1}}^\beta \big] \norm{\ma}^\beta e^{\beta(C_0 + m + 1)} }{\abs{b}^\beta e^{\beta(n-j)\delta}}\\ ~&\le~ \frac{k(\alpha)^{n-j} \norm{\ma}^\beta e^{\beta(C_0 + m + 1)} }{k(\beta) c_\beta \abs{b}^\beta e^{\beta(n-j)\delta}} \left( \frac{k(\beta)}{k(\alpha)} \right) ^{n-j}  \nonumber
\end{align}
Define $\xi:= \alpha - \beta>0$. Using the estimates \eqref{eq:aa1} and \eqref{eq:aa2} in $I$ and simplifying terms, we obtain
\begin{equation}
\label{eq:aa3} I ~ \le ~ \sum_{j<n} \sum_{m\ge 0} \int \1[\Theta](b) \, \frac{C e^\beta}{c_\beta k(\beta )} \frac{1}{\sqrt{j}} \frac{k(\alpha)^n \abs{b}^\xi \norm{\ma}^\beta e^{\xi(n-j)\delta}} {e^{\alpha n_t q} \,  e^{\xi(C_0 +m)} }  \,  \left( \frac{k(\beta)}{k(\alpha)} \right) ^{n-j}  \,  \mu_{\mA,B}(d\ma,\,db)
\end{equation}
We estimate further by omitting the indicator $\1[\Theta](b)$, integrating,  setting $C' := C e^\beta /c_\beta k(\beta)$ and using Fubini's theorem:
\begin{align}
\label{eq:aa4} I ~ \le& ~ \sum_{j<n} \sum_{m\ge 0}  C' \frac{1}{\sqrt{j}} \frac{k(\alpha)^n  e^{\xi(n-j)\delta}} {e^{\alpha n_t q} \,  e^{\xi(C_0 +m)} }  \,  \left( \frac{k(\beta)}{k(\alpha)} \right) ^{n-j}  \, \E \big( \norm{\mA}^\beta \abs{B}^\xi \big) \\
\nonumber =&~ \frac{C' k(\alpha)^n }{e^{\xi C_0} e^{\alpha n_t q}} \left( \sum_{m \ge 0} e^{-\xi m} \right)  \left( \sum_{j<n} \frac{1}{\sqrt{j}} \left( \frac{k(\beta) e^{\xi \delta}}{k(\alpha)} \right) ^{n-j} \right) \, \E \big( \norm{\mA}^\beta \abs{B}^\xi \big) 
  \end{align}
Recall that we chose $\beta <\alpha$ such that $k(\beta) < k(\alpha)$. Thus, we can choose a (small) $\delta >0$, such that 
\begin{equation} \label{eq:exi} e^{\xi \delta} k(\beta) < k(\alpha), \end{equation} and apply Lemma  \ref{lem:sumineq} (see below) with $\rho=e^{\xi \delta} k(\beta) /k(\alpha)$  to infer that the sum over $j$ is bounded by a constant times $1/\sqrt{n}$.
On $\E( \norm{\mA}^\beta \abs{B}^\xi)$ we can apply H\"older's inequality with $p_1=\alpha/(\alpha-\xi)=\alpha/\beta$ and $p_2= \alpha/ \xi$ to infer
$$ I ~\le~ \frac{C' k(\alpha)^n }{e^{\xi C_0} e^{\alpha n_t q}} \,  \frac{1}{1-e^{-\xi}} \, \frac{D_3}{\sqrt{n}} \, \E \norm{\mA}^\alpha \, \E \abs{B}^\alpha ~=:~ \frac{C'' k(\alpha)^n }{\sqrt{n} e^{\xi C_0} e^{\alpha n_t q}} ~\le~ \frac{C'''}{e^{\xi C_0}} \cdot \frac{ k(\alpha)^n }{\sqrt{n_t}  e^{\alpha n_t q}}  $$
for a finite constant $C'''$, which does not depend on $n$ or $t$ or $C_0$. Note that we were allowed to replace $n$ by $n_t$ in the final expression, since  $n_t-\sqrt{n_t} \le n \le n_t - \sqrt{n_t}/2$ by assumption.
}

\medskip
\Step[4]:
 Now, to estimate
 $$II~:=~\sum_{j<n} \sum_{m\ge 0} \int \1[\Theta^c](b) \,
\pb \bigg(
 |\mG_j x| \ge \frac{ e^{n_t q + C_0 +m}}{ |b| e^{ (n-j)\delta}} \bigg)
 \cdot \pb \bigg(   \|\mPi_{n-j-1}\|>  \frac{|b|e^{ (n-j)\delta}}{\|\ma\| e^{C_0+  m + 1}}
  \bigg) \mu_{\mA,B}(d\ma,\,db), $$ 
{\red
we start by applying the Markov inequality with $x\to x^\alpha$ resp. $x \to x^\beta$ with $\beta$ as above to both probabilities:
\begin{align}
\label{eq:b1} \pb \bigg(
 |\mG_j x| \ge \frac{ e^{n_t q + C_0 +m}}{ |b| e^{ (n-j)\delta}} \bigg) ~&\le~ \frac{\E \big[\abs{\mG_j x}^\alpha \big] \abs{b}^\alpha e^{\alpha(n-j)\delta}  }{e^{\alpha(n_t q + C_0 +m)}} ~\le~ \frac{k(\alpha)^j  \abs{b}^\alpha e^{\alpha(n-j)\delta}  }{c_\alpha e^{\alpha(n_t q + C_0 +m)}} \\
 \pb \bigg(   \|\mPi_{n-j-1}\|>  \frac{|b|e^{ (n-j)\delta}}{\|\ma\| e^{C_0+  m + 1}}
  \bigg) ~&\le~ \frac{\E \big[ \norm{\mPi_{n-j-1}}^\beta \big] \norm{\ma}^\beta \, e^{\beta(C_0 + m +1)}}{\abs{b}^\beta e^{\beta(n-j)\delta}} ~\le~ \frac{k(\beta)^{n-j}  \norm{\ma}^\beta \, e^{\beta(C_0 + m +1)}}{c_\beta k(\beta) \, \abs{b}^\beta e^{\beta(n-j)\delta}} ,
\end{align}
 were we used  as before Corollary \ref{cor:ksEs} to obtain the second inequalities. Hence, with $\xi=\alpha - \beta$ as before, \begin{align}
 II ~&\le~ \sum_{j<n} \sum_{m\ge 0} \frac{k(\alpha)^n e^\beta}{c_\alpha c_\beta k(\beta) e^{\xi{C_0}} e^{\alpha n_t q}} \, e^{-\xi m} \,   \left( \frac{e^{\xi \delta} k(\beta)}{k(\alpha)}\right)^{n-j}  \,  \int \1[\Theta^c](b) \, \norm{\ma}^\beta \abs{b}^\xi \,  \mu_{\mA,B}(d\ma,\,db) \\ \label{eq:bb3}
 &\le~ \frac{D k(\alpha)^n}{e^{\xi C_0} e^{\alpha n_t q}} \left( \sum_{m \ge 0} e^{-\xi m} \, \sum_{j <n} \left( \frac{e^{\xi \delta} k(\beta)}{k(\alpha)}\right)^{n-j} \, \int \1[\Theta^c](b) \, \norm{\ma}^\beta \abs{b}^\xi \,  \mu_{\mA,B}(d\ma,\,db) \right)
 \end{align}
 with $D= e^\beta /(c_\alpha c_\beta k(\beta))$.
Recall from \eqref{eq:defTheta} that the set $\Theta^c$ is defined in terms of $m$ and $j$, thus in order to resolve the sums, we first have to deal with the integral. 
We will apply the H\"older inequality twice. Choose (a small) $\gamma >1$ such that \begin{equation} \label{eq:gam}\E \norm{\mA}^{\gamma \alpha} < \infty \quad \text{ and } \quad\E \abs{B}^{\gamma \alpha} < \infty \quad \text{ and (still) } \quad e^{\xi \delta (1 + \frac{\gamma-1}{\gamma})} k(\beta) < k(\alpha).\end{equation} This is possible due to \eqref{eq:exi} and  the moment assumptions \eqref{eq:moment conditions2}. Then, using H\"older first with $p_1=\gamma$, $p_2=\gamma/(\gamma-1)$ and subsequently with $p_1'=\alpha/\beta$, $p_2'=\alpha/\xi$, we obtain
\begin{align}
\label{eq:bb2}\E \1[\Theta^c](B) \norm{\mA}^\beta \abs{B}^\xi ~&\le~  \left(\P{B \in \Theta^c} \right)^{\frac{\gamma-1}{\gamma}}  \, \left( \E \norm{\mA}^{\gamma \beta} \abs{B}^{\gamma \xi} \right)^{\frac{1}{\gamma}} \\ \nonumber
 ~&\le~  \left(\P{B \in \Theta^c} \right)^{\frac{\gamma-1}{\gamma}}  \, \left( \E \norm{\mA}^{\gamma \alpha} \right)^{\frac{\beta}{\alpha \gamma}}  \, \left( \E \abs{B}^{\gamma \alpha} \right)^{\frac{\xi}{\alpha \gamma}}
\end{align}
Now we apply the Markov inequality with $x \mapsto x^\xi$ to estimate $(\P{B \in \Theta^c})$,
\begin{align}
\label{eq:pbtc}  \P{\abs{B} > e^{(n_t-j)q + C_0 - (n-j)\delta + m} }  ~&\le~ \big( \E \abs{B}^\xi \big) e^{ - {\xi q}(n_t -j) - {\xi}(C_0 + m) } e^{ {\xi \delta} (n-j) } \\
\nonumber ~&\le~ \big( \E \abs{B}^\xi \big) e^{- \xi q \sqrt{n_t}} \, \cdot 1 \cdot e^{\xi \delta (n-j)},
\end{align}
 where the last inequality is valid for $j \le n$ (only such $j$ appear in the sum) and follows from the condition $ n < n_t - \sqrt{n_t}$.

Using \eqref{eq:pbtc} in \eqref{eq:bb2} and this in \eqref{eq:bb3}, we obtain
\begin{align}
II ~&\le~ \frac{D k(\alpha)^n}{e^{\xi C_0} e^{\alpha n_t q}} \left( \sum_{m \ge 0} e^{-\xi m} \, \sum_{j <n} \left( \frac{e^{\xi \delta} k(\beta)}{k(\alpha)}\right)^{n-j}  \frac{\big( \E \abs{B}^\epsilon \big)^{\frac{\gamma-1}{\gamma}}  \left( \E \norm{\mA}^{\gamma \alpha} \right)^{\frac{\beta}{\alpha \gamma}}  \, \left( \E \abs{B}^{\gamma \alpha} \right)^{\frac{\xi}{\alpha \gamma}}}{e^{\xi q\frac{ \gamma -1}{\gamma} \sqrt{n_t}}} e^{\xi \delta\frac{ (\gamma -1)}{\gamma} (n-j)} \right) \\
&=~ \frac{D' k(\alpha)^n}{e^{\xi C_0} e^{\alpha n_t q}} \frac{1}{e^{\xi q \frac{\gamma -1}{\gamma} \sqrt{n_t}}} \left( \sum_{m \ge 0} e^{-\xi m} \, \sum_{j <n} \left( \frac{e^{\xi \delta (1+ \frac{\gamma -1}{\gamma})} k(\beta)}{k(\alpha)}\right)^{n-j}    
\right)
\end{align}
for a finite constant $D'$, independent of $n, t, C_0$. Recalling \eqref{eq:gam}, both sums converge. Finally, up to a constant, the second quotient can be replaced by $1/\sqrt{n_t}$, and thus we arrive at
\begin{equation}\label{eq:estimateII} 
II ~\le~ \frac{D''}{e^{\xi C_o}} \frac{k(\alpha)^n}{\sqrt{n_t} e^{\alpha n_t q}}
\end{equation}

\medskip
\Step[5]: Recall that our original aim was to prove \eqref{aim} with an arbitrarily small $\vartheta$. From the previous steps, we have the estimate
$$\pb_x \bigg( \bigcup_{j<n} (U_{n,t}\cap W_{j,{n,t}}) \bigg) ~\le~ I + II ~\le~  \frac{C''' + D''}{e^{\xi C_0}} \frac{k(\alpha)^n}{\sqrt{n_t} \, e^{\alpha n_t q}},$$
where $C'''$ and $D''$ are finite constants, independent of $C_0$, which is still a free parameter. Thus, by choosing appropriately large $C_0$, we obtain the assertion. 
}
\end{proof}


{\red \begin{lem}\label{lem:sumineq}
Let $0 < \rho <1$, then there is $D_3 < \infty$ such that for all $n \in \N$,
$$ \sum_{j=0}^n  \frac{1}{\sqrt{j}} \, \rho^{n-j} ~\le~ D_3 \frac{1}{\sqrt{n}}$$
\end{lem}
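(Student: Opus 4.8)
The plan is a straightforward dyadic split of the sum; there is no real obstacle here, just bookkeeping. Assuming, as in the application where the lemma is used, that the summation index runs over $j \ge 1$ (so that $1/\sqrt{j}$ makes sense), I would write
$$ \sum_{j=1}^{n} \frac{1}{\sqrt{j}}\, \rho^{n-j} ~=~ \sum_{1 \le j < n/2} \frac{1}{\sqrt{j}}\, \rho^{n-j} \;+\; \sum_{n/2 \le j \le n} \frac{1}{\sqrt{j}}\, \rho^{n-j} ~=:~ \Sigma_1 + \Sigma_2, $$
and estimate the two pieces separately (the precise placement of the cutoff via $\lfloor n/2 \rfloor$ or $\lceil n/2 \rceil$ is irrelevant for the orders of magnitude, and the case of very small $n$ can be absorbed into the final constant).

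For $\Sigma_2$ I would use that $j \ge n/2$ forces $1/\sqrt{j} \le \sqrt{2}/\sqrt{n}$, and then substitute $k = n-j$ to get $\sum_{n/2 \le j \le n} \rho^{n-j} \le \sum_{k=0}^{\infty} \rho^{k} = (1-\rho)^{-1}$; hence
$$ \Sigma_2 ~\le~ \frac{\sqrt{2}}{(1-\rho)\,\sqrt{n}}. $$
For $\Sigma_1$ I would bound $1/\sqrt{j} \le 1$ and note that $j < n/2$ gives $n-j > n/2$, so $\Sigma_1 \le \tfrac{n}{2}\,\rho^{n/2}$. Since $\rho < 1$, the function $n \mapsto n^{3/2}\rho^{n/2}$ is bounded on $\N$ (it even tends to $0$); writing $C_\rho := \sup_{n \in \N} n^{3/2}\rho^{n/2} < \infty$, this yields
$$ \Sigma_1 ~\le~ \frac{n}{2}\,\rho^{n/2} ~=~ \frac{1}{2}\, n^{3/2}\rho^{n/2} \cdot \frac{1}{\sqrt{n}} ~\le~ \frac{C_\rho}{2\,\sqrt{n}}. $$

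Adding the two estimates gives the assertion with $D_3 := \frac{\sqrt{2}}{1-\rho} + \frac{C_\rho}{2}$, a finite constant depending only on $\rho$. The only point requiring a word of care is the exponential-beats-polynomial fact used to bound $C_\rho$, which is immediate since $\rho^{n/2} = e^{(n/2)\log\rho}$ with $\log\rho < 0$; everything else is elementary.
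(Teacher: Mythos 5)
Your proof is correct, and it is the same basic strategy as the paper's (split the sum into a range where $1/\sqrt{j}$ is comparable to $1/\sqrt{n}$ and a range killed by the geometric factor), but with a different cutoff. You split at $j=n/2$ and then invoke the fact that $n^{3/2}\rho^{n/2}$ is bounded, i.e.\ exponential decay beats the polynomial loss from bounding $1/\sqrt{j}\le 1$ and counting $\sim n/2$ terms. The paper instead relabels the sum as $\sum_{j}\rho^{j}/\sqrt{n-j}$ and splits at a logarithmic threshold $J_n$ chosen so that $\rho^{J_n}=1/\sqrt{n}$; this makes the geometric tail contribute exactly a factor $1/\sqrt{n}$ without any polynomial-versus-exponential argument, at the mild cost of checking that $\sup_n n/(n-J_n)<\infty$. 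Both yield an explicit constant depending only on $\rho$; yours is marginally more self-contained, the paper's is tuned so the two pieces are bounded by the geometric series alone. Your remark that the $j=0$ term in the statement should be read as starting from $j=1$ (as in the application, where also the relevant sum runs over $j<n$) is a sensible reading of what is a small slip in the lemma's formulation, and it affects nothing.
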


\begin{proof}
As the first step, we relabel the sum to  $\sum_{j=0}^n  \frac{1}{\sqrt{n-j}} \rho^{j}$. Then we split the sum at $J_n:= 1/2 (\log n)/\log (\rho)$ and use that $\rho^{J_n}=1/\sqrt{n}$ and that $n /(n-J_n)$ converges to 1 as $n$ goes to infinity:
\begin{align*}
 \sum_{j=0}^{J_n}  \frac{1}{\sqrt{n-j}} \,  \rho^{j} + \sum_{j={J_n}}^n  \frac{1}{\sqrt{n-j}} \,  \rho^{j} ~&\le~ \frac{1}{\sqrt{n}} \left( \sum_{j=0}^{J_n} \sqrt{\frac{n}{n- J_n}} \, \rho^j \right) + \rho^{J_n} \left( \sum_{j={J_n}}^n \frac{1}{\sqrt{j}} \, \rho^{j-J_n} \right) \\
 &\le~ \frac{1}{\sqrt{n}} \left( \left[ \sup_{n \in \N} \sqrt{\frac{n}{n- J_n}} \right] \sum_{j=0}^{\infty}  \rho^j \right) + \frac{1}{\sqrt{n}} \left( \sum_{j={0}}^\infty  \rho^{j} \right) ~=: \frac{D_3}{\sqrt{n}}
\end{align*}
\end{proof}
}

\subsection{Finishing the proof}

\begin{proof}[Proof of Lemma \ref{lem:finallemma}]
{\red \Step[1]: We have to prove that
$
\pb_x ( \bigcup_{n \in \N} \wt V_{n,t} ) \ge \eta t^{-\alpha}
$ for some $\eta >0$ and all large $t$.  In order to do so, we can estimate the probability from below by $\P[x]{\bigcup_{n \in K_t} \wt V_{n,t}}$, where $K_t$ can be any subset of $\N$, and may depend on $t$.
Applying the inclusion-exclusion formula and Lemma \ref{lem:pos:12.3}, we obtain
\begin{align}
\nonumber \pb_x \bigg( \bigcup_{n\in K_t}\wt V_{n,t} \bigg) ~\ge &~ \sum_{n\in K_t} \pb_x \big({\wt V_{n,t}} \big) - \sum_{n,n'\in K_t:\; n>n'}\pb_x \big( \wt V_{n,t} \cap \wt V_{n',t} \big)\\ \label{pvd}
  ~\ge &~ \kappa_0 \sum_{n\in K_t} \pb_x \big({ V_{n,t}} \big) - \sum_{n,n'\in K_t:\; n>n'}\pb_x\big(  V_{n,t} \cap  V_{n',t} \big) ,
\end{align}
where we also used that $(\wt V_{n,t} \cap \wt V_{n',t}) \subset (V_{n,t} \cap V_{n',t})$, cf. their definitions in \eqref{v1} -- \eqref{v3}.
In order to make the second sum small, we will consider the following specific subsets $K_t$,
\begin{equation}
\label{eq:pos:4}
K_t= \big\{  kC_1:\; n_t-\sqrt{n_t} < kC_1 < n_t - \sqrt{n_t}/2  \big\},
\end{equation}
where the parameter $C_1$ will be chosen later.

\medskip
\Step[2]: In this step, we compute $\P{V_n,t \cap V_{n',t}}$ for $n,n' \in K_t$ with $n>n'$.
Let $\mG_{n'+1}^n = \mM_n^\top \dots \mM_{n'+1}^\top$.
\begin{align}
\P[x]{V_{n,t}\cap V'_{n,t}} ~\le&~ \P{ |\mG_{n'} x|\ge t \mbox{ and } |\mG_n x|\ge t} \nonumber\\
~\le&~ \sum_{m=0}^{\8} \P{te^m \le |\mG_{n'} x| < t e^{m+1} \mbox{ and } \|\mG_{n'+1}^n\||\mG_{n'} x| >t  }\nonumber \\
~\le&~ \sum_{m=0}^{\8} \P{|\mG_{n'} x| \ge  t e^{m}} \P  {\|\mPi_{n-n'}^\top\| > e^{-m-1}  } \label{vv1}
\end{align}
For the first probability, we can apply the estimate \eqref{eq:br1}: Rewriting it as $\QP[x]{S_{n'} > \log t + m}$, 
 we have since $n' \in K_t$ that $n' \le n_t= \lceil \log t/q \rceil $ and thus $u:=\log t +m \ge n' q$. Hence, 
$$ \P{|\mG_{n'} x| \ge  t e^{m}} ~\le~ \frac{C k(\alpha)^{n'}}{\sqrt{n'} \, t^\alpha e^{\alpha m}} ~\le ~ \frac{C' \cdot }{\sqrt{n_t} \,  t^\alpha e^{\alpha m}}.$$
For the second inequality, we used that $n' \in K_t$ is comparable to $n_t$.

For the second probability, we use the Markov inequality with $x \to x^\beta$, where $\beta < \alpha$ is such that $k(\beta) < k(\alpha)=1$ (as above), this is possible since $k'(\alpha)>0$. Together with Corollary \ref{cor:ksEs}, we obtain
$$ \P  {\|\mG_{n-n'}\| > e^{-m-1}  } ~\le~ e^{\beta(m+1)}{\E \norm{\mPi_{n-n'}}^\beta} ~\le~ e^\beta c_\beta^{-1} k(\beta)^{n-n'} \, e^{\beta m}.$$
Using these estimates in \eqref{vv1}, we infer
\begin{align*}
\P[x]{V_{n,t} \cap V_{n',t}} ~\le~  \frac{C' e^\beta}{c_\beta} \frac{  k(\beta)^{n-n'}}{\sqrt{n_t} t^\alpha } \sum_{m=0}^\infty e^{-(\alpha - \beta)m} ~\le~  \frac{C''  k(\beta)^{n-n'}}{\sqrt{n_t} \, t^\alpha}
\end{align*}

\medskip
\Step[3]: Using the previous step and the estimate from Lemma \ref{lem:estimate Vn} (again $k(\alpha)=1$) in \eqref{pvd}, we obtain 
\begin{align*}
 \pb_x \bigg( \bigcup_{n\in K_t}\wt V_{n,t} \bigg) ~\ge&~
   \kappa_0\sum_{n\in K_t} \frac{D_1}{\sqrt{n_t} e^{\alpha n_t q}} -
  \sum_{n\in K_t} \sum_{n'\in K_t:\; n'<n}  \frac{C''}{\sqrt{n_t}t^{\alpha}} \cdot k(\beta)^{n-n'}\\
 \ge&~ \frac{|K_t|}{\sqrt{n_t} t^{\alpha}}\big( \kappa_0 D_1 - C'' \sum_{n' \in K_t :\; n' < n} k(\beta)^{n-n'} \big) 
 \end{align*}
 Now we use that the cardinality $\abs{K_t} \ge \frac{\sqrt{n_t}}{2 C_1} -1 \ge \frac{\sqrt{n_t}}{4 C_1}$ for all sufficiently large $t$, and that $n - n' \ge C_1$ for $n, n' \in K_t$, $n >n'$. 
 \begin{align*}
 \pb_x \bigg( \bigcup_{n\in K_t}\wt V_{n,t} \bigg)
 \ge&~ \frac{1}{4 C_1}\left( \kappa_0 D_1 - C'' k(\beta)^{C_1} \,  \sum_{j=0}^\infty k(\beta)^{j} \right) \frac{1}{t^\alpha}  ~=~ \frac{1}{4 C_1}\left( \kappa_0 D_1 - \frac{C'' k(\beta)^{C_1}}{1-k(\beta)} \right) \frac{1}{t^\alpha}
 \end{align*}
 Since $k(\beta)<k(\alpha)=1$, we can now choose $C_1$ such that the term in the brackets becomes positive. Then, for all $t$ sufficiently large (in particular, such that $K_t$ is nonempty),
 $$\pb_x \bigg( \bigcup_{n\in K_t}\wt V_{n,t} \bigg) ~\ge~ \eta t^{-\alpha},$$
 with $\eta>0$ being independent of $t$.
}

\end{proof}

\end{document}